\renewcommand{\Im}{\operatorname{Im}}
\newcommand{\defeq}{\stackrel{\rm{def}}{=}}
\newcommand{\R}{\mathbb R}
\newcommand{\C}{\mathbb C}
\newcommand{\N}{\mathbb N}
\newcommand{\peq}{\hspace*{0.10in}}
\newtheorem{theorem}{Theorem}[section]
\newtheorem{definition}[theorem]{Definition}
\newtheorem{proposition}[theorem]{Proposition}
\newtheorem{lemma}[theorem]{Lemma}
\newtheorem{corollary}[theorem]{Corollary}
\newtheorem{claim}[theorem]{Claim}
\theoremstyle{remark}
\newtheorem{remark}[theorem]{Remark}
\numberwithin{equation}{section}
\newcommand{\ds}{\displaystyle}
\begin{document}

\title[Solutions to generalized Hartree equation]
{Global behavior of solutions to \\ 
the focusing generalized Hartree Equation}

\author[Anudeep K. Arora]{Anudeep Kumar Arora}
\address{Department of Mathematics  \& Statistics\\
Florida International University,  Miami, FL, USA}
\curraddr{}
\email{simplyandy7@gmail.com}
\thanks{} 

\author[Svetlana Roudenko]{Svetlana Roudenko}
\address{Department of Mathematics \& Statistics\\
Florida International University,  Miami, FL, USA}
\curraddr{}
\email{sroudenko@fiu.edu}
\thanks{}

\subjclass[2010]{Primary: 35Q55, 35Q40; secondary: 37K40, 37K05}

\keywords{Hartree equation, Choquard-Pekar equation, convolution nonlinearity, mass-energy threshold, uniqueness, concentration compactness, global existence, scattering, divergence to infinity, blow-up}

\date{} 

\begin{abstract}
We study the global behavior of solutions to the nonlinear {\it generalized} Hartree equation, where the nonlinearity is of the non-local type and is expressed as a convolution, 
$$ 
\qquad \qquad iu_t + \Delta u  + (|x|^{-(N-\gamma)} \ast |u|^p)|u|^{p-2}u=0, \quad x \in \R^N, t\in \R.
$$ 
Our main goal is to understand behavior of $H^1$ (finite energy) solutions of this equation in various settings. In this work we make an initial attempt towards this goal. We first investigate the $H^1$ local wellposedness and small data theory. We then, in the intercritical regime ($0<s<1$), classify the behavior of $H^1$ solutions under the mass-energy assumption $\mathcal{ME}[u_0]<1$, identifying the sharp threshold for global versus finite time solutions via the sharp constant of the corresponding convolution type Gagliardo-Nirenberg interpolation inequality (note that the uniqueness of a ground state is not known in the general case). In particular, depending on the size of the initial mass and gradient, solutions will either exist for all time and  scatter in $H^1$, or blow up in finite time, or diverge along an infinity time sequence. To either obtain $H^1$ scattering or divergence to infinity, in this paper we employ the well-known concentration compactness and rigidity method of Kenig-Merle \cite{KM06} with the novelty of studying the nonlocal nonlinear potential given via convolution with negative powers of $|x|$ and different, including fractional, powers of nonlinearities.  
\end{abstract}

\maketitle

\tableofcontents

\section{Introduction}
Consider the focusing generalized Hartree, or Schr\"odinger - Hartree, equation of the form
\begin{equation}\label{gH}
i u_t + \Delta u  + \left( \frac1{|x|^{N-\gamma}} \ast |u|^p \right)|u|^{p-2} u=0, ~~ x \in \R^N, ~t\in \R, ~0<\gamma<N, ~p\geq 2. 
\end{equation}
Here, the function 
$u(x,t)$ is complex-valued and $\ast$ denotes the convolution operator in $\R^N$. 

The equation \eqref{gH} is a generalization of the standard Hartree equation with $p=2$, 
\begin{equation}\label{H}
iu_t+\Delta u + \left( \frac1{|x|^{N-\gamma}} \ast |u|^2 \right) u=0, ~~ x \in \R^N, 
\end{equation}
which can be considered as a classical limit of a field equation describing a
quantum mechanical non-relativistic many-boson system interacting through a
two body potential  $V(x) = \frac1{|x|^{N-\gamma}}$, see \cite{GV80}.
How it arises as an effective evolution equation in the mean-field limit of many-body quantum systems can be traced to work of Hepp \cite{H74}, see also \cite{GV80}, \cite{Sp80}, \cite{BGM2000}, \cite{BEGMY}, \cite{FGS07}. 
Lieb \& Yau \cite{LY87} mention it in a context of developing theory for stellar collapse, and in particular, in the boson particles setting. 
A special case of the convolution with $\frac1{|x|}$ in $\mathbb{R}^3$ is referred to as the Coulomb potential, which goes back to work of Lieb \cite{Lieb77} and has been intensively studied since then, see reviews \cite{FTY2002}, \cite{FTY2000}. With $\gamma =2$ and $N=3$, a pseudo-relativistic version of this equation arises in the mean field limit of weakly interacting molecules and bosonic atoms (for example, see \cite{FJL07}, \cite{FL04}), taking the form
\begin{equation}\label{boson}
iu_t -\sqrt{-\Delta+m^2\,} \,u  + (|x|^{-1}\ast |u|^2)u=0,\quad x\in\R^3, 
\end{equation}
which has recently generated many interesting questions about the dynamics of its solutions.

Unlike the standard nonlinear Schr\"odinger equation with pure nonlinearity $|u|^{p-1} u$, the distinct feature of the Hartree equation \eqref{H} is that it models systems with long-range interactions. Possible experimental realizations of such interaction, where the power in the convolution changes, include the interaction of ultracold Rydberg atoms that have large principal quantum numbers \cite{L-10}. These interactions between atoms in highly excited Rydberg levels are long range and dominated by dipole-dipole-type forces (the strength of the interaction between Rb atoms is about $10^{12}$ times stronger than that between Rb atoms in the ground state \cite{SWM10}). The spatial dependence of interactions may be $1/|x|^3$ for small $|x|$ and $1/|x|^6$ for  larger $|x|$. Other powers such as $1/|x|^2$ are also possible, see \cite{OGKA2000}. Even more general, the potential can incorporate not only radial dependence, but also angular dependence $\frac{\theta(x)}{|x|^{N-\gamma}}$ \cite{L-10}, though, in this work we will not consider this case.  

The equation \eqref{gH} can be written (in terms of the wave function $u$ and the potential $V$) as the Schr\"odinger - Poisson system  of the form
	\begin{align}\label{SP}
	\begin{cases}
	iu_t +\Delta u + V|u|^{p-2}u=0\\
	-\Delta V = (N-2)|\mathbb{S}^{N-1}| \, |u|^p.
	\end{cases}
	\end{align}
This can be thought of as an electrostatic version of the Maxwell-Schr\"odinger system, which describes the interaction between the electromagnetic field and the wave function related to a quantum non-relativistic charged particle (see, for example, \cite{CG04} and \cite{Lieb03}). 

With numerous applications, it makes sense to develop a unified mathematical theory of solutions behavior for the general equation \eqref{gH}. 
For that purpose we consider initial data in the $H^1$ space, $u_0(x)\in H^1(\R^N)$, so that we can study finite Hamiltonian or finite energy solutions (definitions below).  
The local existence of $H^1$ solutions is available in the standard Hartree equation \eqref{H} from the work of Ginibre \& Velo \cite{GV80}, see also Cazenave \cite{C03}. 
We prove the local well-posedness in $H^1$ for the generalized Hartree \eqref{gH} with $p\geq 2$ in Section \ref{pre}. 
	
Denote the maximal time existence interval of solutions to \eqref{gH} by $(T_*, T^*)$. We say a solution is global in forward time if $T^* = +\infty$ (and similarly for the backward time). If $(T_*, T^*)= \R$, the solution is said to be global. The global existence for \eqref{gH} is delicate due to the focusing nature of the nonlinearity, and is investigated in this paper. During their lifespan, solutions to \eqref{gH} satisfy mass, energy (Hamiltonian) and momentum conservations: 
\begin{align*}
\quad M[u(t)] &\defeq \int_{\R^N}^{}|u(x,t)|^2\,dx =M[u_0],\\
\quad E[u(t)] &\defeq \frac{1}{2}\int_{\R^N}^{}|\nabla u(x,t)|^2 \, dx- \frac{1}{2p} \int_{\R^N}\left(\frac1{|x|^{N-\gamma}}\ast|u(\,\cdot\,,t)|^p \right)|u(x,t)|^{p}\,dx =E[u_0],\\
\quad P[u(t)] &\defeq \Im \int_{\R^N}\bar{u}(x,t)\nabla u(x,t) \,dx =P[u_0].
\end{align*}

The equation \eqref{gH} has several invariances: if $u(x,t)$ is a solution to \eqref{gH}, so is $\tilde{u}(x,t)$: 
\begin{itemize}
\item \textit{Spatial translation:} for a fixed $x_0\in \R^N$, $\ds \tilde{u}(x,t)=u(x-x_0,t)$.

\item \textit{Time translation:} for a fixed $\tau\in\R$, $\ds \tilde{u}(x,t)=u(x,t+\tau)$.

\item \textit{Time reversal:} $\ds \tilde{u}(x,t)= \overline{u}(x, -t)$.

\item \textit{Phase rotation:} for a fixed $\theta \in [0,\pi)$, $\ds \tilde{u}(x,t)= e^{i\theta} u(x,t)$.

\item \textit{Spatial rotation:} for a fixed $R \in SO(N)$, $\ds \tilde{u}(x,t)=u(R^{-1} x,t)$.

\item \textit{Galilean transformation:} for a fixed $\xi_0\in\R^N$, $\ds \tilde{u}(x,t)=e^{i(x\cdot\xi_0-t|\xi_0|^2)} \, u(x-\xi_0t,t)$.

\item \textit{Scaling:} for a fixed $\lambda\in(0,\infty)$, $\ds \tilde{u}(x,t)=\lambda^{\frac{\gamma+2}{2(p-1)}}\, u(\lambda x,\lambda^2t)$.

\item \textit{Pseudo-conformal transformation:} If $p=1+\frac{\gamma+2}{N}$, then $\ds \tilde{u}(x,t)=\frac1{|t|^{N/2}} u\left(\frac{x}{t},-\frac1{t}\right) \,  e^{\frac{i|x|^2}{4t}}$.
\end{itemize}

The equation \eqref{gH} is referred to as the $\dot{H}^{s_c}$ - critical, if the $\dot{H}^{s_c}$ norm of the solution is invariant under the scaling. The critical scaling index $s_c$ coming from the scaling invariance is defined as 
\begin{equation}
\label{E:s}
s_c=\frac{N}{2}-\frac{\gamma+2}{2(p-1)}.
\end{equation}
If $s_c=0$, or $\displaystyle{p=1+\frac{\gamma+2}{N}}$, the equation \eqref{gH} is referred to as the mass-critical (or $L^2$-critical). For the standard Hartree nonlinearity ($p=2$), the mass-critical case corresponds to $N-\gamma =2$, and thus, occurs only in dimensions $N>2$ with the nonlinearity $\left(\frac1{|x|^{2}} \ast|u|^2\right)u$ regardless of the dimension. If $s_c=1$, or $\displaystyle{p=1+\frac{\gamma+2}{N-2}}$, the problem is called the energy-critical (or $\dot{H}^1$ - critical). For the standard Hartree nonlinearity ($p=2$), the energy-critical case corresponds to $N-\gamma =4$, which implies that it occurs only in dimensions $N$ greater than 4 with the nonlinearity $\left(\frac1{|x|^{4}}\ast|u|^2\right)u$, also regardless of the dimension. Note that the generalized Hartree equation \eqref{gH}, being flexible in power $p$, can be, say, energy-critical in dimensions less than 4, e.g., in 3d $\left(\frac1{|x|}\ast|u|^5\right)|u|^3 u$ or $\left(\frac1{|x|^2}\ast|u|^4 \right)|u|^2 u$, which can make analysis and methods more accessible. 

A global solution $u(t)$ to \eqref{gH} is said to scatter in $H^s(\R^N)$ 
as $t\rightarrow +\infty$, if there exists  $u^+ \in H^s(\R^N)$ such that
$$
\lim\limits_{t\rightarrow +\infty}\|u(t)-e^{it\Delta}u^+\|_{H^s(\R^N)}=0.
$$
There is a number of early works on global existence, asymptotic behavior of solutions and scattering theory for the standard Hartree equation \eqref{H}.
Studies trace back to Ginibre \& Velo \cite{GV80}, where the local wellposedness is established and the authors also prove asymptotic completeness for a repulsive potential. 
Hayashi \& Tsutsumi \cite{HT87} 
continue developing the scattering theory and obtain the asymptotic completeness of wave operators in $H^m \cap L^p(|x|^{\beta}dx)$.  
We refer the reader to Ginibre \& Ozawa \cite{GO93} for results in the case of the convolution with $|x|^{-1}$, or $N-\gamma=1$, for $N\geq 2$; to Ginibre \& Velo \cite{GV2000} for $2<N-\gamma<\min(4,N)$ when $N\geq 3$. In a sequence of papers \cite{GVI00}-\cite{GV01} Ginibre \& Velo considered the time-dependent potential $\pm t^{\mu-\gamma}|x|^{-\mu}$ and studied the asymptotic dynamics and scattering (for any data in the repulsive case or small data otherwise)  first when the convolution power is $\frac{1}{2}<N-\gamma<1$ in \cite{GVI00}, and then in the whole range $0<N-\gamma\leq 1$ in \cite{GVII00}. These two papers are written in the framework of Sobolev spaces with the assumption $\mu\leq N-2$ ($N\geq 3$). In \cite{GV01} the Hartree was treated in Gevrey spaces, which made it possible to cover the whole range $0<\mu\leq N$ with an arbitrary space dimensions $N\geq 1$. In \cite{HNO98} Hayashi, Naumkin \& Ozawa studied the Hartree equation with $N-\gamma=1$ ($N\geq 2$) and initial data in a weighted Sobolev space $H^{0,\alpha}\cap H^{\alpha,0}$ with $\frac{1}{2}<\alpha<\frac{N}{2}$.
	
Our aim is to understand global behavior and dynamics of solutions to the generalized Hartree \eqref{gH}, in particular, how the nonlocal potential with the flexibility of different powers in nonlinearity may influence the global behavior and dynamics of solutions either with infinite or finite time of existence. We are also curious whether solutions behave in a manner similar to local potentials as, for example, in the standard semilinear Schr\"odinger equation with $|u|^{p-1}\,u$ nonlinearity, or if nonlocality creates significant differences in solutions behavior. In addition, we want to develop methods needed to study such solutions. 

In this work we describe the global behavior of solutions to \eqref{gH} with $H^1$ initial data in the inter-critical regime ($0<s_c<1$), provided that $p \geq 2$, that is,
\begin{equation}\label{E:range1}
1+\frac{\gamma+2}{N} < p < 1+\frac{\gamma+2}{N-2}, ~0<\gamma<N ~~\mbox{and}~~~p \geq 2,
\end{equation}
with the appropriate modification of the right-hand side for $N=1, 2$ ($ p <\infty)$. (As a byproduct, we also obtain local wellposedness for any energy-critical and subcritical cases, $s \leq 1$, and small data theory in the energy-subcritical setting, $s<1$.) 
We establish a dichotomy for global vs. finite time solutions under the  mass-energy threshold and show $H^1$ scattering for the global solutions, following the concentration-compactness approach of Kenig \& Merle \cite{KM06}, and divergence along a time sequence for nonradial infinite variance data (also via concentration-compactness method). This is in the spirit of \cite{HR08}, \cite{DHR08}, \cite{G}, \cite{HR10} for the focusing NLS. We emphasize that while the concentration-compactness approach is standard in the field by now, it is important first, to understand the behavior of solutions and describe their asymptotic dynamics and thresholds if possible; secondly, to demonstrate that this method works in the general nonlocal setting while showing modifications needed to handle a general convolution term, and finally, simply to make well-posedness available in a complete general nonlocal setting of the intercritical range, which is needed for future investigations. Some of the immediate questions we investigate in subsequent papers, for example, in \cite{AKA1} we explore the approach of Dodson \& Murphy \cite{DM17} and with their method prove scattering for globally existing in time solutions in this inter-critical regime; in \cite{AKAR2} we investigate the local well-posedness at the (non-conserved) critical regularity $\dot{H}^{s_c}$ for $s_c \geq 0$ (including energy-supercritical regime) and extend the local existence to global for small $\dot{H}^{s_c}$ data while for certain large data with positive energy we show blow-up in finite time. One of the interesting questions for this equation compared to the standard NLS is to investigate the blow-up dynamics and how it is influenced by the nonlocal potential. We will address this in \cite{AKAR3} and \cite{AKARY}.  

In order to characterize the sharp threshold for the dichotomy, one needs a notion of a ground state. The equation \eqref{gH} admits solitary waves solutions of the form $u(x, t) = e^{it}Q(x)$, where $Q$ solves the nonlinear nonlocal elliptic equation
\begin{equation}\label{nonlinell}
-Q + \Delta Q + \left(|x|^{-(N-\gamma)}\ast|Q|^p\right)|Q|^{p-2}Q=0.
\end{equation}
The equation \eqref{nonlinell} is known as the nonlinear Choquard or Choquard-Pekar equation. A special case of \eqref{nonlinell} when $N=3$, $p=2$, and $\gamma=2$,
\begin{equation}\label{original}
\Delta Q-Q+\left(|x|^{-1}\ast|Q|^2\right)Q=0
\end{equation} 
appeared back in 1954 in the work of S. I. Pekar \cite{Pekar54} describing the quantum mechanics of a polaron at rest. Lieb in \cite{Lieb77} mentions it in the context of the Hartree-Fock theory of plasma, pointing out that  
P. Choquard proposed investigating minimization of the corresponding  functional in 1976. In 1996 R. Penrose proposed equation \eqref{original} as a model of self-gravitating matter, in which quantum state reduction is understood as a gravitational phenomenon, see \cite{Penrose98}.

The existence of positive solutions to \eqref{original} was first proved by Lieb \cite{Lieb77}, see also Lions \cite{Lions80}, \cite{Lions84I}. The general existence result of positive solutions along with the regularity and radial symmetry of solutions to \eqref{nonlinell} for $\frac{N+\gamma}{N}<p<\frac{N+\gamma}{N-2}$ with $0<\gamma<N$ was shown by Moroz \& Schaftingen \cite{MS13} (see also a review by Moroz \& Schaftingen \cite{MS17} and references therein). 
	
The uniqueness proof\footnote{In certain existing literature there seem to be a misconception about the uniqueness of the ground state even in the standard ($p=2$) Hartree equation: statements such as ``take the positive unique ground state solution $Q$ of the equation $\Delta Q-Q+\left(|x|^{-b}\ast|Q|^2\right)Q=0$" are not justified for any $0 < b < N$ as the uniqueness of the ground state is only proved when $b=N-2$, $2<N<6$, see Appendix.}
for $p=2$ with $\gamma=2$ in dimension $N=3$ dates back to 1976-77 work of 
Lieb \cite{Lieb77} and later in 2009 was extended to the dimension $N=4$ by Krieger, Lenzmann \& Rapha\"el in \cite{KLR09}; the uniqueness in the pseudo-relativistic 3d version of \eqref{original} was established by Lenzmann \cite{L09}. We review the proof of uniqueness for any (reasonable) $N$ (and $p=2, \gamma=2$) in the Appendix.
For other cases of $\gamma$ and $p$, it is an intricate issue, and while several authors made attempts to obtain uniqueness, it is still an open question. A recent work \cite{X16} shows uniqueness and nondegeneracy of the ground state for $p = 2 + \epsilon$, i.e., when $p$ is sufficiently close to $2$ in dimension $N=3$ and $\gamma = 2$ via perturbation methods. We note that the proof of uniqueness for the nonlinear elliptic equation with convolution \eqref{nonlinell} differs from the corresponding results for the NLS-type equations (e.g., with $|u|^{p-1}u$ type nonlinearity), for which it is given, for example, by Kwong \cite{K89} and Berestycki \& Lions \cite{BL83I}-\cite{BL83II}. The proof in the Hartree case uses Newton's theorem for the convolution in \eqref{original} and linearity in $Q$ outside of the convolution ($p=2$), see more on this in Section \ref{propQ} and Appendix. In this work, we do not need the uniqueness, it suffices to use minimizing properties of the Weinstein-type functional and the value of the sharp constant in the Gagliardo-Nirenberg convolution type inequality via ground state solutions as that value will be unique. Thus, we denote by $Q$ any ground state solution of \eqref{nonlinell} and use such quantities as $M[Q]$, $\|\nabla Q\|_{L^2}$ and $E[Q]$, which are obtained from the sharp constant.

As in \cite{HR07} and \cite{HR08} for the NLS equation, we observe that the quantities $\|u_0\|_{L^2(\R^N)}^{1-s_c} \,  \|\nabla u_0 \|_{L^2(\R^N)}^{s_c}$ and $ M[u_0]^{1-s_c} \, E[u_0]^{s_c}$ are also scale-invariant in the generalized Hartree equation, and for $s_c>0$ with $\theta=\frac{1-s_c}{s_c}$ we define
\begin{itemize}
\item 
renormalized mass-energy: 
$\ds {\mathcal{ME}[u]=\frac{M[u]^{\theta}E[u]}{M[Q]^{\theta}E[Q]}}$,

\item 
renormalized gradient (dependent on $t$): 
$\ds {\mathcal{G}[u(t)]=\frac{\|u\|^{\theta}_{L^2(\R^N)}\|\nabla u(t)\|_{L^2(\R^N)}}{\|Q\|^{\theta}_{L^2(\R^N)}\|\nabla Q\|_{L^2(\R^N)}}}$, \quad \mbox{and}
		
\item 
renormalized momentum: 
$\ds {\mathcal{P}[u]=\frac{\|u\|^{\theta-1}_{L^2(\R^N)}P[u]}{\|Q\|^{\theta}_{L^2(\R^N)}\|\nabla Q\|_{L^2(\R^N)}}}$.
\end{itemize}

We now state the main result of this paper about solutions behavior under the mass-energy threshold. We consider \eqref{gH} with given $N, \gamma$, and $p \geq 2$ so that  $s_c$ defined by \eqref{E:s} is $0<s_c<1$. We first consider solutions with zero momentum.

\begin{theorem}[Zero momentum]\label{mainP0} 
Let $u_0\in H^1(\R^N)$ with $P[u_0]=0$ and let $u(t)$ be the corresponding solution to \eqref{gH} with the maximal time interval of existence $(T_*, T^*)$. 
Suppose that $\mathcal{ME}[u_0]  < 1$.
\begin{enumerate}
\item 
If $\mathcal{G}[u_0] < 1$, then
\begin{enumerate}
\item 
the solution exists globally in time with $\mathcal{G}[u(t)] < 1$ for all $t\in \R$, and

\item 
$u(t)$ scatters in $H^1$, in other words, 
there exists $u_{\pm}\in H^1$ such that 
$$
\lim\limits_{t\rightarrow\pm\infty}\|u(t)-e^{it\Delta}u_{\pm}\|_{H^1(\R^N)}=0.
$$
\end{enumerate}

\item 
If $\mathcal{G}[u_0] > 1,$ then $\mathcal{G}[u(t)] > 1$ for all $t\in(T_*, T^*)$. Moreover, if
\begin{enumerate}
\item 
$|x|u_0\in L^2(\R^N)$ (finite variance) or $u_0$ is radial, then the solution blows up in finite time,

\item 
$u_0$ is of infinite variance and nonradial, then either the solution blows up in finite time or there exits a sequence of times $t_n\rightarrow +\infty$ (or $t_n\rightarrow -\infty$) such that $\|\nabla u(t_n)\|_{L^2(\R^N)}\rightarrow \infty$.
\end{enumerate}
\end{enumerate}
\end{theorem}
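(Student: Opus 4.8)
I would realize the concentration--compactness/rigidity scheme of Kenig and Merle \cite{KM06}, in the spirit of the NLS works \cite{HR08,DHR08,G,HR10}, handling the nonlocal convolution potential throughout by Hardy--Littlewood--Sobolev in place of the pointwise nonlinear estimates, and the (possibly non-integer) outer power $|u|^{p-2}u$ by fractional calculus; the $H^1$ local theory and small-data scattering of Section~\ref{pre} are taken as given. The first ingredient is the sharp Gagliardo--Nirenberg convolution inequality, bounding $\int_{\R^N}(|x|^{-(N-\gamma)}\ast|u|^p)|u|^p\,dx$ by a scale-invariant product of powers of $\|\nabla u\|_{L^2}$ and $\|u\|_{L^2}$, whose optimal constant is computed from (and attained by) any ground state $Q$ of \eqref{nonlinell} and is therefore a well-defined number regardless of uniqueness. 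Feeding it into the conserved energy, renormalizing by $Q$, and running the usual continuity argument in $t$, one gets that $\mathcal{ME}[u_0]<1$ traps $\mathcal{G}[u(t)]$ on one side of $1$: $\mathcal{G}[u_0]<1$ forces $\mathcal{G}[u(t)]\le 1-\bar\delta$ and $\mathcal{G}[u_0]>1$ forces $\mathcal{G}[u(t)]\ge 1+\bar\delta$ on all of $(T_*,T^*)$, for some $\bar\delta=\bar\delta(\mathcal{ME}[u_0])>0$. In the first case $\|\nabla u(t)\|_{L^2}$ is uniformly bounded, so with mass conservation the solution is global --- this is part~(1)(a) --- and in both cases the virial functional $V[u(t)]=\int_{\R^N}|x|^2|u|^2\,dx$, which obeys $V''[u(t)]=8\,\partial_\lambda E[u^\lambda(t)]|_{\lambda=1}$ with $u^\lambda:=\lambda^{N/2}u(\lambda\,\cdot)$, is coercive: $V''[u(t)]\ge c>0$ when $\mathcal{G}[u(t)]\le 1-\bar\delta$, and $V''[u(t)]\le -c<0$ when $\mathcal{G}[u(t)]\ge 1+\bar\delta$, with $c=c(\bar\delta)>0$.

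For the scattering assertion~(1)(b) I would argue by contradiction: if it fails there is a critical level $(\mathcal{ME})_c<1$ and a sequence of non-scattering zero-momentum solutions with $\mathcal{ME}\to(\mathcal{ME})_c$ and $\mathcal{G}<1$. Applying the linear profile decomposition for $e^{it\Delta}$ in $H^1$ to their data --- with spatial translations and Galilean boosts as symmetry parameters, the latter forced to vanish by $P=0$ as in \cite{DHR08} --- together with the Pythagorean decoupling of the mass and of the free $H^1$ norm and a Brezis--Lieb-type decoupling of the nonlocal energy (from Hardy--Littlewood--Sobolev and the asymptotic orthogonality of the profile parameters), one finds that the renormalized mass-energies of the profiles are subadditive and bounded by $(\mathcal{ME})_c$. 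Minimality, the small-data theory and the perturbation lemma then leave a single surviving profile, which generates a critical solution $u_c$ with $\mathcal{ME}[u_c]=(\mathcal{ME})_c<1$, $\mathcal{G}[u_c(t)]<1$, $P[u_c]=0$, that does not scatter and whose orbit $\{u_c(t):t\in\R\}$ is precompact in $H^1$ modulo spatial translations $x(t)$; the zero-momentum condition and the precompactness confine $x(t)$ to a bounded set. A localized virial completes the rigidity: for $z_R(t)=\int_{\R^N}\phi_R(x)|u_c(x,t)|^2\,dx$ one has $|z_R(t)|\lesssim R^2$ while $z_R''(t)=V''[u_c(t)]+o_R(1)\ge c/2>0$ uniformly in $t$ (the errors are made small by precompactness, the positivity is the coercivity above), so $z_R(t)\to+\infty$, which is impossible. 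Hence $u_c\equiv 0$, a contradiction, and $(\mathcal{ME})_c\ge 1$.

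For part~(2), when $\mathcal{G}[u_0]>1$ the trapping gives $\mathcal{G}[u(t)]>1$ and $V''[u(t)]\le -c<0$. If $|x|u_0\in L^2$, then $V[u(t)]$ is finite, nonnegative and concave with $V''\le -c$, so it vanishes in finite time and the solution blows up (and symmetrically backward); for radial $u_0$ I would rerun this with a truncated $z_R(t)=\int_{\R^N}\phi_R(x)|u(x,t)|^2\,dx$, absorbing the localization errors --- including the long-range tail of the convolution, controlled via Newton's theorem and Hardy--Littlewood--Sobolev --- with the help of the radial Sobolev (Strauss) embedding to keep $z_R''(t)\le -c/2<0$; this is part~(2)(a). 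For nonradial infinite-variance $u_0$, suppose $T^*=+\infty$: since $z_R'(t)=2\,\Im\int_{\R^N}\bar u\,\nabla\phi_R\cdot\nabla u\,dx$ satisfies $|z_R'(t)|\lesssim R\,\|\nabla u(t)\|_{L^2}$, a uniform bound on $\|\nabla u(t)\|_{L^2}$ would make $z_R'$ bounded while $z_R''(t)\le -c/2<0$ on an unbounded set of times (here $P[u_0]=0$ is what cancels the momentum term in $z_R''$, exactly as in \cite{HR10}), forcing $z_R(t)\to -\infty$ and contradicting $z_R\ge 0$. Thus either $T^*<\infty$, or $\|\nabla u(t_n)\|_{L^2}\to\infty$ along some $t_n\to+\infty$ (and symmetrically as $t\to-\infty$), which is part~(2)(b).

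The genuinely new work --- and what I expect to be the main obstacle --- is concentrated in the scattering step and comes entirely from the convolution: proving the Brezis--Lieb-type decoupling of $\int(|x|^{-(N-\gamma)}\ast|u|^p)|u|^p$ along the profile decomposition, where Hardy--Littlewood--Sobolev must be combined with the orthogonality of the space-time-Galilean parameters and, for non-integer $p$, with the lack of smoothness of $|u|^p$; and setting up a stability/perturbation theory for the nonlinearity $(|x|^{-(N-\gamma)}\ast|u|^p)|u|^{p-2}u$ in the Strichartz spaces, which requires Hardy--Littlewood--Sobolev together with the fractional product and chain rules. Taming the long range of the kernel in the localized-virial error terms (both in the rigidity and in the radial blow-up) is a lesser but related point. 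Once these nonlocal estimates are in hand, the first step and the blow-up/divergence step follow the NLS template closely.
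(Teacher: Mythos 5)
Your overall architecture---sharp Gagliardo--Nirenberg constant via any ground state $Q$, energy-trapping of $\mathcal{G}[u(t)]$, virial blow-up for finite variance, truncated virial with radial Sobolev for radial data, Kenig--Merle concentration-compactness plus rigidity for scattering, and an [HR10]-style argument for divergence---matches the paper (Theorems~\ref{Dichotomy},~\ref{crit.elem},~\ref{rigidity} and Section~\ref{S-last}), and you correctly flag the two genuinely nonlocal difficulties (Brezis--Lieb-type decoupling of $Z(u)$ in Proposition~\ref{energypythadecomp} and the Strichartz perturbation theory with the convolution). However, there are two concrete gaps in your execution of the scattering and divergence steps.

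First, in the rigidity step you assert that ``the zero-momentum condition and the precompactness confine $x(t)$ to a bounded set,'' and then close the argument with a fixed $R$, $|z_R(t)|\lesssim R^2$, and $z_R''\geq c/2$ uniformly. That confinement is not what zero momentum and precompactness give you: the paper's Lemma~\ref{pathcontrol} (following \cite{DHR08},\cite{HR08}) only yields the sublinear bound $x(t)/t\to 0$, and nothing in your proposal upgrades this to boundedness. With only sublinear growth of $x(t)$, a fixed $R$ does not make the localized-virial errors small uniformly on $[T,T_1]$; one is forced to take $R=R_0+\delta T_1$, and then both the upper bound $|z_R|\lesssim R^2$ and the lower bound obtained from integrating $z_R''\geq c$ grow quadratically in $T_1$, so the contradiction has to come from tracking the constants (and sending $\delta\to 0$), not from a pointwise bound $z_R\lesssim R^2$ versus $z_R\to\infty$. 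This is exactly why the paper bounds $|z_R'(t)|\lesssim R$ and integrates $z_R''$ once, so the comparison is linear in $T_1$ on both sides and the contradiction is cleaner; either bookkeeping can be made to work, but as written your version leans on a false boundedness claim.

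Second, your argument for part~(2)(b) is a direct localized virial: given $T^*=+\infty$ and $\|\nabla u(t)\|_{L^2}$ uniformly bounded, you claim $z_R''(t)\leq -c/2$ on an unbounded set of times and derive $z_R\to-\infty$. For a general nonradial, infinite-variance solution this estimate on $z_R''$ is simply not available: the localized-virial remainders (e.g.\ $\int_{|x|>R}|\nabla u|^2$, the convolution tails in \eqref{dichotomy16}--\eqref{dichotomy17}) are not $o_R(1)$ uniformly in $t$ without either radial symmetry (which gives Lemma~\ref{rsob}) or precompactness of the translated flow. As written, your argument would ``prove'' that no such global solution exists at all, which is far stronger than the claimed divergence along a time sequence. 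The correct route, which is what the paper does in Section~\ref{S-last} following \cite{HR10}, is another concentration-compactness pass: assume $\mathcal{G}[u(t)]$ stays bounded, extract a critical nonscattering solution with precompact translated flow via Proposition~\ref{wblowup2} and the profile machinery, and only then run the localized virial against that compact object. This is a real additional layer of argument, not a corollary of the rigidity already done for part~(1)(b). (A minor further point: the paper's linear profile decomposition, Theorem~\ref{linprodecomp}, carries only time and space shifts---not Galilean boosts---so the Galilean parameter you mention is handled at the level of reducing Theorem~\ref{main} to Theorem~\ref{mainP0}, not inside the profile decomposition.)
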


The general case when $P[u_0]\neq 0$ is given by the following	
\begin{theorem}\label{main}
Let $u_0\in H^1(\R^N)$ and $u(t)$ be the corresponding solution to \eqref{gH} with the maximal time interval of existence $(T_*,T^*)$. Assume that
\begin{equation}\label{main1}
\mathcal{ME}[u_0] - \frac{N(p-1)-\gamma}{N(p-1)-\gamma-2}\mathcal{P}[u_0]^2 < 1.
\end{equation}

\begin{enumerate}
\item 
If
\begin{equation}\label{main2}
\mathcal{G}[u_0]^2-\mathcal{P}[u_0]^2 < 1,
\end{equation}
then
\begin{enumerate}
\item 
the solution exists globally in time with $\mathcal{G}[u(t)]^2-\mathcal{P}[u_0]^2 < 1$ for all $t\in \R$, and

\item 
$u(t)$ scatters in $H^1$, i.e., 
there exists $u_{\pm}\in H^1$ such that 
$$
\lim\limits_{t\rightarrow\pm\infty}\|u(t)-e^{it\Delta}u_{\pm}\|_{H^1(\R^N)}=0.
$$
\end{enumerate}

\item 
If
\begin{align}\label{main3}
\mathcal{G}[u_0]^2-\mathcal{P}[u_0]^2 > 1,
\end{align}
then $\mathcal{G}[u(t)]^2-\mathcal{P}[u_0]^2 > 1$ for all $t\in(T_*,T^*)$. 
Moreover, if
\begin{enumerate}
\item 
$|x|u_0\in L^2(\R^N)$ or $u_0$ is radial, then the solution blows up in finite time,

\item 
$u_0$ is of infinite variance and nonradial, then either the solution blows up in finite time or there exits a sequence of times $t_n\rightarrow +\infty$ (or $t_n\rightarrow -\infty$)  such that $\|\nabla u(t_n)\|_{L^2(\R^N)}\rightarrow \infty$.
\end{enumerate}
\end{enumerate}
\end{theorem}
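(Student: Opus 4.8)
The plan is to reduce Theorem~\ref{main} to the zero-momentum case, Theorem~\ref{mainP0}, by exploiting the Galilean invariance of \eqref{gH}. If $P[u_0]=0$ there is nothing to do: then $\mathcal P[u_0]=0$, so \eqref{main1}, \eqref{main2}, \eqref{main3} become $\mathcal{ME}[u_0]<1$, $\mathcal G[u_0]<1$, $\mathcal G[u_0]>1$, and Theorem~\ref{mainP0} applies verbatim. So assume $P[u_0]\neq 0$; then $u_0\not\equiv 0$ and $M[u_0]>0$, and I would let $v$ be the Galilean boost of $u$ whose velocity parameter is chosen so that the boosted momentum vanishes. At each fixed time the boost is an $H^1$-bijection that preserves the $L^2$-norm and the finite-variance property, carries solutions of \eqref{gH} to solutions and free Schr\"odinger solutions to free Schr\"odinger solutions, and at $t=0$ satisfies $|v_0|=|u_0|$. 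By uniqueness from the local theory, $v$ is then the solution of \eqref{gH} with data $v_0$, it shares the maximal interval $(T_*,T^*)$ with $u$, and $v_0$ has finite variance if and only if $u_0$ does.

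The first computational step is to record the transformation laws. Working on a fixed time slice and using conservation of mass and momentum, one gets $M[v(t)]=M[u_0]$, $P[v(t)]=0$, and
\[
\|\nabla v(t)\|_{L^2}^2=\|\nabla u(t)\|_{L^2}^2-\frac{|P[u_0]|^2}{M[u_0]},\qquad E[v(t)]=E[u(t)]-\frac12\,\frac{|P[u_0]|^2}{M[u_0]}.
\]
Since $\mathcal G$ and $\mathcal{ME}$ are assembled from $M$, $\|\nabla\cdot\|_{L^2}^2$ and $E$, and by the definition of $\mathcal P$, these rewrite as
\[
\mathcal G[v(t)]^2=\mathcal G[u(t)]^2-\mathcal P[u_0]^2,\qquad
\mathcal{ME}[v_0]=\mathcal{ME}[u_0]-\frac{\|\nabla Q\|_{L^2}^2}{2\,E[Q]}\,\mathcal P[u_0]^2 .
\]

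The second step is to identify $\|\nabla Q\|_{L^2}^2/(2E[Q])$ with the coefficient in \eqref{main1}. Testing \eqref{nonlinell} against $Q$ and against $x\cdot\nabla Q$ yields the two Pohozaev identities, i.e.\ two linear relations among $M[Q]$, $\|\nabla Q\|_{L^2}^2$ and $\int(|x|^{-(N-\gamma)}\ast|Q|^p)|Q|^p$; eliminating the potential term gives
\[
E[Q]=\frac12\,\|\nabla Q\|_{L^2}^2\cdot\frac{N(p-1)-\gamma-2}{N(p-1)-\gamma},
\]
both factors of the fraction being positive throughout the intercritical range \eqref{E:range1} (which also re-proves $E[Q]>0$). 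Hence $\|\nabla Q\|_{L^2}^2/(2E[Q])=\tfrac{N(p-1)-\gamma}{N(p-1)-\gamma-2}$, so hypothesis \eqref{main1} is exactly $\mathcal{ME}[v_0]<1$, and \eqref{main2} (resp.\ \eqref{main3}) is exactly $\mathcal G[v_0]^2<1$ (resp.\ $>1$). Thus $v$, which has zero momentum, satisfies the hypotheses of Theorem~\ref{mainP0}.

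It then remains to transfer the conclusions of Theorem~\ref{mainP0} for $v$ back to $u$. In case (1): $v$ is global with $\mathcal G[v(t)]<1$, which by the identity above is $\mathcal G[u(t)]^2-\mathcal P[u_0]^2<1$; $u$ is global since it shares the maximal interval of $v$; and, as $u$ is the inverse Galilean boost of $v$, applying that boost to $v(t)-e^{it\Delta}v_\pm$ (an operation whose $H^1$-norm distortion is bounded in terms of the boost velocity) shows $u$ scatters with $u_\pm$ the boost of $v_\pm$. In case (2): $\mathcal G[v(t)]>1$ for all $t$, hence $\mathcal G[u(t)]^2-\mathcal P[u_0]^2>1$; if $u_0$ has finite variance so does $v_0$, hence $v$, and therefore $u$, blows up in finite time; if $u_0$ has infinite variance (necessarily nonradial), Theorem~\ref{mainP0} gives either finite-time blow-up of $v$ or a sequence $t_n\to\pm\infty$ with $\|\nabla v(t_n)\|_{L^2}\to\infty$, and since $\|\nabla v(t)\|_{L^2}^2-\|\nabla u(t)\|_{L^2}^2$ is the constant $-|P[u_0]|^2/M[u_0]$, the same sequence works for $u$. (The radial alternative in (2)(a) needs no separate discussion: a radial function has zero momentum by the change of variables $x\mapsto-x$ in $P[u_0]$, so that case falls under the direct application of Theorem~\ref{mainP0}.) The bookkeeping is elementary; the only step requiring genuine care is the Pohozaev computation that pins down the coefficient $\tfrac{N(p-1)-\gamma}{N(p-1)-\gamma-2}$, together with checking that the boost really respects each of the three alternative conclusions (global-plus-scattering, finite-time blow-up, grow-up along a sequence) and the finite-variance/radial split inside the blow-up part.
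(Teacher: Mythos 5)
Your proof is correct and is exactly the reduction the authors have in mind: the paper itself never writes out a separate proof of Theorem~\ref{main} but instead proves the zero-momentum Theorem~\ref{mainP0} in Sections 5--8 and leaves the Galilean-boost reduction implicit (this is the standard argument from the NLS literature the paper cites). Your computations are accurate: choosing $\xi_0=-P[u_0]/M[u_0]$ gives $P[v]\equiv0$, $\|\nabla v(t)\|_{L^2}^2=\|\nabla u(t)\|_{L^2}^2-|P[u_0]|^2/M[u_0]$ (using momentum conservation), and $E[v]=E[u]-\tfrac12|P[u_0]|^2/M[u_0]$, whence $\mathcal G[v(t)]^2=\mathcal G[u(t)]^2-\mathcal P[u_0]^2$ and $\mathcal{ME}[v_0]=\mathcal{ME}[u_0]-\tfrac{\|\nabla Q\|_{L^2}^2}{2E[Q]}\mathcal P[u_0]^2$; and since by the Pohozhaev relation \eqref{poh2} one has $E[Q]=\tfrac{s_c(p-1)}{2s_c(p-1)+2}\|\nabla Q\|_{L^2}^2$ with $s_c(p-1)=\tfrac{N(p-1)-\gamma-2}{2}$, the coefficient $\tfrac{\|\nabla Q\|_{L^2}^2}{2E[Q]}$ is indeed $\tfrac{N(p-1)-\gamma}{N(p-1)-\gamma-2}$, matching \eqref{main1}. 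The transfer of all three conclusions back to $u$ and the observation that radial data already has zero momentum are handled correctly. (One incidental remark, not a gap in your argument: the paper's displayed form of the Galilean transformation appears to have a typo --- the spatial shift should be $x-2\xi_0 t$, not $x-\xi_0 t$, for $\tilde u$ to solve \eqref{gH}; your narrative description is consistent with the correct version.)
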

While we follow the strategy of \cite{HR08}, \cite{G}, \cite{DHR08} and \cite{HR10}, the fundamental difference is in the nonlocal potential, and control of convolution terms arising in various steps of this work. For example, to obtain local well-posedness and small data theory in $H^1$ we do not get the contraction automatically as the difference produces extra terms due to convolution. We use Lemma \ref{HLS} to estimate the inhomogeneous term in Duhamel's  formula via Strichartz estimate in Proposition \ref{lwp}, Theorem \ref{smalldata}, Theorem \ref{H1scatter}, Theorem \ref{LTP} and in Theorem \ref{crit.elem} (Claim \ref{LTPclaim2}). Also note that to control the potential energy 
in Proposition \ref{energypythadecomp} and in Lemma \ref{GNineq}, we rely on $L_x^{\frac{2Np}{N+\gamma}}$ norm (using the assumption that $s_c<1$) along with the Lemma \ref{HLS}. Moreover, the local virial identity \eqref{dichotomy9}, \eqref{dichotomy10}, \eqref{dichotomy11} in Theorem \ref{Dichotomy} and Theorem \ref{rigidity} has some extra terms involving convolution  which demands a careful study and application of convolution properties, Lemma \ref{HLS} and Lemma \ref{rsob}. We also have to review the sharp constant coming from the convolution-type Gagliardo-Nirenberg inequality and discuss the values coming from the minimization process as there is no uniqueness.

The paper is organized as follows: in Section \ref{pre}, we review the necessary preliminaries such as Strichartz estimates, embeddings and other useful inequalities. There, we also discuss the local well-posedness in the energy-critical and subcritical cases and $p\geq 2$. It would be interesting to investigate well-posedness for $p<2$. 
In Section \ref{small}, we prove the small data theory in the energy-subcritical setting as well as the $H^1$ scattering along with the long-time perturbation lemma. In Section \ref{propQ}, we introduce a generalized convolution type Gagliardo - Nirenberg inequality and show that the minimizer is given by a positive minimizer (a ground state) $Q$ and identify the sharp constant.  In Section \ref{dich}, we prove Theorem \ref{mainP0}, the dichotomy result: global existence vs. blow-up; we also include several Lemmas needed to prove scattering later. In section \ref{comp}, we prove Theorem \ref{mainP0}(1), part (b), the scattering, using the concentration-compactness and rigidity approach of Kenig \& Merle \cite{KM06}, and the adaptation of Holmer \& Roudenko \cite{HR08}.
In Section \ref{rigid} we exclude the existence of the critical element 
using the rigidity argument applied to the corresponding localized virial identity. In the last Section \ref{S-last} we consider the case of nonradial solutions with infinite variance and larger than 1 renormalized gradient (part 2(b) of both theorems), and discuss either the divergence to infinity along a time sequence or finite time existence of solution in a spirit of \cite{HR10}. In appendix we review the uniqueness argument.
	
\textbf{Acknowledgments.} A.K.A. would like to thank MSRI, where this work was initiated, for providing excellent working conditions during the Fall 2015 semester program ``New Challenges in PDE : Deterministic Dynamics and Randomness in High and Infinite Dimensional Systems". A.K.A. would also like to thank Luiz G. Farah, Cristi D. Guevara and Justin Holmer for fruitful and productive discussions on the subject.
S.R. was partially supported by the NSF CAREER grant DMS-1151618 and NSF grant DMS-1815873.  A.K.A.'s graduate research support was in part funded by the grant DMS-1151618 (PI: Roudenko).

\section{Preliminaries}\label{pre}
	
\subsection{Strichartz estimates and admissible pairs}

For $s>0$, the pair $(q,r)$ is referred to as an $\dot{H}^s$-admissible (for $s=0$, it is called an $L^2$-admissible), if
\begin{equation}\label{admpair}
\frac{2}{q} + \frac{N}{r}=\frac{N}{2}-s\quad \text{with}\quad 2\leq q,r\leq \infty\quad \text{and}\quad (q,r,N)\neq(2,\infty,2).	
\end{equation}

To control the constants uniformly in Strichartz estimates below, we restrict the range for the pair $(q, r)$, defined in \eqref{admpair}, depending on the dimension $N$ (as in \cite{G}): 
\begin{align}\label{range}
\begin{cases}
	\left(\frac{2}{1-s}\right)^+\leq q\leq \infty,\quad \frac{2N}{N-2s}\leq r \leq \left(\frac{2N}{N-2}\right)^-,\,\,\text{if}\,\, N\geq 3\\
	\left(\frac{2}{1-s}\right)^+\leq q\leq \infty,\quad \frac{2}{1-s}\leq r\leq \left(\left(\frac{2}{1-s}\right)^+\right)',\,\,\text{if}\,\, N=2\\
	\frac{4}{1-2s}\leq q \leq \infty,\quad \frac{2}{1-2s}\leq r\leq \infty,\,\,\text{if}\,\, N=1.
\end{cases}
\end{align}
Here, $n^+$ is a fixed number (slightly) greater than $n$ such that $\frac{1}{n}=\frac{1}{n^+}+\frac{1}{(n^+)'}$. Respectively, $n^-$ is a fixed number (slightly) less than $n$. 

Following \cite{HR08}, we introduce the $S(\dot{H}^s)$ notation: 
\begin{equation}\label{SH}
\|u\|_{S(\dot{H}^s)}=\sup\{ \|u\|_{L_t^q \, L_x^r}: (q,r) ~\mbox{as ~ in ~} \eqref{admpair} ~\mbox{and}~ \eqref{range} \}.
\end{equation}

Similarly, in order to define the dual Strichartz norm, we set the following restrictions:  
\begin{align}\label{dualrange}
\begin{cases}
	\left(\frac{2}{1+s}\right)^+\leq q\leq \left(\frac{1}{s}\right)^-,\quad \left(\frac{2N}{N-2s}\right)^+\leq r \leq \left(\frac{2N}{N-2}\right)^-,\,\,\text{if}\,\, N\geq 3\\
	\left(\frac{2}{1+s}\right)^+\leq q\leq \left(\frac{1}{s}\right)^-,\quad \left(\frac{2}{1-s}\right)^+\leq r\leq \left(\left(\frac{2}{1+s}\right)^+\right)',\,\,\text{if}\,\, N=2\\
	\frac{2}{1+2s}\leq q \leq \left(\frac{1}{s}\right)^-,\quad \left(\frac{2}{1-s}\right)^+\leq r\leq \infty,\,\,\text{if}\,\, N=1,
\end{cases}
\end{align}
and define the dual Strichartz norm as
\begin{equation}\label{S-H}
\|u\|_{S'(\dot{H}^{-s})}=\inf\{ \|u\|_{L_t^{q'}L_x^{r'}}: \tfrac1{q'}+\tfrac1{q}=1, \tfrac1{r'}+\tfrac1{r}=1 ~~\mbox{with}~~(q,r) ~\mbox{as~in~}\eqref{admpair} ~\mbox{and}~ \eqref{dualrange} \}.
\end{equation}

In the sequel, for given $N$, $p$, $\gamma$, and hence, a fixed $0<s_c<1$, we use the following $L^2$-admissible pairs : 
\begin{equation}\label{L2adm1}
(q_1,r_1)=\left(\frac{2p}{1+s_c(p-1)},\frac{2Np}{N+\gamma}\right)
\end{equation}
and
\begin{equation}\label{L2adm2}
(q_2,r_2)=\left(\frac{2p}{1-s_c},\frac{2Np}{N+\gamma+2s_cp}\right).
\end{equation}
Observe that $s_c<1$ implies $\frac{2p}{1+s_c(p-1) }>2$. As an $L^2$-dual admissible pair we take
\begin{equation}\label{L2dadm}
(q_1^{\prime},r_1^{\prime})=\left(\frac{2p}{2p-1-s_c(p-1)},\frac{2Np}{2Np-N-\gamma}\right).
\end{equation}
The specific $\dot{H}^{s_c}$-admissible pair we use is	
\begin{align}\label{Hscadm}
(q_2,r_1)=\left(\frac{2p}{1-s_c},\frac{2Np}{N+\gamma}\right),
\end{align}
and the $\dot{H}^{-s_c}$ dual admissible pair is given by 
\begin{align}\label{Hscdadm}
(q_3^{\prime},r_1^{\prime})=\left(\frac{2p}{(2p-1)(1-s_c)},\frac{2Np}{2Np-N-\gamma}\right).	
\end{align}
Note that $\left(\frac{2p}{1+s_c(2p-1)},\frac{2Np}{N+\gamma}\right)$ is also an $\dot{H}^{-s_c}$ admissible pair. 
Observe that $s_c<1$ imply that both $\frac{2p}{1+s_c(2p-1) }>\frac{2}{1+s_c}$ and $\frac{2p}{1+s_c(2p-1) }<\frac{1}{s_c}$, thus, confirming to be in the range of \eqref{dualrange}. 

Using Duhamel's formula, the equation \eqref{gH} is equivalent to the integral equation
\begin{equation}\label{duhamel}
u(x,t) = e^{it\Delta}u_0 + i \int_0^t e^{i(t-t')\Delta} (|x|^{-(N-\gamma)}\ast |u|^p)|u|^{p-2}\, u(t')\,dt'.
\end{equation}
We recall the following well-known Strichartz estimates (see Cazenave \cite{C03}, Foschi \cite{F05}, and Keel-Tao \cite{KT98}). 
\begin{lemma}\label{stzest}
For the range of $p$ and $q$ as in \eqref{SH}, we have
\begin{align}\label{stri1}
\|e^{it\Delta}\phi\|_{S(L^2)} 
&\leq c\,\|\phi\|_{L^2},\\
\label{stri2}
\left\| \int_{0}^{t}e^{i(t-t')\Delta}f(\cdot\,,\,t')\,dt'\right\|_{S(L^2)}
&\leq c\, \|f\|_{S'(L^2)}.
\end{align}
\end{lemma}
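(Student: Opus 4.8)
The statement is the classical non-endpoint Strichartz inequality for the free Schr\"odinger group $U(t)=e^{it\Delta}$, and the plan is to reproduce the standard $TT^*$ argument (this is why the authors cite \cite{C03,F05,KT98}). The one structural remark worth making at the outset is that for $s=0$ the constraints in \eqref{range} force $q>2$ (or $q=\infty$) for \emph{every} pair occurring in the $S(L^2)$ supremum, and the dual pairs in \eqref{dualrange}, \eqref{S-H} are strictly subconjugate; hence the argument never touches the double endpoint $(q,r)=(2,\tfrac{2N}{N-2})$, and no Keel--Tao bilinear/dyadic machinery is needed.

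\textbf{Step 1 (dispersive bound).} Start from the two elementary facts: $\|U(t)\phi\|_{L^2}=\|\phi\|_{L^2}$, and, from the explicit kernel $(4\pi i t)^{-N/2}e^{i|x|^2/4t}$, the decay $\|U(t)\phi\|_{L^\infty}\le c\,|t|^{-N/2}\|\phi\|_{L^1}$. Riesz--Thorin interpolation gives $\|U(t)\phi\|_{L^r_x}\le c\,|t|^{-N(\frac12-\frac1r)}\|\phi\|_{L^{r'}_x}$ for all $2\le r\le\infty$. \textbf{Step 2 ($TT^*$).} Fix an $L^2$-admissible $(q,r)$ in the range of \eqref{SH}. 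By duality \eqref{stri1} is equivalent to $\big\|\int_{\R}U(-s)F(s)\,ds\big\|_{L^2_x}\le c\,\|F\|_{L^{q'}_tL^{r'}_x}$, and squaring reduces this to the bilinear estimate $\big|\int_{\R}\!\int_{\R}\langle U(t-s)F(s),F(t)\rangle_{L^2_x}\,ds\,dt\big|\le c\,\|F\|_{L^{q'}_tL^{r'}_x}^2$. Applying Step 1 pointwise in $(s,t)$ together with H\"older in $x$, and writing $g(t):=\|F(t)\|_{L^{r'}_x}$,
\[
\Big|\int\!\!\int \langle U(t-s)F(s),F(t)\rangle\,ds\,dt\Big|\le c\int\!\!\int |t-s|^{-N(\frac12-\frac1r)}\,g(s)\,g(t)\,ds\,dt .
\]

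\textbf{Step 3 (Hardy--Littlewood--Sobolev in time).} Since $(q,r)$ is $L^2$-admissible, $N(\tfrac12-\tfrac1r)=\tfrac2q$; for the pairs allowed by \eqref{range} with $s=0$ one has $2<q\le\infty$. If $q=\infty$ then $r=2$ and \eqref{stri1} is just the $L^2$ isometry, so assume $2<q<\infty$, i.e. $0<\tfrac2q<1$. Then the one-dimensional HLS inequality bounds the right-hand side above by $c\,\|g\|_{L^{q'}_t}^2=c\,\|F\|_{L^{q'}_tL^{r'}_x}^2$, the exponent balance $\tfrac1{q'}+\tfrac1{q'}+\big(1-\tfrac2q\big)=2$ being automatic from $\tfrac1{q}+\tfrac1{q'}=1$. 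This proves \eqref{stri1} for each admissible pair; taking the supremum over the pairs in \eqref{SH} is harmless since the constants in \eqref{range} are uniform.

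\textbf{Step 4 (inhomogeneous estimate).} The "untruncated" bound $\big\|\int_{\R}U(t-s)f(s)\,ds\big\|_{L^q_tL^r_x}\le c\,\|f\|_{L^{\tilde q'}_tL^{\tilde r'}_x}$ for any two admissible pairs $(q,r),(\tilde q,\tilde r)$ follows by composing \eqref{stri1} with its dual (equivalently, by the bilinear estimate of Step 2 with two different spatial exponents and HLS as in Step 3). To pass from this to the retarded integral in \eqref{stri2} I would invoke the Christ--Kiselev lemma, which upgrades an untruncated estimate to the one with $\int_0^t$ provided $q>\tilde q'$; in our range $q>2$ while $\tilde q'<2$, so this strict inequality holds and \eqref{stri2} follows. \textbf{Main obstacle.} There is essentially no obstacle beyond bookkeeping: the genuinely delicate case is the double endpoint, where Christ--Kiselev fails and one needs the Keel--Tao argument, but it is excluded by the strict $q>2$ built into \eqref{range}. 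The only points to verify are that the interpolated dispersive exponent closes against the one-dimensional HLS exponents (Step 3) and that $q>\tilde q'$ strictly (Step 4); both are immediate here, which is why we simply invoke \cite{C03,F05,KT98}.
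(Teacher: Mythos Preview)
The paper does not actually prove this lemma: it is stated as a recollection of ``well-known Strichartz estimates'' with citations to Cazenave \cite{C03}, Foschi \cite{F05}, and Keel--Tao \cite{KT98}, and no argument is given. Your sketch is the standard $TT^*$/HLS/Christ--Kiselev proof found in those references, and it is correct in substance; in particular you correctly observe that the restrictions in \eqref{range} force $q>2$, so the endpoint case and the Keel--Tao machinery are never needed and Christ--Kiselev applies.

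One tiny slip: in Step~3 the exponent balance you wrote, $\tfrac{1}{q'}+\tfrac{1}{q'}+(1-\tfrac{2}{q})=2$, is not the correct HLS relation (it would only hold for $q=4$). The right identity for the one-dimensional bilinear HLS with kernel $|t-s|^{-2/q}$ is $\tfrac{1}{q'}+\tfrac{1}{q'}+\tfrac{2}{q}=2$, which is automatic from $\tfrac{1}{q}+\tfrac{1}{q'}=1$. This does not affect the argument.
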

Using the Sobolev embedding (since $e^{it\Delta}$ commutes with derivatives), we obtain
\begin{corollary}
For the range of $p$ and $q$ as in \eqref{SH}, we have
\begin{align}\label{sobstri1}
\qquad \|e^{it\Delta}\phi \|_{S(\dot{H}^s)} & \leq c\, \|\phi\|_{\dot{H}^s}\\
\label{sobstri2}
\|\int_{0}^{t}e^{i(t-t')\Delta}f(\cdot\,,\,t')\,dt'\|_{S(\dot{H}^s)} &\leq c\,\|D^sf\|_{S'(L^2)}.
\end{align}
\end{corollary}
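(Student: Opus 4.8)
The plan is to deduce both estimates directly from the $L^2$-Strichartz estimates of Lemma \ref{stzest} via two elementary maneuvers: commuting the free propagator with the fractional derivative $D^s=(-\Delta)^{s/2}$ (legitimate, since both are Fourier multipliers), and trading the $\dot{H}^s$-admissible spatial integrability for an $L^2$-admissible one by means of the Sobolev embedding $\dot W^{s,\rho}(\R^N)\hookrightarrow L^r(\R^N)$ in the $x$ variable.

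First I would record the pairing of exponents: if $(q,r)$ is $\dot{H}^s$-admissible as in \eqref{admpair}, \eqref{range}, and $\rho$ is defined by $\frac1\rho=\frac1r+\frac sN$, then $\frac2q+\frac N\rho=\frac N2$, so $(q,\rho)$ is $L^2$-admissible, and — checking the dimension-dependent endpoints — it lies in the window \eqref{SH} with constant uniform in the pair. Since $s=\frac N\rho-\frac Nr$ with $1<\rho<r<\infty$, Sobolev embedding gives $\|g\|_{L^r_x}\le c\,\|D^s g\|_{L^\rho_x}$ for a.e.\ $t$, hence $\|u\|_{L^q_tL^r_x}\le c\,\|D^s u\|_{L^q_tL^\rho_x}$, and taking suprema over admissible pairs, $\|u\|_{S(\dot{H}^s)}\le c\,\|D^s u\|_{S(L^2)}$. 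Applying this with $u=e^{it\Delta}\phi$, using $D^s e^{it\Delta}\phi=e^{it\Delta}D^s\phi$ and \eqref{stri1},
\[
\|e^{it\Delta}\phi\|_{S(\dot{H}^s)}\le c\,\|e^{it\Delta}D^s\phi\|_{S(L^2)}\le c\,\|D^s\phi\|_{L^2}=c\,\|\phi\|_{\dot{H}^s},
\]
which is \eqref{sobstri1}. For \eqref{sobstri2} I would run the same argument with $u=\int_0^t e^{i(t-t')\Delta}f(t')\,dt'$, using $D^s u=\int_0^t e^{i(t-t')\Delta}D^sf(t')\,dt'$ and \eqref{stri2}:
\[
\Big\|\int_0^t e^{i(t-t')\Delta}f\,dt'\Big\|_{S(\dot{H}^s)}\le c\,\Big\|\int_0^t e^{i(t-t')\Delta}D^sf\,dt'\Big\|_{S(L^2)}\le c\,\|D^sf\|_{S'(L^2)}.
\]

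The only point that genuinely needs care — and so the ``main obstacle'', modest as it is — is the bookkeeping in the first step: one must check that as $(q,r)$ ranges over all $\dot{H}^s$-admissible pairs kept in the restricted window \eqref{range}, the associated exponent $\rho$ never escapes the $L^2$-admissible window implicit in \eqref{SH} (no forbidden $(2,\infty)$ when $N=2$, no violation of the $\bigl(\tfrac{2N}{N-2}\bigr)^-$ cap, and the low-dimensional endpoints for $N=1,2$), so that the Sobolev and Strichartz constants are all uniform and the two $S$-norms are genuinely comparable. Everything else is the routine commutation of Fourier multipliers together with a single application of Lemma \ref{stzest}.
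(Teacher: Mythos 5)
Your proposal is correct and is essentially the argument the paper gestures at in its one-line justification ("Using the Sobolev embedding (since $e^{it\Delta}$ commutes with derivatives)"): commute $D^s$ with the Schr\"odinger group, apply the $L^2$-Strichartz estimates of Lemma \ref{stzest}, and pass from the $L^2$-admissible spatial exponent $\rho$ to the $\dot{H}^s$-admissible one $r$ via $\dot W^{s,\rho}\hookrightarrow L^r$. Your added care about the exponent bookkeeping — ensuring $\rho$ stays inside the restricted window \eqref{range} so the constants are uniform — is exactly the point the paper leaves implicit.
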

We also recall a more refined than \eqref{sobstri2} Strichartz estimate, which includes a larger set of admissible indexes than \eqref{sobstri2}.
\begin{lemma}[Kato-Strichartz estimate, \cite{Kato94}]\label{Kato}
If $F\in S'(\dot{H}^{-s})$, then
\begin{align}\label{Katostri}
\|\int_{0}^{t}e^{i(t-t')\Delta}F(t')dt'\|_{S(\dot{H}^s)}\lesssim \|F\|_{S'(\dot{H}^{-s})}.
\end{align}		
\end{lemma}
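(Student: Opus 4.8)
The plan is to deduce the retarded (Duhamel) estimate \eqref{Katostri} from its non-retarded analogue via the Christ--Kiselev lemma, the non-retarded analogue being obtained by composing the homogeneous Strichartz estimate \eqref{sobstri1} at regularity $s$ with the dual of the homogeneous Strichartz estimate at regularity $-s$. No feature of the nonlocal nonlinearity in \eqref{gH} enters; this is a purely linear estimate for the free propagator $e^{it\Delta}$. By the definitions \eqref{SH} and \eqref{S-H}, it suffices to fix an $\dot H^s$-admissible pair $(q,r)$ in the range \eqref{range} and an $\dot H^{-s}$-admissible pair $(\tilde q,\tilde r)$ in the range \eqref{dualrange}, and to prove
\[
\Bigl\|\int_0^t e^{i(t-t')\Delta}F(t')\,dt'\Bigr\|_{L_t^q L_x^r}\ \lesssim\ \|F\|_{L_t^{\tilde q'} L_x^{\tilde r'}};
\]
taking the supremum over $(q,r)$ and the infimum over $(\tilde q,\tilde r)$ then yields \eqref{Katostri}.

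First I would establish the estimate with $\int_0^t$ replaced by $\int_{\R}$. Factoring $\int_{\R}e^{i(t-t')\Delta}F(t')\,dt'=e^{it\Delta}g$ with $g\defeq\int_{\R}e^{-it'\Delta}F(t')\,dt'$, estimate \eqref{sobstri1} gives $\|e^{it\Delta}g\|_{L_t^q L_x^r}\le\|e^{it\Delta}g\|_{S(\dot H^s)}\lesssim\|g\|_{\dot H^s}$, so it remains to bound $\|g\|_{\dot H^s}$. The operator $F\mapsto g$ is the (Hermitian) adjoint of the operator $\phi\mapsto e^{it'\Delta}\phi$, which by the homogeneous Strichartz estimate at regularity $-s$ maps $\dot H^{-s}$ boundedly into $L_t^{\tilde q}L_x^{\tilde r}$; since $(\dot H^{-s})^*=\dot H^s$, duality gives $\|g\|_{\dot H^s}\lesssim\|F\|_{L_t^{\tilde q'}L_x^{\tilde r'}}$. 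Concretely, $\|g\|_{\dot H^s}=\sup\{|\langle g,\phi\rangle|:\|\phi\|_{\dot H^{-s}}\le1\}$, and by Parseval together with the unitarity of the Schr\"odinger group one has $\langle g,\phi\rangle=\int_{\R}\langle F(t'),e^{it'\Delta}\phi\rangle\,dt'$, whence H\"older in $t'$ and $x$ followed by the $\dot H^{-s}$ homogeneous estimate yields $|\langle g,\phi\rangle|\le\|F\|_{L_t^{\tilde q'}L_x^{\tilde r'}}\,\|e^{it'\Delta}\phi\|_{L_t^{\tilde q}L_x^{\tilde r}}\lesssim\|F\|_{L_t^{\tilde q'}L_x^{\tilde r'}}\,\|\phi\|_{\dot H^{-s}}$. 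This proves the non-retarded estimate.

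To restore the Duhamel cutoff, I would apply the Christ--Kiselev lemma to the operator $F\mapsto\int_{\R}e^{i(t-t')\Delta}F(t')\,dt'$, regarded as having operator-valued kernel $K(t,t')=e^{i(t-t')\Delta}\colon L_x^{\tilde r'}\to L_x^r$: we have just shown this operator is bounded from $L_t^{\tilde q'}(L_x^{\tilde r'})$ to $L_t^q(L_x^r)$, and, since the time exponents satisfy $\tilde q'<q$ (verified below), the lemma upgrades this to boundedness of the causal truncation $F\mapsto\int_{\{t'<t\}}K(t,t')F(t')\,dt'$ between the same spaces with comparable norm; splitting the time axis at $0$ and using time-translation invariance then gives the corresponding bound for the Duhamel operator $F\mapsto\int_0^t e^{i(t-t')\Delta}F(t')\,dt'$, which is what we want.

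The crux --- and the only point that genuinely needs care here --- is the strict inequality $\tilde q'<q$, equivalently that the double-endpoint configuration $q=\tilde q'$ (at which Christ--Kiselev fails) never occurs; this is exactly what the restricted ranges \eqref{range} and \eqref{dualrange} are engineered to exclude. Indeed, for $N\ge2$ one has $\tilde q\ge\bigl(\tfrac2{1+s}\bigr)^+$, hence $\tilde q'<\bigl(\tfrac2{1+s}\bigr)'=\tfrac2{1-s}<\bigl(\tfrac2{1-s}\bigr)^+\le q$, while for $N=1$ one has $\tilde q\ge\tfrac2{1+2s}$, hence $\tilde q'\le\tfrac2{1-2s}<\tfrac4{1-2s}\le q$. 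Granting this --- and recalling that \eqref{sobstri1} and the companion homogeneous $\dot H^{-s}$ Strichartz estimate hold uniformly over the admissible ranges in play --- the remaining steps (adjointness/duality, H\"older, and the Christ--Kiselev lemma) are entirely standard, and \eqref{Katostri} follows.
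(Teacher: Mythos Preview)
The paper does not supply a proof of this lemma; it is quoted with a citation to \cite{Kato94} (see also Foschi \cite{F05}). Your argument, however, has a genuine gap at the step where you invoke a ``homogeneous Strichartz estimate at regularity $-s$,'' i.e.\ the bound
\[
\|e^{it'\Delta}\phi\|_{L_t^{\tilde q}L_x^{\tilde r}}\ \lesssim\ \|\phi\|_{\dot H^{-s}}
\]
for an $\dot H^{-s}$-admissible pair $(\tilde q,\tilde r)$ in the range \eqref{dualrange}. This estimate is \emph{false} on part (in fact, for $s\ge\tfrac12$, on all) of that range. Unlike \eqref{sobstri1}, which follows from $L^2$-Strichartz together with the Sobolev embedding $\dot W^{s,\rho}\hookrightarrow L^r$, the negative-regularity analogue cannot be obtained this way: one would need $\|w\|_{L^{\tilde r}}\lesssim\||D|^{-s}w\|_{L^{\rho}}$, which is a reverse Sobolev inequality. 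More decisively, the Knapp wave-packet example (frequency support in a $\delta^2\times\delta^{N-1}$ box at unit frequency) forces the necessary condition $\tfrac{1}{\tilde r}\le\tfrac12-s$, i.e.\ $\tilde r\ge\tfrac{2}{1-2s}$; but \eqref{dualrange} allows $\tilde r$ down to just above $\tfrac{2N}{N-2s}$, and one checks $\tfrac{2N}{N-2s}<\tfrac{2}{1-2s}$ whenever $N\ge2$ and $s>0$ (for $s\ge\tfrac12$ the constraint degenerates to $\tilde r=\infty$). For a case relevant to this paper, take $N=3$, $p=3$, $\gamma=2$ so that $s_c=\tfrac12$: the pair underlying \eqref{Hscdadm} has $\tilde r=r_1=\tfrac{18}{5}$, and the Knapp constraint requires $\tilde r=\infty$, so your dual step fails outright.

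Consequently the non-retarded estimate cannot be obtained by factoring through $\dot H^s$; this is exactly why the paper calls \eqref{Katostri} ``more refined than \eqref{sobstri2}\ldots includ[ing] a larger set of admissible indexes.'' The standard proofs (Kato \cite{Kato94}, Foschi \cite{F05}) bypass any $L^2$-based intermediate space and work directly from the dispersive decay $\|e^{it\Delta}\|_{L^{p'}\to L^p}\lesssim|t|^{-N(1/2-1/p)}$ via bilinear interpolation and Hardy--Littlewood--Sobolev in time to get the non-retarded estimate; your Christ--Kiselev reduction and the verification $\tilde q'<q$ are then correct and finish the job.
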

Note that we can use the dual of $\dot{H}^{s}$ pair on the right side of above inequality (for example, from \eqref{S-H}), which would not follow from \eqref{sobstri2}. 
	
\subsection{Embeddings} 
In this section we state embeddings and inequalities used later.

\begin{lemma}[Hardy-Littlewood-Sobolev inequality, \cite{Lieb83}]\label{HLS}
For $0<\gamma <N$ and $1<p,q<\infty$, there exists a sharp constant $c_{N,p,\gamma}>0$ such that
$$
\left \|\int_{\R^N} \frac{u(y)}{|x-y|^{N-\gamma}} \, dy \right\|_{L^q(\R^N)}\leq c_{N,p,\gamma}\|u\|_{L^p(\R^N)},
$$
where $\frac{1}{q}=\frac{1}{p}-\frac{\gamma}{N}$ and $p<\frac{N}{\gamma}$.
\end{lemma}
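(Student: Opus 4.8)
Since this is a classical estimate recalled from \cite{Lieb83}, the plan is to present the standard route that yields a finite constant; the existence of a \emph{sharp} (i.e.\ best) constant $c_{N,p,\gamma}>0$ is then immediate by taking the infimum over all admissible constants, and the determination of its exact value and extremizers (handled by Lieb via Riesz rearrangement and a competing-symmetries argument, at least in the conformally symmetric case) plays no role in this paper. I would use Hedberg's pointwise method. Write $I_\gamma u(x)=\int_{\R^N}|x-y|^{-(N-\gamma)}u(y)\,dy$; for a parameter $R>0$ to be chosen I would split the $y$-integral into the near part $\{|x-y|<R\}$ and the far part $\{|x-y|\geq R\}$.

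For the near part, decomposing $\{|x-y|<R\}$ into dyadic annuli and bounding the average of $|u|$ over each ball by the Hardy--Littlewood maximal function $Mu(x)$ gives $\int_{|x-y|<R}|x-y|^{-(N-\gamma)}|u(y)|\,dy\lesssim R^{\gamma}\,Mu(x)$. For the far part, Hölder's inequality yields
\begin{equation*}
\int_{|x-y|\geq R}|x-y|^{-(N-\gamma)}|u(y)|\,dy\leq \|u\|_{L^p}\Big(\int_{|x-y|\geq R}|x-y|^{-(N-\gamma)p'}\,dy\Big)^{1/p'},
\end{equation*}
where the last integral converges precisely because the hypothesis $p<N/\gamma$ forces $(N-\gamma)p'>N$, and evaluating it gives the bound $\lesssim \|u\|_{L^p}\,R^{-N/q}$ once one uses the scaling relation $1/q=1/p-\gamma/N$. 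Adding the two contributions, $I_\gamma u(x)\lesssim R^{\gamma}Mu(x)+R^{-N/q}\|u\|_{L^p}$ for every $R>0$, and optimizing in $R$ (balancing the two terms, which is harmless where $Mu(x)=0$) produces the pointwise inequality $I_\gamma u(x)\lesssim \big(Mu(x)\big)^{p/q}\|u\|_{L^p}^{1-p/q}$, the exponents being dictated by homogeneity together with $1/q=1/p-\gamma/N$.

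To finish, I would raise this to the $q$-th power, integrate in $x$, and apply the Hardy--Littlewood maximal inequality $\|Mu\|_{L^p}\lesssim\|u\|_{L^p}$, valid since $p>1$:
\begin{equation*}
\|I_\gamma u\|_{L^q}^{q}\lesssim \|u\|_{L^p}^{q-p}\,\|Mu\|_{L^p}^{p}\lesssim \|u\|_{L^p}^{q},
\end{equation*}
which is the asserted bound. Beyond this bookkeeping of exponents, the only places where hypotheses are used — and the points that genuinely need care — are the two strict inequalities defining the admissible range: $1<p$ is exactly what makes the maximal theorem applicable in the last step, while $p<N/\gamma$ is exactly what makes the far-field integral converge. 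At the endpoints ($p=1$, or $q=\infty$, i.e.\ $p=N/\gamma$) the strong-type bound fails and one only has a weak-type estimate, so the stated restriction $1<p<N/\gamma$ is sharp for the $L^p\to L^q$ formulation. An alternative I would keep in mind, giving the same conclusion, is to prove the endpoint weak-type bound $I_\gamma\colon L^1\to L^{N/(N-\gamma),\infty}$ directly (equivalently, note $|x|^{-(N-\gamma)}\in L^{N/(N-\gamma),\infty}$ and invoke the weak Young inequality in Lorentz spaces) and then interpolate by the Marcinkiewicz theorem; either way the constant produced is finite, and the sharp constant is its infimum.
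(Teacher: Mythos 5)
Your Hedberg-type argument is correct: the dyadic decomposition of the near region against the maximal function, Hölder on the far region (with $(N-\gamma)p'>N$ exactly because $p<N/\gamma$), the optimization in $R$ yielding $I_\gamma u(x)\lesssim (Mu(x))^{p/q}\|u\|_{L^p}^{1-p/q}$, and the closing application of the maximal theorem (using $p>1$) all go through, and the exponent bookkeeping is right: with $\gamma+N/q=N/p$ the balanced choice $R=(\|u\|_{L^p}/Mu(x))^{p/N}$ does give the stated power of $Mu(x)$. Note, however, that the paper does not prove this lemma at all; it is recalled verbatim from Lieb \cite{Lieb83}, so there is no proof in the paper to compare against. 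What you have supplied is the standard elementary proof of boundedness, which is a genuinely different route from Lieb's (Lieb establishes the inequality together with the value of the optimal constant and the extremizers via Riesz rearrangement and a competing-symmetries argument). Your approach is shorter and self-contained but produces only some finite constant; since the paper never uses the numerical value of $c_{N,p,\gamma}$, only its finiteness, you are right that the extra content of Lieb's theorem is irrelevant here, and your remark that the \emph{sharp} constant then exists by taking an infimum over admissible constants is the correct way to reconcile your proof with the lemma's wording.
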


\begin{remark}\label{HLSrem}
Observe that
$$
\nabla\left(|x|^{-(N-\gamma)}\right)=C_{N,\gamma}|x|^{-(N-(\gamma-1))}.
$$
\end{remark}

\begin{lemma}[Radial Sobolev inequality, \cite{S77}]\label{rsob}
Let $u\in H^1(\R^N)$ be radially symmetric. Then
$$
\||x|^{\frac{N-1}{2}}u\|_{L^{\infty}(\R^N)}\leq C\|u\|^{1/2}_{L^2(\R^N)}\|\nabla u\|^{1/2}_{L^2(\R^N)}.
$$
\end{lemma}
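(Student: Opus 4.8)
The plan is to reduce the estimate to a one–dimensional inequality in the radial variable and then pass from smooth compactly supported functions to general radial $H^1$ by density. First I would take $u\in C_c^\infty(\R^N)$ radial, abuse notation writing $u(x)=u(r)$ with $r=|x|$, and apply the fundamental theorem of calculus to $\rho\mapsto\rho^{N-1}|u(\rho)|^2$, which is legitimate since $u$ has compact support:
$$
r^{N-1}|u(r)|^2 \;=\; -\int_r^\infty \frac{d}{d\rho}\Big(\rho^{N-1}|u(\rho)|^2\Big)\,d\rho
\;=\; -\int_r^\infty\Big[(N-1)\,\rho^{N-2}|u(\rho)|^2 \;+\; 2\,\rho^{N-1}\Re\big(\overline{u(\rho)}\,u'(\rho)\big)\Big]\,d\rho .
$$
Since $(N-1)\rho^{N-2}|u|^2\ge 0$, discarding that nonnegative term on the right can only increase the quantity, so
$$
r^{N-1}|u(r)|^2 \;\le\; 2\int_r^\infty \rho^{N-1}\,|u(\rho)|\,|u'(\rho)|\,d\rho \;\le\; 2\int_0^\infty \rho^{N-1}\,|u(\rho)|\,|u'(\rho)|\,d\rho .
$$

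Next I would apply the Cauchy–Schwarz inequality in the measure $\rho^{N-1}\,d\rho$ on the right-hand side and convert back to $\R^N$-integrals via polar coordinates, noting that for a radial function $\nabla u = u'(r)\,x/|x|$, hence $|\nabla u|=|u'(r)|$:
$$
r^{N-1}|u(r)|^2 \;\le\; 2\Big(\int_0^\infty \rho^{N-1}|u|^2\,d\rho\Big)^{1/2}\Big(\int_0^\infty \rho^{N-1}|u'|^2\,d\rho\Big)^{1/2}
\;=\; \frac{2}{|\mathbb{S}^{N-1}|}\,\|u\|_{L^2(\R^N)}\,\|\nabla u\|_{L^2(\R^N)} .
$$
Taking square roots, then the supremum over $r>0$ — the weight $|x|^{(N-1)/2}u$ vanishing at the origin when $N\ge 2$, while for $N=1$ the weight is the constant $1$ and the statement reduces to the one–dimensional Agmon/Gagliardo–Nirenberg inequality — gives the asserted bound with explicit constant $C=(2/|\mathbb{S}^{N-1}|)^{1/2}$.

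Finally I would remove the smoothness and compact-support hypotheses by density: radial $C_c^\infty(\R^N)$ functions are dense in the radial subspace of $H^1(\R^N)$, and applying the inequality just proved to a difference $u_n-u_m$ of approximants, together with $\|u_n-u_m\|_{L^2}^{1/2}\|\nabla(u_n-u_m)\|_{L^2}^{1/2}\le \|u_n-u_m\|_{H^1}$, shows that $\big(|x|^{(N-1)/2}u_n\big)_n$ is Cauchy in $L^\infty(\R^N)$; its limit coincides almost everywhere with $|x|^{(N-1)/2}u$ (along a subsequence converging a.e.), so the inequality is inherited in the limit. The only point needing a little care is exactly this approximation step — in particular, that the boundary term $\rho^{N-1}|u(\rho)|^2$ genuinely vanishes at infinity on the dense class and that the pointwise weighted bound survives passage to the limit — but both are automatic once the estimate is established for $C_c^\infty$ radial functions and transferred, since the right-hand side is controlled by the $H^1$ norm. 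I do not expect any obstacle beyond this bookkeeping.
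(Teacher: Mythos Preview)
Your argument is correct and is essentially the standard proof of Strauss's radial lemma: fundamental theorem of calculus on $\rho^{N-1}|u(\rho)|^2$, drop the nonnegative $(N-1)\rho^{N-2}|u|^2$ term, apply Cauchy--Schwarz in the radial measure, and pass to general radial $H^1$ by density. The paper itself does not give a proof of this lemma at all---it simply cites \cite{S77}---so there is nothing to compare; your proof is precisely the argument behind that citation, with the explicit constant $C=(2/|\mathbb{S}^{N-1}|)^{1/2}$ as a bonus.
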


\subsection{Local well-posedness in $H^1$}
We end this section with the local existence result in $H^1$ for the equation \eqref{gH}. We consider the integral equation \eqref{duhamel} with $u_0\in H^1(\R^N)$ and $0<\gamma<N$ with  
\begin{align}\label{prangelwp}
	\begin{cases}
	2 \leq p \leq 1+\frac{\gamma+2}{N-2}, & \text{if} \,\,N\geq 3	\\
	2 \leq  p<\infty,&\text{if} \,\,N=1,2.	
	\end{cases}
\end{align}

\begin{remark}\label{sdcontraction}
Let $f(z)=|z|^{p-2}z$. The complex derivative of $f$ is given by
$$
f_z(z)=\frac{p}{2}|z|^{p-2}\quad \text{and}\quad f_{\bar{z}}(z)=\frac{p-2}{2}|z|^{p-4}z^2.
$$
For $z_1$, $z_2\in\C$ we get
$$
f(z_1)-f(z_2)=\int_{0}^{1}\Big[f_{z_1}(z_2+\theta(z_1-z_2))(z_1-z_2)+f_{\overline{z_1}}(z_2+\theta(z_1-z_2))(\overline{z_1-z_2})\Big]\,d\theta.
$$
Hence,
\begin{align}\label{sdc1}
|f(z_1)-f(z_2)|\lesssim 
	 (|z_1|^{p-2}+|z_2|^{p-2})|z_1-z_2|.
\end{align}
Also, observe that for $p\geq 1$ (e.g., see \cite{CFH11})
\begin{align}\label{sdc5}
| |z_1|^p-|z_2|^p|\lesssim (|z_1|^{p-1}+|z_2|^{p-1})|z_1-z_2|.
\end{align}
\end{remark}
	
\begin{proposition}\label{lwp}
If $p$ satisfies \eqref{prangelwp}, then for $u_0\in H^1(\R^N)$ there exists $T>0$ and a unique solution $u(x,t)$ of the integral equation \eqref{duhamel} in the time interval $[0,T]$ with 
		\begin{align}\label{lwpspace}
		u\in C([0,T];H^1(\R^N))\cap L^{q_1}([0,T];W^{1,r_1}(\R^N)),	
		\end{align}
where $(q_1,r_1)$ is given by \eqref{L2adm1}. In the energy-critical case $p=1+\frac{\gamma+2}{N-2}$ (or $s_c=1$) we require an additional assumption of smallness of $\|u_0\|_{H^1_x}$. In any energy-subcritical case $p <1+\frac{\gamma+2}{N-2}$ the time $T=T(\|u_0\|_{H^1},N,p,\gamma)>0$. 
\end{proposition}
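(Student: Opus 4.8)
The plan is a contraction-mapping argument for the Duhamel operator \eqref{duhamel} in a Strichartz space built on the $L^2$-admissible pair $(q_1,r_1)$ of \eqref{L2adm1}. Fix $N,p,\gamma$ as in \eqref{prangelwp} (so $0<s_c\le 1$), set $\mathcal N(u)=(|x|^{-(N-\gamma)}\ast|u|^p)|u|^{p-2}u$ and $\Phi(u)(t)=e^{it\Delta}u_0+i\int_0^te^{i(t-t')\Delta}\mathcal N(u)(t')\,dt'$. For $A,T>0$ to be chosen, let
$$
X_{T,A}=\Big\{u\in C([0,T];H^1(\R^N))\cap L^{q_1}([0,T];W^{1,r_1}(\R^N)):\;\|u\|_{L^\infty_TH^1_x}+\|u\|_{L^{q_1}_TW^{1,r_1}_x}\le A\Big\},
$$
a complete metric space for $d(u,v)=\|u-v\|_{L^\infty_TL^2_x}+\|u-v\|_{L^{q_1}_TL^{r_1}_x}$ (measuring distances at the $L^2$ level suffices to close the argument). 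The goal is to show $\Phi:X_{T,A}\to X_{T,A}$ is a contraction for an appropriate choice of $A$ and of $T$ in the energy-subcritical case, resp. of $A$ and a smallness assumption on $\|u_0\|_{H^1}$ in the energy-critical case.

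The heart of the proof is the nonlinear estimate. By the $L^2$-Strichartz bounds of Lemma \ref{stzest}, applied both to $\Phi(u)$ and to $\nabla\Phi(u)$ (legitimate since $e^{it\Delta}$ commutes with $\nabla$), with the admissible pair $(q_1,r_1)$ and the dual pair $(q_1',r_1')$ of \eqref{L2dadm},
$$
\|\Phi(u)\|_{L^\infty_TH^1_x}+\|\Phi(u)\|_{L^{q_1}_TW^{1,r_1}_x}\le c\,\|u_0\|_{H^1}+c\big(\|\mathcal N(u)\|_{L^{q_1'}_TL^{r_1'}_x}+\|\nabla\mathcal N(u)\|_{L^{q_1'}_TL^{r_1'}_x}\big).
$$
To estimate $\mathcal N(u)$ one writes, by H\"older in $x$, a product of $\||x|^{-(N-\gamma)}\ast|u|^p\|_{L^{2N/(N-\gamma)}_x}$ — bounded by $\||u|^p\|_{L^{2N/(N+\gamma)}_x}=\|u\|_{L^{r_1}_x}^p$ via the Hardy--Littlewood--Sobolev inequality of Lemma \ref{HLS} — and factors of $|u|^{p-2}u$, leaving $2p-1$ powers of $u$ in $L^{r_1}_x$; the differentiated term $\nabla\mathcal N(u)$ produces, by the product rule (and Remark \ref{HLSrem} when the derivative falls on the Riesz kernel), the same structure with one factor $\nabla u$ in $L^{r_1}_x$ and $2p-2$ factors of $u$ in $L^{r_1}_x$. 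Since $s_c\le 1$ is exactly the condition $H^1(\R^N)\hookrightarrow L^{r_1}(\R^N)$, one places all undifferentiated factors in $L^\infty_TH^1_x$ and the differentiated one in $L^{q_1}_TW^{1,r_1}_x$; H\"older in time then gives
$$
\|\mathcal N(u)\|_{L^{q_1'}_TL^{r_1'}_x}+\|\nabla\mathcal N(u)\|_{L^{q_1'}_TL^{r_1'}_x}\le c\,T^{\delta}\,\|u\|_{L^\infty_TH^1_x}^{2p-2}\big(\|u\|_{L^\infty_TH^1_x}+\|u\|_{L^{q_1}_TW^{1,r_1}_x}\big),
$$
with $\delta=\tfrac1{q_1'}-\tfrac1{q_1}$. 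When $s_c<1$ one has $q_1>2$, hence $\delta>0$ and a genuine small factor $T^{\delta}$; when $s_c=1$ one has $q_1=q_1'=2$, $\delta=0$, and the higher-order gain must instead come from choosing $\|u_0\|_{H^1}$ small. Taking $A=2c\|u_0\|_{H^1}$ shows $\Phi(X_{T,A})\subset X_{T,A}$ once $T$ (resp. $\|u_0\|_{H^1}$) is small.

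For the contraction one uses the nonlocal decomposition
$$
\mathcal N(u)-\mathcal N(v)=\big(|x|^{-(N-\gamma)}\ast(|u|^p-|v|^p)\big)|u|^{p-2}u+\big(|x|^{-(N-\gamma)}\ast|v|^p\big)\big(|u|^{p-2}u-|v|^{p-2}v\big),
$$
the first term being the genuinely new contribution forced by the convolution. Estimating $|\,|u|^p-|v|^p\,|\lesssim(|u|^{p-1}+|v|^{p-1})|u-v|$ inside the convolution via \eqref{sdc5}, and $|\,|u|^{p-2}u-|v|^{p-2}v\,|\lesssim(|u|^{p-2}+|v|^{p-2})|u-v|$ via \eqref{sdc1} (both of which require $p\ge 2$), the same H\"older--Hardy-Littlewood-Sobolev chain yields $d(\Phi(u),\Phi(v))\le c\,T^{\delta}A^{2p-2}\,d(u,v)$ in the subcritical case, resp. $d(\Phi(u),\Phi(v))\le c\,\|u_0\|_{H^1}^{2p-2}\,d(u,v)$ in the critical case after a short bootstrap; shrinking $T$ (resp. the data) further gives a contraction constant $<1$. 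The unique fixed point $u\in X_{T,A}$ solves \eqref{duhamel}; that $u\in C([0,T];H^1)$ and that uniqueness holds in the class \eqref{lwpspace} follow from the Strichartz estimates in the standard way, and in the energy-subcritical case the estimate shows $T$ depends only on $\|u_0\|_{H^1},N,p,\gamma$.

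\textbf{The main obstacle} is the nonlinear and, especially, the difference estimate: one must check that the Riesz-potential factor $|x|^{-(N-\gamma)}\ast(\,\cdot\,)$ always lands in a Lebesgue space reachable from $L^{r_1}_x$ through Lemma \ref{HLS} (in particular that $\tfrac{2N}{N+\gamma}<\tfrac N\gamma$, i.e.\ $\gamma<N$), that differentiating the kernel produces the still-integrable profile of Remark \ref{HLSrem}, and — the delicate bookkeeping — that every H\"older exponent generated in space and time lies in the admissible ranges \eqref{range}, \eqref{dualrange}. This is precisely what pins down the specific pairs \eqref{L2adm1}, \eqref{L2dadm}, and the hypotheses $0<s_c\le 1$ (with smallness when $s_c=1$) and $p\ge 2$ (so that $z\mapsto|z|^{p-2}z$ is $C^1$ and the inequalities of Remark \ref{sdcontraction} hold) are exactly what makes the scheme run.
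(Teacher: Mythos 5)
Your proposal follows the same overall strategy as the paper: a contraction-mapping argument on a ball defined by the $L^{\infty}_TH^1_x$ and $L^{q_1}_TW^{1,r_1}_x$ norms, using Strichartz estimates together with H\"older in time (extracting exactly $T^{\theta}$ with $\theta=\frac{1}{q_1'}-\frac{1}{q_1}=\frac{(p-1)(1-s_c)}{p}$, which matches the paper's $\theta$), Hardy--Littlewood--Sobolev for the convolution, Sobolev embedding $H^1\hookrightarrow L^{r_1}$ (valid precisely because $s_c\le 1$), and the add-and-subtract decomposition of $\mathcal N(u)-\mathcal N(v)$ into a convolution-difference term and a nonlinearity-difference term handled by \eqref{sdc1} and \eqref{sdc5}. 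The energy-critical case $s_c=1$ is identified exactly as in the paper via $q_1=q_1'=2$ and $\theta=0$, forcing smallness of $\|u_0\|_{H^1}$ instead of smallness of $T$.

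The one genuine difference, and it is an improvement, is your choice of contraction metric. You measure the distance between iterates at the $L^2$ level, $d(u,v)=\|u-v\|_{L^\infty_TL^2_x}+\|u-v\|_{L^{q_1}_TL^{r_1}_x}$, and use the closedness of the ball $X_{T,A}$ in this weaker topology. The paper instead runs the contraction in the full $\|\cdot\|_{L^\infty_TH^1_x}+\|\cdot\|_{L^{q_1}_TW^{1,r_1}_x}$ metric and therefore must estimate $\nabla\big(|u|^{p-2}u-|v|^{p-2}v\big)$; in its bound for the term $B_1$ it only records the contribution $(|u|^{p-2}+|v|^{p-2})|\nabla(u-v)|$ and suppresses the companion term $(|u|^{p-2}-|v|^{p-2})\nabla v$, which for $2\le p<3$ is merely H\"older of order $p-2$ in $u-v$ and cannot be bounded linearly in the $H^1$-distance. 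Your lower-regularity metric sidesteps this entirely: no derivative ever falls on the difference of the nonlinearities, so only \eqref{sdc1} and \eqref{sdc5} are needed, which hold for all $p\ge 2$. This is the standard Cazenave--Weissler device and is arguably the more robust route; the paper's version works cleanly only when $p\ge 3$ (or after the same fix you implement).

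One small caveat worth stating explicitly when you write this up: after obtaining the fixed point $u\in X_{T,A}$, the continuity statement $u\in C([0,T];H^1)$ and uniqueness in the whole class \eqref{lwpspace} (rather than just in the ball) require the usual soft argument via Strichartz plus a continuity/bootstrap step, which you gesture at with ``in the standard way''; it is worth one sentence, since the ball $X_{T,A}$ is complete only in the $L^2$-level metric.
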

	
\begin{proof} For $T>0$, specified later, define $\ds \nu(u)=\max\big\{\sup\limits_{t\in[0,T]}\|u\|_{H^1_x},\,\,\|u\|_{L_t^{q_1}W_x^{1,r_1}}\big\}$ and for an appropriately defined constant $M>0$, also specified later, let
		\begin{align}\label{contractspace}
		\mathcal{S}=\{u\in C([0,T]);H^1_x(\R^N)\cap L_t^{q_1}([0,T]);W_x^{1,r_1}(\R^N)\,:\nu(u)\leq M \}.
		\end{align}
We prove that the following operator 
		\begin{align}\label{Dope}
			\Phi(u(t))=e^{it\Delta}u_0 + i \int_0^t e^{i(t-t')\Delta}N(u(t')) \,dt'
		\end{align} 
is a contraction on the set $\mathcal{S}$, where $N(u(t'))=(|x|^{-(N-\gamma)}\ast |u|^p)|u|^{p-2}u(t')$. Using \eqref{stri1} and \eqref{stri2}, we obtain
		\begin{align}\label{Lwp1}
		\|\Phi(u(t))\|_{L_t^{q_1}L_x^{r_1}}\lesssim \| u_0\|_{L_x^2}+\| N(u)\|_{L_t^{q_1^{\prime}}L_x^{r_1^{\prime}}}
		\end{align}
		and
		\begin{align}\label{Lwp2}
		\|\nabla\Phi(u(t))\|_{L_t^{q_1}L_x^{r_1}}\lesssim \|\nabla u_0\|_{L_x^2}+\|\nabla N(u)\|_{L_t^{q_1^{\prime}}L_x^{r_1^{\prime}}}.
		\end{align} 
Using H\"older's in time on the second term in \eqref{Lwp1} and \eqref{Lwp2}, we have
\begin{align*}
	\|N(u)\|_{L_t^{q_1^{\prime}}L_x^{r_1^{\prime}}}\lesssim T^{\theta}\|N(u)\|_{L_t^{q_1}L_x^{r_1^{\prime}}}\,\,\,\text{and}\,\,\,	\|\nabla N(u)\|_{L_t^{q_1^{\prime}}L_x^{r_1^{\prime}}}\lesssim T^{\theta}\|\nabla N(u)\|_{L_t^{q_1}L_x^{r_1^{\prime}}},
\end{align*}
where $\theta =\frac{(1-s_c)(p-1)}{p}$. Using H\"older's inequality, Lemma \ref{HLS} and Sobolev inequality, we estimate
		\begin{align}\label{Lwp3}\notag
		\|N(u)\|_{L_t^{q_1}L_x^{r_1^{\prime}}} &\lesssim \|(|x|^{-(N-\gamma)}\ast|u|^p)\|_{L_t^{q_1}L_x^{\frac{2N}{N-\gamma}}}\||u|^{p-2} u\|_{L_t^{\infty}L_x^{\frac{2Np}{(N+\gamma)(p-1)}}}\\\notag
		&\lesssim \|u\|^p_{L_t^{q_1 p}L_x^{r_1}}\|u\|^{p-1}_{L_t^{\infty}L_x^{r_1}}\\
		&\lesssim\|u\|^{2(p-1)}_{L_t^{\infty}L_x^{r_1}}\|u\|_{L_t^{q_1}L_x^{r_1}}\lesssim\|u\|^{2(p-1)}_{L_t^{\infty}H_x^{1}}\|u\|_{L_t^{q_1}L_x^{r_1}}
		\end{align}
		and (noting that the gradient lands on two different terms)
		\begin{align}\label{Lwp4}\notag
		\|\nabla N(u)\|_{L_t^{q_1}L_x^{r_1^{\prime}}} \lesssim&\,\, \|(|x|^{-(N-(\gamma-1))}\ast|u|^p)\|_{L_t^{q_1}L_x^{\frac{2N}{N-\gamma}}}\||u|^{p-2}u\|_{L_t^{\infty}L_x^{\frac{2Np}{(N+\gamma)(p-1)}}}\\\notag
		&+\|(|x|^{-(N-\gamma)}\ast|u|^p)\|_{L_t^{\infty}L_x^{\frac{2N}{N-\gamma}}}\||u|^{p-2}\nabla u\|_{L_t^{q_1}L_x^{\frac{2Np}{(N+\gamma)(p-1)}}}\\\notag
		\lesssim&\,\|u\|^p_{L_t^{q_1p}L_x^{\frac{2Np}{N+\gamma-2}}}\|u\|^{p-1}_{L_t^{\infty}L_x^{r_1}}+\|u\|^p_{L_t^{\infty}L_x^{r_1}}\|u\|^{p-2}_{L_t^{\infty}L_x^{r_1}}\|\nabla u\|_{L_t^{q_1}L_x^{r_1}}\\\notag
		\lesssim&\,\|u\|_{L_t^{q_1}L_x^{\frac{2Np}{N+\gamma-2p}}}\| u\|^{2(p-1)}_{L_t^{\infty}L_x^{r_1}}+\|u\|^{2(p-1)}_{L_t^{\infty}L_x^{r_1}}\|\nabla u\|_{L_t^{q_1}L_x^{r_1}}\\
		\lesssim&\,\|u\|^{2(p-1)}_{L_t^{\infty}H_x^1}\|\nabla u\|_{L_t^{q_1}L_x^{r_1}}.
		\end{align}
Combining \eqref{Lwp1} and \eqref{Lwp2}, respectively, with  \eqref{Lwp3} and \eqref{Lwp4}, we obtain
		\begin{align*}
		\|\Phi(u(t))\|_{L_t^{q_1}W_x^{1,r_1}}\lesssim \|u_0\|_{H^1_x}+T^{\theta}\|u\|^{2(p-1)}_{L_t^{\infty}H_x^1}\|u\|_{L_t^{q_1}W_x^{1,r_1}}.
		\end{align*}
Following a similar argument, we also have 
		\begin{align*}
		\|\Phi(u(t))\|_{L_t^{\infty}H_x^{1}}\lesssim \|u_0\|_{H^1_x}+T^{\theta}\|u\|^{2(p-1)}_{L_t^{\infty}H_x^1}\, \|u\|_{L_t^{q_1}W_x^{1,r_1}}.
		\end{align*}
Adding the last two lines, we get that  for $u\in\mathcal{S}$
\begin{align}\label{Lwp5}
\|\Phi(u(t))\|_{L_t^{q_1}W_x^{1,r_1}}+\|\Phi(u(t))\|_{L_t^{\infty}H_x^{1}}\leq C\|u_0\|_{H^1_x}+CT^{\theta}M^{2p-1}.
\end{align}
Set $M=2C\|u_0\|_{H^1_x}$ and take $T$ so that
		\begin{align}\label{Lwp6}
		CT^{\theta}M^{2(p-1)}\leq \frac{1}{2},
		\end{align}
		yielding that the right-hand side of \eqref{Lwp5} is bounded by $M$. Therefore, for $T\lesssim \|u_0\|^{-\frac{2p}{1-s_c}}_{H^1_x}$, we obtain $\Phi:\,\mathcal{S}\rightarrow\mathcal{S}.$ Note that the above estimate works for any $s_c<1$. In the energy-critical case, $s_c=1$, we have $\theta=0$, and thus, there is no time dependence in \eqref{Lwp5},
\begin{equation}\label{Lwp5a}
\|\Phi(u(t))\|_{L_t^{q_1}W_x^{1,r_1}}+\|\Phi(u(t))\|_{L_t^{\infty}H_x^{1}}\leq C\|u_0\|_{H^1_x}+CM^{2p-1}.
\end{equation} 
Hence, we can proceed only if $\|u_0\|_{H^1_x}$ is small enough, namely, if 
\begin{equation}\label{Lwp6a}
C\|u_0\|^{2(p-1)}_{H^1_x} < \frac{1}{2},
\end{equation}
which then bounds the right-hand side of \eqref{Lwp5a} by $M$: 
$C\|u_0\|_{H^1_x}+CM^{2p-1} < M$, yielding $\Phi$ mapping $\mathcal{S}$ into itself.

To complete the proof we need to show that the operator $\Phi$ is a contraction. This is achieved by running the same argument as above on the difference 
$$
d(\Phi(u(t)),\Phi(v(t))):=\|\Phi(u(t))-\Phi(v(t))\|_{L_t^{q_1}W_x^{1,r_1}}+\|\Phi(u(t))-\Phi(v(t))\|_{L_t^{\infty}H_x^{1}}
$$
for $u,v\in\mathcal{S}$. We again note that because of the convolution and also estimating at the $H^1$ level, we end up with extra terms to work unlike the proof for the mapping $\Phi$ into itself above (or which would be simply repeating that argument in the pure nonlinearity case of NLS). 

We first apply H\"older's in time to get
		\begin{align*}
			d(\Phi(u(t)),\Phi(v(t)))\lesssim&\,\, T^{\theta}\left(\|\Phi(u(t))-\Phi(v(t))\|_{L_t^{q_1}L_x^{r_1^{\prime}}}+\|\nabla(\Phi(u(t))-\Phi(v(t)))\|_{L_t^{q_1}L_x^{r_1^{\prime}}}\right),
		\end{align*}
		where
		\begin{align*}
			\|\Phi(u(t))-\Phi(v(t))\|_{L_t^{q_1}L_x^{r_1^{\prime}}}\lesssim&\,\,\|\big(|x|^{-(N-\gamma)}\ast |u|^p\big)\big(|u|^{p-2}u-|v|^{p-2}v\big)\|_{L_t^{q_1}L_x^{r_1^{\prime}}}\\
			&+\|\big(|x|^{-(N-\gamma)}\ast (|u|^p-|v|^p)\big)|v|^{p-2}v\|_{L_t^{q_1}L_x^{r_1^{\prime}}}\\
			=&\,\,A_1+A_2
		\end{align*}
		and
		\begin{align*}
			\|\nabla(\Phi(u(t))-\Phi(v(t)))\|_{L_t^{q_1}L_x^{r_1^{\prime}}}\lesssim&\,\,\|\nabla\big[\big(|x|^{-(N-\gamma)}\ast |u|^p\big)\big(|u|^{p-2}u-|v|^{p-2}v\big)\big]\|_{L_t^{q_1}L_x^{r_1^{\prime}}}\\
			&+\|\nabla\big[\big(|x|^{-(N-\gamma)}\ast (|u|^p-|v|^p)\big)|v|^{p-2}v\big]\|_{L_t^{q_1}L_x^{r_1^{\prime}}}\\
		=&\,\,B_1+B_2.
		\end{align*}
Here, we have added and subtracted the term $\big(|x|^{-(N-\gamma)}\ast |u|^p\big)|v|^{p-2}v$.
For $A_1$, we use H\"older's inequality, Lemma \ref{HLS} and \eqref{sdc1} to obtain
\begin{align}\label{Lwp11}\notag
          A_1&\lesssim\|(|x|^{-(N-\gamma)}\ast|u|^p)\|_{L_t^{q_1}L_x^{\frac{2N}{N-\gamma}}}\||u|^{p-2}u-|v|^{p-2}v\|_{L_t^{\infty}L_x^{\frac{2Np}{(N+\gamma)(p-1)}}}\\\notag
			&\lesssim \|u\|^p_{L_t^{q_1p}L_x^{r_1}}\Big(\|u\|^{p-2}_{L_t^{\infty}L_x^{r_1}}+\|v\|^{p-2}_{L_t^{\infty}L_x^{r_1}}\Big)\|u-v\|_{L_t^{\infty}L_x^{r_1}}\\\notag
			&\lesssim\|u\|^{p-1}_{L_t^{\infty}L_x^{r_1}}\|u\|_{L_t^{q_1}L_x^{r_1}}\Big(\|u\|^{p-2}_{L_t^{\infty}L_x^{r_1}}+\|v\|^{p-2}_{L_t^{\infty}L_x^{r_1}}\Big)\|u-v\|_{L_t^{\infty}L_x^{r_1}}\\
			&\lesssim\|u\|^{p-1}_{L_t^{\infty}H_x^{1}}\|u\|_{L_t^{q_1}L_x^{r_1}}\Big(\|u\|^{p-2}_{L_t^{\infty}H^1_x}+\|v\|^{p-2}_{L_t^{\infty}H^1_x}\Big)\|u-v\|_{L_t^{\infty}H^1_x}.
\end{align} 
We again use H\"older's, Lemma \ref{HLS} and \eqref{sdc5} to estimate $A_2$
\begin{align}\label{Lwp12}\notag
			A_2&\lesssim\||x|^{-(N-\gamma)}\ast(|u|^p-|v|^p)\|_{L_t^{q_1}L_x^{\frac{2N}{N-\gamma}}}\||v|^{p-2}v\|_{L_t^{\infty}L_x^{\frac{2Np}{(N+\gamma)(p-1)}}}\\\notag
			&\lesssim\||u|^p-|v|^p\|_{L_t^{q_1}L_x^{\frac{2N}{N+\gamma}}}\|v\|^{p-1}_{L_t^{\infty}L_x^{r_1}}\\\notag
			&\lesssim\Big(\|u\|^{p-1}_{L_t^{\infty}L_x^{r_1}}+\|v\|^{p-1}_{L_t^{\infty}L_x^{r_1}}\Big)\|u-v\|_{L_t^{q_1}L_x^{r_1}}\|v\|^{p-1}_{L_t^{\infty}L_x^{r_1}}\\
			&\lesssim\Big(\|u\|^{p-1}_{L_t^{\infty}H^1_x}+\|v\|^{p-1}_{L_t^{\infty}H^1_x}\Big)\|u-v\|_{L_t^{q_1}L_x^{r_1}}\|v\|^{p-1}_{L_t^{\infty}H_x^1}.
\end{align}
For $B_1$ we first use the product rule
		\begin{align*}
			B_1\lesssim&\,\,\|(|x|^{-(N-(\gamma-1))}\ast|u|^p)\|_{L_t^{q_1}L_x^{\frac{2N}{N-\gamma}}}\||u|^{p-2}u-|v|^{p-2}v\|_{L_t^{\infty}L_x^{\frac{2Np}{(N+\gamma)(p-1)}}}\\
			&+\|(|x|^{-(N-\gamma)}\ast|u|^p)\|_{L_t^{\infty}L_x^{\frac{2N}{N-\gamma}}}\|\nabla(|u|^{p-2}u-|v|^{p-2}v)\|_{L_t^{q_1}L_x^{\frac{2Np}{(N+\gamma)(p-1)}}},
		\end{align*}
	    then applying H\"older's inequlaity, Lemma \ref{HLS}, Sobolev inequality and \eqref{sdc1} yields
	    \begin{align}\label{Lwp13}\notag
	    	B_1\lesssim&\,\,\|u\|^p_{L_t^{q_1p}L_x^{\frac{2Np}{N+\gamma-2}}}\Big(\|u\|^{p-2}_{L_t^{\infty}L_x^{r_1}}+\|v\|^{p-2}_{L_t^{\infty}L_x^{r_1}}\Big)\|u-v\|_{L_t^{\infty}L_x^{r_1}}\\\notag
	    	&+\|u\|^p_{L_t^{\infty}L_x^{r_1}}\Big(\|u\|^{p-2}_{L_t^{\infty}L_x^{r_1}}+\|v\|^{p-2}_{L_t^{\infty}L_x^{r_1}}\Big)\|\nabla(u-v)\|_{L_t^{q_1}L_x^{r_1}}\\\notag
	    	\lesssim&\,\,\|u\|_{L_t^{q_1}L_x^{\frac{2Np}{N+\gamma-2p}}}\|u\|^{p-1}_{L_t^{\infty}L_x^{r_1}}\Big(\|u\|^{p-2}_{L_t^{\infty}L_x^{r_1}}+\|v\|^{p-2}_{L_t^{\infty}L_x^{r_1}}\Big)\|u-v\|_{L_t^{\infty}L_x^{r_1}}\\\notag
	    	&+\|u\|^p_{L_t^{\infty}L_x^{r_1}}\Big(\|u\|^{p-2}_{L_t^{\infty}L_x^{r_1}}+\|v\|^{p-2}_{L_t^{\infty}L_x^{r_1}}\Big)\|\nabla(u-v)\|_{L_t^{q_1}L_x^{r_1}}\\\notag
	    	\lesssim&\,\,\|\nabla u\|_{L_t^{q_1}L_x^{r_1}}\|u\|^{p-1}_{L_t^{\infty}H^1_x}\Big(\|u\|^{p-2}_{L_t^{\infty}H^1_x}+\|v\|^{p-2}_{L_t^{\infty}H^1_x}\Big)\|u-v\|_{L_t^{\infty}H^1_x}\\
	    	&+\|u\|^p_{L_t^{\infty}H_x^1}\Big(\|u\|^{p-2}_{L_t^{\infty}H_x^1}+\|v\|^{p-2}_{L_t^{\infty}H^1_x}\Big)\|\nabla(u-v)\|_{L_t^{q_1}L_x^{r_1}}.
	    \end{align}
Again using the product rule and Lemma \ref{HLS} to estimate $B_2$, we get
	    \begin{align*}
	    	B_2\lesssim&\,\,\||x|^{-(N-(\gamma-1))}\ast(|u|^p-|v|^p)\|_{L_t^{q_1}L_x^{\frac{2N}{N-\gamma}}}\||v|^{p-2}v\|_{L_t^{\infty}L_x^{\frac{2Np}{(N+\gamma)(p-1)}}}\\
	    	&+\||x|^{-(N-\gamma)}\ast(|u|^p-|v|^p)\|_{L_t^{\infty}L_x^{\frac{2N}{N-\gamma}}}\|\nabla(|v|^{p-2}v)\|_{L_t^{q_1}L_x^{\frac{2Np}{(N+\gamma)(p-1)}}}\\
	    	\lesssim&\,\,\||u|^p-|v|^p\|_{L_t^{q_1}L_x^{\frac{2N}{N+\gamma-2}}}\|v\|^{p-1}_{L_t^{\infty}L_x^{r_1}}+\||u|^p-|v|^p\|_{L_t^{\infty}L_x^{\frac{2N}{N+\gamma}}}\|v\|^{p-2}_{L_t^{\infty}L_x^{r_1}}\|\nabla v\|_{L_t^{q_1}L_x^{r_1}}.
	    \end{align*}
Using \eqref{sdc5} and Sobolev, we obtain
	    \begin{align}\label{Lwp14}\notag
	    	B_2\lesssim&\,\,\Big(\|u\|^{p-1}_{L_t^{\infty}L_x^{r_1}}+\|v\|^{p-1}_{L_t^{\infty}L_x^{r_1}}\Big)\|u-v\|_{L_t^{q_1}L_x^{\frac{2Np}{N+\gamma-2p}}}\|v\|^{p-1}_{L_t^{\infty}L_x^{r_1}}\\\notag
	    	&+\Big(\|u\|^{p-1}_{L_t^{\infty}L_x^{r_1}}+\|v\|^{p-1}_{L_t^{\infty}L_x^{r_1}}\Big)\|u-v\|_{L_t^{\infty}L_x^{r_1}}\|v\|^{p-2}_{L_t^{\infty}L_x^{r_1}}\|\nabla v\|_{L_t^{q_1}L_x^{r_1}}\\\notag
	    	\lesssim&\,\,\Big(\|u\|^{p-1}_{L_t^{\infty}H^1_x}+\|u\|^{p-1}_{L_t^{\infty}H^1_x}\Big)\Big(\|\nabla(u-v)\|_{L_t^{q_1}L_x^{r_1}}\|v\|^{p-1}_{L_t^{\infty}H^1_x}\\
	    	&+\|u-v\|_{L_t^{\infty}H^1_x}\|v\|^{p-2}_{L_t^{\infty}H^1_x}\|\nabla v\|_{L_t^{q_1}L_x^{r_1}}\Big).
	    \end{align}
Combining \eqref{Lwp11}, \eqref{Lwp12}, \eqref{Lwp13} and \eqref{Lwp14}, we obtain that for $u,v\in \mathcal{S}$
	    \begin{align*}
	    	d(\Phi(u(t)),\Phi(v(t)))\lesssim T^{\theta}M^{2(p-1)}d(u,v).
	    \end{align*}
This together with \eqref{Lwp6}, the bound on time $T$, implies that $\Phi$ is a contraction on $\mathcal{S}$ for the energy-subcritical case. Similarly, for the energy-critical case, we have that for $u,v\in \mathcal{S}$
\begin{align*}
d(\Phi(u(t)),\Phi(v(t)))\lesssim M^{2(p-1)}d(u,v),
\end{align*} 
which with the smallness of \eqref{Lwp6a} implies that $\Phi$ is again a contraction on $\mathcal{S}$. To prove the continuous dependence with respect to $u_0$, we note that if $u$ and $v$ are the corresponding solutions of \eqref{duhamel} with initial data $u_0$ and $v_0$, respectively, then
$$
u(t)-v(t) = e^{it\Delta}(u_0-v_0) + i\int_{0}^{t}e^{i(t-t')\Delta}(N(u)-N(v))(t') \,dt'.
$$
Thus, the same argument as in \eqref{Lwp11}, \eqref{Lwp12}, \eqref{Lwp13} and \eqref{Lwp14} (and the appropriate modifications when $s_c=1$) yields 
\begin{align*}
d(u(t),v(t))&:=\|u(t)-v(t)\|_{L_t^{q_1}W_x^{1,r_1}}+\|u(t)-v(t)\|_{L_t^{\infty}H_x^{1}}\\
	&\lesssim \|u_0-v_0\|_{H^1}+C_{N,p\gamma}T^{\theta}M^{2(p-1)}d(u(t),v(t)).
\end{align*}
This implies that if $\|u_0-v_0\|_{H^1}$ is small enough (see \eqref{Lwp6} or \eqref{Lwp6a}), we have that
	\begin{align*}
		d(u(t),v(t)\leq \widetilde{C}\|u_0-v_0\|_{H^1},
	\end{align*} 
which completes the proof.
\end{proof}
	
	
\section{Small data theory}\label{small}
As we now have the $H^1$ local well-posedness, we investigate the global existence of small data and scattering in $H^1$. At the end of this section we also include the long-time perturbation argument. This may appear to be standard, however, we give a careful and detailed proof demonstrating how we tackle the nonlocal potential term. 
In this section we consider the integral equation \eqref{duhamel} with $u_0\in H^1(\R^N)$ and $0<\gamma<N$ with $p\geq 2$ satisfying 
\begin{align}\label{prange}
\begin{cases}
1+\frac{\gamma+2}{N} \leq p <1+\frac{\gamma+2}{N-2},&\text{if} ~N\geq 3\\
1+\frac{\gamma+2}{N} \leq p <\infty,& \text{if} ~N=1,2.	
\end{cases}
\end{align} 

In the energy-subcritical case ($s_c<1$) it is possible to obtain $\dot{H}^{s_c}$ small data theory, replacing the right-hand side bound below in \eqref{E:H^s-level}	with the $\dot{H}^{s_c}$ norm (instead of $H^1$ norm) as done in \cite{HR07}, \cite{G}. This requires fractional derivatives, introduction of different Strichartz pairs and considering different cases of smoothness, depending on $p$ and $s_c$; it is done in \cite{AKA2}. For the purpose of this paper, it suffices to have $H^1$ small data, and thus, we consider the bound on the right-hand side of \eqref{E:H^s-level} by the full $H^1$ norm. Also note that while the norm on the left-hand side of \eqref{E:H^s-level} is at the $H^{s_c}$ level, it can be replaced 
with the norms at the $H^1$ level, that is by $\|u\|_{S(L^2)} +\|\nabla u\|_{S(L^2)}$ (by the interpolation and then separating it into the sum by Peter-Paul), which we will do in the proof. For brevity, we chose to state \eqref{E:H^s-level} at the $H^{s_c}$ level. Furthermore, we note that the Proposition \ref{smalldata} also holds true for the $L^2$-critical equations ($s_c=0$) with $u_0\in H^1(\R^N)$ and \eqref{E:H^s-level} reduces just to one condition \eqref{base-level}. We also mention that one would need to use different Strichartz pairs to obtain small data theory for the energy-critical case ($s_c=1$), which is possible but beyond the scope of this paper.
   
\begin{proposition}[Small data theory in $H^1$]\label{smalldata}
Let $p\geq 2$ satisfy \eqref{prange} with $0<\gamma<N$ and $u_0\in H^1(\R^N)$. Suppose $\|u_0\|_{H^1} \leq A$. There exists $\delta = \delta(A)>0$ such that if $\|e^{it\Delta}u_0\|_{S(\dot{H}^{s_c})} \leq \delta$, then there exists a unique global solution $u$ of \eqref{gH} in $H^1(\R^N)$ such that
\begin{equation}\label{base-level}
\|u\|_{S(\dot{H}^{s_c})} \leq 2\|e^{it\Delta}u_0\|_{S(\dot{H}^{s_c})}
\end{equation}
and
\begin{equation}\label{E:H^s-level}
\| |\nabla|^{s_c}u\|_{S(L^2)} \leq 2\,c\, \|u_0\|_{{H}^{1}},
\end{equation}
where $c$ depends on constants from the Gagliardo-Nirenberg interpolation estimate and the Strichartz inequality.
\end{proposition}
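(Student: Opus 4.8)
The plan is to run a standard fixed-point argument for the integral equation \eqref{duhamel}, but now with the iteration space measured in a mixed $S(\dot H^{s_c})$/$S(L^2)$ norm rather than the purely $H^1$-type norm used in Proposition \ref{lwp}. Concretely, I would work on the complete metric space
\[
\mathcal{B} = \Big\{ u : \|u\|_{S(\dot H^{s_c})} \le 2\delta, \ \||\nabla|^{s_c} u\|_{S(L^2)} \le 2c\,\|u_0\|_{H^1} \Big\},
\]
with the metric $d(u,v) = \|u-v\|_{S(\dot H^{s_c})}$, and show that $\Phi(u) = e^{it\Delta}u_0 + i\int_0^t e^{i(t-t')\Delta} N(u(t'))\,dt'$, with $N(u) = (|x|^{-(N-\gamma)}\ast|u|^p)|u|^{p-2}u$, maps $\mathcal{B}$ into itself and is a contraction there. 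Note that since there is no $T^\theta$ smallness available (we want a \emph{global} solution), the smallness must come entirely from the hypothesis $\|e^{it\Delta}u_0\|_{S(\dot H^{s_c})} \le \delta$; this is the whole point of the $\dot H^{s_c}$-critical formulation, where the nonlinear term scales so that a Strichartz-in-time estimate with no time factor closes.

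First I would estimate $\|\Phi(u)\|_{S(\dot H^{s_c})}$: by \eqref{stri1} and the Kato--Strichartz estimate (Lemma \ref{Kato}), $\|\Phi(u)\|_{S(\dot H^{s_c})} \le \|e^{it\Delta}u_0\|_{S(\dot H^{s_c})} + c\|N(u)\|_{S'(\dot H^{-s_c})}$. Using the $\dot H^{-s_c}$ dual pair $(q_3',r_1')$ from \eqref{Hscdadm} together with H\"older in $x$, Lemma \ref{HLS} (Hardy--Littlewood--Sobolev) applied to the convolution factor, and Sobolev embedding, the nonlinear term should be controlled by a product of the form (schematically) $\|u\|_{S(\dot H^{s_c})}^{a}\,\||\nabla|^{s_c}u\|_{S(L^2)}^{2p-1-a}$ for an appropriate split of the $2p-1$ total factors of $u$ — the exponents are arranged exactly so that the space-time Lebesgue indices match the admissible pairs $(q_1,r_1)$, $(q_2,r_2)$, $(q_2,r_1)$ introduced in \eqref{L2adm1}--\eqref{Hscadm}, which is precisely why those pairs were singled out. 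This yields $\|\Phi(u)\|_{S(\dot H^{s_c})} \le \delta + C(2\delta)^{\alpha}(2c\|u_0\|_{H^1})^{2p-1-\alpha}$ for some $\alpha \ge 1$, and choosing $\delta$ small depending on $A \ge \|u_0\|_{H^1}$ makes the right side $\le 2\delta$. Next I would estimate $\||\nabla|^{s_c}\Phi(u)\|_{S(L^2)}$ using \eqref{sobstri1}, \eqref{sobstri2} (or the fractional-derivative Strichartz estimate), $\||\nabla|^{s_c}\Phi(u)\|_{S(L^2)} \le c\|u_0\|_{H^1} + c\||\nabla|^{s_c} N(u)\|_{S'(L^2)}$; here I distribute the fractional derivative using a fractional Leibniz / chain rule (onto either the convolution factor — recalling Remark \ref{HLSrem} that $\nabla(|x|^{-(N-\gamma)})$ is again a Riesz-type kernel, handled by Lemma \ref{HLS} — or onto $|u|^{p-2}u$), ending with a bound $\le c\|u_0\|_{H^1} + C\delta^{2(p-1)}\||\nabla|^{s_c}\Phi(u)\text{-type factors}\|$; again smallness of $\delta$ absorbs this into $2c\|u_0\|_{H^1}$. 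The contraction estimate is the same computation run on $N(u)-N(v)$, using the pointwise difference bounds \eqref{sdc1} and \eqref{sdc5} (exactly as in the proof of Proposition \ref{lwp}, where the convolution forces the extra add-and-subtract term $(|x|^{-(N-\gamma)}\ast|u|^p)|v|^{p-2}v$), giving $d(\Phi u,\Phi v) \le C\delta^{2(p-1)}d(u,v) \le \tfrac12 d(u,v)$.

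Having the fixed point, the resulting $u$ solves \eqref{duhamel} globally and satisfies \eqref{base-level} and \eqref{E:H^s-level}; uniqueness in $H^1$ follows by combining with the local theory of Proposition \ref{lwp} (the two solutions agree on a small interval and a continuity/bootstrap argument propagates this), and $u \in C(\R;H^1)$ comes from feeding the finite Strichartz norms back into the Duhamel formula one more time. The remark in the statement that the left-hand norm can equivalently be taken as $\|u\|_{S(L^2)} + \|\nabla u\|_{S(L^2)}$ is handled by interpolation ($\||\nabla|^{s_c}u\| \lesssim \|u\|^{1-s_c}\|\nabla u\|^{s_c}$) followed by Young's (Peter--Paul) inequality to split the product into a sum.

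The main obstacle I expect is bookkeeping: choosing the exponents in the H\"older/HLS/Sobolev chain so that every factor lands on one of the prescribed admissible pairs \eqref{L2adm1}--\eqref{Hscdadm} and the number of $u$-factors ($p$ inside the convolution, $p-1$ outside, total $2p-1$) distributes correctly between the $S(\dot H^{s_c})$ norm and the $S(L^2)$ norm — and, when the derivative hits the convolution kernel, verifying via Remark \ref{HLSrem} that the resulting kernel $|x|^{-(N-(\gamma-1))}$ still satisfies the HLS hypotheses in the relevant range $0<s_c<1$ (this is where the assumption $s_c < 1$ is genuinely used, to keep the exponent $\frac{2Np}{N+\gamma}$ below the Sobolev-critical one, cf.\ the authors' remark about relying on the $L^{2Np/(N+\gamma)}_x$ norm). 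None of these steps is conceptually deep, but each convolution estimate requires care that the intermediate Lebesgue exponents are admissible.
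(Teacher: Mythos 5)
Your overall scaffolding matches the paper: same iteration set (with $\|u\|_{S(\dot H^{s_c})}$ small, $H^1$-level Strichartz norm bounded), same use of Kato--Strichartz \eqref{Katostri} and HLS (Lemma \ref{HLS}) for the convolution, and the same add-and-subtract trick with \eqref{sdc1}, \eqref{sdc5} for the contraction. The choice of contraction metric is a cosmetic difference (you take only $\|u-v\|_{S(\dot H^{s_c})}$; the paper throws in $\|u-v\|_{S(L^2)} + \|\nabla(u-v)\|_{S(L^2)}$ as well, ``for convenience''); either closes.

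The substantive divergence is in how you propose to prove \eqref{E:H^s-level}. You plan to hit $N(u) = (|x|^{-(N-\gamma)}\ast|u|^p)|u|^{p-2}u$ with $|\nabla|^{s_c}$ directly and then run a fractional Leibniz/chain rule, distributing $|\nabla|^{s_c}$ onto either the convolution factor or onto $|u|^{p-2}u$. The paper deliberately avoids this. Instead, it invokes the pointwise-in-time Gagliardo--Nirenberg inequality $\||\nabla|^{s_c}v\|_{L^2}\lesssim\|v\|_{L^2}^{1-s_c}\|\nabla v\|_{L^2}^{s_c}$ with $v=\Phi_{u_0}(u)$ (plus Peter--Paul) to reduce the $|\nabla|^{s_c}$ estimate to bounds on $\|\Phi_{u_0}(u)\|_{S(L^2)}$ and $\|\nabla\Phi_{u_0}(u)\|_{S(L^2)}$ separately, and then only ever applies the \emph{integer} derivative $\nabla$ to $N(u)$, which distributes by the ordinary Leibniz rule with no smoothness subtleties (the convolution kernel becomes $|x|^{-(N-(\gamma-1))}$ as in Remark \ref{HLSrem}, and $\nabla(|u|^{p-2}u)$ is elementary for $p\ge 2$). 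This matters: a fractional chain rule applied to $|u|^{p-2}u$ for non-integer $p$ close to $2$ is delicate and requires a case analysis tying $s_c$ to the H\"older regularity of $z\mapsto|z|^{p-2}z$; the authors say exactly this in the paragraph preceding the proposition (``This requires fractional derivatives, introduction of different Strichartz pairs and considering different cases of smoothness, depending on $p$ and $s_c$'') and defer that version to a companion paper. So as written, your plan lands you in the technically heavier route that the proposition was specifically designed to avoid. You do mention the GN + Peter--Paul reduction at the end of your write-up, but only as an afterthought to explain the equivalent form of the norm; in the paper's proof it is the main engine, not an aside. A secondary bookkeeping point: in \eqref{sd4} the paper keeps all $2p-1$ copies of $u$ in $S(\dot H^{s_c})$ for the $\|\Phi\|_{S(\dot H^{s_c})}$ self-map (no split across $S(L^2)$), and in \eqref{sd8}, \eqref{sd10} it takes exactly $2(p-1)$ factors in $S(\dot H^{s_c})$ (supplying the $\delta^{2(p-1)}$ smallness) and one factor in $S(L^2)$ resp.\ $\nabla S(L^2)$; your ``schematic split'' is less sharp than that.
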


\begin{proof}
First, note that by Strichartz \eqref{sobstri1} and Sobolev estimates, we can track the dependence of $\delta$ on $A$ (if needed, splitting the time interval). 
Next, denote
$$
B=\left\{ u ~:\, \|u\|_{S(\dot{H}^{s_c})} \leq 2\,\|e^{it\Delta}u_0\|_{S(\dot{H}^{s_c})} \peq\text{and}\peq \||\nabla|^{s_c}u\|_{S(L^2)} \leq 2\,c\|u_0\|_{H^{1}}
\right\},
$$
and define
\begin{align}\label{sd0}
\Phi_{u_0}(u) = e^{it\Delta}u_0 + i \int_0^t e^{i(t-t')\Delta} F(u(t'))\,dt', \quad \mbox{where} \quad F(u)=(|x|^{-(N-\gamma)}\ast|u|^p)|u|^{p-2}u.
\end{align}
Applying the triangle inequality and \eqref{Katostri} to \eqref{sd0}, we obtain
\begin{align}\label{sd1}
 \|\Phi_{u_0}(u)\|_{S(\dot{H}^{s_c})} \leq \|e^{it\Delta}u_0\|_{S(\dot{H}^{s_c})} + c\|F(u)\|_{S'(\dot{H}^{-s_c})}.
 \end{align}
Using the pair $(q_3^{\prime}, r_1^{\prime})$,  H\"older's inequality yields
\begin{align}\label{sd2}
	\|F(u)\|_{L_t^{q_3^{\prime}}L_x^{r_1^{\prime}}}\leq \||x|^{-(N-\gamma)}\ast|u|^p\|_{L_t^{\frac{2}{1-s_c}}L_x^{\frac{2N}{N-\gamma}}}\|u\|^{p-1}_{L_t^{q_2}L_x^{r_1}}.
\end{align}
Applying Lemma \ref{HLS} for $N>\gamma$, we estimate
\begin{align}\label{sd3}
\||x|^{-(N-\gamma)}\ast|u|^p\|_{L_t^{\frac{2}{1-s_c}}L_x^{\frac{2N}{N-\gamma}}}\leq c_{N,p,\gamma}\|u\|^p_{L_t^{q_2}L_x^{r_1}}.
\end{align}
Using \eqref{sd3}, we can write the estimate \eqref{sd2} as
\begin{align}\label{sd4}
\|F(u)\|_{S'(\dot{H}^{-s_c})} \leq c_{N,p,\gamma} \|u\|^p_{S(\dot{H}^{s_c})} \|u\|^{p-1}_{S(\dot{H}^{s_c})}.
\end{align} 
Thus, for $u\in B$, \eqref{sd4} gives
\begin{align}\label{sd5}
\|F(u)\|_{S'(\dot{H}^{-s_c})} \leq c_{N,p,\gamma} \,2^{2p-1}\, \|e^{it\Delta}u_0\|_{S(\dot{H}^{s_c})}^{2p-1}.
\end{align}
Inserting \eqref{sd5} into \eqref{sd1} and redefining the constant $c_{N,p, \gamma}c=:c_1$, we have
\begin{align*}
\|\Phi_{u_0}(u)\|_{S(\dot{H}^{s_c})} \leq \|e^{it\Delta}u_0\|_{S(\dot{H}^{s_c})}\left(1+c_1\, 2^{2p-1}\, \|e^{it\Delta}u_0\|_{S(\dot{H}^{s_c})}^{2(p-1)}\right),
\end{align*}
and thus, we need
$$
c_1 \, 2^{2p-1}\, \|e^{it\Delta}u_0\|_{S(\dot{H}^{s_c})}^{2(p-1)}\leq 1.
$$
To estimate $\| |\nabla|^{s_c}\Phi_{u_0}(u)\|_{S(L^2)}$, 
we recall the Gagliardo-Nirenberg interpolation inequality
$$
\| |\nabla|^{s_c} v \|_{L^2} \leq c_{GN}\|\nabla v \|_{L^2}^{s_c}\| v \|_{L^2}^{1-s_c},
$$
and taking $v = \Phi_{u_0}(u)$, we bound the $L^2$ and $\dot{H}^1$ norms as follows:
\begin{align}\label{sd6}
\|\Phi_{u_0}(u)\|_{S(L^2)} \leq c\,\|u_0\|_{L^2} + c\,\|F(u)\|_{S^{\prime}(L^2)}.
\end{align}
From H\"older's inequality, we get
\begin{align}\label{sd7}
\|F(u)\|_{L_t^{q_1^{\prime}}L_x^{r_1^{\prime}}} \leq \||x|^{-(N-\gamma)} \ast |u|^p \|_{L_t^{2} L_x^{\frac{2N}{N-\gamma}}}\|u\|^{p-1}_{L_t^{q_2}L_x^{r_1}}.
\end{align}
We estimate the convolution term in \eqref{sd7} again by Lemma \ref{HLS} for $N>\gamma$ and then use H\"older's to obtain 
\begin{align}\label{sd8}\notag
\|F(u)\|_{S^{\prime}(L^2)}&\leq c_{N,p, \gamma}\| \, |u|^p\|_{L_t^2L_x^\frac{2N}{N+\gamma}}\|u\|^{p-1}_{L_t^{q_2}L_x^{r_1}}\\ \notag
 &\leq c_{N,p,\gamma}\|u\|^{p-1}_{L_t^{q_2}L_x^{r_1}}\|u\|_{L_t^{q_1}L_x^{r_1}}\|u\|^{p-1}_{L_t^{q_2}L_x^{r_1}}\\
 &\leq c_{N,p,\gamma}\|u\|^{2(p-1)}_{S(\dot{H}^{s_c})}\|u\|_{S(L^2)}.
\end{align} 
Using \eqref{sobstri2} (and triangle inequality) in \eqref{sd0}, we get
\begin{align}\label{sd9}
\|\nabla\Phi_{u_0}(u)\|_{S(L^2)} \leq c\,\|\nabla u_0\|_{L^2} + c\,\|\nabla F(u)\|_{S^{\prime}(L^2)},
\end{align}
where the nonlinear term is estimated as
\begin{align}\label{sd10}\notag
\|\nabla F(u)\|_{L_t^{q_1^{\prime}}L_x^{r_1^{\prime}}}&\leq \||x|^{-(N-\gamma)}\ast|u|^p\|_{L_t^{\frac{2}{1-s_c}}L_x^{\frac{2N}{N-\gamma}}}\|\nabla(|u|^{p-2}u)\|_{L_t^{\frac{2p}{p-(1-s_c)}}L_x^{\frac{2Np}{(N+\gamma)(p-1)}}}\\\notag
 	&+\||x|^{-(N-(\gamma-1))}\ast|u|^p\|_{L_t^{2}L_x^{\frac{2N}{N-\gamma}}}\|u\|^{p-1}_{L_t^{q_2}L_x^{r_1}}\\\notag
 	\leq
 	&\,\, c_{N,p,\gamma}\|u\|^{2(p-1)}_{L_t^{q_2}L_x^{r_1}}\|\nabla u\|_{L_t^{q_1}L_x^{r_1}}+c_{N,p,\gamma}\|u\|^p_{L_t^{2p}L_x^{\frac{2Np}{N+\gamma-2}}}\|u\|^{p-1}_{L_t^{q_2}L_x^{r_1}}\\\notag
 	\leq&\,\, c_{N,p,\gamma}\|u\|^{2(p-1)}_{L_t^{q_2}L_x^{r_1}}\|\nabla u\|_{L_t^{q_1}L_x^{r_1}}+c_{N,p,\gamma}\|\nabla u\|_{L_t^{q_1}L_x^{r_1}}\|u\|^{2(p-1)}_{L_t^{q_2}L_x^{r_1}}\\
 	\leq&\,\,2c_{N,p,\gamma}\|u\|^{2(p-1)}_{S(\dot{H}^{s_c})}\|\nabla u\|_{S(L^2)}.
\end{align}
Combining \eqref{sd6} and \eqref{sd9}, and applying \eqref{sd8} and \eqref{sd10}, we obtain
\begin{align}\label{sd11}\notag
 	\|\Phi_{u_0}(u)\|_{S(L^2)} + \|\nabla\Phi_{u_0}(u)\|_{S(L^2)} \leq c&\left(\|u_0\|_{L^2}+\|\nabla u_0\|_{L^2}\right)  \\\notag
 	+ &c_1\|u\|^{2(p-1)}_{S(\dot{H}^{s_c})}\left(\|u\|_{S(L^2)}+\|\nabla u\|_{S(L^2)}\right)\\\notag
 	\leq c&\|u_0\|_{H^1}+2^{2p-1}c_1c\|e^{it\Delta}u_0\|^{2(p-1)}_{S(\dot{H}^{s_c})}\|u_0\|_{H^1} \\
 	\leq c&\|u_0\|_{H^1}\left(1+2^{2p-1}c_1\|e^{it\Delta}u_0\|^{2(p-1)}_{S(\dot{H}^{s_c})}\right),
\end{align}
where $c_{N,p, \gamma}c=:c_1$. 
Now, if we take 
$$
2^{2p-1}\,c_1\,\|e^{it\Delta}u_0\|^{2(p-1)}_{S(\dot{H}^{s_c})}\leq 1,
$$
and recalling that $\ds \|e^{it\Delta}u_0\|_{S(\dot{H}^{s_c})} <\delta$,  then \eqref{sd11} would give the required bound for the space $B$: $2c \|u_0\|_{H^1}$.
Hence, choosing $\delta < \delta_0 =\frac{1}{2}\sqrt[2(p-1)]{\frac{1}{2c_1}}$ implies that $\Phi_{u_0}\in B$. Now we show that $\Phi_{u_0}(u)$ is a contraction on $B$ with the metric
$$
d(u,v)=\|u-v\|_{S(L^2)}+\|\nabla(u-v)\|_{S(L^2)}+\|u-v\|_{S(\dot{H}^{s_c})}.
$$ 
(The last norm is included for convenience.)
For $u$, $v\in B$, by Strichartz estimates \eqref{Katostri} and \eqref{stri2}, we obtain
\begin{align}\label{sd12}
\|\Phi_{u_0}(u)-\Phi_{u_0}(v)\|_{S(\dot{H}^{s_c})}&\leq c \|F(u)-F(v)\|_{S'(\dot{H}^{-s_c})}
\end{align}
and
\begin{align}\label{sd13}
\|(1+\nabla)(\Phi_{u_0}(u)-\Phi_{u_0}(v))\|_{S(L^2)}&\leq c\|(1+\nabla)(F(u)-F(v))\|_{S'(L^2)}.
\end{align}
The triangle inequality applied to the right-hand side of \eqref{sd12} yields 
\begin{align*}
\|F(u)-F(v)\|_{S'(\dot{H}^{-s_c})}\leq&\,\, \|\big(|x|^{-(N-\gamma)}\ast |u|^p\big)\big(|u|^{p-2}u-|v|^{p-2}v\big)\|_{S^{\prime}(\dot{H}^{-s_c})}\\
&+\|\big(|x|^{-(N-\gamma)}\ast (|u|^p-|v|^p)\big)|v|^{p-2}v\|_{S^{\prime}(\dot{H}^{-s_c})},
\end{align*} 
where we have added and subtracted the term $\big(|x|^{-(N-\gamma)}\ast |u|^p\big)|v|^{p-2}v$ to the difference. Using \eqref{sdc1}, \eqref{sdc5} and calculations in \eqref{sd2}, we obtain
\begin{align*}
\|F(u)-F(v)\|_{S'(\dot{H}^{-s_c})}	\leq&\,\,c_{N,p,\gamma}\||x|^{-(N-\gamma)}\ast |u|^p\|_{L_t^{\frac{q_2}{p}}L_x^{\frac{2N}{N-\gamma}}}\||u|^{p-2}u-|v|^{p-2}v\|_{L_t^{\frac{q_2}{p-1}}L_x^{\frac{r_1}{p-1}}}\\\notag
&+c_{N,p,\gamma}\||x|^{-(N-\gamma)}\ast(|u|^p-|v|^p)\|_{L_t^{\frac{q_2}{p}}L_x^{\frac{2N}{N-\gamma}}}\|v\|^{p-1}_{S(\dot{H}^{s_c})}\\\notag
\leq&\,\,c_{N,p,\gamma}\|u\|^p_{L_t^{q_2}L_x^{r_1}}\Big(\|u\|^{p-2}_{L_t^{q_2}L_x^{r_1}}+\|v\|_{L_t^{q_2}L_x^{r_1}}\Big)\|u-v\|_{L_t^{q_2}L_x^{r_1}}\\\notag
&+c_{N,p,\gamma}\||u|^p-|v|^p\|_{L_t^{\frac{q_2}{p}}L_x^{\frac{r_1}{p}}}\|v\|^{p-1}_{S(\dot{H}^{s_c})}\\\notag
\leq&\,\,c_{N,p,\gamma}\|u\|^p_{S(\dot{H}^{s_c})}\Big(\|u\|^{p-2}_{S(\dot{H}^{s_c})}+\|v\|^{p-2}_{S(\dot{H}^{s_c})}\Big)\|u-v\|_{S(\dot{H}^{s_c})}\\\notag
&+c_{N,p,\gamma}\Big(\|u\|^{p-1}_{S(\dot{H}^{s_c})}+\|v\|^{p-1}_{S(\dot{H}^{s_c})}\Big)\|u-v\|_{S(\dot{H}^{s_c})}\|v\|^{p-1}_{S(\dot{H}^{s_c})}.
\end{align*}
For $u$, $v\in B$, we have that
\begin{align}\label{sd14}
\|F(u)-F(v)\|_{S'(\dot{H}^{-s_c})}\leq 2^{2p}c_{N,p,\gamma}\|e^{it\Delta}u_0\|^{2(p-1)}_{S(\dot{H}^{s_c})}\|u-v\|_{S(\dot{H}^{s_c})}.
\end{align}
Combining \eqref{sd12} with \eqref{sd14}, we obtain
\begin{align}\label{sd15}
\|\Phi_{u_0}(u)-\Phi_{u_0}(v)\|_{S(\dot{H}^{s_c})}&\leq2^{2p}c_1\|e^{it\Delta}u_0\|^{2(p-1)}_{S(\dot{H}^{s_c})}\|u-v\|_{S(\dot{H}^{s_c})}.
\end{align}
Next, we estimate the difference from \eqref{sd13} using again the triangle inequality  and H\"older's 
\begin{align*}
\|F(u)-F(v)\|_{S'(L^2)}\leq&\,\, \|\big(|x|^{-(N-\gamma)}\ast |u|^p\big)\big(|u|^{p-2}u-|v|^{p-2}v\big)\|_{S'(L^2)}\\\notag
&+\|\big(|x|^{-(N-\gamma)}\ast (|u|^p-|v|^p)\big)|v|^{p-2}v\|_{S'(L^2)}\\
\leq&\,\,c_{N,p,\gamma}\||x|^{-(N-\gamma)}\ast|u|^p\|_{L_t^{\frac{q_2}{p}}L_x^{\frac{2N}{N-\gamma}}}\||u|^{p-2}u-|v|^{p-2}v\|_{L_t^{\frac{2p}{p-1+s_c}}L_x^{\frac{r_1}{p-1}}}\\\notag
&+c_{N,p,\gamma}\||x|^{-(N-\gamma)}\ast(|u|^p-|v|^p)\|_{L_t^{2}L_x^{\frac{2N}{N-\gamma}}}\|v\|^{p-1}_{L_t^{q_2}L_x^{r_1}}.
\end{align*}
Apply \eqref{sdc1}, \eqref{sdc5} and calculations in \eqref{sd7}, \eqref{sd8} on the right-hand side of above estimate to obtain
\begin{align*}
\|F(u)-F(v)\|_{S'(L^2)}\leq&\,\,c_{N,p,\gamma}\|u\|^p_{L_t^{q_2}L_x^{r_1}}\Big(\|u\|^{p-2}_{L_t^{q_2}L_x^{r_1}}+\|v\|^{p-2}_{L_t^{q_2}L_x^{r_1}}\Big)\|u-v\|_{L_t^{q_1}L_x^{r_1}}\\\notag
&+c_{N,p,\gamma}\||u|^p-|v|^p\|_{L_t^{2}L_x^{\frac{r_1}{p}}}\|v\|^{p-1}_{L_t^{q_2}L_x^{r_1}}\\\notag
\leq&\,\,c_{N,p,\gamma}\|u\|^{p}_{S(\dot{H}^{s_c})}\Big(\|u\|^{p-2}_{S(\dot{H}^{s_c})}+\|v\|^{p-2}_{S(\dot{H}^{s_c})}\Big)\|u-v\|_{S(L^2)}\\\notag
&+c_{N,p,\gamma}\Big(\|u\|^{p-1}_{S(\dot{H}^{s_c})}+\|v\|^{p-1}_{S(\dot{H}^{s_c})}\Big)\|u-v\|_{S(L^2)}\|v\|^{p-1}_{S(\dot{H}^{s_c})}.
\end{align*}
For $u$, $v\in B$, we have
\begin{align}\label{sd16}
\|F(u)-F(v)\|_{S'(L^2)}\leq2^{2p}c_{N,p,\gamma}\|e^{it\Delta}u_0\|^{2(p-1)}_{S(\dot{H}^{s_c})}\|u-v\|_{S(L^2)}.
\end{align}
Combining \eqref{sd13} with \eqref{sd16}, we obtain
\begin{align}\label{sd21}
\|\Phi_{u_0}(u)-\Phi_{u_0}(v)\|_{S(L^2)}&\leq 2^{2p}c_1\|e^{it\Delta}u_0\|^{2(p-1)}_{S(\dot{H}^{s_c})}\|u-v\|_{S(L^2)}.
\end{align}
Finally, estimating the difference in \eqref{sd13} with the gradient, we obtain 
\begin{align}\label{sd17}
\|\nabla(F(u)-F(v))\|_{S'(L^2)}\leq&\,\,\|\nabla\big[\big(|x|^{-(N-\gamma)}\ast |u|^p\big)\big(|u|^{p-2}u-|v|^{p-2}v\big)\big]\|_{S'(L^2)}\\\label{sd18}
&+\|\nabla\big[\big(|x|^{-(N-\gamma)}\ast (|u|^p-|v|^p)\big)|v|^{p-2}v\big]\|_{S'(L^2)}.
\end{align}
Using \eqref{sd1} along with the calculations for \eqref{sd11} and embedding $\dot{W}^{1,\frac{2Np}{N+\gamma}}\hookrightarrow L^{\frac{2Np}{N+\gamma-2p}}$, we get 
\begin{align*}
	\eqref{sd17}&\leq 2c_{N,p,\gamma}\|u\|^{p}_{S(\dot{H}^{s_c})}\Big(\|u\|^{p-2}_{S(\dot{H}^{s_c})}+\|v\|^{p-2}_{S(\dot{H}^{s_c})}\Big)\|\nabla(u-v)\|_{S(L^2)}.
\end{align*}
Similarly, we obtain
\begin{align*}
	\eqref{sd18}&\leq c_{N,p,\gamma}\Big(\|u\|^{p-1}_{S(\dot{H}^{s_c})}+\|v\|^{p-1}_{S(\dot{H}^{s_c})}\Big)\|\nabla(u-v)\|_{S(L^2)}\|v\|^{p-1}_{S(\dot{H}^{s_c})}\\
	&+c_{N,p,\gamma}\Big(\|u\|^{p-1}_{S(\dot{H}^{s_c})}+\|v\|^{p-1}_{S(\dot{H}^{s_c})}\Big)\|u-v\|_{S(\dot{H}^{s_c})}\|\nabla v\|_{S(L^2)} \|v\|^{p-2}_{S(\dot{H}^{s_c})}.
\end{align*}
Then for $u$, $v\in B$, we have
\begin{align}\label{sd19}\notag
	\|\nabla(\Phi_{u_0}&(u)-\Phi_{u_0}(v))\|_{S(L^2)}\leq c\|\nabla(F(u)-F(v))\|_{S'(L^2)}\\
	\leq 2^{2p}&c_1\|e^{it\Delta}u_0\|^{2(p-1)}_{S(\dot{H}^{s_c})}\|\nabla(u-v)\|_{S(L^2)}+2^{2p-1}c_1\|u_0\|_{H^1}\|e^{it\Delta}u_0\|^{2(p-1)}_{S(\dot{H}^{s_c})}\|u-v\|_{S(\dot{H}^{s_c})}.
\end{align} 
From \eqref{sd15}, \eqref{sd16} and \eqref{sd19}, we get
$$
d(\Phi_{u_0}(u),\Phi_{u_0}(v))\leq 2^{2p-1}c_1\|u_0\|_{H^1}\|e^{it\Delta}u_0\|^{2(p-1)}_{S(\dot{H}^{s_c})}d(u,v)\leq \frac{1}{2}d(u,v)
$$
for $\delta_1\leq \frac{1}{2}\sqrt[2(p-1)]{\frac{1}{2c_1A}}.$ 
Finally, taking $\delta \leq \min(\delta_0,\delta_1)$ concludes that $\Phi_{u_0}$ is a contraction.
\end{proof}

Next we establish the scattering in $H^1(\R^N)$.

\begin{theorem}[$H^1$ scattering]\label{H1scatter}
Let $u(t)$ be a global solution to \eqref{gH} with initial data $u_0\in H^1(\R^N)$. If $\|u\|_{S(\dot{H}^{s_c})} < +\infty$ (globally finite $\dot{H}^{s_c}$ Strichartz norm) and $\sup_{t\in \R^+}\|u(t)\|_{H^1}\leq B$ (uniformly bounded $H^1(\R^N)$ norm). Then $u(t)$ scatters in $H^1(\R^N)$ as $t\rightarrow +\infty$, i.e., there exists $\peq u^+ \in H^1(\R^N)$ such that
$$
\lim_{t\rightarrow +\infty} \|u(t)-e^{it\Delta}u^+\|_{H^1}=0.
$$
\end{theorem}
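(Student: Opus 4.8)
The plan is to establish scattering via the standard two-step argument: first show that the Duhamel term $w(t) = i\int_0^t e^{i(t-t')\Delta} F(u(t'))\,dt'$ has a limit in $H^1$ as $t\to+\infty$, which defines $u^+$, and then verify that $u(t)-e^{it\Delta}u^+\to 0$ in $H^1$. Concretely, for $0<\tau<t$ one writes $e^{-it\Delta}u(t) - e^{-i\tau\Delta}u(\tau) = i\int_\tau^t e^{-it'\Delta}F(u(t'))\,dt'$, and the goal is to bound the right-hand side in $H^1$ by a quantity that tends to $0$ as $\tau\to\infty$, uniformly in $t$. By the dual Strichartz and Kato-Strichartz estimates (Lemma \ref{Kato}, \eqref{sobstri2}) applied on the time interval $[\tau,t]$, it suffices to control $\|F(u)\|_{S'(\dot H^{-s_c})}$ and $\|\nabla F(u)\|_{S'(L^2)}$ on $[\tau,\infty)$, and these can be made small because $\|u\|_{S(\dot H^{s_c})}<+\infty$ globally forces the $S(\dot H^{s_c})$ norm over $[\tau,\infty)$ to vanish as $\tau\to\infty$.

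The key steps, in order, are as follows. First I would reprise the nonlinear estimates already carried out in the proof of Proposition \ref{smalldata}: using H\"older in time, Lemma \ref{HLS} (Hardy-Littlewood-Sobolev) on the convolution factor, and the Sobolev embedding, one gets on any interval $I$
\begin{align*}
\|F(u)\|_{S'(\dot H^{-s_c})(I)} &\lesssim \|u\|_{S(\dot H^{s_c})(I)}^{2p-1},\\
\|\nabla F(u)\|_{S'(L^2)(I)} &\lesssim \|u\|_{S(\dot H^{s_c})(I)}^{2(p-1)}\,\|\nabla u\|_{S(L^2)(I)},
\end{align*}
and an analogous bound for $\|F(u)\|_{S'(L^2)(I)}$, exactly as in \eqref{sd4}, \eqref{sd8}, \eqref{sd10}; here Remark \ref{HLSrem} handles the extra term when the gradient lands on the Riesz potential. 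Second, since $\|u\|_{S(\dot H^{s_c})(\R)}<+\infty$, I can partition $\R^+$ into finitely many intervals on each of which $\|u\|_{S(\dot H^{s_c})}$ is small, and combine this with the uniform bound $\sup_t\|u(t)\|_{H^1}\le B$ (which, via the local theory and a standard continuity/bootstrap argument on each subinterval, upgrades to $\|\nabla u\|_{S(L^2)(\R^+)}<+\infty$, hence $\|u\|_{S(L^2)(\R^+)}<\infty$ too by interpolation). Third, with these global Strichartz bounds in hand, the tail $\|u\|_{S(\dot H^{s_c})([\tau,\infty))}\to 0$ as $\tau\to\infty$, so the displayed nonlinear estimates show $\|e^{-it\Delta}u(t)-e^{-i\tau\Delta}u(\tau)\|_{H^1}\to 0$; thus $\{e^{-it\Delta}u(t)\}$ is Cauchy in $H^1$ and converges to some $u^+\in H^1$. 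Fourth, unwinding this gives $\|u(t)-e^{it\Delta}u^+\|_{H^1} = \|e^{-it\Delta}u(t)-u^+\|_{H^1}\le \limsup_{t'\to\infty}\|e^{-it'\Delta}u(t')-e^{-it\Delta}u(t)\|_{H^1}\to 0$, which is the claim.

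The main obstacle is purely the bookkeeping forced by the nonlocal nonlinearity: in each of the $H^1$-level estimates the gradient can fall either on the convolution kernel $|x|^{-(N-\gamma)}$ — producing $|x|^{-(N-(\gamma-1))}$ via Remark \ref{HLSrem}, which must still be admissible for Lemma \ref{HLS} — or on the $|u|^{p-2}u$ factor, so one must track two families of terms and check that the H\"older exponents (the pairs $(q_1,r_1)$, $(q_2,r_1)$ from \eqref{L2adm1}, \eqref{Hscadm}, and the dual pairs) close up; the constraint $s_c<1$ is exactly what keeps $r_1 = \frac{2Np}{N+\gamma}$ in the admissible Sobolev range. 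Once the scaling of exponents is verified (which is identical to the computations already displayed in \eqref{sd8}-\eqref{sd10}), the argument is the routine Cauchy-in-$H^1$ scattering scheme, so I do not expect any genuine difficulty beyond transcribing those estimates onto a sliding time interval $[\tau,\infty)$.
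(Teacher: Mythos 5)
Your proposal is correct and follows essentially the same route as the paper's proof: the same nonlinear estimates from \eqref{sd4}, \eqref{sd8}, \eqref{sd10}, the same partition of $\R^+$ into finitely many intervals with small $S(\dot H^{s_c})$ norm to obtain global bounds on $\|u\|_{S(L^2)}+\|\nabla u\|_{S(L^2)}$, and the same tail-vanishing argument to conclude. The only cosmetic difference is that you organize the endgame as a Cauchy-in-$H^1$ argument for $e^{-it\Delta}u(t)$, whereas the paper defines $u^+$ by the explicit integral formula \eqref{eq:waveop} and then verifies $u^+\in H^1$ and convergence — these are two presentations of the same step. One small slip: you cannot deduce $\|u\|_{S(L^2)}<\infty$ from $\|\nabla u\|_{S(L^2)}<\infty$ ``by interpolation''; the $L^2$-level Strichartz bound must be run separately (as you in fact indicate earlier, and as the paper does in \eqref{scat3}), so this aside should simply be dropped.
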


\begin{proof}
The assumption $\|u\|_{S(\dot{H}^{s_c})} < +\infty$ implies that there exists $M$ such that
$$
M = \|u\|_{L_t^{\frac{2p}{1-s_c}}L_x^{\frac{2Np}{N+\gamma}}} < +\infty.
$$
Recall that $\left(\frac{2p}{1-s_c},\frac{2Np}{N+\gamma}\right)$ is an $\dot{H}^{s_c}$-admissible pair. Let $\widetilde{M} = M^{\frac{2p}{1-s_c}}$. Given $\delta>0$ we can decompose $[0,+\infty) = \cup_{j=1}^{\widetilde{M}}I_j$, where $I_j=[t_j,t_{j+1})$ such that for each $j$, we have
$$
\|u\|_{L_{I_j}^{\frac{2p}{1-s_c}}L_x^{\frac{2Np}{N+\gamma}}} < \delta.
$$
Hence, by the triangle inequality and Strichartz estimates \eqref{stri1} and \eqref{stri2} applied to the integral equation \eqref{duhamel} on $I_j$, we have
\begin{align}\label{scat1}
\|u\|_{S(L^2;I_j)} &\leq c \|u(t_j)\|_{L^2} +c\|(|x|^{-(N-\gamma)}\ast|u|^p)|u|^{p-2}u\|_{S^{\prime}(L^2;I_j)}.
\end{align}
From \eqref{sd8}, we have
\begin{align}\label{scat2}
\|(|x|^{-(N-\gamma)}\ast|u|^p)|u|^{p-2}u\|_{S^{\prime}(L^2;I_j)}\leq c_{N,p,\gamma}\|u\|^{2(p-1)}_{S(\dot{H}^{s_c};I_j)}\|u\|_{S(L^2;I_j)}. 
\end{align}
Thus, \eqref{scat1} combined with \eqref{scat2} and the assumption $\sup_{t\in \R^+}\|u(t)\|_{H^1}\leq B$ implies 
\begin{align}\label{scat3}
	\|u\|_{S(L^2;I_j)} \leq cB + c_1\delta^{2(p-1)}\|u\|_{S(L^2;I_j)}.
\end{align}
Similarly, using Strichartz estimates \eqref{sobstri1} and \eqref{sobstri2} for $s=1$ along with \eqref{sd10} yields
\begin{align}\label{scat4}\notag
\|\nabla u\|_{S(L^2;I_j)}&\leq  c\|\nabla u(t_j)||_L^2 + c\|\nabla\big((|x|^{-(N-\gamma)}\ast|u|^p)|u|^{p-2}u\big)\|_{S'(L^2;I_j)}\\\notag
&\lesssim cB + 2c_1\|u\|^{2(p-1)}_{S(\dot{H}^{s_c};I_j)}\|\nabla u\|_{S(L^2;I_j)}\\
&\lesssim cB + 2c_1\delta^{2(p-1)}\|\nabla u\|_{S(L^2;I_j)}.
\end{align}
Combining \eqref{scat3} and \eqref{scat4}, we get
$$
\|u\|_{S(L^2;I_j)}+\|\nabla u\|_{S(L^2;I_j)} \leq 2\,c\,B + 2\,c_1\, \delta^{2(p-1)}\left(\|u\|_{S(L^2;I_j)}+\|\nabla u\|_{S(L^2;I_j)}\right).
$$
Performing the summation over $I_j$, we obtain
\begin{align*}
\|u\|_{S(L^2)}+\|\nabla u\|_{S(L^2)} &\leq 2\,c\,BM^{\frac{2p}{1-s_c}} + 2\,c_1\,\delta^{2(p-1)}\left(\|u\|_{S(L^2)}+\|\nabla u\|_{S(L^2)}\right),
\end{align*}
which implies that
$$
\left(1-2c_1\delta^{2(p-1)}\right)\left(\|u\|_{S(L^2)}+\|\nabla u\|_{S(L^2)}\right) \lesssim 2\,c\,BM^{\frac{2p}{1-s_c}}.
$$
Thus, for small $\delta$, we require that
$
1-2\delta^{2(p-1)} \leq \frac{1}{2},
$
so that
\begin{align}\label{bound}
\|u\|_{S(L^2)}+\|\nabla u\|_{S(L^2)} \leq 4\,c\,BM^{\frac{2p}{1-s_c}}.
\end{align}
Now, we define the wave operator
\begin{align}\label{eq:waveop}
u^+ = u_0 + i\int_0^{+\infty}e^{-it'\Delta}(|x|^{-(N-\gamma)}\ast|u|^p)|u|^{p-2}u(t')\,dt'.
\end{align}
By the same arguments as before, we have that
\begin{align*}
\|u^+\|_{L^2}&\leq c\|u_0\|_{L^2} + c_1\|u\|^{2(p-1)}_{S(\dot{H}^{s_c})}\|u\|_{S(L^2)},
\end{align*}
and
\begin{align*}
\|\nabla u^+\|_{L^2}&\leq c\|\nabla u_0\|_{L^2} + 2c_1\|u\|^{2(p-1)}_{S(\dot{H}^{s_c})}\|\nabla u\|_{S(L^2)}.
\end{align*}
Finally, by initial assumptions, we get
\begin{align*}
	\|u^+\|_{L^2}+\|\nabla u^+\|_{L^2}\leq cB + 2c_1M^{2(p-1)}\left(\|u\|_{S(L^2)}+\|\nabla u\|_{S(L^2)}\right).
\end{align*}
Using \eqref{bound}, we obtain that $\|u^+\|_{H^1}\leq \text{constant}$. This implies that $u^+ \in H^1(\R^N)$. From \eqref{eq:waveop} and the integral equation \eqref{duhamel}, we have
\begin{align*}
u(t)-e^{it\Delta}u^+ = -i\int_{t}^{+\infty}e^{i(t-t')\Delta}(|x|^{-(N-\gamma)}\ast |u|^p)|u|^{p-2}u(t')\,dt'.
\end{align*}
Again using the similar computation, we obtain
\begin{align*}
\|u(t)-e^{it\Delta}u^+\|_{L^2} &\leq c_1\|u\|^{2(p-1)}_{S(\dot{H}^{s_c};[t,+\infty))}\|u\|_{S(L^2;[t,+\infty))}
\end{align*}
and
\begin{align*}
	\|\nabla\left(u(t)-e^{it\Delta}u^+\right)\|_{L^2} &\leq c_1\|u\|^{2(p-1)}_{S(\dot{H}^{s_c};[t,+\infty))}\|\nabla u\|_{S(L^2;[t,+\infty))}.
\end{align*}
While obtaining \eqref{bound}, we have observed that the Strichartz norm on $[0,+\infty)$ for the above expression is bounded, therefore, the tail has to vanish as $t\rightarrow +\infty$, and thus, $\|u\|_{S(\dot{H}^{s_c};[t,+\infty))}\rightarrow 0$ as $t\rightarrow +\infty$. Hence,
$$
\lim_{t\rightarrow +\infty}\|u(t)-e^{it\Delta}u^+\|_{\dot{H}^1}=0.
$$
\end{proof}
We note that Theorem \ref{H1scatter} with initial data $u_0\in H^1(\R^N)$ also holds in the $L^2$-critical case ($s_c=0$ or $p=1+\frac{\gamma+2}{N}\geq 2$).
One can also obtain a similar result for the energy-critical case ($s_c=1$) but with a different selection of Strichartz pairs.

We now prove the long time perturbation result in the spirit of \cite{HR08}, which is one of the necessary ingredients in the subsequent analysis, specifically, in Theorem \ref{crit.elem}.

\begin{theorem}[Long time perturbation]\label{LTP}
For each $A\gg 1$, there exists $\epsilon_0=\epsilon_0(A)\ll1$ and $c=c(A)\gg 1$ such that the following holds. Let $u=u(x,t)\in H^1(\R^N)$ for all time $t$ and solve \eqref{gH}. Let $\widetilde{u}=\widetilde{u}(x,t)\in H^1(\R^N)$ for all $t$
and define $e$ to be
$$
e \defeq i\widetilde{u}_t+\Delta\widetilde{u}+(|x|^{-(N-\gamma)}\ast |\widetilde{u}|^p)|\widetilde{u}|^{p-2}\widetilde{u}.
$$
Suppose that 
\begin{align}\label{ltp1}
\|\widetilde{u}\|_{S(\dot{H}^{s_c})}\leq A,\quad \|e\|_{S^{\prime}(\dot{H}^{-s_c})}\leq \epsilon_0
\end{align} and
\begin{align}\label{epsiloncond}
\|e^{i(t-t_0)\Delta}(u(t_0)-\widetilde{u}_0(t_0))\|_{S(\dot{H}^{s_c})}\leq \epsilon_0.
\end{align}
Then
\begin{align}\label{concl}
\|u\|_{S(\dot{H}^{s_c})}\leq c=c(A)<+\infty.
\end{align}
\end{theorem}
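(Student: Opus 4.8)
The plan is to run the standard bootstrap for long-time perturbation by partitioning $[t_0,\infty)$ into finitely many subintervals on which the $\dot H^{s_c}$ Strichartz norm of $\widetilde u$ is small, and then propagate control of $w \defeq u - \widetilde u$ across the partition, with the novelty here being that the convolution nonlinearity forces us to carefully track cross terms. First I would note that, since $\|\widetilde u\|_{S(\dot H^{s_c})}\le A$, given $\delta>0$ (to be chosen small depending on $A$) we can split $[t_0,+\infty)=\bigcup_{j=0}^{J-1} I_j$ with $J=J(A,\delta)$ subintervals $I_j=[t_j,t_{j+1})$ such that $\|\widetilde u\|_{S(\dot H^{s_c};I_j)}\le\delta$ on each. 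The difference $w$ solves the integral equation on $I_j$:
\begin{equation*}
w(t)=e^{i(t-t_j)\Delta}w(t_j)+i\int_{t_j}^{t}e^{i(t-t')\Delta}\big(F(\widetilde u+w)-F(\widetilde u)\big)(t')\,dt' - i\int_{t_j}^{t}e^{i(t-t')\Delta}e(t')\,dt',
\end{equation*}
where $F(u)=(|x|^{-(N-\gamma)}\ast|u|^p)|u|^{p-2}u$.

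The key step is the nonlinear estimate on each $I_j$. Applying the Kato--Strichartz bound \eqref{Katostri} and the dual Strichartz estimate, together with Lemma \ref{HLS} exactly as in the computations for \eqref{sd4}--\eqref{sd5} and the difference estimate in \eqref{sd14}, I would show
\begin{equation*}
\|F(\widetilde u+w)-F(\widetilde u)\|_{S'(\dot H^{-s_c};I_j)}\lesssim_{N,p,\gamma}\big(\|\widetilde u\|_{S(\dot H^{s_c};I_j)}+\|w\|_{S(\dot H^{s_c};I_j)}\big)^{2(p-1)}\|w\|_{S(\dot H^{s_c};I_j)}.
\end{equation*}
Here the pointwise bounds \eqref{sdc1} and \eqref{sdc5} from Remark \ref{sdcontraction} are used to expand $F(\widetilde u+w)-F(\widetilde u)$ after adding and subtracting $(|x|^{-(N-\gamma)}\ast|\widetilde u|^p)|\widetilde u+w|^{p-2}(\widetilde u+w)$, so that every term carries at least one factor of $w$; then Lemma \ref{HLS} handles the convolution factor in $L_x^{2N/(N-\gamma)}$, and Hölder in the remaining exponents closes at the level of the $S(\dot H^{s_c})$ norm. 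Combining this with \eqref{Katostri}, \eqref{concl}-type bootstrap, and \eqref{ltp1}--\eqref{epsiloncond}, on $I_0$ one gets, for $\delta$ small,
\begin{equation*}
\|w\|_{S(\dot H^{s_c};I_0)}\lesssim \|e^{i(t-t_0)\Delta}w(t_0)\|_{S(\dot H^{s_c})}+\|e\|_{S'(\dot H^{-s_c})} + \delta'(A)\,\|w\|_{S(\dot H^{s_c};I_0)},
\end{equation*}
hence $\|w\|_{S(\dot H^{s_c};I_0)}\lesssim \epsilon_0$ provided $\delta$ (and thus $\delta'(A)$) is absorbed. To pass from $I_0$ to $I_1$ I would estimate $\|e^{i(t-t_1)\Delta}w(t_1)\|_{S(\dot H^{s_c})}$ by writing $w(t_1)=e^{i(t_1-t_0)\Delta}w(t_0)+(\text{Duhamel terms over }I_0)$ and re-estimating the Duhamel terms by the same nonlinear bound; this shows the "local error" at the left endpoint of $I_1$ is still $O(\epsilon_0)$ up to a constant. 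Iterating $J=J(A)$ times, each step multiplies the error bound by a fixed constant $C_0=C_0(N,p,\gamma)$, so $\|w\|_{S(\dot H^{s_c};I_j)}\lesssim C_0^{\,j}\epsilon_0\le C_0^{\,J}\epsilon_0$; choosing $\epsilon_0=\epsilon_0(A)$ small enough that $C_0^{\,J(A)}\epsilon_0$ stays below the smallness threshold needed for each bootstrap keeps the scheme consistent. Finally $\|u\|_{S(\dot H^{s_c})}\le \|\widetilde u\|_{S(\dot H^{s_c})}+\|w\|_{S(\dot H^{s_c})}\le A + J(A)\,C_0^{\,J(A)}\epsilon_0 =: c(A)<\infty$, giving \eqref{concl}.

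The main obstacle, and the place requiring genuine care beyond the NLS template of \cite{HR08}, is the nonlinear difference estimate for the nonlocal term: unlike a pure power nonlinearity, $F(\widetilde u+w)-F(\widetilde u)$ does not factor cleanly, and one must organize it so that the convolution factor is always estimated by Lemma \ref{HLS} in the fixed Lebesgue exponent $L_x^{2N/(N-\gamma)}$ while the residual factors distribute among the admissible pairs \eqref{L2adm1}, \eqref{L2adm2} in a way compatible with the Hölder exponents in time. One must also make sure the same partition into $J=J(A,\delta)$ intervals controls \emph{both} the $\widetilde u$ norm and, inductively, the accumulated error, which is why the order of quantifiers ($A$ fixed first, then $\delta=\delta(A)$, then $\epsilon_0=\epsilon_0(A,J)$) matters; this is routine once the nonlinear estimate above is in hand but is the step where the nonlocality actually enters.
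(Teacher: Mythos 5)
Your proposal follows essentially the same route as the paper's proof: partition $[t_0,\infty)$ into $J(A)$ intervals with small $\|\widetilde u\|_{S(\dot H^{s_c};I_j)}$, close the Duhamel estimate for $w=u-\widetilde u$ on each $I_j$ via Kato--Strichartz, Lemma \ref{HLS}, and the pointwise bounds \eqref{sdc1}--\eqref{sdc5} after a suitable add-and-subtract in the convolution nonlinearity, then iterate with geometric loss and absorb by choosing $\epsilon_0(A)$ small. The only cosmetic difference is the choice of cross term (you add and subtract $(|x|^{-(N-\gamma)}\ast|\widetilde u|^p)|\widetilde u+w|^{p-2}(\widetilde u+w)$ while the paper uses $(|x|^{-(N-\gamma)}\ast|\widetilde u+w|^p)|\widetilde u|^{p-2}\widetilde u$), which leads to the same estimate.
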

\begin{proof}
Denote by $w$ the perturbation of $u$: $w=u-\widetilde{u}$. For $F(u)=(|x|^{-(N-\gamma)}\ast |u|^p)|u|^{p-2}u$ set $W(\widetilde{u},w)= F(u)-F(\widetilde{u}) = F(\widetilde{u}+w)-F(\widetilde{u})$. Then, $w$ solves
$$
iw_t+\Delta w + W(\widetilde{u},w) - e =0.
$$
Since $\|\widetilde{u}\|_{S(\dot{H}^s)}\leq A$, we can partition the interval $[t_0,+\infty)$ into $K=K(A)$ intervals $I_j=[t_j,t_{j+1}]$ such that for each $j$,
$
\|\widetilde{u}\|_{S(\dot{H}^{s_c};I_j)}\leq \delta.
$
Note that the number of intervals depends only on $A$, however, the intervals themselves depend upon $\widetilde{u}$.
The integral equation of $w$ at time $t_j$ is given by
\begin{align}\label{pertinteg}
w(t)=e^{i(t-t_j)\Delta}w(t_j) +i\int_{t_j}^{t}e^{i(t-t')\Delta}(W-e)(t')dt'.
\end{align}
Applying Kato estimate \eqref{Katostri} to \eqref{pertinteg} for each $I_j$, we obtain
\begin{align}\label{pertest}
\|w\|_{S(\dot{H}^{s_c};I_j)} &\leq \|e^{i(t-t_j)\Delta}w(t_j)\|_{S(\dot{H}^{s_c};I_j)} + c\|W(\widetilde{u},w)||_{S^{\prime}(\dot{H}^{-s_c};I_j)}+c\|e\|_{S^{\prime}(\dot{H}^{-s_c};I_j)}\notag\\
&\leq \|e^{i(t-t_j)\Delta}w(t_j)\|_{S(\dot{H}^{s_c};I_j)} + c\|W(\widetilde{u},w)\|_{S^{\prime}(\dot{H}^{-s_c};I_j)}+c\epsilon_0.
\end{align}
Next we estimate
\begin{align*}
&\|W(\widetilde{u},w)\|_{S^{\prime}(\dot{H}^{-s_c};I_j)}\lesssim \|F(\widetilde{u}+w)-F(\widetilde{u})\|_{L_{I_j}^{q_3^{\prime}}L_x^{r_1^{\prime}}}.
\end{align*}
Adding and subtracting $(|x|^{-(N-\gamma)}\ast |\widetilde{u}+w|^p)|\widetilde{u}|^{p-2}\widetilde{u}$, we obtain
\begin{align*}
\|W(\widetilde{u},w)\|_{S^{\prime}(\dot{H}^{-s_c};I_j)}\lesssim &\,\,\|\big(|x|^{-(N-\gamma)}\ast |\widetilde{u}+w|^p\big)\big(|\widetilde{u}+w|^{p-2}(\widetilde{u}+w)-|\widetilde{u}|^{p-2}\widetilde{u})\|_{L_{I_j}^{q_3^{\prime}}L_x^{r_1^{\prime}}}\\
&+\|\big(|x|^{-(N-\gamma)}\ast (|\widetilde{u}+w|^p-|\widetilde{u}|^p)\big)|\widetilde{u}|^{p-2}\widetilde{u}\|_{L_{I_j}^{q_3^{\prime}}L_x^{r_1^{\prime}}}.
\end{align*}
Using the calculations similar to \eqref{sd2}, we get
\begin{align*}
\|W(\widetilde{u},w)\|_{S^{\prime}(\dot{H}^{-s_c};I_j)}\leq&\,\, c_{N,\gamma}\|\widetilde{u}+w\|^p_{L_{I_j}^{q_2}L_x^{r_1}}\||\widetilde{u}+w|^{p-2}(\widetilde{u}+w)-|\widetilde{u}|^{p-2}\widetilde{u}\|_{L_{I_j}^{\frac{q_2}{p-1}}L_x^{\frac{r_1}{p-1}}}\\
&+c_{N,\gamma}\||\widetilde{u}+w|^p-|\widetilde{u}|^p\|_{L_{I_j}^{\frac{q_2}{p}}L_x^{\frac{r_1}{p}}}\|\widetilde{u}\|^{p-1}_{L_{I_j}^{q_2}L_x^{r_1}}.
\end{align*}
Using \eqref{sdc1} and \eqref{sdc5} yields
\begin{align}\label{pert1}
\|W(\widetilde{u},w)\|_{S^{\prime}(\dot{H}^{-s_c};I_j)}\leq&\,\, c_{N,\gamma}\|\widetilde{u}+w\|^p_{L_{I_j}^{q_2}L_x^{r_1}}\|w\|_{L_{I_j}^{q_2}L_x^{r_1}}\left(\|\widetilde{u}+w\|^{p-2}_{L_{I_j}^{q_2}L_x^{r_1}}+\|\widetilde{u}\|^{p-2}_{L_{I_j}^{q_2}L_x^{r_1}}\right)\\\label{ltp2}
&+c_{N,\gamma}\|w\|_{L_{I_j}^{q_2}L_x^{r_1}}\left(\|\widetilde{u}+w\|^{p-1}_{L_{I_j}^{q_2}L_x^{r_1}}+\|\widetilde{u}\|^{p-1}_{L_{I_j}^{q_2}L_x^{r_1}}\right)\|\widetilde{u}\|^{p-1}_{L_{I_j}^{q_2}L_x^{r_1}}.
\end{align}
We use the fact that $(a+b)^p\underset{p}{\lesssim} a^p+b^p$ for the $\|\widetilde{u}+w\|_{L_{I_j}^{q_2}L_x^{r_1}}$ terms in \eqref{pert1} and \eqref{ltp2} to obtain
\begin{align}
\|W(\widetilde{u},w)\|_{S^{\prime}(\dot{H}^{-s_c};I_j)}
\lesssim&\,\,c_{N,\gamma} \left(\|\widetilde{u}\|^p_{L_{I_j}^{q_2}L_x^{r_1}}+\|w\|^p_{L_{I_j}^{q_2}L_x^{r_1}}\right)\|w\|_{L_{I_j}^{q_2}L_x^{r_1}}\left(\|w\|^{p-2}_{L_{I_j}^{q_2}L_x^{r_1}}+\|\widetilde{u}\|^{p-2}_{L_{I_j}^{q_2}L_x^{r_1}}\right)\notag\\
&+c_{N,\gamma}\|w\|_{L_{I_j}^{q_2}L_x^{r_1}}\left(\|\widetilde{u}\|^{p-1}_{L_{I_j}^{q_2}L_x^{r_1}}+\|w\|^{p-1}_{L_{I_j}^{q_2}L_x^{r_1}}\right)\|\widetilde{u}\|^{p-1}_{L_{I_j}^{q_2}L_x^{r_1}}\notag.
\end{align}
Since $(q_2,r_1)$ is a $\dot{H}^{s_c}$ admissible pair by our assumption $\|\widetilde{u}\|_{S(\dot{H}^{s_c};I_j)}\leq \delta$, we obtain
\begin{align*}
\|W(\widetilde{u},w)\|_{S^{\prime}(\dot{H}^{-s_c};I_j)}
\lesssim&\,\,c_{N,\gamma} \left(\delta^p+\|w\|^p_{L_{I_j}^{q_2}L_x^{r_1}}\right)\|w\|_{L_{I_j}^{q_2}L_x^{r_1}}\left(\|w\|^{p-2}_{L_{I_j}^{q_2}L_x^{r_1}}+\delta^{p-2}\right)\\
&+c_{N,\gamma}\|w\|_{L_{I_j}^{q_2}L_x^{r_1}}\left(\delta^{p-1}+\|w\|^{p-1}_{L_{I_j}^{q_2}L_x^{r_1}}\right)\delta^{p-1}.
\end{align*}
Substituting the above estimate in \eqref{pertest},
\begin{align}
\|w\|_{S(\dot{H}^{s_c};I_j)}\lesssim&\,\, \|e^{i(t-t_j)\Delta}w(t_j)\|_{S(\dot{H}^{s_c};I_j)}+ c_1\delta^p\|w\|^{p-1}_{S(\dot{H}^{s_c};I_j)}+2c_1\delta^{2(p-1)}\|w\|_{S(\dot{H}^{s_c};I_j)}\notag\\
&+c_1\delta^{p-2}\|w\|^{p+1}_{S(\dot{H}^{s_c};I_j)}+c_1\delta^{p-1}\|w\|^p_{S(\dot{H}^{s_c};I_j)}+c_1\|w\|^{2p-1}_{S(\dot{H}^{s_c};I_j)}+c\epsilon_0.\notag
\end{align}
Let $\|w\|_{S(\dot{H}^{s_c};I_j)}\leq\widetilde{c}\delta$. If $c_1\widetilde{c}\delta^{2(p-1)}\leq \frac{1}{12}$, by
choosing
$\delta\leq\min\left(1,\delta_1\right)$, where $\delta_1=\sqrt[2(p-1)]{\frac{1}{12c_1\widetilde{c}}}$ together with \eqref{epsiloncond}, we can make sure that at time $t_j$, $\|e^{i(t-t_j)\Delta}w(t_j)\|_{S(\dot{H}^{s_c})}\leq\epsilon_1$, where $\epsilon_1$ depends on $\epsilon_0$, thus, we take
\begin{align}\label{pertepsilon}
\|e^{i(t-t_j)\Delta}w(t_j)\|_{S(\dot{H}^{s_c};I_j)}+c\epsilon_0 \leq \min\left(1,\frac{\delta_1}{2}\right).
\end{align}
Therefore, \eqref{pertepsilon} ensures that,
\begin{align}\label{pertfinal}
\|w\|_{S(\dot{H}^{s_c};I_j)}&\leq 2\|e^{i(t-t_j)\Delta}w(t_j)\|_{S(\dot{H}^{s_c};I_j)}+2c\epsilon_0.
\end{align}
Taking $t=t_{j+1}$ in \eqref{pertinteg}, applying $e^{i(t-t_{j+1})\Delta}$ to both sides and repeating the similar argument used for \eqref{pertfinal} (since the Duhamel integral is confined to $I_j=[t_j,t_{j+1}]$), we obtain
$$
\|e^{i(t-t_{j+1})\Delta}w(t_{j+1})\|_{S(\dot{H}^{s_c})}\leq 2\|e^{i(t-t_j)\Delta}w(t_j)\|_{S(\dot{H}^{s_c})}+2c\epsilon_0.
$$
Iterating down to $j=0$ and using \eqref{epsiloncond}, we get
\begin{align}\label{useinconcl}
\|e^{i(t-t_j)\Delta}w(t_j)\|_{S(\dot{H}^{s_c})}\leq 
 2^j\|e^{i(t-t_0)\Delta}w(t_0)\|_{S(\dot{H}^{s_c})}+(2^j-1)2c\epsilon_0\leq 2^{j+2}c\epsilon_0.
\end{align}
Now to satisfy the assumption \eqref{pertepsilon} for all intervals $I_j$, $0\leq j\leq n-1$, we require that
\begin{align}\label{smallepsi}
2^{n+2}c\epsilon_0 \leq \min\left(1,\frac{\delta_1}{2}\right).
\end{align}
This quantifies $\epsilon_0$ in terms of $n$ (number of time subintervals), which is determined by $A$ (given).
Hence, substituting $w=u-\tilde{u}$ on the left-hand side of \eqref{pertinteg} and applying Kato estimate \eqref{Katostri}, we obtain
\begin{align}
\|u\|_{S(\dot{H}^{s_c})}\leq \|e^{i(t-t_j)\Delta}w(t_j)\|_{S(\dot{H}^{s_c})} + c\|W(\widetilde{u},w)\|_{S^{\prime}(\dot{H}^{-s_c};I_j)}+c\epsilon_0+\|\widetilde{u}\|_{S(\dot{H}^{s_c})}\notag.
\end{align}
Thus, by repeating the argument used to deduce \eqref{pertfinal} and using \eqref{useinconcl} \eqref{ltp1} and \eqref{smallepsi}, we can conclude that
$$
\|u\|_{S(\dot{H}^{s_c})}\leq c(A).
$$	
\end{proof}

\section{Properties of Ground State}\label{propQ}

Now that we have local existence and that it was extended to get global existence of small data and $H^1$ scattering, we would like to study how large the initial data can be taken to continue enjoying the property of global existence and scattering. As in most focusing dispersive equations, there is typically a (sharp) threshold, which can be identified via the so-called ground state. However, one would need to know that such ground state solutions exist, whether they are unique (perhaps up to certain symmetries), and if ground state solutions can be obtained as minimizers of a certain functional (as it was originally done by Weinstein for the NLS in \cite{W83}). Minimization will identify the value of the threshold via some sharp constants of inequalities from which the functional is derived. We proceed along this route: we consider an appropriate interpolation inequality, set up a functional, minimize it and identify the sharp constant. One property that we do not know is if the minimizer is unique. Nevertheless, for the purpose of this work, it is sufficient to use the value of the sharp constant. 


We start with the Gagliardo-Nirenberg type inequality of convolution type. For brevity we denote 
$$
Z(u)=\int_{\R^N}\left(|x|^{-(N-\gamma)}\ast|u|^p\right)|u|^p\,dx.
$$
\begin{lemma}\label{GNineq} 
Suppose $p\geq 2$ and $0<\gamma<N$. Then
\begin{align}\label{eq:GN}
Z(u)\leq C_{GN}\|\nabla u\|_{L^2}^{Np-(N+\gamma)}\|u\|_{L^2}^{N+\gamma-(N-2)p}.
\end{align}
Moreover, the equality is attained on ground state solutions Q, which solve\footnote{In this equation we use the normalization for $Q$ as in Weinstein \cite{W83} when $\|Q\|_{L^2} = \|\nabla Q\|_{L^2} = Z(Q)$. Below we rescale $Q$ to have as elliptic equation with unit coefficients.}
\begin{align}\label{eq:finalground}
-\left(\frac{N+\gamma}{2p}-\frac{N-2}{2}\right)Q+\left(\frac{N}{2}-\frac{N+\gamma}{2p}\right)\Delta Q+\left(|x|^{-(N-\gamma)}\ast|Q|^p\right)|Q|^{p-2}Q=0,
\end{align}
and the sharp constant for \eqref{eq:GN} is attained at (any ground state) $Q$, which may be expressed as $C_{GN} = \|Q\|_{L^2({\mathbb{R}^N})}^{-2(p-1)}$.
\end{lemma}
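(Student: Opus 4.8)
The plan is to run Weinstein's variational argument, the one genuinely new ingredient being the handling of the nonlocal functional $Z$ under rearrangement and under weak limits.

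\emph{Step 1 (the inequality with a finite constant).} First I would estimate $Z(u)$ by the Hardy--Littlewood--Sobolev inequality (Lemma~\ref{HLS}) applied with the dual exponents $\frac{2N}{N-\gamma}$ and $\frac{2N}{N+\gamma}$, giving $Z(u)\le c_{N,p,\gamma}\,\||u|^p\|_{L^{2N/(N+\gamma)}}^{2}=c_{N,p,\gamma}\,\|u\|_{L^{2Np/(N+\gamma)}}^{2p}$, and then interpolate $\|u\|_{L^{2Np/(N+\gamma)}}$ between $L^2$ and $\dot H^1$ by classical Gagliardo--Nirenberg. In the intercritical range $0<s_c<1$ (the setting of this section) $\tfrac{2Np}{N+\gamma}$ lies strictly between $2$ and $\tfrac{2N}{N-2}$, so the interpolation exponent is in $(0,1)$, and matching powers by homogeneity produces exactly $\|\nabla u\|_{L^2}^{Np-(N+\gamma)}\|u\|_{L^2}^{N+\gamma-(N-2)p}$ with both exponents positive.

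\emph{Step 2 (the sharp constant as an attained infimum).} Introduce the Weinstein-type quotient
\[
J(u)=\frac{\|\nabla u\|_{L^2}^{\,Np-(N+\gamma)}\,\|u\|_{L^2}^{\,N+\gamma-(N-2)p}}{Z(u)},
\]
which a short computation shows is invariant under the two-parameter rescaling $u\mapsto \mu\,u(\lambda\,\cdot)$; thus $C_{GN}=\big(\inf_{0\ne u\in H^1}J(u)\big)^{-1}$, and it suffices to show the infimum is attained. Take a minimizing sequence $\{u_n\}$ and replace each $u_n$ by its symmetric-decreasing rearrangement: the P\'olya--Szeg\H{o} inequality does not increase $\|\nabla u_n\|_{L^2}$, rearrangement preserves $\|u_n\|_{L^2}$, and the Riesz rearrangement inequality — available because the kernel $|x|^{-(N-\gamma)}$ is radially nonincreasing — does not decrease $Z(u_n)$; hence $J(u_n)$ does not increase and we may assume each $u_n$ is radial and radially nonincreasing. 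Using the scaling freedom, normalize $\|u_n\|_{L^2}=\|\nabla u_n\|_{L^2}=1$, so $Z(u_n)=1/J(u_n)\to 1/\inf J>0$. The sequence is bounded in radial $H^1$, so by the Strauss radial compactness lemma a subsequence converges weakly in $H^1$ and strongly in $L^{2Np/(N+\gamma)}$ to some $Q$; the Hardy--Littlewood--Sobolev bound makes $Z$ continuous along this convergence, so $Z(Q)=\lim Z(u_n)>0$ (in particular $Q\ne 0$), while weak lower semicontinuity gives $\|Q\|_{L^2}\le 1$ and $\|\nabla Q\|_{L^2}\le1$. Therefore $J(Q)\le 1/Z(Q)=\inf J$, so $Q$ is a minimizer (and the normalizations are in fact attained). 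Alternatively, one could invoke the existence of ground states for the Choquard equation \cite{MS13} and carry out only the identities below.

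\emph{Step 3 (Euler--Lagrange equation and the value of $C_{GN}$).} A minimizer satisfies $\tfrac{d}{d\varepsilon}J(Q+\varepsilon\varphi)\big|_{\varepsilon=0}=0$ for all test $\varphi$; computing the variations of the two $L^2$-type factors and of $Z$ produces a nonlocal semilinear elliptic equation, and fixing its coefficients by the two-parameter rescaling puts it in the form \eqref{eq:finalground} (a further rescaling then gives the unit-coefficient equation \eqref{nonlinell}). To evaluate the constant I would derive two scalar identities for a solution $Q$ of \eqref{eq:finalground}: the Nehari identity (pair the equation with $\bar Q$, integrate, take real part), which writes $Z(Q)$ as an explicit linear combination of $\|Q\|_{L^2}^2$ and $\|\nabla Q\|_{L^2}^2$, and the Pohozaev identity (pair with $x\cdot\nabla\bar Q$, integrate, take real part), where for the convolution term the contribution $-\tfrac{N+\gamma}{2p}Z(Q)$ is read off from differentiating $Z\big(Q(\lambda\,\cdot)\big)=\lambda^{-(N+\gamma)}Z(Q)$ at $\lambda=1$. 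The two identities force $\|\nabla Q\|_{L^2}^2=\|Q\|_{L^2}^2$ and hence $Z(Q)=\|Q\|_{L^2}^2$; substituting into the equality case $C_{GN}=Z(Q)\big/\big(\|\nabla Q\|_{L^2}^{\,Np-(N+\gamma)}\|Q\|_{L^2}^{\,N+\gamma-(N-2)p}\big)$ and using $\big(Np-(N+\gamma)\big)+\big(N+\gamma-(N-2)p\big)=2p$ collapses the numerical prefactor to $1$, leaving $C_{GN}=\|Q\|_{L^2}^{\,2-2p}=\|Q\|_{L^2}^{-2(p-1)}$.

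\emph{Main obstacle.} The delicate point is Step~2 — compactness of the minimizing sequence — and specifically the two interactions of the nonlocal term $Z$ with the limiting procedures: that rearrangement does not decrease $Z$ (needing the Riesz inequality for this particular Riesz kernel, rather than the pointwise monotonicity available for a local nonlinearity), and that $Z$ is continuous along weak-$H^1$/strong-$L^{2Np/(N+\gamma)}$ convergence on the radial class (which is exactly where the Strauss lemma and Hardy--Littlewood--Sobolev enter). Finally, I emphasize that uniqueness of $Q$ is never invoked: every minimizer realizes the same value $\inf J$, hence the same $C_{GN}$, which is all the later arguments need.
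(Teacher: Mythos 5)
Your proposal is correct and follows essentially the same route as the paper's proof: the Weinstein quotient $J$, reduction to radially nonincreasing functions via P\'olya--Szeg\H{o} and Riesz rearrangement, normalization plus radial compactness to extract a minimizer, continuity of $Z$ through Hardy--Littlewood--Sobolev, then the Euler--Lagrange equation with Nehari/Pohozaev identities to evaluate $C_{GN}$. The only differences are cosmetic (your Step~1, establishing finiteness of $C_{GN}$ up front, and the remark that one could alternatively cite \cite{MS13}), so nothing further is needed.
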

	
\begin{remark}
We note that the ground state solutions $Q$ are positive, vanishing at infinity solutions, which are radial (modulo translations). These and other properties are investigated in \cite{MS13}, see also early works on the Hartree case in $\mathbb{R}^3$ in \cite{Lieb77}, \cite{Lieb83}, \cite{Lions80}, \cite{Lions84I}, \cite{Lions84II}. As we mentioned in the introduction the uniqueness is only known in the standard Hartree case $p=2, \gamma=2$ and $N \geq 3$ (also for $p=2+\epsilon$, $\gamma=2$ in dimension $N=3$).
\end{remark}	
	
\begin{proof}
We consider the Weinstein-type functional for functions $u \in H^1(\mathbb{R}^N)\setminus \{0\}$
\begin{equation}\label{E:W}
J(u)=\frac{\|u\|_{L^2}^{(N+\gamma)-(N-2)p} \, \|\nabla u\|_{L^2}^{Np-(N+\gamma)}}{Z(u)}.
\end{equation}
We mention that since we are interested in minimizing the value of $J$, replacing $u$ with its symmetric decreasing rearrangement will decrease both the $L^2$ norm and the $H^1$ norm (by Hardy-Littlewood and P\'olya-Szeg\"o inequalities). On the other hand, 
the symmetric decreasing rearrangement will increase the value of $Z(u)$ by Riesz's inequality, and thus, also will decrease the value of $J$. Hence, we can consider only radially symmetric functions $u = u(r)$, which are radially non-increasing (this is up to translations).

We proceed as in Weinstein \cite{W83} by defining 
$$
\eta = \inf \{ J(u): ~ u \in H^1_{rad}\setminus \{0\} \}.
$$ 
Since $J(u)>0$, there exists a minimizing sequence $\{u_k\}$ such that $\eta = 
\lim\limits_{k\to\infty}J(u_k)<\infty$. Note that if we set $u_{\lambda,\mu}=\mu u(\lambda x)$, then
$\|u_{\lambda,\mu}\|_{L^2}^2=\lambda^{-N}\mu^2 \|u\|_{L^2}^2 \quad \text{and} \quad 
\|\nabla u_{\lambda,\mu}\|_{L^2}^2 = \lambda^{2-N}\mu^2 \|\nabla u \|_{L^2}^2.$
By choosing $\lambda_k = {\|u_k \|_{L^2}}/{\| \nabla u_k \|_{L^2}}$ and 
$\mu_k = { \|u_k\|_{L^2}^{\frac{N}{2}-1}}/{ \| \nabla u_k \|_{L^2}^{\frac{N}{2}} } $, we obtain the sequence $\{ u_{\lambda_k,\mu_k} \}$, denoting it also by $\{u_k\}$, with $\|\nabla u_k \|_{L^2} = \|u_k \|_{L^2} = 1.$
Thus, $\{ u_k \}$ is a bounded non-negative sequence in $H^1$. Therefore, there exists $u^* \in H^1 \setminus \{0\}$, radial, nonnegative and non-increasing,  such that a subsequence of $\{u_k\}$ converges weakly in $H^1$ to $u^*$ with $\|u^*\|_{L^2} \leq 1$ and $\|\nabla u^*\|_{L^2}\leq 1$.

We next claim that $Z(u^*) = \lim\limits_{k \to \infty} Z(u_k)$, which is justified as follows: since $\{u_k\}$ is uniformly bounded in $\dot{H}^1_{rad}$, we have $u_k\rightarrow u^{*}$ in $L^{\frac{2Np}{N+\gamma}}$ (note that $2<\frac{2Np}{N+\gamma}<\frac{2N}{N-2}$). 
Now evaluating the difference, we obtain
\begin{align*}
Z(u_k)-Z(u^{*})=&\int_{\R^N}\left(|\cdot|^{-(N-\gamma)}\ast|u_k|^p\right)\big(|u_k|^p-|u^{*}|^p\big)\,dx\\
	&+\int_{\R^N}\left(|\cdot|^{-(N-\gamma)}\ast\big(|u_k|^p-|u^{*}|^p\big)\right)|u^{*}|^p\,dx\\
	\lesssim&\,\,\|u_k\|^p_{L^{\frac{2Np}{N+\gamma}}}\||u_k|^p-|u^{*}|^p\|_{L^{\frac{2N}{N+\gamma}}}+\||u_k|^p-|u^{*}|^p\|_{L^{\frac{2N}{N+\gamma}}}\|u^{*}\|^p_{\frac{2Np}{N+\gamma}} \xrightarrow[k \to \infty] {} 0.
\end{align*}
We can now conclude 
\begin{equation}\label{E:eta}
\eta \leq J(u^*) \leq \frac{1}{Z(u^*)} =\lim\limits_{k \to \infty}J(u_k)=\eta.
\end{equation}
This implies that 
$\|u^*\|_{L^2}= \|\nabla u^*\|_{L^2}=1$, and also $u_k\rightarrow u^*$ strongly in $H^1$. Therefore, $u^*$ is indeed a minimizer of $J$. 

Next we note that a minimizer $u^*$ satisfies the Euler - Lagrange equation
$$
\frac{d}{d\epsilon}\bigg|_{\epsilon=0}J(u^*+\epsilon h)=0\quad\text{for all}\quad h\in C^{\infty}_0,
$$
which, with $\|u^*\|_L^2= 1$ and $\|\nabla u^*\|_{L^2}= 1$, can be written as 
\begin{equation}\label{E:u}
-\left(\frac{N+\gamma}{2p}-\frac{N-2}{2}\right)u^*+\left(\frac{N}{2}-\frac{N+\gamma}{2p}\right)\Delta u^*+\eta\left(|x|^{-(N-\gamma)}\ast|u^*|^p\right)|u^*|^{p-1}=0.
\end{equation}
With equality in \eqref{E:eta}, we have $C_{GN} = \frac1{\eta} = Z(u^*)$.
Recall that $u^*$ is a positive, vanishing at infinity function, satisfying the above equation, thus, it is a ground state solution of \eqref{E:u} with the normalization $\|u^*\|_{L^2} = \|\nabla u^*\|_{L^2} = 1$. 
 
Setting $Q= {\eta}^{\frac{1}{2(p-1)}} u^*$, we obtain that $Q$ satisfies \eqref{eq:finalground}. With this rescaling, we have  $\|Q\|^2_{L^2} = \|\nabla Q\|^2_{L^2} = Z(Q) = \eta^{\frac1{2(p-1)}}$, and the sharp constant $C_{GN}=\frac1{\eta} \equiv 1/\|Q\|_{L^2}^{2(p-1)}$. Note that $\eta$ is the infimum, it uniquely determines $C_{GN}$ or such a quantity as $\|Q\|_{L^2}$.

One can also use another approach to find $C_{GN}$ and compute Pohozhaev identities for the equation \eqref{eq:finalground}: first, multiplying \eqref{eq:finalground} by $Q$ and integrating to obtain
\begin{equation}\label{poh1a}
\left(\frac{N+\gamma}{2p}-\frac{N-2}{2}\right)\|Q\|_{L^2}^2+\left(\frac{N}{2}-\frac{N+\gamma}{2p}\right)\|\nabla Q\|_{L^2}^2=Z(Q).
\end{equation}
Secondly, multiplying \eqref{eq:finalground} by $x\cdot\nabla Q$ and integrating, yields
\begin{equation}\label{poh1b}
\frac{N}{2}\left(\frac{N+\gamma}{2p}-\frac{N-2}{2}\right)\|Q\|_{L^2}^2+\frac{N-2}{2}\left(\frac{N}{2}-\frac{N+\gamma}{2p}\right)\|\nabla Q\|_{L^2}^2=\frac{N+\gamma}{2p}Z(Q),
\end{equation}
which also gives 
\begin{equation}\label{E:QZ}
Z(Q) = \|Q\|^2_{L^2}  = \| \nabla Q \|^2_{L^2}, 
\end{equation}
and substituting these values into \eqref{eq:GN}, we obtain
$\eta \equiv C_{GN, sharp} = \|Q\|_{L^2}^{-2(p-1)}$.
\end{proof}

\begin{remark}
It is convenient to rescale $Q$ as $Q(x) = \beta^{\frac{1}{2(p-1)}}\widetilde{Q}\left(\frac{\sqrt{\beta}}{\alpha} \, x\right)$, which gives the equation \eqref{nonlinell} (with all unit coefficients) for $Q$ instead of \eqref{eq:finalground} for $\tilde{Q}$. Here, $\alpha^2=\frac{N(p-1)-\gamma}{2p}$ and $\beta=\frac{N+\gamma-(N-2)p}{2p}$. From now on we only use $\widetilde{Q}$ (denoting it again by $Q$), solving \eqref{nonlinell} and the sharp constant  
\begin{equation}\label{poh1}
C_{GN} = \frac{2p}{N(p-1)-\gamma}\left(\frac{N+\gamma-(N-2)p}{N(p-1)-\gamma}\right)^{\frac{N(p-1)-\gamma}{2}-1} \, \frac1{\|Q\|_{L^2}^{2(p-1)}}.
\end{equation}
For future reference we also compute, 
\begin{equation}\label{poh2}
	M[Q]^{\theta}E[Q] = \frac{s_c(p-1)}{2s_c(p-1)+2}\|Q\|_{L^2}^{2\theta}\|\nabla Q\|_{L^2}^2
\end{equation}
and 
\begin{align}\label{poh3}
\|Q\|_{L^2}^{1-s_c}\|\nabla Q\|_{L^2}^{s_c}= \left(\frac{p\left(C_{GN}\right)^{-1}}{s_c(p-1)+1}\right)^{\frac{1}{2(P-1)}}.
\end{align}
	
\end{remark}

\section{Dichotomy: Global vs blow up solutions}\label{dich}
In this section we obtain the proof of Theorem \ref{main} part (1)(a) and part (2). We show that the condition in Theorem \ref{main} is sharp.  

\begin{theorem}\label{Dichotomy}
	Consider \eqref{gH} with $u_0 \in H^1(\R^N)$ and $0<s_c<1$. Assume that
	\begin{align}\label{eq:dichotomy1}
	\mathcal{M}\mathcal{E}[u_0]<1.
	\end{align}
	If
	\begin{align}\label{eq:dichotomy2}
	\mathcal{G}[u_0]<1,
	\end{align}
	then the solution $u(t)$ exists  for all $t\in\R$ (i.e., $I=\R$), and
	\begin{align}\label{eq:dichotomy3}
	\mathcal{G}[u(t)]<1.
	\end{align}
	If
	\begin{align}\label{eq:dichotomy4}
	\mathcal{G}[u_0]>1,
	\end{align}
	then for $t\in I = (-T,T)$
	\begin{align}\label{eq:dichotomy5}
	\mathcal{G}[u(t)]>1.
	\end{align}
	Moreover, if either $x|u_0|\in L^2(\R^N)$ or $u_0$ is radial, then $I$ is finite, and thus, the solution blows up in finite time.
\end{theorem}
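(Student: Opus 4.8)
\emph{Overall plan.} I would follow the variational/virial scheme of Holmer--Roudenko \cite{HR08}, the only new ingredient being the treatment of the nonlocal convolution terms.

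\emph{Step 1: variational trapping and global existence.} First feed the sharp Gagliardo--Nirenberg inequality \eqref{eq:GN} into the energy $E[u]=\frac12\|\nabla u\|_{L^2}^2-\frac1{2p}Z(u)$ and multiply by the scale-invariant factor $M[u]^{\theta}$. Writing $\sigma:=N(p-1)-\gamma=2s_c(p-1)+2$ and using $N+\gamma-(N-2)p=2p-\sigma=(\sigma-2)\theta$, this gives, for all $t$ in the maximal interval,
\begin{equation*}
\mathcal{ME}[u(t)]\ \ge\ f\!\left(\mathcal{G}[u(t)]^2\right),\qquad f(y)=\Big(1+\tfrac1{s_c(p-1)}\Big)y-\tfrac1{s_c(p-1)}\,y^{\sigma/2},
\end{equation*}
where the coefficient of $y$ equals $\|\nabla Q\|_{L^2}^2/(2E[Q])=1+\tfrac1{s_c(p-1)}$ by \eqref{poh2}, and the nonlinear coefficient is fixed by the requirement $f(1)=1$, which is exactly the statement that $C_{GN}$ is attained at $Q$ (with the Pohozhaev relations \eqref{E:QZ}). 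Since $s_c>0$ we have $\sigma/2>1$, so $f$ is strictly concave on $(0,\infty)$ with $f(0)=0$, $f'(1)=0$, $\max f=f(1)=1$, and hence $\{y>0:f(y)\le\mathcal{ME}[u_0]\}=(0,y_-]\cup[y_+,\infty)$ with $0\le y_-<1<y_+$. By conservation of mass and energy, $\mathcal{ME}[u(t)]=\mathcal{ME}[u_0]<1$, so $\mathcal{G}[u(t)]^2$ stays in $(0,y_-]\cup[y_+,\infty)$; as $t\mapsto\mathcal{G}[u(t)]^2$ is continuous ($u\in C(I;H^1)$) it remains in the component it starts in. If $\mathcal{G}[u_0]<1$ then $\mathcal{G}[u_0]^2\le y_-$, so $\mathcal{G}[u(t)]^2\le y_-<1$ for all $t$, which is \eqref{eq:dichotomy3}; mass conservation then gives $\sup_{t}\|u(t)\|_{H^1}<\infty$, and since $s_c<1$ the local existence time in Proposition \ref{lwp} depends only on $\|u(t)\|_{H^1}$, so a continuation argument yields $I=\R$. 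If $\mathcal{G}[u_0]>1$, the same dichotomy forces $\mathcal{G}[u(t)]^2\ge y_+>1$ for all $t\in I$, i.e.\ \eqref{eq:dichotomy5}.

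\emph{Step 2: finite-time blow-up, finite variance.} Suppose $\mathcal{G}[u_0]>1$ and $|x|u_0\in L^2$, and set $V(t)=\int_{\R^N}|x|^2|u(x,t)|^2\,dx$. The virial identity for \eqref{gH} is $V''(t)=8\|\nabla u(t)\|_{L^2}^2-\frac{4\sigma}{p}Z(u(t))$; removing $Z(u(t))$ with the conserved energy and invoking \eqref{poh2} reduces it to
\begin{equation*}
V''(t)=8\,s_c(p-1)\,\frac{\|Q\|_{L^2}^{2\theta}\,\|\nabla Q\|_{L^2}^{2}}{\|u_0\|_{L^2}^{2\theta}}\,\big(\mathcal{ME}[u_0]-\mathcal{G}[u(t)]^2\big).
\end{equation*}
By Step 1, $\mathcal{G}[u(t)]^2\ge y_+>1>\mathcal{ME}[u_0]$, so $V''(t)\le-c_0<0$ uniformly in $t$, with $c_0$ depending only on $u_0$, $Q$, and the gap $y_+-\mathcal{ME}[u_0]$. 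Since $V\ge0$, two integrations force $V$ to vanish at a finite time — a contradiction — so $T_*$ and $T^*$ are finite.

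\emph{Step 3: finite-time blow-up, radial case; the main obstacle.} For radial $u_0$ of possibly infinite variance I would run the same convexity argument with $V_R(t)=\int_{\R^N}\varphi_R(x)|u(x,t)|^2\,dx$, where $\varphi_R(x)=R^2\varphi(|x|/R)$, $\varphi$ smooth, $\varphi(r)=r^2$ for $r\le1$, $\varphi$ bounded, and $\varphi''\le2$ everywhere. The bound $\varphi''\le2$ keeps the Hessian contribution to $V_R''$ no larger than $8\|\nabla u\|_{L^2}^2$, the bilaplacian term is $O(R^{-2}\|u_0\|_{L^2}^2)$, and one arrives at
\begin{equation*}
V_R''(t)\le 8\,s_c(p-1)\,\frac{\|Q\|_{L^2}^{2\theta}\,\|\nabla Q\|_{L^2}^{2}}{\|u_0\|_{L^2}^{2\theta}}\,\big(\mathcal{ME}[u_0]-\mathcal{G}[u(t)]^2\big)+\mathcal{R}_R(t),
\end{equation*}
where $\mathcal{R}_R(t)$ collects the $\{|x|\ge R\}$-supported contributions of $\nabla\varphi_R-2x$ and $\Delta\varphi_R-2N$ acting on $(|x|^{-(N-\gamma)}\ast|u|^p)|u|^{p-2}u$. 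Splitting $|x|^{-(N-\gamma)}$ into a near and a far piece and combining Lemma \ref{HLS} with the radial decay $\||x|^{(N-1)/2}u\|_{L^\infty}\lesssim\|u\|_{L^2}^{1/2}\|\nabla u\|_{L^2}^{1/2}$ of Lemma \ref{rsob} should give $|\mathcal{R}_R(t)|\le CR^{-\beta}\|\nabla u(t)\|_{L^2}^{a}+O(R^{-2})$ with $\beta>0$ and $a<2$; since the leading term carries $-8s_c(p-1)\|\nabla u(t)\|_{L^2}^{2}$ and $\|\nabla u(t)\|_{L^2}$ is bounded below (from $\mathcal{G}[u(t)]^2\ge y_+$), the error is absorbed for $R$ large, so $V_R''(t)\le-\tfrac{c_0}{2}<0$, and finite-time blow-up follows as before. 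The genuinely nonroutine point is this estimate on $\mathcal{R}_R(t)$: in contrast with a local power nonlinearity, the cutoff derivatives hit both the convolution factor and the $|u|^{p-2}u$ factor, producing terms in which the Riesz potential is tested against functions that are not localized to $\{|x|\ge R\}$; extracting a bound whose power of $\|\nabla u(t)\|_{L^2}$ is strictly below $2$ — so that it can be swallowed by the leading virial term — is where the near/far decomposition of $|x|^{-(N-\gamma)}$, Lemma \ref{HLS}, and Lemma \ref{rsob} have to be used together.
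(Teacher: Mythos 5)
Your proposal is correct and takes essentially the same route as the paper: the Gagliardo--Nirenberg/Pohozhaev trapping argument in Step 1 (the paper records exactly the inequality $\mathcal{ME}[u]\geq \frac{s_c(p-1)+1}{s_c(p-1)}\mathcal{G}[u(t)]^2-\frac{1}{s_c(p-1)}\mathcal{G}[u(t)]^{2s_c(p-1)+2}$ and then defers to \cite{HR08}, \cite{DHR08}), the same unlocalized virial convexity for the finite-variance case, and the same localized virial with HLS and radial Sobolev for the radial case, including your correct identification of the crucial point that the remainder's power of $\|\nabla u\|$ must be strictly below $2$ so that it can be absorbed. The only cosmetic difference is in Step 3: you split the Riesz kernel into near/far pieces, whereas the paper splits the $(x,y)$-domain into the regions $|x|\approx|y|$ and $\max\{|x|,|y|\}\gg\min\{|x|,|y|\}$ before applying H\"older, Lemma~\ref{HLS}, and Lemma~\ref{rsob}; both lead to the same remainder bound $CR^{-\beta}\|\nabla u\|_{L^2}^{(N(p-1)-\gamma)/N}\|u\|_{L^2}^{c_2}$ followed by Young's inequality.
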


The proof of this theorem goes along the established convexity arguments and the relevant Gagliardo-Nirenberg inequality with its sharp constant, we include it partially for completeness and also since the constants and coefficients are specific for the generalized Hartree case. The localized virial part deals with the convolution term, and thus, is new. 

\begin{proof} 
Using the energy conservation and \eqref{eq:GN}, we have
\begin{align}\label{eq:dichotomy6}
\mathcal{M}\mathcal{E}&[u]=\left(\frac{1}{2}\|\nabla u\|_{L^2(\R^N)}^2\|u_0\|_{L^2(\R^N)}^{2\theta}-\frac{1}{2p}Z(u)\|u_0\|_{L^2(\R^N)}^{2\theta	}\right)\frac{1}{M[Q]^{\theta}E[Q]}\notag \\
\geq &\left(\frac{1}{2}\|\nabla u\|_{L^2(\R^N)}^2\|u_0\|_{L^2(\R^N)}^{2\theta}-\frac{C_{GN}}{2p}\left(\|\nabla u\|_{L^2}\|u_0\|_{L^2}^{\theta}\right)^{2s_c(p-1)+2}\right)\frac{1}{M[Q]^{\theta}E[Q]}.
\end{align}
Using \eqref{poh2} and \eqref{poh3} and the value of $C_{GN}$, we get
\begin{align*}
\mathcal{M}\mathcal{E}[u] \geq\frac{s_c(p-1)+1}{s_c(p-1)}\mathcal{G}[u(t)]^2 - \frac{1}{s_c(p-1)}\left(\mathcal{G}[u(t)]\right)^{2s_c(p-1)+2}.
\end{align*}
Now the proof of \eqref{eq:dichotomy3} and \eqref{eq:dichotomy5} follows the same argument as in \cite{HR08}, \cite{DHR08} (see \cite{AKA2} for details).

Next if, $x u_0 \in L^2(\R^N)$, we write the virial identity as
\begin{align}\label{virial}
V_{tt} = 16(s_c(p-1)+1)E[u_0]-8s_c(p-1)\|\nabla u\|^2_{L^2(\R^N)}.
\end{align}
Multiplying the virial identity by $M[u_0]^{\theta}$ and proceeding as in \cite{HR08}, \cite{G}, we get
\begin{align*}
M[u_0]^{\theta}V_{tt} <-8s_c(p-1)\delta M[Q]^{\theta}\|\nabla Q\|^2_{L^2}<0,
\end{align*}
which by the convexity argument implies that the time interval $I$ must be finite, thus, blow-up occurs in finite time. 

If $u_0$ is radial, define $\phi\in C^{\infty}(\R)$, 
$$
\phi(|x|)=\begin{cases}
\frac{|x|^2}{2}&\quad 0\leq |x| \leq 2\\
1&\quad\quad r\geq 3
\end{cases}
$$
such that $\phi$ is smooth for $2<r<3$ and $\partial_r^2\phi(r)\leq 1$ for all $r\geq 0$. Now, for $R>0$ large, let $\phi_R=R^2\phi\left(\frac{|x|}{R}\right) $. Define the localized variance
$$
V_{loc}(t)=\int_{}^{}\phi_R(x)|u(x,t)|^2\,dx
$$
and compute the second derivative to obtain
\begin{align}\label{dichotomy9}
	\partial_t^2V_{loc}(t)
	=&\,4\int_{\R^N}\phi_R''|\nabla u|^2\,dx - \int_{\R^N}\Delta^2\phi_R|u|^2\,dx\\\label{dichotomy10}
	&-\frac{2(p-2)}{p}\int_{\R^N}\Delta\phi_R\frac{|u(x)|^p|u(y)|^p}{|x-y|^{N-\gamma}}\,dxdy\\\label{dichotomy11}
	&-\frac{4(N-\gamma)}{p}\int_{\R^N}\int_{\R^N}\nabla\phi_R\frac{(x-y)|u(x)|^p|u(y)|^p}{|x-y|^{N-\gamma+2}}\,dxdy.
\end{align}
We bound the two terms in \eqref{dichotomy9} using $\Delta\phi_R=N$  and $\Delta^2\phi_R=0$ for $|x|\leq 2R$ as follows
\begin{align}\label{dichotomy12}
	4\int\phi_R''|\nabla u|^2\,dx\leq&\, 4\int_{\R^N}|\nabla u|^2\,dx,\\\label{dichotomy13}
	- \int\Delta^2\phi_R|u|^2\,dx\leq&\, \frac{c}{R^2}\int_{2R<|x|<3R}^{}|u|^2\,dx.
	\end{align}
	Estimate \eqref{dichotomy10} using again the fact that $\Delta\phi_R(r)=N$ 
	\begin{align}\notag
	&-\frac{2(p-2)}{p}\int_{\R^N}\Delta\phi_R(|x|^{-(N-\gamma)}\ast|u|^p)|u|^p\,dx\\\notag
	\leq &-\frac{2N(p-2)}{p}\int_{|x|\leq 2R}(|x|^{-(N-\gamma)}\ast|u|^p)|u|^p\,dx+
	\frac{2c(p-2)}{p}\int_{2R<|x|<3R}(|x|^{-(N-\gamma)}\ast|u|^p)|u|^pdx\\\label{dichotomy14}
	\leq&-\frac{2N(p-2)}{p}\int_{\R^N}(|x|^{-(N-\gamma)}\ast|u|^p)|u|^pdx+c_1\int_{|x|>2R}(|x|^{-(N-\gamma)}\ast|u|^p)|u|^p\,dx.
	\end{align}
	Next we turn our attention to the term in \eqref{dichotomy11}, which can be rewritten as 
	\begin{align*}\notag
	\eqref{dichotomy11}=-&\frac{4(N-\gamma)}{p}\int_{\R^N}\int_{\R^N}\frac{R}{|x|}\phi'\left(\frac{|x|}{R}\right)\frac{x(x-y)|u(x)|^p|u(y)|^p}{|x-y|^{N-\gamma+2}}\,dxdy\\\notag
	=-&\frac{4(N-\gamma)}{p}\int_{\R^N}\int_{\R^N}\frac{x(x-y)|u(x)|^p|u(y)|^p}{|x-y|^{N-\gamma+2}}\,dxdy\\\notag
	&+\frac{4(N-\gamma)}{p}\int_{\R^N}\int_{\R^N}\left(1-\frac{R}{|x|}\phi'\left(\frac{|x|}{R}\right)\right)\frac{x(x-y)|u(x)|^p|u(y)|^p}{|x-y|^{N-\gamma+2}}\,dxdy\\
	=-&\frac{2(N-\gamma)}{p}\int_{\R^N}\int_{\R^N}\frac{|u(x)|^p|u(y)|^p}{|x-y|^{N-\gamma}}\,dxdy\\
	&+\frac{4(N-\gamma)}{p}\int_{\R^N}\int_{\R^N}\left(1-\frac{R}{|x|}\phi'\left(\frac{|x|}{R}\right)\right)\frac{x(x-y)|u(x)|^p|u(y)|^p}{|x-y|^{N-\gamma+2}}\,dxdy
	\end{align*}
Combining the above expression with \eqref{dichotomy12}, \eqref{dichotomy13} and \eqref{dichotomy14}, we write
\begin{align*}
	\partial_t^2V_{loc}(t)\leq&\,\, 4\int_{\R^N}^{}|\nabla u|^2 + \frac{c}{R^2}\int_{2R<|x|<3R}^{}|u|^2+c_1\int_{|x|>2R}(|x|^{-(N-\gamma)}\ast|u|^p)|u|^pdx\\
	& -\left(\frac{2N(p-2)}{p}+\frac{2(N-\gamma)}{p}\right)\int_{\R^N}(|x|^{-(N-\gamma)}\ast|u|^p)|u|^pdx\\ 
	&+\frac{4(N-\gamma)}{p}\int_{\R^N}\int_{\R^N}\left(1-\frac{R}{|x|}\phi'\left(\frac{|x|}{R}\right)\right)\frac{x(x-y)|u(x)|^p|u(y)|^p}{|x-y|^{N-\gamma+2}}dxdy.
	\end{align*}
	Writing the above inequality in terms of energy and gradient, we get
	\begin{align}\label{dichotomy15}
	\partial_t^2V_{loc}(t)	\leq&\,\, 4(N(p-1)-\gamma)E[u_0]- (2(N(p-1)-\gamma)-4)\int_{\R^N}^{}|\nabla u|^2\,dx\\\label{dichotomy16}
	&+ \frac{c}{R^2}\int_{2R<|x|<3R}^{}|u|^2\,dx +c_1\int_{|x|>2R}(|x|^{-(N-\gamma)}\ast|u|^p)|u|^p\,dx\\\label{dichotomy17}
	&+\frac{4(N-\gamma)}{p}\int_{\R^N}\int_{\R^N}\left(1-\frac{R}{|x|}\phi'\left(\frac{|x|}{R}\right)\right)\frac{x(x-y)|u(x)|^p|u(y)|^p}{|x-y|^{N-\gamma+2}}\,dxdy.
    \end{align}
The second term in the expression \eqref{dichotomy16} can be estimated as
\begin{align}\notag
	\int_{|x|>2R}^{}\left(|x|^{-(N-\gamma)}\ast|u|^p\right)|u|^p\,dx&\lesssim  \||x|^{-(N-\gamma)}\ast|u|^p\|_{L_{|x|>2R}^{\frac{2N}{N-\gamma}}}\|u\|^p_{L_{|x|>2R}^{\frac{2Np}{N+\gamma}}}\quad \text{(H\"older's)}\\\notag
	&\lesssim \|u\|^{2p}_{L_{|x|>2R}^{\frac{2Np}{N+\gamma}}}\quad \text{(Lemma \ref{HLS})}\\\label{dichotomy18}
	&\lesssim \frac{1}{R^{\frac{(N-1)(N(p-1)-\gamma)}{N}}}\|\nabla u\|^{\frac{N(p-1)-\gamma}{N}}_{L^2}\|u\|^{\frac{N(p+1)+\gamma}{N}}_{L^2}\,\,\text{(radial Sobolev)}.
\end{align}
We rewrite the integral in \eqref{dichotomy17}, using symmetry, as follows
\begin{align}\label{dichotomy19}
	\frac{1}{2}\int_{\R^N}\int_{\R^N}\left(\left(1-\frac{R}{|x|}\phi'\left(\frac{|x|}{R}\right)\right)x-\left(1-\frac{R}{|y|}\phi'\left(\frac{|y|}{R}\right)\right)y\right)\frac{(x-y)|u(x)|^p|u(y)|^p}{|x-y|^{N-\gamma+2}}dxdy,
\end{align}
which can be broken down into the following regions (observe that the integral vanishes in the region $|x|\leq 2R$);
\begin{itemize}
	\item Region I: $|x|\approx |y|.$ In this region we have
	$$
	|x|>2R,\,\,\,|y|> 2R.
	$$
	Observe that
	$$
	\left|\left(1-\frac{R}{|x|}\phi'\left(\frac{|x|}{R}\right)\right)x-\left(1-\frac{R}{|y|}\phi'\left(\frac{|y|}{R}\right)\right)y\right|\lesssim |x-y|.
	$$
	We estimate \eqref{dichotomy19} in a similar fashion as \eqref{dichotomy18} to obtain
	\begin{align}\label{dichotomy20}
		\int\int\frac{\chi_{|y|>2R}|u(y)|^p}{|x-y|^{N-\gamma}}\chi_{|x|>2R}|u(x)|^p\,dxdy\lesssim\frac{1}{R^{\frac{(N-1)(N(p-1)-\gamma)}{N}}}||\nabla u||^{\frac{N(p-1)-\gamma}{N}}_{L^2}||u||^{\frac{N(p+1)+\gamma}{N}}_{L^2}.
	\end{align} 	
	\item Region II: $\max\{|x|,|y|\}\gg\min\{|x|,|y|\}$ and $\max\{|x|,|y|\}>2R.$ We consider two cases:
	\begin{itemize}
		\item Case (a):	$|x|\ll|y|\approx |x-y|,\quad |y|>2R$ and $|x|<2R$. In this case \eqref{dichotomy19} becomes
		$$
		\int\int\frac{1}{|x-y|^{N-\gamma}}\,\chi_{|y|>2R}|u(y)|^p\,|u(x)|^p\,dxdy,
		$$
		since using the triangle inequality and  the definition of $\phi$, we have
		\begin{align*}
			\Big|&\left(1-\frac{R}{|x|}\phi'\left(\frac{|x|}{R}\right)\right)x-\left(1-\frac{R}{|y|}\phi'\left(\frac{|y|}{R}\right)\right)y\Big|\\
			&\leq |x|\left(1-\frac{R}{|x|}\phi'\left(\frac{|x|}{R}\right)\right)+|y|\left(1-\frac{R}{|y|}\phi'\left(\frac{|y|}{R}\right)\right)\\
			&\lesssim |y|\approx|x-y|
		\end{align*}
		since $1-\frac{R}{|x|}\phi'\left(\frac{|x|}{R}\right)<1$ and $1-\frac{R}{|y|}\phi'\left(\frac{|y|}{R}\right)>\frac{1}{2}$. Again using H\"older's inequality, Lemma \ref{HLS} and  radial Sobolev as in \eqref{dichotomy18}, we bound the above integral by
		\begin{align}\label{dichotomy21}
		\frac{1}{R^{\frac{(N-1)(N(p-1)-\gamma)}{N}}}||\nabla u||^{\frac{N(p-1)-\gamma}{N}}_{L^2}||u||^{\frac{N(p+1)+\gamma}{N}}_{L^2}.
		\end{align}
		\item Case (b): $|y|\ll|x|\approx |x-y|,\quad |x|>2R$ and $|y|<2R$. This case is symmetric and treated with a similar argument as in Case (a).
	\end{itemize}
\end{itemize}
Combining \eqref{dichotomy18}, \eqref{dichotomy20} and \eqref{dichotomy21}, we get
\begin{align*}
	\partial_t^2V_{loc}(t)\leq&\,8(s_c(p-1)+1)E[u_0]- 4s_c(p-1)\int_{\R^N}^{}|\nabla u|^2+ \frac{c}{R^2}\int_{2R<|x|<3R}^{}|u|^2\\
	&+\frac{\tilde{c}}{R^{\frac{(N-1)(N(p-1)-\gamma)}{N}}}\|\nabla u\|^{\frac{N(p-1)-\gamma}{N}}_{L^2}\|u\|^{\frac{N(p+1)+\gamma}{N}}_{L^2}.
\end{align*}
Using Young's inequality to separate the $L^2$ norm and gradient term in
the last term, we obtain
\begin{align*}
	\partial_t^2V_{loc}(t)\leq&\,8(s_c(p-1)+1)E[u_0]- 4s_c(p-1)\int_{\R^N}^{}|\nabla u|^2+ \frac{c}{R^2}\int_{2R<|x|<3R}^{}|u|^2\\
	&+\epsilon\,\|\nabla u\|_{L^2}^2+\frac{c(\epsilon,N)}{R^{\frac{2(N-1)(N(p-1)-\gamma)}{N(3-p)+\gamma}}}\|u\|^\frac{2(N(p+1)+\gamma)}{N(3-p)+\gamma}_{L^2}.
\end{align*}
 Multiplying the above expression by $M[u_0]^{\theta}$ and using the similar argument as in the case of finite variance, we get
\begin{align*}
	M[u_0]^{\theta}\partial_t^2V_{loc}(t)\leq&\,8(s_c(p-1)+1)M[u_0]^{\theta}E[u_0]- (4s_c(p-1)-\epsilon)\|u\|^{2\theta}_{L^2}\|\nabla u\|^2_{L^2}\\
	&+ \frac{c}{R^2}\|u\|^{2+2\theta}_{L^2}+\frac{c(\epsilon,N)}{R^{\frac{2(N-1)(N(p-1)-\gamma)}{N(3-p)+\gamma}}}\|u\|^{\frac{2(N(p+1)+\gamma)}{N(3-p)+\gamma}+2\theta}_{L^2},
	 \end{align*}
	 which can be re-written as
	 \begin{align*}
	M[u_0]^{\theta}\partial_t^2V_{loc}(t)\leq\,4s_c(p-1)(1-\delta_1)&M[Q]^{\theta}\|\nabla Q\|_{L^2}^2- (4s_c(p-1)-\epsilon)(1+\delta_2)	M[Q]^{\theta}\|\nabla Q\|_{L^2}^2 \\
	&+ \frac{c}{R^2}\|u\|^{2+2\theta}_{L^2}+\frac{c(\epsilon,N)}{R^{\frac{2(N-1)(N(p-1)-\gamma)}{N(3-p)+\gamma}}}\|u\|^\frac{2(N(p+1)+\gamma)}{N(3-p)+\gamma}_{L^2}
	.
\end{align*}
Choose $$
0<\epsilon<\frac{4s_c(p-1)(\delta_1+\delta_2)}{1+\delta_2}
$$
and $R=R(\epsilon, \delta_1,N,p,\gamma, M[u_0])$ large enough to obtain
\begin{align*}
	M[u_0]^{\theta}\partial_t^2V_{loc}(t)\leq-c(\epsilon,N,p,\gamma),
\end{align*}
where $c(\epsilon,N,p,\gamma)>0$, implying that the maximum interval of existence $I$ is finite.
\end{proof}
The following lemmas provide some additional estimates that will be needed for the
compactness and rigidity results in Section \ref{comp}-\ref{rigid}. We state the Lemmas without proof as the arguments are similar to the ones presented in \cite{HR08}, \cite{G}. For more details, refer to \cite{AKA2}.   
\begin{lemma}[Comparison of Energy and Gradient]\label{EGcomp}
	Let $u_0\in H^1(\R^N)$ satisfy \eqref{eq:dichotomy1} and \eqref{eq:dichotomy2}. Then
	\begin{align}\label{E-G_comp}
	\frac{s_c(p-1)}{2s_c(p-1)+2}\|\nabla u\|_{L^2(\R^N)}^2 \leq E[u] \leq \frac{1}{2}\|\nabla u\|_{L^2(\R^N)}^2.
	\end{align}
\end{lemma}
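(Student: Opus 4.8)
The upper bound is immediate and does not use the hypotheses: since the Riesz kernel $|x|^{-(N-\gamma)}$ is positive and $|u|^p\ge 0$, the quantity $Z(u)=\int_{\R^N}(|x|^{-(N-\gamma)}\ast|u|^p)|u|^p\,dx$ is nonnegative, so directly from the definition of the energy $E[u]=\frac12\|\nabla u\|_{L^2}^2-\frac1{2p}Z(u)\le\frac12\|\nabla u\|_{L^2}^2$.

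For the lower bound the plan is to proceed as in \cite{HR08}, \cite{G}. First I would invoke Theorem \ref{Dichotomy}, whose hypotheses are exactly \eqref{eq:dichotomy1} and \eqref{eq:dichotomy2} assumed here, to get that $u$ is global and $\mathcal{G}[u(t)]<1$ for all $t$. Next, the sharp Gagliardo--Nirenberg inequality \eqref{eq:GN} together with \eqref{poh2}, \eqref{poh3} and the value of $C_{GN}$ --- that is, the chain of estimates already carried out in \eqref{eq:dichotomy6} and the line following it --- yields, for every $t$,
\[
\mathcal{ME}[u]\ \ge\ \frac{s_c(p-1)+1}{s_c(p-1)}\,\mathcal{G}[u(t)]^2-\frac{1}{s_c(p-1)}\,\mathcal{G}[u(t)]^{2s_c(p-1)+2}.
\]
Dividing by $\mathcal{G}[u(t)]^2>0$ and using $\mathcal{G}[u(t)]^{2s_c(p-1)}<1$ (valid since $\mathcal{G}[u(t)]<1$ and $s_c(p-1)>0$), the right-hand side is bounded below by $\frac{s_c(p-1)+1}{s_c(p-1)}-\frac{1}{s_c(p-1)}=1$, so $\mathcal{ME}[u]\ge\mathcal{G}[u(t)]^2$ for all $t$.

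Finally I would convert this back into a statement about $E[u]$ and $\|\nabla u\|_{L^2}^2$. From the definitions of $\mathcal{ME}$ and $\mathcal{G}$, together with $M[Q]^\theta=\|Q\|_{L^2}^{2\theta}$ and \eqref{poh2} (which gives $E[Q]=\frac{s_c(p-1)}{2s_c(p-1)+2}\|\nabla Q\|_{L^2}^2$), one has the identity
\[
E[u]=\frac{s_c(p-1)}{2(s_c(p-1)+1)}\,\frac{\mathcal{ME}[u]}{\mathcal{G}[u(t)]^2}\,\|\nabla u(t)\|_{L^2}^2,
\]
and since $\mathcal{ME}[u]/\mathcal{G}[u(t)]^2\ge 1$ the claimed lower bound follows. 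The argument is entirely parallel to the NLS case; the only points to watch are the bookkeeping of the exponents specific to the convolution nonlinearity (in particular the identity $Np-(N+\gamma)=2s_c(p-1)+2$, which makes the power of $\|\nabla u\|_{L^2}$ in \eqref{eq:GN} equal to $2s_c(p-1)+2$) and the fact, used in deriving \eqref{eq:GN}, that $Z(u)$ is controlled at the $L_x^{2Np/(N+\gamma)}$ level via Lemma \ref{HLS}. I do not expect any genuine analytic obstacle beyond what Lemma \ref{GNineq} and Theorem \ref{Dichotomy} already supply.
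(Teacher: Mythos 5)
Your proposal is correct and takes essentially the same route that the paper (which omits the proof, referring instead to \cite{HR08}, \cite{G}, and \cite{AKA2}) has in mind: the upper bound is immediate from $Z(u)\geq 0$, and the lower bound follows from Theorem \ref{Dichotomy} to propagate $\mathcal{G}[u(t)]<1$, then the Gagliardo--Nirenberg bound \eqref{eq:dichotomy6} giving $\mathcal{ME}[u]\geq \frac{s_c(p-1)+1}{s_c(p-1)}\mathcal{G}[u(t)]^2-\frac{1}{s_c(p-1)}\mathcal{G}[u(t)]^{2s_c(p-1)+2}$, dividing by $\mathcal{G}[u(t)]^2$, and unwinding the renormalized quantities via \eqref{poh2}--\eqref{poh3}. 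The exponent bookkeeping ($N(p-1)-\gamma=2s_c(p-1)+2$ and $N+\gamma-(N-2)p=2(p-1)(1-s_c)$) and the final identity $E[u]=\frac{s_c(p-1)}{2(s_c(p-1)+1)}\cdot\frac{\mathcal{ME}[u]}{\mathcal{G}[u(t)]^2}\cdot\|\nabla u(t)\|_{L^2}^2$ all check out.
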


\begin{lemma}[Lower bound on the convexity of variance]\label{bndconvexvar}
	Let $u_0 \in H^1(\R^N)$ satisfy \eqref{eq:dichotomy1} and \eqref{eq:dichotomy2}. Then for all $t\in\R$
	\begin{align}\label{lowbndvar}
	16E[u]\left(1-(\mathcal{ME}[u])^{s_c(p-1)}\right)\leq 8 \left(\|\nabla u\|^2_{L^2} -  \frac{s_c(p-1)+1}{p}\,Z(u)\right).
	\end{align}
\end{lemma}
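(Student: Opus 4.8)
The plan is to reduce \eqref{lowbndvar} to a purely algebraic inequality between the scale-invariant quantities $\mathcal{G}[u(t)]$ and $\mathcal{ME}[u]$, and then to close that inequality using the sharp Gagliardo--Nirenberg bound \eqref{eq:GN} in the renormalized form already extracted in the proof of Theorem~\ref{Dichotomy}. Throughout write $a:=s_c(p-1)>0$.

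First I would rewrite both sides of \eqref{lowbndvar} in terms of $E[u]$, $\|\nabla u\|_{L^2}^2$ and $Z(u)$ only, via the identity $\tfrac1p Z(u)=\|\nabla u\|_{L^2}^2-2E[u]$ coming from the definition of the energy. The right-hand side of \eqref{lowbndvar} then equals $16(a+1)E[u]-8a\|\nabla u\|_{L^2}^2$, i.e.\ the virial quantity appearing in \eqref{virial} (using $E[u]=E[u_0]$), and a short rearrangement shows that \eqref{lowbndvar} is equivalent to
\[
\frac{a}{p}\,Z(u)\ \le\ 2\,E[u]\,\bigl(\mathcal{ME}[u]\bigr)^{a}.
\]
Next I would multiply by $M[u_0]^\theta=\|u_0\|_{L^2}^{2\theta}$ and use $M[u_0]^\theta E[u_0]=\mathcal{ME}[u]\,M[Q]^\theta E[Q]$, the value of $M[Q]^\theta E[Q]$ from \eqref{poh2}, the definition of $\mathcal{G}[u(t)]$, and once more $\tfrac1p Z(u)=\|\nabla u\|^2-2E[u]$; all the mass factors cancel and the displayed inequality becomes
\[
\mathcal{G}[u(t)]^2\ \le\ \frac{a\,\mathcal{ME}[u]+\bigl(\mathcal{ME}[u]\bigr)^{a+1}}{a+1}.
\]
Here I invoke Lemma~\ref{EGcomp} to know $E[u]\ge 0$, hence $\mathcal{ME}[u]\ge 0$, which legitimizes the manipulations above.

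To prove this last inequality I would use two facts already available from Theorem~\ref{Dichotomy} and its proof: the bound $\mathcal{ME}[u]\ge \frac{a+1}{a}\mathcal{G}[u(t)]^2-\frac1a \mathcal{G}[u(t)]^{2a+2}$ (displayed right after \eqref{eq:dichotomy6}), and the global-in-time control $\mathcal{G}[u(t)]<1$ from \eqref{eq:dichotomy3}. Writing $y:=\mathcal{G}[u(t)]^2\in[0,1)$, $m:=\mathcal{ME}[u]$ and $f(y):=\frac{a+1}{a}y-\frac1a y^{a+1}$, the first fact reads $m\ge f(y)\ge 0$, while $y\le 1$ gives $y^{a+1}\le y$, hence $f(y)\ge y$ and therefore $m^{a+1}\ge f(y)^{a+1}\ge y^{a+1}$. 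Combining $a m\ge (a+1)y-y^{a+1}$ (from $m\ge f(y)$) with $m^{a+1}\ge y^{a+1}$ yields $a m+m^{a+1}\ge (a+1)y$, which is exactly the desired inequality, and unwinding the two reductions gives \eqref{lowbndvar}.

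I do not anticipate any genuine obstacle: the lemma is essentially an elementary consequence of the sharp Gagliardo--Nirenberg inequality already encoded in Theorem~\ref{Dichotomy}. The only care required is the bookkeeping in the two reduction steps — tracking the exponents $\theta$ and $a$ (recall $2a+2=Np-(N+\gamma)$), the constants coming from \eqref{poh2}, and checking that the $\|u_0\|_{L^2}$ mass factors cancel exactly — together with the few sign/monotonicity facts used in the final step ($f(y)\ge 0$ on $[0,1]$, and $t\mapsto t^{a+1}$ increasing on $[0,\infty)$).
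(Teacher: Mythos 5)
Your argument is correct, and the route you take — rewriting the virial quantity via $\tfrac1pZ(u)=\|\nabla u\|_{L^2}^2-2E[u]$, renormalizing by $M[u_0]^\theta$ so everything collapses into the scale-invariant quantities $y=\mathcal G[u(t)]^2$ and $m=\mathcal{ME}[u]$, and then closing the resulting algebraic inequality $(a+1)y\le am+m^{a+1}$ using the Gagliardo--Nirenberg bound $m\ge f(y)=\tfrac{a+1}{a}y-\tfrac1a y^{a+1}$ together with $\mathcal G[u(t)]<1$ — is exactly the standard argument the paper defers to (Holmer--Roudenko and Guevara). All the algebra checks out: the two reduction steps are genuine equivalences, $f(y)\ge y$ on $[0,1]$ follows from $y^{a+1}\le y$, and adding $am\ge(a+1)y-y^{a+1}$ to $m^{a+1}\ge y^{a+1}$ gives the claim. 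One can even shortcut your appeal to Lemma~\ref{EGcomp}: the nonnegativity of $\mathcal{ME}[u]$ needed to make sense of $m^{a+1}$ already follows from $m\ge f(y)=\tfrac{y}{a}\bigl((a+1)-y^{a}\bigr)\ge 0$ for $y\in[0,1]$.
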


\begin{lemma}[Existence of wave operators]\label{waveop}
	Suppose $\psi^+ \in H^1(\R^N)$ and
	\begin{align}\label{op1}
	\|\psi^+\|_{L^2}^{2\theta}\|\nabla\psi^+\|_{L^2}^{2}\leq \mu^2\left(\frac{2s_c(p-1)+2}{s_c(p-1)}\right) M[Q]^{\theta}E[Q]
	\end{align}
	 for some $0<\mu\leq \left(\frac{s_c(p-1)}{2s_c(p-1)+2}\right)^{\frac{1}{2}}<1$. Then there exists $v_0 \in H^1(\R^N)$	such that $v(t)$, solving \eqref{gH} with initial data $v_0$, is global in $H^1(\R^N)$ with
	$$
	\|v_0\|_{L^2}^{\theta}\|\nabla v(t)\|_{L^2}\leq \|Q\|_{L^2}^{\theta}\|\nabla Q\|_{L^2},\quad M[v]=\|\psi^+\|_{L^2}^2,\quad E[v]=\frac{1}{2}\|\nabla\psi^+\|_{L^2}^2
	$$
	and
	$$
	\qquad \|v(t)-e^{it\Delta}\psi^+\|_{H^1}\rightarrow 0\qquad\qquad\text{as}\qquad\qquad t\rightarrow\infty .
	$$
	Moreover, if $\|e^{it\Delta}\psi^+\|_{S(\dot{H}^{s_c})}\leq\delta$, then
	$$
	\|v_0\|_{\dot{H}^{s_c}}\leq 2\|\psi^+\|_{\dot{H}^{s_c}}\qquad\text{and}\qquad \|v\|_{S(\dot{H}^{s_c})}\leq 2\|e^{it\Delta}\psi^+\|_{S(\dot{H}^{s_c})}.
	$$
\end{lemma}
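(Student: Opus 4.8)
The plan is to construct $v_0$ by solving \eqref{gH} backwards from $t=+\infty$ with prescribed asymptotic state $\psi^+$, then transport the mass-energy/gradient bounds from infinity back to all finite times. First I would split the construction into two regimes depending on the size of the linear evolution $e^{it\Delta}\psi^+$ in the $S(\dot{H}^{s_c})$ norm. For the large-time tail: since $\psi^+\in H^1$, by the Strichartz estimate \eqref{sobstri1} the quantity $\|e^{it\Delta}\psi^+\|_{S(\dot{H}^{s_c};[T,\infty))}$ tends to $0$ as $T\to\infty$, so I can pick $T$ large enough that this norm is below the small-data threshold $\delta_0$ from Proposition \ref{smalldata}. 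On $[T,\infty)$ I then run a fixed-point argument for the integral equation
$$
v(t)=e^{it\Delta}\psi^+ - i\int_t^{\infty}e^{i(t-t')\Delta}F(v(t'))\,dt', \qquad F(v)=(|x|^{-(N-\gamma)}\ast|v|^p)|v|^{p-2}v,
$$
using exactly the same contraction estimates as in the proof of Proposition \ref{smalldata} (the Kato-Strichartz estimate \eqref{Katostri} together with Lemma \ref{HLS} to handle the convolution term), obtaining a solution on $[T,\infty)$ with $\|v\|_{S(\dot{H}^{s_c};[T,\infty))}\leq 2\|e^{it\Delta}\psi^+\|_{S(\dot{H}^{s_c};[T,\infty))}$ and $\|v(t)-e^{it\Delta}\psi^+\|_{H^1}\to 0$ as $t\to\infty$ (the tail of the Duhamel integral vanishes, again by the argument in Theorem \ref{H1scatter}). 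Then I set $v_0 = v(T)$ if a genuine time $T$ was needed, or $v_0$ chosen so $v(0)$ is defined; in the small-data case where $\|e^{it\Delta}\psi^+\|_{S(\dot{H}^{s_c})}\leq\delta$ globally, one works directly on $[0,\infty)$ and reads off both conclusions $\|v_0\|_{\dot H^{s_c}}\leq 2\|\psi^+\|_{\dot H^{s_c}}$ and $\|v\|_{S(\dot H^{s_c})}\leq 2\|e^{it\Delta}\psi^+\|_{S(\dot H^{s_c})}$ from the fixed-point bounds.

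Next I would identify the conserved quantities of $v$. Since $v(t)-e^{it\Delta}\psi^+\to 0$ in $H^1$ and the free evolution is unitary on $L^2$ and on $\dot H^1$, while the potential energy $Z(v(t))\to 0$ as $t\to\infty$ (because $\|v(t)-e^{it\Delta}\psi^+\|_{L^{2Np/(N+\gamma)}}\to 0$ by Sobolev embedding and $\|e^{it\Delta}\psi^+\|_{L^{2Np/(N+\gamma)}}\to 0$ by the dispersive decay, using $2<\tfrac{2Np}{N+\gamma}<\tfrac{2N}{N-2}$ since $s_c<1$, and then Lemma \ref{HLS}), conservation of mass and energy force
$$
M[v]=\|\psi^+\|_{L^2}^2, \qquad E[v]=\tfrac12\|\nabla\psi^+\|_{L^2}^2.
$$
With these identities in hand, the hypothesis \eqref{op1} translates precisely into $M[v]^{\theta}E[v]\leq \mu^2\,M[Q]^{\theta}E[Q]$, i.e. $\mathcal{ME}[v]\leq\mu^2<1$; moreover, taking $t\to\infty$ in the renormalized gradient and using $Z(v(t))\to 0$ shows $\mathcal{G}[v(t)]\to$ a value which, combined with $\mathcal{ME}[v]<1$ and the restriction $\mu\leq(\tfrac{s_c(p-1)}{2s_c(p-1)+2})^{1/2}$, lands strictly below $1$ for large $t$. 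Then the dichotomy Theorem \ref{Dichotomy} (case \eqref{eq:dichotomy2}) applies, giving that $v$ is global with $\mathcal{G}[v(t)]<1$, equivalently $\|v_0\|_{L^2}^{\theta}\|\nabla v(t)\|_{L^2}\leq\|Q\|_{L^2}^{\theta}\|\nabla Q\|_{L^2}$ for all $t$; in particular the solution constructed on $[T,\infty)$ extends to all of $\R$.

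The main obstacle is the justification that $Z(v(t))\to 0$ as $t\to\infty$ together with the careful handling of the nonlocal term at each step of the fixed-point argument: unlike the local NLS, the difference $F(v)-F(\widetilde v)$ produces two groups of terms after adding and subtracting $(|x|^{-(N-\gamma)}\ast|v|^p)|\widetilde v|^{p-2}\widetilde v$, and each must be estimated by pairing an $L^{2N/(N-\gamma)}$ bound on the convolution (via Lemma \ref{HLS}) with the pointwise inequalities \eqref{sdc1}, \eqref{sdc5} — exactly as already carried out in Proposition \ref{smalldata}, so this is technical rather than conceptual. A secondary subtlety is matching the numerology: one must check that the constraint $0<\mu\leq(\tfrac{s_c(p-1)}{2s_c(p-1)+2})^{1/2}$ is exactly what is needed so that the limiting renormalized gradient satisfies $\mathcal{G}<1$ rather than $\mathcal{G}>1$, which is where Lemma \ref{EGcomp} (comparison of energy and gradient) enters to pin down the correct branch.
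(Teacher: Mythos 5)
Your proposal follows the paper's proof essentially verbatim: solve the backward-in-time Duhamel equation on $[T,\infty)$ via the small-data contraction (Kato–Strichartz plus Lemma~\ref{HLS} for the convolution), use the dispersive decay of $e^{it\Delta}\psi^+$ in $L^{2Np/(N+\gamma)}$ to show $Z(v(t))\to 0$ and read off $M[v]=\|\psi^+\|_{L^2}^2$, $E[v]=\tfrac12\|\nabla\psi^+\|_{L^2}^2$, and then invoke Theorem~\ref{Dichotomy} to evolve $v(T)$ back to $t=0$ globally. One small arithmetic caution: the intermediate claim $\mathcal{ME}[v]\leq\mu^2$ is slightly off — carrying the factor $\tfrac12$ from $E[v]$ through \eqref{op1} and \eqref{poh2} actually gives $\mathcal{ME}[v]\leq\tfrac12\,\mu^2\,\tfrac{2s_c(p-1)+2}{s_c(p-1)}\leq\tfrac12$, which still lands strictly below $1$ and leaves the argument intact.
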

\begin{proof} We consider the integral equation
	\begin{align}\label{Duhform}
	v(t) = e^{it\Delta}\psi^+ - i\int_{t}^{\infty} e^{i(t-t')\Delta}\left(\left(|x|^{-(N-\gamma)}\ast |u|^p\right)|u|^{p-2}u\right)(t')\, dt',
	\end{align}
	which we would like to solve for all $t$. Note that for $T > 0$ by Theorem \ref{smalldata} (small data theory) there exists $\delta > 0$ such that
	$\|e^{it\Delta}\psi^+\|_{S(\dot{H}^{s_c};[T,\infty))}\leq \delta$. Thus, we solve the equation \eqref{Duhform} in $H^1$ for $t \geq T$ with $T$ large. Estimating \eqref{Duhform} in $S(L^2)$ for $t\geq T$, we obtain
	\begin{align*}
	\|\nabla v\|_{S(L^2;[T,\infty))}&\lesssim \|e^{it\Delta}\nabla \psi^+\|_{S(L^2;[T,\infty))} + \|\nabla[(|\cdot|^{-(N-\gamma)}\ast|v|^p)|v|^{p-2}v]\|_{S'(L^2;[T,\infty))}\\
	&\lesssim \|\psi^+\|_{\dot{H}^1} + \| v\|^{2(p-1)}_{S(\dot{H}^{s_c};[T,\infty))}\|\nabla v\|_{S(L^2;[T,\infty))}.
	\end{align*}
	Taking $T$ sufficiently large so that $\|v\|^{2(p-1)}_{S(\dot{H}^{s_c};[T,\infty))}\leq \frac{1}{2}$, we get $\|\nabla v \|_{S(L^2;[T,\infty))}\lesssim 2\|\psi^+\|_{\dot{H}^1}.$
	Using the above inequality, we obtain in a similar fashion,
	\begin{align*}
	\|\nabla\left(v-e^{it\Delta}\psi^+\right)\|_{S(L^2;[T,\infty))}&\leq \|\nabla[(|\cdot|^{-(N-\gamma)}\ast|v|^p)|v|^{p-2}v]\|_{S'(L^2;[T,\infty))}\\
	&\leq \|v\|^{2(p-1)}_{S(\dot{H}^{s_c};[T,\infty))}\|\nabla v\|_{S(L^2;[T,\infty))}\\
	&\leq c\|\psi^+\|_{\dot{H}^1},
	\end{align*}
	hence, $\|\nabla\left(v-e^{it\Delta}\psi^+\right)\|_{S(L^2;[T,\infty))}\rightarrow 0$ as $T\rightarrow \infty$. Since, by Theorem \ref{H1scatter} ($H^1$ scattering), we have $v-e^{it\Delta}\psi^+\rightarrow 0$ in $H^1$ as $t\rightarrow\infty$ and the decay estimate together with the embedding $H^1(\R^N)\hookrightarrow L^q(\R^N)$ with $q\leq \frac{2N}{N-2}$ for $N\geq3$, $q<\infty$ for $N=2$ and $q\leq \infty$ for $N=1$ implies
	$$
	Z(e^{it\Delta}\psi^+)\lesssim \|e^{it\Delta}\psi^+\|_{L^{\frac{2Np}{N+\gamma}}}\leq |t|^{-\frac{Np-N-\gamma}{2p}}\|\psi^+\|_{H^1},
	$$
	thus, $Z\left(e^{it\Delta}\psi^+\right)\rightarrow 0$ in $L^{\frac{2Np}{N+\gamma}}$ as $t\rightarrow\infty$.
	Since $\lim_{t\rightarrow +\infty}\|v(t)\|_{H^1}=\|\nabla\psi^+\|_{H^1}$, we have
	\begin{align*}
	E[v]&=\frac{1}{2}\|\nabla v\|_{L^2}^2-\frac{1}{2p}\int (|x|^{-(N-\gamma)}\ast|v|^p)|v|^{p}\,dx\\
	&=\lim\limits_{t\rightarrow\infty}\left(\frac{1}{2}\|\nabla e^{it\Delta}\psi^+\|_{L^2}^2-\frac{1}{2p}\int \left(|x|^{-(N-\gamma)}\ast|e^{it\Delta}\psi^+|^p\right)|e^{it\Delta}\psi^+|^{p}\right)\\
	&=\frac{1}{2}\|\nabla\psi^+\|_{L^2}^2
	\end{align*}
	and $	M[v]=\lim\limits_{t\rightarrow\infty}\|e^{it\Delta}\psi^+\|_{L^2}^2=\|\psi^+\|_{L^2}^2.$	Note that by \eqref{op1} we now have
	\begin{align*}
	M[v]^{\theta}E[v]&= \frac{1}{2}\|\psi^+\|_{L^2}^{2\theta}\|\nabla\psi^+\|_{L^2}^{2}\leq \mu^2\left(\frac{2s_c(p-1)+2}{s_c(p-1)}\right)M[Q]^{\theta}E[Q]
	\end{align*}
	and by our choice of $\mu$ we conclude that $M[v]^{\theta}E[v] <M[Q]^{\theta}E[Q].$ Moreover,
	\begin{align*}
	\lim\limits_{t\rightarrow\infty}\|v(t)\|_{L^2}^{2\theta}\|\nabla v(t)\|_{L^2}^{2}&=\|\psi^+\|_{L^2}^{2\theta}\|\nabla\psi^+\|_{L^2}^{2}\\
	&\leq \mu^2\left(\frac{2s_c(p-1)+2}{s_c(p-1)}\right) M[Q]^{\theta}E[Q]\\
	&=\mu^2\|Q\|_{L^2}^{2\theta}\|\nabla Q\|_{L^2}^{2},
	\end{align*}
	where the inequality is due to \eqref{op1} and last equality is from \eqref{poh3}. We can take $T > 0$ sufficiently large so that $\|v(T)\|_{L^2}^{\theta}\|\nabla v(T)\|_{L^2}<\mu\|Q\|_{L^2}^{\theta}\|\nabla Q\|_{L^2}$. And, since $\mu<1$, by Theorem \ref{Dichotomy} (global existence of solutions), we evolve $v(T)$
	from time $T$ back to time $0$ and obtain $v$ with initial data $v_0 \in H^1$  for all time $t\in[0,\infty)$ with the desired properties.
\end{proof}

\section{Compactness }\label{comp}
\subsection{Blueprint}

To characterize the behavior of global solutions to \eqref{gH}, we must show that if $\mathcal{M}\mathcal{E}[u]<1$ and $\mathcal{G}[u_0]<1$, then the global-in-time $\dot{H}^{s_c}$ Strichartz norm is finite, i.e., $\|u\|_{S(\dot{H}^{s_c})}<\infty$. This would imply that $\|\nabla u (t)\|_{L^2}\leq C$ and thus, $I=(-\infty,\infty)$. For completeness we provide the blueprint below, which is based on the works of Holmer-Roudenko \cite{HR08}, Duyckaerts-Holmer-Roudenko \cite{DHR08} for the 3d cubic nonlinear Schr\"odinger equation and Kenig-Merle \cite{KM06} for the energy-critical NLS equation.\\

\noindent
\underline{\textit{First Stage:}} \textit{Small data theory}

Using Lemma \ref{EGcomp}, we have
$$
\|u_0\|_{\dot{H}^{s_c}}^{2(p-1)}\leq \left(\|u_0\|^{\theta}_{L^2}\|\nabla u_0\|_{L^2}\right)^{2s_c(p-1)}< \left(\frac{2p}{p-1}\right)^{s_c(p-1)}\left(M[u]^{\theta}E[u]\right)^{s_c(p-1)}.
$$
If $\mathcal{G}[u_0]<1 $ and $\mathcal{ME}[u]<\left(\frac{p-1}{2p}\right)\frac{\delta_{sd}^{2/s_c}}{M[Q]^{\theta}E[Q]}$, then from the above inequality, we obtain $\|u_0\|_{\dot{H}^{s_c}}\leq\delta_{sd}$, which  by Strichartz estimates gives $\|e^{it\Delta}u_0\|_{S(\dot{H}^{s_c})}\leq c\,\delta_{sd}$. Therefore, Theorem  \ref{smalldata} (small data theory) implies that there exists a $\delta>0$ such that if $\mathcal{G}[u_0]<1$ and $\mathcal{ME}[u]<\delta$, then $T^*=+\infty$ and $\|u_0\|_{\dot{H}^{s_c}}<\infty$. This gives us the basis for induction.\\

\noindent
\underline{\textit{Second stage:}} \textit{Construction of critical solution (via induction on scattering threshold) }

Let $(\mathcal{ME})_c$ be the supremum over all $\delta>0$ for which the following is true:

 ``If $u_0\in H^1(\R^N)$ with $\mathcal{G}[u_0]<1$ and $\mathcal{ME}[u]<\delta$ such that $\delta = \delta(M[Q]^{1-s}E[Q]^s)$, then $T^*=+\infty$ and $\|u_0\|_{\dot{H}^{s_c}}<\infty$.''

 If $(\mathcal{ME})_c=1$, then we are done, since $Q$ (soliton) does not scatter. So, we assume that $(\mathcal{ME})_c<1$. This implies (by definition of $(\mathcal{ME})_c$) that there exists a sequence of solutions $\{u_n\}$ to \eqref{gH} with initial data $u_{n,0}\in H^1(\R^N)$ that approach the threshold $(\mathcal{ME})_c$ from above but do not scatter, i.e., there exists a sequence $u_{n,0}\in H^1(\R^N)$ such that
 $$
 \mathcal{G}[u_{n,0}]<1\,\,\,\text{and}\,\,\,\mathcal{ME}[u_{n,0}]\searrow (\mathcal{ME})_c \,\,\,\text{as}\,\,n\rightarrow\infty
 $$
 for which $\|u_n\|_{S(\dot{H}^{s_c})}=+\infty$. Using the profile decomposition (Theorem \ref{linprodecomp}) on the sequence of initial data $\{u_{n,0}\}$, we prove the existence of an $H^1$ solution $u_c$ to \eqref{gH} with initial data $u_{c,0}$ such that $\mathcal{G}[u_{c,0}]<1$ and $\mathcal{ME}[u_c]=(\mathcal{ME})_c$ (i.e., it lies exactly at the  threshold $(\mathcal{ME})_c$), but $u_c$ does not scatter (Theorem \ref{crit.elem}).\\


\noindent
 \underline{\textit{Third stage:}} \textit{Localization of critical solution (setting the premise for rigidity theorem)}
 
 The critical solution $u_c(t)$, constructed in the second stage, will have the property that $K=\{u_c(t)\,\,|\,\,t\in[0,\infty)\} $ is precompact in $H^1(\R^N)$ (Proposition \ref{precompflow_crit}). This will allow us to show that for a given $\epsilon>0$, there is an $R>0$ such that
 $$
 \int_{|x+x(t)|>R}^{}|\nabla u(x,t_n)|^2\,dx\leq \epsilon
 $$
 uniformly in $t$ (Lemma \ref{unilocalization}). Together with the zero momentum hypothesis (Lemma \ref{zeromoment}), this controls the growth of path $x(t)$ (Lemma \ref{pathcontrol}).\\

 \noindent
 \underline{\textit{Final Stage:}} \textit{Rigidity theorem (Theorem \ref{rigidity})}

 Appealing to this uniform localization and control of $x(t)$, we invoke the Rigidity theorem, which leads to contradiction that such compact solution in $H^1$ exists unless it is a trivial solution, which scatters. Therefore, the assumption $(\mathcal{ME})_c<1$ is not valid, concluding the proof. 
 
We now fill in the necessary details.
\subsection{Profile decomposition}
\begin{theorem}[Linear Profile decomposition]\label{linprodecomp}
Let $\phi_n(x)$ be a uniformly bounded sequence in $H^1(\R^N)$. Then for each $M \in \N$ there exists a subsequence of
	$\phi_n(x)$ (also denoted $\phi_n(x)$), such that, for each $1 \leq j \leq M$, \begin{enumerate}
		\item there exist, fixed in $n$, a profile  $\psi^j \in H^1(\R^N)$,
		\item there exists a sequence (in $n$) $t^j_n$ of time shifts,
		\item there exists a sequence (in $n$) $x^j_n$ of space shifts,
		\item there exists a sequence (in $n$) $W^M_n(x)$
		of remainders in $H^1(\R^N)$, such that
		\begin{equation}\label{NLPD_crit}
			\phi_n(x) = \sum_{j=1}^{M}e^{-it^j_n\Delta}\psi^j(x-x^j_n)+W^M_n(x)
		\end{equation}
	\end{enumerate}
	with the properties:
	\begin{itemize}
		\item Pairwise divergence for the time and space sequences. For $1 \leq k \neq j \leq M$,
		\begin{align}\label{pairdivg}
			\lim\limits_{n\rightarrow\infty} |t_n^j-t_n^k|+|x_n^j-x_n^k|=+\infty.
		\end{align}
		\item Asymptotic smallness for the remainder sequence
		\begin{align}\label{smallrem}
			\lim\limits_{M\rightarrow\infty}\left(\lim\limits_{n\rightarrow\infty}\|e^{it\Delta}W_n^M\|_{S(\dot{H}^{s_c})}\right)=0.
		\end{align}
		\item Asymptotic Pythagorean expansion. For fixed $M\in\N$ and for any $0\leq s \leq 1$, we have
		\begin{align}\label{pythagexp}
			\|\phi_n\|_{\dot{H}^s}=\sum_{j=1}^{M}\|\psi^j\|^2_{\dot{H^s}}+\|W_n^M\|^2_{\dot{H^s}}+o_n(1).
		\end{align}
	\end{itemize}
\end{theorem}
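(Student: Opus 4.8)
The plan is to establish the linear profile decomposition by iterating a single "profile extraction" step, following the now-standard Keraani-type construction adapted to the $H^1$-bounded setting. First I would prove the extraction lemma: if $\{v_n\}$ is bounded in $H^1(\R^N)$ and $\limsup_n \|e^{it\Delta}v_n\|_{S(\dot H^{s_c})} = \eta > 0$, then there exist a profile $\psi\in H^1$, time shifts $t_n$ and space shifts $x_n$ such that, along a subsequence, $e^{it_n\Delta}v_n(\cdot + x_n) \rightharpoonup \psi$ weakly in $H^1$, with a quantitative lower bound $\|\psi\|_{H^1}\gtrsim \eta^{\alpha}$ (for some power $\alpha$ depending on $N,p,\gamma$) coming from interpolating the $S(\dot H^{s_c})$-norm between a dispersive/Sobolev bound and the $H^1$-bound, combined with the refined Fatou/Brezis-Lieb type inequality for the free propagator. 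This is where Lemma~\ref{HLS} and the Sobolev embeddings of Section~\ref{pre} enter, exactly as in the nonlocal estimates already used for the small data theory.

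Next I would run the iteration. Set $W^0_n = \phi_n$; having defined $W^{j-1}_n$, apply the extraction lemma to obtain $\psi^j$, $t^j_n$, $x^j_n$ and set
\begin{equation*}
W^j_n(x) = W^{j-1}_n(x) - e^{-it^j_n\Delta}\psi^j(x-x^j_n).
\end{equation*}
The Pythagorean expansion \eqref{pythagexp} at the $\dot H^s$ level (for each fixed $0\le s\le 1$) follows from the weak convergence $e^{it^j_n\Delta}W^{j-1}_n(\cdot+x^j_n)\rightharpoonup \psi^j$: one expands the square, the cross term $\langle e^{-it^j_n\Delta}\psi^j(\cdot-x^j_n), W^j_n\rangle_{\dot H^s}\to 0$, and one gets $\|W^{j-1}_n\|_{\dot H^s}^2 = \|\psi^j\|_{\dot H^s}^2 + \|W^j_n\|_{\dot H^s}^2 + o_n(1)$; summing in $j$ gives \eqref{pythagexp}. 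In particular $\sum_j \|\psi^j\|_{H^1}^2$ is bounded by $\limsup\|\phi_n\|_{H^1}^2$, so $\|\psi^j\|_{H^1}\to 0$, which forces $\eta_M := \limsup_n\|e^{it\Delta}W^M_n\|_{S(\dot H^{s_c})}\to 0$ as $M\to\infty$ by the quantitative bound from the extraction lemma — this is \eqref{smallrem}. The pairwise orthogonality \eqref{pairdivg} is proved by contradiction: if $|t^j_n - t^k_n| + |x^j_n - x^k_n|$ stayed bounded for some $j<k$ (say $j$ is the smallest such index for the given $k$), one passes to a further subsequence so that $t^j_n - t^k_n \to \tau$ and $x^j_n - x^k_n \to y$, and then shows $e^{it^k_n\Delta}W^{k-1}_n(\cdot + x^k_n)\rightharpoonup$ something whose weak limit cannot be $\psi^k$ unless $\psi^k = 0$, contradicting $\|\psi^k\|_{H^1}\gtrsim \eta_{k-1}^\alpha>0$; this uses that $e^{i\tau\Delta}$ is a unitary isomorphism of $H^1$ and that the previously extracted profiles have already been subtracted.

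The main obstacle I anticipate is the extraction lemma's quantitative lower bound in the nonlocal, $\dot H^{s_c}$-critical setting: one must show that a nontrivial amount of the Strichartz norm $\|e^{it\Delta}v_n\|_{S(\dot H^{s_c})}$ can only persist if a bump of definite $H^1$-size survives a space-time translation, and the exponent bookkeeping here is more delicate than in the pure-power $L^2$-critical NLS because the scaling-critical Lebesgue exponent $\frac{2Np}{N+\gamma}$ and the Strichartz pair $(q_2,r_1)$ in \eqref{Hscadm} are tied to $\gamma$ and $p$ through $s_c$. Concretely, the step where one converts an $L^{q_2}_t L^{r_1}_x$ lower bound into a weak-$H^1$ lower bound needs an intermediate $L^\infty_t$–in–time localization and a frequency-localization (Littlewood–Paley) argument so that one may use a Bernstein-type inequality; I would keep the decomposition at the $H^1$ level throughout (rather than $\dot H^{s_c}$) precisely so that $\{W^j_n\}$ stays $H^1$-bounded and all weak limits live in $H^1$, and then read off the $\dot H^{s_c}$ smallness \eqref{smallrem} at the end via interpolation between $\dot H^{s_c}$-Strichartz and the $H^1$-bound. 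The remaining ingredients — passing to diagonal subsequences over $j=1,\dots,M$ and over $M\in\N$, and the almost-orthogonality of the free evolutions — are routine once the extraction lemma and its quantitative bound are in place.
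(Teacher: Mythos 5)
Your sketch matches the approach the paper delegates to (it simply cites \cite{HR08}, \cite{G}, \cite{AKA2} for the proof): a Keraani-type extraction lemma giving a profile of quantified $H^1$-size whenever the $S(\dot H^{s_c})$ norm is bounded below, iterated with weak-limit orthogonality to produce the decomposition, the Pythagorean expansion, the vanishing of $\eta_M$ from summability of $\|\psi^j\|_{H^1}^2$, and pairwise divergence by the standard contradiction. One small correction: Lemma~\ref{HLS} (Hardy--Littlewood--Sobolev) plays no role here --- the linear profile decomposition is a statement purely about the free propagator and the $\dot H^{s_c}$-admissible Strichartz/Sobolev embeddings, so the nonlocal convolution structure does not enter; HLS only becomes relevant in the \emph{nonlinear} Pythagorean expansion (Proposition~\ref{energypythadecomp}) where the potential energy $Z(u)$ must be controlled.
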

\begin{proof}
Refer \cite{AKA2}, \cite{G}, \cite{HR08} for details.
\end{proof}
\begin{proposition}
	[Energy Pythagorean expansion]\label{energypythadecomp}
	Under the assumptions of \,Theorem \ref{linprodecomp}, we have
	\begin{align}\label{energypythaexp}
		E\left[\phi_n\right]=\sum_{j=1}^{M}E\left[e^{-it_n^j\Delta}\psi^j\right]+E\left[W_n^M\right]+o_n(1).
	\end{align}
\end{proposition}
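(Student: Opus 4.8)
The plan is to split $E[u]=\tfrac12\|\nabla u\|_{L^2}^2-\tfrac1{2p}Z(u)$ and observe that the kinetic part of \eqref{energypythaexp} is immediate: since $e^{it\Delta}$ and spatial translations are isometries of $\dot H^1$, the gradient term of $E[e^{-it_n^j\Delta}\psi^j]$ equals $\tfrac12\|\nabla\psi^j\|_{L^2}^2$, so \eqref{pythagexp} with $s=1$ delivers it. Everything therefore reduces to the nonlinear decoupling $Z(\phi_n)=\sum_{j=1}^M Z(e^{-it_n^j\Delta}\psi^j)+Z(W_n^M)+o_n(1)$. I would set $\psi_n^j\defeq e^{-it_n^j\Delta}\psi^j(\cdot-x_n^j)$ and $v_n^M\defeq\sum_{j=1}^M\psi_n^j$, so that $\phi_n=v_n^M+W_n^M$ and, by translation invariance of $Z$, $Z(\psi_n^j)=Z(e^{-it_n^j\Delta}\psi^j)$. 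After passing to a further subsequence I may assume that for each $j$ either $t_n^j\to\pm\infty$ or $t_n^j\to0$ (absorbing a fixed evolution into the profile), and I will use the standard fact, built into the construction behind Theorem \ref{linprodecomp}, that $e^{it_n^j\Delta}W_n^M(\cdot+x_n^j)\rightharpoonup0$ in $H^1$ for every $j\le M$, so that in the case $t_n^j\to0$ also $W_n^M(\cdot+x_n^j)\rightharpoonup0$ in $H^1$.

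The strategy for the $Z$-decoupling is to first prove a Brezis--Lieb type decoupling for the densities $|u|^p$ in $L^{2N/(N+\gamma)}$ and then transport it through the Hardy--Littlewood--Sobolev inequality, Lemma \ref{HLS}. Write $r_1=\tfrac{2Np}{N+\gamma}$; because $0<s_c<1$ one has $2<r_1<\tfrac{2N}{N-2}$, hence $H^1\hookrightarrow L^{r_1}$ and (Rellich--Kondrachov) $H^1\hookrightarrow L^{r_1}_{\mathrm{loc}}$ compactly. Using the pointwise bound $\bigl|\,|a+b|^p-|a|^p-|b|^p\,\bigr|\lesssim|a|^{p-1}|b|+|a||b|^{p-1}$ (cf.\ \eqref{sdc5}) together with $\bigl(\sum_j|a_j|\bigr)^{p-1}\lesssim\sum_j|a_j|^{p-1}$ (finitely many $j$), the claims
\begin{equation*}
\Bigl\|\,\bigl|{\textstyle\sum_{j=1}^M}\psi_n^j\bigr|^p-{\textstyle\sum_{j=1}^M}|\psi_n^j|^p\,\Bigr\|_{L^{2N/(N+\gamma)}}\longrightarrow0,\qquad \bigl\|\,|\phi_n|^p-|v_n^M|^p-|W_n^M|^p\,\bigr\|_{L^{2N/(N+\gamma)}}\longrightarrow0
\end{equation*}
(as $n\to\infty$, for fixed $M$) both come down to showing that the mixed products $\bigl\||\psi_n^j|^{p-1}|\psi_n^k|\bigr\|_{L^{2N/(N+\gamma)}}$ for $j\neq k$ and $\bigl\||\psi_n^j|^{p-1}|W_n^M|\bigr\|_{L^{2N/(N+\gamma)}}$ tend to $0$. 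The former I would handle with the pairwise divergence \eqref{pairdivg}: if $|t_n^j-t_n^k|\to\infty$, one factor tends to $0$ in $L^{r_1}$ by the dispersive estimate after truncating $\psi^j,\psi^k$ to Schwartz functions; if $|x_n^j-x_n^k|\to\infty$, the spatial profiles escape to infinity from one another. The latter I would handle, when $t_n^j\to0$, by raising to the power $\tfrac{2N}{N+\gamma}$ and rewriting it as $\int_{\R^N}|\psi^j(z)|^{2N(p-1)/(N+\gamma)}\,|W_n^M(z+x_n^j)|^{2N/(N+\gamma)}\,dz$, where $|\psi^j|^{2N(p-1)/(N+\gamma)}$ is a \emph{fixed} function in $L^{p'}$ while $|W_n^M(\cdot+x_n^j)|^{2N/(N+\gamma)}$ is bounded in $L^p$ and tends to $0$ in $L^p_{\mathrm{loc}}$; cutting the integral at $|z|\le R$ and $|z|>R$ and sending $n\to\infty$ then $R\to\infty$ yields $0$ (the case $t_n^j\to\pm\infty$ is easier, since then $\|\psi_n^j\|_{L^{r_1}}\to0$).

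With these two density decouplings I would write $|\phi_n|^p=\sum_{j=1}^M|\psi_n^j|^p+|W_n^M|^p+\varepsilon_n$ with $\|\varepsilon_n\|_{L^{2N/(N+\gamma)}}\to0$, substitute into $Z(\phi_n)=\int_{\R^N}(|x|^{-(N-\gamma)}\ast|\phi_n|^p)|\phi_n|^p\,dx$, and expand. By Lemma \ref{HLS} every term carrying a factor $\varepsilon_n$ is $o_n(1)$ (its complementary factor being bounded in $L^{2N/(N+\gamma)}$); every profile--profile cross term $\int(|x|^{-(N-\gamma)}\ast|\psi_n^j|^p)|\psi_n^k|^p$ with $j\neq k$ is $o_n(1)$ by the orthogonality argument just described; and the profile--remainder cross term $\int(|x|^{-(N-\gamma)}\ast|\psi_n^j|^p)|W_n^M|^p$ is $o_n(1)$ because $|x|^{-(N-\gamma)}\ast|\psi_n^j|^p$ is (for $t_n^j\to0$) a translate of the fixed function $|x|^{-(N-\gamma)}\ast|\psi^j|^p\in L^{2N/(N-\gamma)}$ (Lemma \ref{HLS} again) paired against $|W_n^M(\cdot+x_n^j)|^p$, bounded in $L^{2N/(N+\gamma)}$ and locally null, so the same $R$-cutoff argument applies (and it is $o_n(1)$ trivially if $t_n^j\to\pm\infty$). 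What remains is precisely $\sum_{j=1}^M Z(\psi_n^j)+Z(W_n^M)+o_n(1)$, which is \eqref{energypythaexp}.

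The main obstacle is the nonlocality of $Z$. In the local NLS analysis of \cite{HR08}, \cite{G} the cross terms are annihilated by a single pointwise Brezis--Lieb identity; here one must instead decouple the densities $|u|^p$ in $L^{2N/(N+\gamma)}$ first and then move the decoupling across the Hardy--Littlewood--Sobolev inequality, taking care at each convolution interaction to place the ``fixed'' (translated) profile into a global Lebesgue space and the weakly-null remainder into the complementary one, so that local compactness can be invoked. The remaining bookkeeping is routine and parallels \cite{HR08}, \cite{G} (see also \cite{AKA2}); the one new ingredient throughout is the systematic use of Lemma \ref{HLS}.
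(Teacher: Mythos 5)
Your proposal is correct, but it follows a genuinely different route from the paper's. After the common reduction to the nonlinear decoupling $Z(\phi_n)=\sum_j Z(e^{-it_n^j\Delta}\psi^j)+Z(W_n^M)+o_n(1)$, the paper argues \emph{indirectly}: it never shows that the individual profile–remainder cross terms $\int (|x|^{-(N-\gamma)}\ast|\psi_n^j|^p)|W_n^M|^p$ vanish at the fixed level $M$. Instead it invokes \eqref{smallrem} (via the embedding giving \eqref{smallrem1}) to choose a higher level $M_1>M$ for which $\|W_n^{M_1}\|_{L^{2Np/(N+\gamma)}}$ is small, uses the quadratic-form continuity of $Z$ on $L^{2Np/(N+\gamma)}$ to replace $\phi_n$ by $\phi_n-W_n^{M_1}$ and $W_n^M$ by $W_n^M-W_n^{M_1}$ at cost $O(\varepsilon)$, and then applies the Step-1 profile–profile orthogonality to these finite sums. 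The only input about the remainder it needs is the Strichartz smallness explicitly listed in Theorem~\ref{linprodecomp}. You instead prove a Brezis--Lieb decoupling of the densities $|u|^p$ in $L^{2N/(N+\gamma)}$ and push it through Lemma~\ref{HLS}, which forces you to kill the profile–remainder interactions directly at level $M$; for that you invoke $e^{it_n^j\Delta}W_n^M(\cdot+x_n^j)\rightharpoonup 0$ in $H^1$ (hence local $L^{r_1}$ compactness) plus a tail cutoff. That weak-convergence property is indeed built into the standard construction behind Theorem~\ref{linprodecomp}, but it is not among the conclusions stated there, so your argument implicitly strengthens the hypothesis. In exchange, your route is more direct and yields a clean pointwise/Lebesgue decoupling of $|u|^p$ as an intermediate statement, which the paper never produces. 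Both arguments are sound; the paper's squeezing trick is slightly more economical in hypotheses, while yours is more transparent about where each cross term goes.

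One small clean-up worth making in your write-up: when you say ``if $|t_n^j-t_n^k|\to\infty$, one factor tends to $0$ in $L^{r_1}$,'' you should note explicitly that divergence of the difference forces at least one of $|t_n^j|,|t_n^k|$ to diverge (after the preliminary reduction to $t_n^j\to 0$ or $t_n^j\to\pm\infty$), so the dispersive decay applies to that factor; and in the remaining case both are bounded, so \eqref{pairdivg} forces $|x_n^j-x_n^k|\to\infty$ and the spatial-separation argument takes over.
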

\begin{proof}
	By the definition of energy, $E[u]$, and \eqref{pythagexp} for $s=1$, it is sufficient to establish for all $M\geq 1$,
\begin{align}\label{PEdecomp}
	Z\left(\phi_n\right)=\sum_{j=1}^{M}Z\left(e^{-it_n^j\Delta}\psi^j\right)+Z\left(W_n^M\right)+o_n(1),
\end{align}
where $Z(u)=\int_{\R^N}^{}\left(|x|^{-(N-\gamma)}\ast |u|^p\right)|u|^p$. \\

\noindent
\textit{Step 1.} \emph{Pythagorean expansion of a sum of orthogonal profiles.} We show that for $M\geq 1$ fixed, the orthogonality condition \eqref{pairdivg} implies
\begin{align}\label{pairdivgexp}
	Z\left(\sum_{j=1}^{M}e^{-it_n^j\Delta}\psi^j(\cdot - x_n^j)\right)=\sum_{j=1}^{M}Z\left(e^{-it_n^j\Delta}\psi^j\right)+o_n(1).
\end{align}
By rearranging and reindexing, we can find $M_0\leq M$ such that
\begin{itemize}
	\item For $1\leq j\leq M_0$, we have that $t_n^j$ is bounded in $n$.
	\item For $M_0+1\leq j\leq M$, we have that $\left|t_n^j\right|\rightarrow\infty$ as $n\rightarrow\infty$.
\end{itemize}
Passing to a subsequence, we may assume that for each $1\leq j\leq M_0$, $t_n^j$ converges (in $n$), and by adjusting the profiles $\psi^j$'s we can take $t_n^j=0$. Note that either for $1\leq k\leq M_0$ we have $t_n^k\rightarrow 0$ or for $M_0+1\leq k\leq M$ we have $|t_n^k|\rightarrow\infty$ as $n\rightarrow\infty$. So if $t_n^k\rightarrow 0$, then from \eqref{pythagexp} we have $|x_n^j-x_n^k|\rightarrow\infty$ as $n\rightarrow\infty$, which implies
\begin{align}\label{pairdivgexp1}
	Z\left(\sum_{j=1}^{M_0}\psi^j(\cdot - x_n^j)\right)=\sum_{j=1}^{M_0}Z\left(\psi^j\right)+o_n(1).
\end{align}
Now if $|t_n^k|\rightarrow\infty$ as $n\rightarrow\infty$, for a function $\tilde{\psi}\in\dot{H}^{\frac{N(p-1)-\gamma}{2p}}\cap L^{\frac{p+1}{p}}$, by Hardy-Littlewood-Sobolev, Sobolev embedding and $L^p$ space-time decay estimate, we obtain
$$
Z\left(e^{-it_n^k\Delta}\psi^k\right)\lesssim \|e^{-it_n^k\Delta}\psi^k\|_{L^{\frac{2Np}{N+\gamma}}}^{2p} \lesssim \|\psi^k-\tilde{\psi}\|_{\dot{H}^{\frac{N(p-1)-\gamma}{2p}}}+\left|t_n^k\right|^{-\frac{N(p-1)}{2(p+1)}}\|\tilde{\psi}\|_{L^{\frac{p+1}{p}}}.$$
Approximating $\psi^k$ by $\tilde{\psi}\in C_0^{\infty}$ in $\dot{H}^{\frac{N(p-1)-\gamma}{2p}}$ and sending $n\rightarrow\infty$, we obtain
\begin{align}\label{pairdivgexp2}
	\lim\limits_{n\rightarrow+\infty}Z\left(e^{-it_n^k\Delta}\psi^k\right)\lesssim \lim\limits_{n\rightarrow+\infty}\|e^{-it_n^k\Delta}\psi^k\|^{2p}_{L^{\frac{2Np}{N+\gamma}}} =0.
\end{align}
Thus, combining \eqref{pairdivgexp1} and \eqref{pairdivgexp2} together yields,
\begin{align*}
	Z\left(\sum_{j=1}^{M}e^{-it_n^j\Delta}\psi^j\right)&=Z\left(\sum_{j=1}^{M_0}\psi^j + \sum_{j=M_0+1}^{M}e^{-it_n^j\Delta}\psi^j\right)\\
	&=Z\left(\sum_{j=1}^{M_0}\psi^j\right) + \sum_{j=M_0+1}^{M}Z\left(e^{-it_n^j\Delta}\psi^j\right)+o_n(1)=\sum_{j=1}^{M}Z\left(e^{-it_n^j\Delta}\psi^j\right)+o_n(1),
\end{align*}
which is the right-hand side of the expansion \eqref{pairdivgexp}.\\

\noindent
\textit{Step 2.} \emph{Ending the proof.} Note that
\begin{align*}
	\|W_n^{M}\|_{L^{\frac{2Np}{N+\gamma}}}^{2p}&\leq \|e^{it\Delta}W_n^{M}\|_{L_t^{\infty}L_x^{\frac{2Np}{N+\gamma}}}^{2p}\leq \|e^{it\Delta}W_n^{M}\|_{L_t^{\infty}L_x^{\frac{2N}{N-2s_c}}}^p \|e^{it\Delta}W_n^{M}\|_{L_t^{\infty}L_x^{\frac{2Np}{N+\gamma-2(1-s_c)}}}^p\\
	&\leq \|e^{it\Delta}W_n^{M}\|_{L_t^{\infty}L_x^{\frac{2N}{N-2s_c}}}^p \|e^{it\Delta}W_n^{M}\|_{L_t^{\infty}\dot{H}_x^1}^p \leq \|e^{it\Delta}W_n^{M}\|_{L_t^{\infty}L_x^{\frac{2N}{N-2s_c}}}^p  \sup_n \|\phi_n\|_{H^1}^p.
\end{align*}
Since $\dot{H}^{s_c} \hookrightarrow L^{\frac{2N}{N-2s_c}}$, i.e., $\left(\infty,\frac{2N}{N-2s_c}\right)$ is an $\dot{H}^{s_c}$ admissible pair, by \eqref{smallrem}, we get
\begin{align}\label{smallrem1}
	\lim\limits_{M\rightarrow +\infty}\left(\lim\limits_{n\rightarrow +\infty}\|W_n^{M}\|_{\frac{2Np}{N+\gamma}}^{2p}\right) = 0.
\end{align}
Let $M\geq 1$ and $\varepsilon > 0$. Note that $\{\phi_n\}_n$ is uniformly bounded in $L^{\frac{2Np}{N+\gamma}}$, as it is uniformly bounded in $H^1$ by the hypothesis. Hence, by \eqref{smallrem1} $\{W_n^M\}_n$ is also uniformly bounded in $L^{\frac{2Np}{N+\gamma}}$. Hence, we can choose $M_1 > M$ and $n_1$ such that for $n>n_1$, we have
\begin{align}\label{PEdecomp1a}
	\big|Z(\phi_n) - Z(\phi_n - W_n^{M_1})\big| + \big|Z(W_n^M - W_n^{M_1}) &- Z(W_n^M)\big|\\
	\leq C\Bigg[\|W_n^{M_1}\|_{L^{\frac{2Np}{N+\gamma}}}\left(\sup_n\|\phi_n\|_{L^{\frac{2Np}{N+\gamma}}}^{2p-1}+\sup_n\|W_n^M\|_{L^{\frac{2Np}{N+\gamma}}}^{2p-1}\right)\Bigg]&+C\|W_n^{M_1}\|_{L^{\frac{2Np}{N+\gamma}}}^{2p}\leq\varepsilon,\notag
\end{align}
where we have used the triangle inequality to estimate
$$
\big|\|W_n^M-W_n^{M_1}\|_{L^{\frac{2Np}{N+\gamma}}}^{2p}-\|W_n^{M}\|_{L^{\frac{2Np}{N+\gamma}}}^{2p}\big|\lesssim \|W_n^{M_1}\|_{L^{\frac{2Np}{N+\gamma}}}^{2p},
$$
and by observing that $a^{2p} > a(a-b)^{2p-1}$ together with the triangle inequality, we estimate
\begin{align*}
	\Big|\|\phi_n-W_n^{M_1}\|_{L^{\frac{2Np}{N+\gamma}}}^{2p}-\|\phi_n\|_{L^{\frac{2Np}{N+\gamma}}}^{2p}\Big| &\lesssim \Big(\|\phi_n-W_n^{M_1}\|_{L^{\frac{2Np}{N+\gamma}}}-\|\phi_n\|_{L^{\frac{2Np}{N+\gamma}}}\Big)\|\phi_n-W_n^{M}\|_{L^{\frac{2Np}{N+\gamma}}}^{2p-1}\\
	& \lesssim \|W_n^{M_1}\|_{L^{\frac{2Np}{N+\gamma}}}\left(\sup_n\|\phi_n\|_{L^{\frac{2Np}{N+\gamma}}}^{2p-1}+\sup_n\|W_n^M\|_{L^{\frac{2Np}{N+\gamma}}}^{2p-1}\right).
\end{align*}
Choose $n_2\geq n_1$ such that for $n\geq n_2$, by \eqref{pairdivgexp}, we get
\begin{align}\label{PEdecomp1b}
	\Big|Z(\phi_n-W_n^{M_1}) - \sum_{j=1}^{M_1}Z\left(e^{-it_n^j\Delta}\psi^j\right)\Big|
	\leq \varepsilon.
\end{align}
Using the definition of $W_n^j$, we expand $W_n^M - W_n^{M_1}$, to obtain
$$
W_n^M-W_n^{M_1}=\sum_{j=M+1}^{M_1} e^{-it_n^j\Delta}\psi^j(\cdot - x_j).
$$
By \eqref{pairdivgexp} there exists $n_3\geq n_2$ such that for $n\geq n_3$,
\begin{align}\label{PEdecomp1c}
	\Big|Z(W_n^M - W_n^{M_1}) - \sum_{j=M+1}^{M_1} Z\left(e^{-it_n^j\Delta}\psi^j\right)\Big|\leq\varepsilon.
\end{align}
Thus, for $n\geq n_3$, by \eqref{PEdecomp1a}, \eqref{PEdecomp1b} and \eqref{PEdecomp1c}, we obtain
\begin{align*}
	Z(\phi_n) &- \sum_{j=1}^{M}Z\left(e^{-it_n^j\Delta}\psi^j\right) - Z(W_n^M) \\
	=& \bigg| Z(\phi_n) - Z(\phi_n - W_n^{M_1}) + Z(\phi_n - W_n^{M_1})
	-  \sum_{j=1}^{M_1} Z\left(e^{-it_n^j\Delta}\psi^j\right)
	+ Z(W_n^M - W_n^{M_1}) \\
	&- Z(W_n^M) + \sum_{j=M+1}^{M_1} Z\left(e^{-it_n^j\Delta}\psi^j\right) - Z(W_n^M - W_n^{M_1})\bigg|\leq 3\varepsilon,
\end{align*}
which implies \eqref{PEdecomp}.

\end{proof}

\subsection{Critical solution}
In this subsection, we study a critical solution of \eqref{gH}, denoted by $u_c(t)$. The main ingredients are Theorem \ref{linprodecomp} and Proposition \ref{energypythadecomp} (proved in previous subsection) along with Theorem \ref{LTP} (long time perturbation theory). 
\begin{theorem}[Existence of critical solution]\label{crit.elem}
Let $0<s_c<1$. There exists a global solution $u_c(t)\in H^1(\R^N)$ with initial data $u_{c,0}\in H^1(\R^N)$ such that
	$
	\|u_{c,0}\|_{L^2}=1$, $(\mathcal{ME})_c<1,
	$
	where $(\mathcal{ME})_c=\frac{E[u_c]}{M[Q]^{\theta}E[Q]}$,
	$
	\mathcal{G}[{u_c}(t)]<1\,\,\, \text{for all}~~\,\, 0\leq t\leq +\infty,
	$
	and
	\begin{align}\label{nonscat}
		\|u_c\|_{S(\dot{H}^{s_c})} = +\infty.
	\end{align}
\end{theorem}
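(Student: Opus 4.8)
The plan is to run the by-now-standard Kenig--Merle construction of a minimal non-scattering element, with the extra care the nonlocal nonlinearity requires in the perturbative steps. By the hypothesis $(\mathcal{ME})_c<1$ and the definition of $(\mathcal{ME})_c$, there is a sequence $u_{n,0}\in H^1(\R^N)$ with $\mathcal{G}[u_{n,0}]<1$, $\mathcal{ME}[u_{n,0}]\searrow(\mathcal{ME})_c$, and such that the corresponding solutions $u_n$ of \eqref{gH} satisfy $\|u_n\|_{S(\dot H^{s_c})}=+\infty$; using the scaling symmetry we normalize $\|u_{n,0}\|_{L^2}=1$. Since Lemma \ref{EGcomp} gives positivity of the energy and a bound on $\|\nabla u_{n,0}\|_{L^2}$, the sequence is bounded in $H^1$, and Theorem \ref{linprodecomp} yields, for each $M$,
\[
u_{n,0}=\sum_{j=1}^{M}e^{-it_n^j\Delta}\psi^j(\cdot-x_n^j)+W_n^M,
\]
with the pairwise divergence \eqref{pairdivg} and the remainder smallness \eqref{smallrem}.

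First I would record the orthogonal decompositions of the conserved quantities. From \eqref{pythagexp} (with $s=0$ and $s=1$) and Proposition \ref{energypythadecomp} --- splitting each profile according to whether $t_n^j$ stays bounded, in which case $E[e^{-it_n^j\Delta}\psi^j]\to E[\psi^j]$, or $|t_n^j|\to\infty$, in which case $Z(e^{-it_n^j\Delta}\psi^j)\to0$ by the $L^{2Np/(N+\gamma)}$ space-time decay estimate so $E[e^{-it_n^j\Delta}\psi^j]\to\tfrac12\|\nabla\psi^j\|_{L^2}^2$ --- one obtains in the limit that mass, energy and $\|\nabla\cdot\|_{L^2}^2$ split across the profiles and the remainder. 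Combining these (taking $\theta$-th powers, and using that every piece has positive energy by Lemma \ref{EGcomp}) as in \cite{HR08}, each nonlinear profile inherits $\mathcal{ME}[\,\cdot\,]\le(\mathcal{ME})_c$ and $\mathcal{G}[\,\cdot\,]<1$, with the inequality on $\mathcal{ME}$ strict for every profile as soon as more than one profile is present or the remainder does not vanish.

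Then comes the dichotomy. Assume first that two or more profiles are nonzero, or that one profile is nonzero together with a non-vanishing remainder. Each $\psi^j$ generates a nonlinear profile $v^j$: if $t_n^j\to t^j$ finite, by solving \eqref{gH} with data $e^{-it^j\Delta}\psi^j$; if $|t_n^j|\to\infty$, by the wave-operator construction of Lemma \ref{waveop}. Since $\mathcal{ME}[v^j]<(\mathcal{ME})_c$ and $\mathcal{G}<1$, the definition of the threshold $(\mathcal{ME})_c$ (with the small-data Proposition \ref{smalldata} covering the profiles of very small $\mathcal{ME}$) forces each $v^j$ to be global with $\|v^j\|_{S(\dot H^{s_c})}<\infty$. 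Forming
\[
\widetilde u_n(x,t)=\sum_{j=1}^{M}v^j\big(x-x_n^j,\,t+t_n^j\big)+e^{it\Delta}W_n^M,
\]
one checks $\|\widetilde u_n\|_{S(\dot H^{s_c})}$ is bounded uniformly in $n$ (the cross terms in $L^{q_2}_tL^{r_1}_x$ vanish by \eqref{pairdivg}) and that the error $e_n\defeq i\partial_t\widetilde u_n+\Delta\widetilde u_n+(|x|^{-(N-\gamma)}\ast|\widetilde u_n|^p)|\widetilde u_n|^{p-2}\widetilde u_n$ satisfies $\|e_n\|_{S'(\dot H^{-s_c})}\to0$; Theorem \ref{LTP} then gives $\|u_n\|_{S(\dot H^{s_c})}<\infty$ for $n$ large, contradicting the choice of $u_n$. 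Hence exactly one profile $\psi:=\psi^1$ is nonzero and $W_n^M\to0$ in $S(\dot H^{s_c})$ in the double limit; translating, $x_n^1=0$. If $|t_n^1|\to\infty$ then $\|e^{it\Delta}u_{n,0}\|_{S(\dot H^{s_c};[0,\infty))}\to0$ and Proposition \ref{smalldata} makes $u_n$ scatter, a contradiction; so $t_n^1\to t^*$ finite and, absorbing $e^{-it^*\Delta}$ into $\psi$, $u_{n,0}\to\psi$ in $H^1$. Setting $u_{c,0}=\psi$ (mass normalized to $1$) and letting $u_c$ be its solution, continuity of $M$, $E$, $\|\nabla\cdot\|_{L^2}$ gives $\mathcal{ME}[u_c]=(\mathcal{ME})_c<1$ and $\mathcal{G}[u_{c,0}]\le1$; the strict inequality $\mathcal{G}[u_{c,0}]<1$ and globality of $u_c$ then follow from Theorem \ref{Dichotomy}, and $\|u_c\|_{S(\dot H^{s_c})}=+\infty$ since otherwise Theorem \ref{LTP} would again force $u_n$ to have finite Strichartz norm for large $n$.

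\textbf{Main obstacle.} The delicate point is the perturbative step: the uniform bound on $\|\widetilde u_n\|_{S(\dot H^{s_c})}$ and the decay $\|e_n\|_{S'(\dot H^{-s_c})}\to0$. Because of the convolution, the error $F(\widetilde u_n)-\sum_j F(v^j_n)-F(e^{it\Delta}W_n^M)$, with $F(u)=(|x|^{-(N-\gamma)}\ast|u|^p)|u|^{p-2}u$, contains mixed terms in which the convolution or the outer factor couples two or more distinct profiles $v^j_n,v^k_n$; these are not obviously small, and one must combine Lemma \ref{HLS} with the orthogonality \eqref{pairdivg} and an approximation of each $v^j$ by compactly supported functions to show every such cross interaction tends to $0$ in $S'(\dot H^{-s_c})$. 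A secondary technical point is establishing the $\mathcal{ME}$- and $\mathcal{G}$-superadditivity with the correct strictness, which forces one to treat the cases $t_n^j\to t^j$ and $|t_n^j|\to\infty$ separately in Proposition \ref{energypythadecomp}.
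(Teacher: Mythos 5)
Your proposal follows the same Kenig--Merle/Holmer--Roudenko blueprint that the paper uses: extract a minimizing sequence, apply the linear profile decomposition and the energy Pythagorean expansion (splitting profiles by whether $t_n^j$ is bounded), pass to nonlinear profiles via the local theory or the wave operator Lemma~\ref{waveop}, compare $u_n$ with the sum of nonlinear profiles through the long-time perturbation Theorem~\ref{LTP}, and argue by dichotomy on the number of nonzero profiles. You also correctly single out the nonlocal-specific difficulty: the cross terms in the error $\widetilde e_n^M$ that couple distinct profiles through the convolution kernel and/or the outer factor, and the need to kill them using the pairwise divergence \eqref{pairdivg}. This is exactly the content of the paper's Claim~\ref{LTPclaim2}.

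There is, however, a genuine gap in your Scenario~2. You argue that if $|t_n^1|\to\infty$ then $\|e^{it\Delta}u_{n,0}\|_{S(\dot H^{s_c};[0,\infty))}\to 0$, and invoke Proposition~\ref{smalldata} to conclude $u_n$ scatters. That implication is false when $t_n^1\to+\infty$: writing $e^{it\Delta}\big(e^{-it_n^1\Delta}\psi^1\big)=e^{i(t-t_n^1)\Delta}\psi^1$, one has
\[
\|e^{i(t-t_n^1)\Delta}\psi^1\|_{S(\dot H^{s_c};[0,\infty))}=\|e^{i\tau\Delta}\psi^1\|_{S(\dot H^{s_c};[-t_n^1,\infty))}\longrightarrow\|e^{i\tau\Delta}\psi^1\|_{S(\dot H^{s_c};\R)},
\]
which is not small. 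In that sign case the linear flow is small only on the negative half-line, so small data theory gives backward scattering, which does not contradict the failure of forward scattering. The paper does not split into cases here at all: it sets $u_{c,0}=\widetilde\psi^1$, the \emph{nonlinear} profile (produced by Lemma~\ref{waveop} precisely when $|t_n^1|\to\infty$), lets $\widetilde u_n(t)=\mathrm{NLF}(t-t_n^1)\widetilde\psi^1$, and applies Theorem~\ref{LTP} with $e=0$, using only $\|e^{it\Delta}\widetilde W_n^1\|_{S(\dot H^{s_c})}\to0$; this handles both signs of $t_n^1$ uniformly. You already have the wave-operator construction in hand (you use it for the multi-profile case), so the fix is simply to use it here as well instead of the shortcut. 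A smaller point: your assertion that $u_{n,0}\to\psi$ strongly in $H^1$ is not automatic from the profile decomposition; it requires the Pythagorean expansions plus the observation that any persistent mass or gradient in the remainder would force $\mathcal{ME}[\psi^1]<(\mathcal{ME})_c$ and hence scattering of the profile. The paper avoids asserting strong $H^1$ convergence of the remainder by phrasing Scenario~2 entirely through $\widetilde\psi^1$ and the Strichartz smallness of $\widetilde W_n^1$.
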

\begin{proof}
	The argument for the linear profile expansion is similar to the one presented in \cite{HR08}, \cite{DHR08}, \cite{G}. Thus, we continue for a new nonlinear profile $\widetilde{\psi}^j$ associated to each original linear profile $\psi^j$ satisfying 
	\begin{align}\label{uc9}
	\|\text{NLF}(-t_n^j)\widetilde{\psi}^j-e^{-it_n^j\Delta}\psi^j\|_{H^1}\rightarrow 0
	\end{align}
	and
	\begin{align}\label{uc10}
	\|\text{NLF}(-t)\widetilde{\psi}^j\|_{S(\dot{H}^{s_c}}<+\infty.
	\end{align} 
	The idea now is to apply a nonlinear flow to $\phi_n(x)$ and approximate it by a combination of ``nonlinear bumps", i.e.,
	\begin{align*}
		\text{NLF}(t)\phi_n(x) \approx \sum_{j=1}^{M}\text{NLF}(t-t_n^j)\,\widetilde{\psi}^j(x-x^j_n).
	\end{align*}
	To carry out this argument, we introduce the nonlinear evolution of each separate initial condition $u_{n,0}=\phi_n$:
	$$
	u_n(t)=\text{NLF}(t)\phi_n(x)=\text{NLF}(t)u_{n,0},
	$$
	the nonlinear evolution of each separate nonlinear profile (``bump"):
	$$
	v^j(t)=\text{NLF}(t)\widetilde{\psi}^j,
	$$
	and a linear sum of nonlinear evolutions of those ``bumps":
	$$
	\widetilde{u}_n(t,x)=\sum_{j=1}^{M}v^j(t-t_n^j,x-x^j_n).
	$$
	Intuitively, we think that $u_{n,0}=\phi_n$ is a sum of nonlinear bumps $\widetilde{\psi}^j$ and $u_n(t)$ is a nonlinear evolution of their entire sum. On the other
	hand, $\widetilde{u}_n(t)$ is a sum of nonlinear evolutions of each bump and we want to
	compare $u_n(t)$ with $\widetilde{u}_n(t)$.
	Also, note that if we just had the linear evolutions, then both $u_n(t)$ and $\widetilde{u}_n(t)$
	would be the same.
	
	Thus, $u_n(t)$ satisfies
	$$
	i(u_n)_t+\Delta u_n + (|x|^{-(N-\gamma)}\ast |u_n|^p)|u_n|^{p-2}u_n=0,
	$$
	and $\widetilde{u}_n(t)$ satisfies
	$$
	i(\widetilde{u}_n)_t+\Delta \widetilde{u}_n + (|x|^{-(N-\gamma)}\ast |\widetilde{u}_n|^p)|\widetilde{u}_n|^{p-2}\widetilde{u}_n=\widetilde{e}_n^M,
	$$
	where
	$$
	\widetilde{e}_n^M = \left(|x|^{-(N-\gamma)}\ast |\widetilde{u}_n|^p\right)|\widetilde{u}_n|^{p-2}\widetilde{u}_n - \sum_{j=1}^{M}\left(|x|^{-(N-\gamma)}\ast |v^j(t-t_n^j,\cdot-x^j_n)|^p\right)|v^j|^{p-2}v^j.
	$$
	We also define 
	\begin{align}\label{NLrem}
		\widetilde{W}_n^M=W_n^M+\sum_{j=1}^M\left(e^{-it_n^j\Delta}\psi^j(x-x_n^j)-\text{NLF}(-t_n^j)\widetilde{\psi}^j(x-x_n^j)\right),
	\end{align}
	and using \eqref{NLPD_crit} we write
	\begin{align}\label{uc11}
		u_{n,0}=\sum_{j=1}^M\text{NLF}(-t_n^j)\widetilde{\psi}^j(x-x_n^j)+\widetilde{W}_n^M,
	\end{align}
	such that $u_{n,0}-\widetilde{u}(0)=\widetilde{W}_n^M$. Applying triangle inequality together with the Strichartz inequality \eqref{sobstri1}, we estimate
	$$
	\|e^{it\Delta}\widetilde{W}^M_n\|_{S(\dot{H}^{s_c})}\lesssim \|e^{it\Delta}W^M_n\|_{S(\dot{H}^{s_c})} + \sum_{j=1}^{M}\|e^{-it_n^j\Delta}\psi^j - NLF(-t_n^j)\widetilde{\psi}^j\|_{H^1}.
	$$
	By \eqref{uc9} and \eqref{smallrem} we have that
	\begin{align}\label{uc12}
		\lim\limits_{M\rightarrow\infty}\left(\lim\limits_{n\rightarrow\infty}\|e^{it\Delta}\widetilde{W}_n^M\|_{S(\dot{H}^{s_c})}\right)=0.
	\end{align}
	We now approximate $u_n$ by $\widetilde{u}_n$. Then from the Theorem \ref{LTP} (long time perturbation theory) and \eqref{uc10} it follows that for $n$ large enough, $\|u_n\|_{S(\dot{H}^{s_c})}<+\infty$, which is a contradiction, since $u_n$ is non-scattering. We assume the following two claims, which we prove later.
	\begin{claim}\label{LTPclaim1}
		There exists a constant $A$ independent of $M$, and for every $M$, there
		exists $n_0 = n_0(M)$ such that if $n > n_0$, then $\|\widetilde{u}_n\|_{S(\dot{H}^{s_c})}\leq A$.
	\end{claim}
	\begin{claim}\label{LTPclaim2}
		For each $M$ and $\epsilon > 0$, there exists $n_1 = n_1(M; \epsilon)$ such that if $n > n_1$,
		then $\|\widetilde{e}^M_n\|_{S'(\dot{H}^{-s_c})}\leq\epsilon$.
	\end{claim}
	By \eqref{uc12}, for any $\epsilon >0$ there exists $M_1=M_1(\epsilon)$ sufficiently large such that for each $M>M_1$ there exists $n_2=n_2(M)$ such that $n>n_2$ implies
	$$
	\|e^{it\Delta}(\tilde{u}_n(0)-u_n(0))\|_{S(\dot{H}^{s_c})}\leq\epsilon.
	$$
	Thus, if the Claim \ref{LTPclaim1} and Claim \ref{LTPclaim2} holds true, using Theorem \ref{LTP} for sufficiently large $M$ and $n=\max(n_0,n_1,n_2)$
we obtain $\|u_n\|_{S(\dot{H}^{s_c})} < \infty$, a contradiction, since $u_n$ is non-scattering. Now there are two possible scenarios in the profile decomposition \eqref{uc11}:
	
	\underline{\textit{Scenario 1:}} More that one $\widetilde{\psi}^j\neq 0$. Observe that for $s=0$ in \eqref{pythagexp}, we have
	\begin{align}\label{massdecomp_crit}
	\sum_{j=1}^{M} \|\psi^j\|_{L^2}^2 + \lim\limits_{n\rightarrow +\infty}\|W_n^M\|_{L^2}^2\leq\lim\limits_{n\rightarrow +\infty}\|u_{n,0}\|_{L^2}^2=1.
	\end{align}
	Thus, by \eqref{massdecomp_crit}, we must have $M[e^{-it_n^j\Delta}\widetilde{\psi}^j]<1$ for each $j$, which by energy decomposition, 
for large enough $n$ yields
	$$
	\frac{M[\text{NLF}(t)\widetilde{\psi}^j]^{1-s_c}E[\text{NLF}(t)\widetilde{\psi}^j]^{s_c}}{M[Q]^{1-s_c}E[Q]^{s_c}}=\frac{M[\widetilde{\psi}^j]^{1-s_c}E[\widetilde{\psi}^j]^{s_c}}{M[Q]^{1-s_c}E[Q]^{s_c}}=\mathcal{ME}[\widetilde{\psi}^j]<(ME)_c.
	$$
	Now, since $\|\text{NLF}(t)\widetilde{\psi}^j(\cdot -x_n^j)\|_{S(\dot{H}^{s_c})}<+\infty$, the right hand side of \eqref{uc11} is bounded in $S(\dot{H}^{s_c})$. By \eqref{uc12}, we conclude that $\|\text{NLF}(t)u_{n,0}\|_{S(\dot{H}^{s_c})}<+\infty$, which is a contradiction.
	
	\underline{\textit{Scenario 2:}} Suppose $\widetilde{\psi}^1\neq 0$ and $\widetilde{\psi}^j=0$ for all $j\geq 2$. Hence, we have
	$$
	u_{n,0}=\text{NLF}(-t_n^1)\widetilde{\psi}^1(x-x^1_n)+\widetilde{W}^1_n
	$$
	with
	$$
	M[\widetilde{\psi}^1]<1,\,\,\, \mathcal{ME}[\widetilde{\psi}^1]\leq(\mathcal{ME})_c,\,\,\, \text{and} \lim\limits_{n\rightarrow +\infty}\|e^{it\Delta}(t)\widetilde{W}_n^1\|_{S(\dot{H}^{s_c})}=0.
	$$
	Let $u_c$ be the global solution to \eqref{gH} with initial data $u_{c,0}=\widetilde{\psi}^1$ i.e., $u_c(t)=\text{NLF}(t)\widetilde{\psi}^1$. Assume by contradiction that $\|u_c\|_{S(\dot{H}^{s_c})}<+\infty$. Let $\widetilde{u}_n(t)=\text{NLF}(t-t_n^1)\widetilde{\psi}^1$, then 
	\begin{align*}
		\|\widetilde{u}_n(t)\|_{S(\dot{H}^{s_c})}=\|\text{NLF}(t-t_n^1)\widetilde{\psi}^1\|_{S(\dot{H}^{s_c})}=\|u_c\|_{S(\dot{H}^{s_c})}<+\infty.
	\end{align*}   
	Therefore, using the long time perturbation theory with $e=0$, we deduce that $\|u_n\|_{S(\dot{H}^{s_c})}<+\infty$, which is a contradiction, since by construction $u_n$ is non-scattering. It only remains to establish the claims \ref{LTPclaim1} and \ref{LTPclaim2}.
	
	\textbf{Proof of Claim \ref{LTPclaim1}:} See \cite{AKA2} or the original NLS works \cite{DHR08}, \cite{HR08}, \cite{G} for details. 
	
	\textbf{Proof of Claim \ref{LTPclaim2}:} Recall that $\left(\frac{2p}{s_c(2p-1)+1},\frac{2Np}{N+\gamma}\right)$ is an $\dot{H}^{-s_c}$ admissible pair. Then
	\begin{align*}
	\|\widetilde{e}_n^M\|_{S'(\dot{H}^{-s_c})} \leq \|\widetilde{e}_n^M\|_{L_t^{\frac{2p}{(1-s_c)(2p-1)}}L_x^{\frac{2Np}{2Np-N-\gamma}}}.
	\end{align*}
	Observe that expansion of $\widetilde{e}_n^M$ consists of cross terms of the form
	$$
	\sum_{j=1}^{M}\sum_{k=1}^{M}\sum_{\substack{l=1 \\k\ne l}}^{M}\left(|x|^{-(N-\gamma)}\ast|v^j(t-t_n^j)|^p\right)|v^k(t-t_n^k)|^{p-2}v^l(t-t_n^l),
	$$
	where all of $j$, $k$ and $l$ are not same. Assume, without loss of generality, that $k\neq l$, and thus, $|t_n^k-t_n^l|\rightarrow\infty$ as $n\rightarrow +\infty$. So, we estimate
	\begin{align*}
	\|\big(|x|^{-(N-\gamma)}&\ast|v^j(t-t_n^j)|^p\big)|v^k(t-t_n^k)|^{p-2}v^l(t-t_n^l)\|_{L_t^{\frac{2p}{(1-s_c)(2p-1)}}L_x^{\frac{2Np}{2Np-N-\gamma}}} \\
	&\leq \|v^j\|_{L_t^{\frac{2p}{1-s_c}}L_x^{\frac{2Np}{N+\gamma}}}\||v^k(t-t_n^k)|^{p-2}v^l(t-t_n^l)\|_{L_t^{\frac{2p}{(1-s_c)(p-1)}}L_x^{\frac{2Np}{(N+\gamma)(p-1)}}}.
	\end{align*}
	Since both $v^k$ and $v^l$ belong to $L_t^{\frac{2p}{1-s_c}}L_x^{\frac{2Np}{N+\gamma}}$, then
	$$
	\||v^k(t-(t_n^k-t_n^l))|^{p-2}v^l(t)\|_{L_t^{\frac{2p}{(1-s_c)(p-1)}}L_x^{\frac{2Np}{(N+\gamma)(p-1)}}} \rightarrow 0.
	$$
	This gives Claim \ref{LTPclaim2}, which completes the proof of Theorem \ref{crit.elem}.
\end{proof}

For the proof of the following Proposition and Lemmas  \ref{unilocalization}, \ref{zeromoment} and \ref{pathcontrol}, see \cite{HR08}, \cite{DHR08} and \cite{G} (or refer to \cite{AKA2} for details).

\begin{proposition}[Precompactness of the flow of the critical solution]\label{precompflow_crit}
	Assume $u_c$ as in Theorem \ref{crit.elem}. Then there exists a continuous path $x(t)$ in $\R^N$ such that
	$$
	K=\{u_c(\cdot-x(t),t)\,\,|\,\,t\in [0,\infty)\}
	$$
	is precompact in $H^1$ (i.e., $\overline{K}$ is compact in $H^1$).
\end{proposition}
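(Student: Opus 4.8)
The plan is to prove the equivalent assertion that the orbit $\{u_c(t):t\in[0,\infty)\}$ is precompact in $H^1$ \emph{modulo spatial translations}, i.e.\ that for every sequence $t_n\in[0,\infty)$ there exist $x_n\in\R^N$ and a subsequence along which $u_c(\cdot-x_n,t_n)$ converges in $H^1(\R^N)$; the function $x(t)$ and its continuity are then obtained by the standard packaging argument using $u_c\in C([0,\infty);H^1)$ and uniqueness of the limiting profile (as in \cite{KM06}, \cite{HR08}), which I will only indicate at the end. First note that by Theorem \ref{crit.elem} we have $\mathcal{G}[u_c(t)]<1$ for all $t$, which together with conservation of mass gives $\sup_{t\ge0}\|u_c(t)\|_{H^1}\le B<\infty$; this uniform bound is exactly what is needed to feed the sequences $\{u_c(t_n)\}$ into the linear profile decomposition. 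If $t_n$ is bounded the claim is immediate (a subsequence of $u_c(t_n)$ converges in $H^1$ by continuity of the flow, with $x_n=0$), so assume $t_n\to\infty$ and apply Theorem \ref{linprodecomp} to $\phi_n:=u_c(t_n)$, obtaining profiles $\psi^j$, parameters $(t_n^j,x_n^j)$ and remainders $W_n^M$.

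The first key step is the \textbf{single-profile dichotomy}: at most one $\psi^j$ is nonzero. Combining the $L^2$-- and $\dot{H}^1$--Pythagorean expansions \eqref{pythagexp} with the energy Pythagorean expansion \eqref{energypythaexp}, and using that the constraint $\mathcal{G}[\,\cdot\,]<1$ is inherited (for $n$ large) by each profile and by each remainder, so that by Lemma \ref{EGcomp} all energies appearing are nonnegative, one deduces $\mathcal{G}[\widetilde{\psi}^j]<1$ and $\mathcal{ME}[\widetilde{\psi}^j]\le(\mathcal{ME})_c$ for each associated nonlinear profile $\widetilde{\psi}^j$. If two or more profiles were nonzero the mass and the energy would be strictly split, forcing $\mathcal{ME}[\widetilde{\psi}^j]<(\mathcal{ME})_c$ for every $j$; then, by the very definition of the critical threshold, each $v^j=\mathrm{NLF}(t)\widetilde{\psi}^j$ would be global with finite $S(\dot{H}^{s_c})$ norm, and inserting $\widetilde{u}_n=\sum_j v^j(t-t_n^j,\cdot-x_n^j)$ into Theorem \ref{LTP} (exactly as in the proof of Theorem \ref{crit.elem}, the convolution cross-terms in the error $\widetilde{e}_n^M$ being controlled in $S'(\dot{H}^{-s_c})$ by Lemma \ref{HLS} as in Claim \ref{LTPclaim2}) would yield $\|u_c\|_{S(\dot{H}^{s_c})}<\infty$, contradicting \eqref{nonscat}. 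Hence $\phi_n=e^{-it_n^1\Delta}\psi^1(\cdot-x_n^1)+W_n^1$ with $\lim_n\|e^{it\Delta}W_n^1\|_{S(\dot{H}^{s_c})}=0$, this being \eqref{smallrem} in the single-profile case.

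Next I rule out $|t_n^1|\to\infty$: if $t_n^1\to+\infty$ then $\widetilde{\psi}^1$ scatters as $t\to-\infty$, so $\widetilde{u}_n(t)=\mathrm{NLF}(t-t_n^1)\widetilde{\psi}^1$ has uniformly bounded $S(\dot{H}^{s_c})$ norm on $[0,\infty)$ and the same long-time perturbation argument gives $\|u_c\|_{S(\dot{H}^{s_c})}<\infty$, a contradiction (the case $t_n^1\to-\infty$ is handled in the same way). Passing to a subsequence, $t_n^1\to t^*\in\R$, and absorbing $e^{-it^*\Delta}$ into $\psi^1$ we may take $t_n^1=0$, so $\phi_n=\psi^1(\cdot-x_n^1)+W_n^1$. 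Finally I upgrade the remainder to $\|W_n^1\|_{H^1}\to0$: the argument of the previous paragraph applied to the single nonlinear profile shows $\mathcal{ME}[\psi^1]$ cannot be $<(\mathcal{ME})_c$, hence $\mathcal{ME}[\psi^1]=(\mathcal{ME})_c=\mathcal{ME}[u_c]$; but $M[u_c]=\|\psi^1\|_{L^2}^2+\lim_n\|W_n^1\|_{L^2}^2$ and $E[u_c]=E[\psi^1]+\lim_n E[W_n^1]$ with $\lim_n E[W_n^1]\ge0$ and $E[\psi^1]>0$, so equality of the mass--energy products forces $\lim_n\|W_n^1\|_{L^2}=0$ and $\lim_n E[W_n^1]=0$; the latter, via $E[W_n^1]\gtrsim\|\nabla W_n^1\|_{L^2}^2$ (Lemma \ref{EGcomp}), gives $\lim_n\|\nabla W_n^1\|_{L^2}=0$. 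Therefore $u_c(\cdot+x_n^1,t_n)\to\psi^1$ in $H^1$, which with $x_n:=-x_n^1$ proves precompactness modulo translations; defining $x(t)$ accordingly and invoking the continuity upgrade completes the proof.

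The main obstacle is the single-profile dichotomy together with the $H^1$ upgrade of the remainder: both rest on the interplay of the three Pythagorean expansions with the variational trapping $\mathcal{G}<1$ and on a clean application of Theorem \ref{LTP}, where—unlike the local-nonlinearity NLS case—one must verify that the error $\widetilde{e}_n^M$ assembled from nonlinear-profile cross terms is small in $S'(\dot{H}^{-s_c})$, which is precisely the point at which the Hardy--Littlewood--Sobolev estimate of Lemma \ref{HLS} is needed.
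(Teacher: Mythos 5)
Your overall strategy is the standard Kenig--Merle / Holmer--Roudenko compactness argument, which is exactly what the paper defers to via its citations (the paper gives no proof of Proposition~\ref{precompflow_crit} itself), so the approach matches. There is, however, a genuine gap in the step that rules out $|t_n^1|\to\infty$, specifically in the case $t_n^1\to+\infty$. There the wave-operator construction of the nonlinear profile $\widetilde{\psi}^1$ matches the free flow as $\tau\to-\infty$, so the conclusion you can legitimately draw is that $\mathrm{NLF}(\tau)\widetilde{\psi}^1$ has finite $S(\dot{H}^{s_c})$ norm on $(-\infty,0]$. But for $t\in[0,\infty)$ the approximation $\widetilde{u}_n(t)=\mathrm{NLF}(t-t_n^1)\widetilde{\psi}^1$ evaluates $\tau=t-t_n^1\in[-t_n^1,\infty)$, which runs arbitrarily far in the \emph{forward} direction as $t\to\infty$; backward scattering of $\widetilde{\psi}^1$ gives no control there, so the claim that $\|\widetilde{u}_n\|_{S(\dot{H}^{s_c};[0,\infty))}$ is uniformly bounded does not follow. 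This is not a removable detail: by the very argument you invoke in your remainder step, the single nonlinear profile must have $\mathcal{ME}[\widetilde{\psi}^1]=(\mathcal{ME})_c$, so it is not covered by the scattering threshold and is not known to scatter forward. (If instead $\mathcal{ME}[\widetilde{\psi}^1]<(\mathcal{ME})_c$, the long-time perturbation argument already yields a contradiction irrespective of $t_n^1$, so the $|t_n^1|\to\infty$ dichotomy is never reached in that case.)

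The correct way to exclude $t_n^1\to+\infty$ runs backward in time from $t_n$. From the single-profile decomposition $u_c(t_n)=e^{-it_n^1\Delta}\psi^1(\cdot-x_n^1)+W_n^1$, the Strichartz tail $\|e^{is\Delta}\psi^1\|_{S(\dot{H}^{s_c};(-\infty,-t_n^1])}\to 0$ by monotone convergence, and $\|e^{i\tau\Delta}W_n^1\|_{S(\dot{H}^{s_c})}\to 0$ by \eqref{smallrem} in the one-profile case; hence $\|e^{i\tau\Delta}u_c(t_n)\|_{S(\dot{H}^{s_c};(-\infty,0])}\to 0$. The time-reversed small data theory (Proposition~\ref{smalldata}) then gives $\|u_c\|_{S(\dot{H}^{s_c};(-\infty,t_n])}\to 0$, while this quantity dominates $\|u_c\|_{S(\dot{H}^{s_c};[0,t_n])}$, which increases to $\|u_c\|_{S(\dot{H}^{s_c};[0,\infty))}=+\infty$ as $t_n\to\infty$ --- a contradiction. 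Your treatment of $t_n^1\to-\infty$ (forward scattering of the profile and vanishing forward Strichartz tail of $\widetilde{u}_n$) is fine, as are the single-profile dichotomy and the $H^1$-upgrade of the remainder via the energy Pythagorean expansion together with Lemma~\ref{EGcomp}.
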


\begin{lemma}[Precompactness of the flow implies uniform localization]\label{unilocalization}
	Let $u$ be a solution to \eqref{gH} such that
	$$
	K=\{u(\cdot-x(t),t)\,\,|\,\,t\in [0,\infty)\}
	$$
	is precompact in $H^1$. Then for each $\epsilon >0$, there exists $R>0$ so that
	\begin{align}\label{uniloc}
	\int_{|x+x(t)|>R}^{}|\nabla u(x,t)|^2 +|u(x,t)|^2 dx < \epsilon
	\end{align}
	for all $0\leq t<\infty$.
\end{lemma}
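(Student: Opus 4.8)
The plan is to run a standard compactness argument; continuity of the flow plays no role, all the content is in the precompactness hypothesis. First I would reduce \eqref{uniloc} to a statement purely about the set $K$. Substituting $y = x + x(t)$ in the integral, the left-hand side of \eqref{uniloc} becomes $\|v_t\|_{H^1(\{|y|>R\})}^2$, where $v_t(y) \defeq u(y-x(t),t)$ and $\|\cdot\|_{H^1(\{|y|>R\})}$ denotes the $H^1$-norm of the restriction to the exterior ball $\{|y|>R\}$ (i.e.\ the integral of $|\nabla v_t|^2 + |v_t|^2$ over that region). Thus it suffices to show that for every $\epsilon>0$ there is an $R>0$ with $\sup_{t\geq 0}\|v_t\|_{H^1(\{|y|>R\})}^2 < \epsilon$; that is, the tails of the functions in $K = \{v_t : t\geq 0\}$ are uniformly small.

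Second, I would record two elementary facts about the truncated functional $\Phi_R(v) \defeq \|v\|_{H^1(\{|y|>R\})}$ on $H^1(\R^N)$. (i) $\Phi_R$ is a seminorm, hence $1$-Lipschitz: $|\Phi_R(v)-\Phi_R(w)| \leq \|v-w\|_{H^1}$; moreover $R\mapsto \Phi_R(v)$ is nonincreasing, since $\{|y|>R'\}\subseteq\{|y|>R\}$ whenever $R'\geq R$. (ii) For each fixed $v\in H^1(\R^N)$, $\Phi_R(v)\to 0$ as $R\to\infty$, by dominated convergence (equivalently, by density of compactly supported functions in $H^1$).

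Third, I would exploit precompactness. Since $\overline{K}$ is compact in $H^1(\R^N)$, it is totally bounded, so given $\epsilon>0$ there is a finite $\tfrac12\sqrt\epsilon$-net $v_1,\dots,v_n\in\overline{K}$: every $v\in K$ satisfies $\|v-v_j\|_{H^1}<\tfrac12\sqrt\epsilon$ for some $j$. Using (ii), choose $R_j$ with $\Phi_{R_j}(v_j)<\tfrac12\sqrt\epsilon$, and set $R=\max_{1\leq j\leq n}R_j$. Then for any $t\geq 0$, picking $j$ with $\|v_t-v_j\|_{H^1}<\tfrac12\sqrt\epsilon$, the monotonicity in $R$ and the Lipschitz bound give
\[
\Phi_R(v_t) \leq \Phi_R(v_j) + \|v_t-v_j\|_{H^1} \leq \Phi_{R_j}(v_j) + \tfrac12\sqrt\epsilon < \sqrt\epsilon ,
\]
hence $\|v_t\|_{H^1(\{|y|>R\})}^2 = \Phi_R(v_t)^2 < \epsilon$ uniformly in $t$, which is exactly \eqref{uniloc} after undoing the substitution.

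There is no serious obstacle here; the only point requiring (minor) care is arranging that a single radius $R$ works simultaneously for all $t\geq 0$, which is precisely what total boundedness of $K$ provides. Alternatively one could argue by contradiction: sequences $t_n$ and $R_n\to\infty$ violating the bound would, after passing to an $H^1$-convergent subsequence $v_{t_n}\to v_\infty\in\overline K$, contradict fact (ii) applied to $v_\infty$ together with the Lipschitz estimate (i).
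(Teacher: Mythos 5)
Your argument is correct and is essentially the standard proof; the paper itself defers to \cite{HR08}, \cite{DHR08}, \cite{G}, where exactly this total-boundedness/finite-net argument (with the Lipschitz tail functional $\Phi_R$) is used. Nothing to add.
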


\begin{lemma}[Zero momentum]\label{zeromoment}
	Let $u_c$ be the critical solution constructed in Theorem \ref{crit.elem} and assume $(\mathcal{ME})_c < 1$. Then $P[u_c]=\Im\int \bar{u}_c\nabla u_c\, dx = 0$.
\end{lemma}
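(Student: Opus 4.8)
The plan is to run the standard Galilean boost argument of \cite{DHR08}, \cite{HR08}, adapted to \eqref{gH}. Suppose toward a contradiction that $P[u_c]\neq 0$, and set $\xi_0\defeq P[u_c]/M[u_c]\neq 0$ (recall $M[u_c]=1>0$ by Theorem \ref{crit.elem}). Using the Galilean invariance recorded in the introduction with parameter $-\xi_0$, define
\begin{equation*}
v(x,t)=e^{i(-x\cdot\xi_0-t|\xi_0|^2)}\,u_c(x+\xi_0 t,t),\qquad v_0(x)=e^{-ix\cdot\xi_0}\,u_{c,0}(x).
\end{equation*}
Then $v$ is again a global $H^1$ solution of \eqref{gH} (by Galilean invariance together with the uniqueness in Proposition \ref{lwp}), and $v_0\in H^1(\R^N)$ since multiplication by the unimodular factor $e^{-ix\cdot\xi_0}$ preserves $H^1$.

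First I would record how the conserved quantities transform, which is a short computation using $|v_0|=|u_{c,0}|$ (so $Z(v_0)=Z(u_{c,0})$) and the identity $\int_{\R^N}\bar u_{c,0}\nabla u_{c,0}=iP[u_c]$ (the real part being $\tfrac12\nabla\!\int|u_{c,0}|^2=0$). One obtains
\begin{equation*}
M[v]=M[u_c],\qquad P[v]=P[u_c]-\xi_0\,M[u_c]=0,
\end{equation*}
\begin{equation*}
\|\nabla v_0\|_{L^2}^2=\|\nabla u_{c,0}\|_{L^2}^2-\frac{|P[u_c]|^2}{M[u_c]},\qquad E[v]=E[u_c]-\frac{|P[u_c]|^2}{2M[u_c]}.
\end{equation*}
Since $\xi_0\neq 0$, both subtracted terms are strictly positive; hence $\|\nabla v_0\|_{L^2}\le\|\nabla u_{c,0}\|_{L^2}$, so $\mathcal{G}[v_0]\le\mathcal{G}[u_{c,0}]<1$, and (using $M[v]=M[u_c]$ and $M[u_c]^{\theta}>0$ together with $M[Q]^{\theta}E[Q]>0$) we get the strict inequality $\mathcal{ME}[v]<\mathcal{ME}[u_c]=(\mathcal{ME})_c$.

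Next I would invoke the definition of the critical threshold. The property defining the admissible parameters $\delta$ in $(\mathcal{ME})_c$ is monotone in $\delta$, and the First Stage (small data theory) shows that $(\mathcal{ME})_c>0$; consequently scattering holds for every $H^1$ datum with renormalized gradient below $1$ and renormalized mass-energy strictly below $(\mathcal{ME})_c$. Applying this to $v$ yields $\|v\|_{S(\dot H^{s_c})}<+\infty$. On the other hand, the Galilean transform only translates the spatial profile and multiplies by a unimodular factor, so $\|v(t)\|_{L_x^r}=\|u_c(t)\|_{L_x^r}$ for every $r$, and therefore $\|v\|_{S(\dot H^{s_c})}=\|u_c\|_{S(\dot H^{s_c})}=+\infty$ by \eqref{nonscat}. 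This contradiction forces $P[u_c]=0$.

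I expect the only point requiring any care is verifying that the strict inequalities $\mathcal{ME}[v]<(\mathcal{ME})_c$ and $\mathcal{G}[v_0]<1$ genuinely place $v$ inside the scattering regime described by the induction hypothesis — which relies on $(\mathcal{ME})_c>0$ and on the monotonicity of the scattering property in $\delta$ — and that the transformation identities above are computed correctly; there is no substantial analytic obstacle here beyond this bookkeeping.
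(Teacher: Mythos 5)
Your argument is correct and is exactly the Galilean-boost argument from \cite{HR08}, \cite{DHR08}, \cite{G} to which the paper defers for this lemma (the paper gives no independent proof): boost by $\xi_0=P[u_c]/M[u_c]$, note that the transformation preserves $M$ and all $L^r_x$ norms pointwise in $t$ (hence the Strichartz norm), strictly lowers $\|\nabla u_0\|_{L^2}$ and $E$, and then invoke the definition of $(\mathcal{ME})_c$ to contradict non-scattering. The transformation identities are computed correctly; the only cosmetic point is that $\|\nabla v_0\|_{L^2}\le\|\nabla u_{c,0}\|_{L^2}$ is in fact a strict inequality when $P[u_c]\neq 0$, though the non-strict version already suffices.
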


Next, observe that
$$
\frac{\partial}{\partial t}\int x|u(x,t)|^2\,dx=2N\Im\int\bar{u}\nabla u\,dx=2NP[u].
$$
Since $P[u_c]=0$ (Lemma \ref{zeromoment}), this implies that $\int x|u_c(x,t)|^2\,dx=\,\text{constant}$, provided it is finite. We will replace this identity with a localized version adapted to a suitably large
radius $R > 0$. To envelope the entire path $x(t)$ over $[T, T_1]$ the localization $R$ should be taken large enough over the same interval $[T,T_1]$. We can use the precompactness of the translated flow $u_c(\cdot-x(t),t)$
and the zero momentum to prove that the localized center of mass is nearly
conserved. By the localization of $u_c$ in $H^1$ around $x(t)$ and the near conservation of localized
center of mass we constrain parameter $x(t)$ from going too quickly to $+\infty$.

\begin{lemma}[Control over $x(t)$]\label{pathcontrol}
	Let $u$ be a solution of \eqref{gH} defined on $[0,+\infty)$ such that $P[u]=0$ and $	K=\{u(\cdot-x(t),t)\,\,|\,\,t\in [0,\infty)\}
	$ is precompact in $H^1$, for some continuous function $x(\cdot)$. Then
	\begin{align}\label{control}
	\frac{x(t)}{t}\rightarrow 0\quad \text{as}\,\,\, t\rightarrow +\infty.
	\end{align}
\end{lemma}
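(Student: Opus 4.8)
The plan is to argue by contradiction with a \emph{localized} (truncated) first--moment — center of mass — functional, as sketched in the paragraph preceding the statement. Suppose $x(t)/t\not\to0$. Then there are $\delta>0$ and $t_n\to+\infty$ with $|x(t_n)|\ge\delta t_n$. Translating the solution only shifts $x(\cdot)$ by a constant and affects neither precompactness, nor $P[u]=0$, nor the conclusion, so we may assume $x(0)=0$. Fix a small $\varepsilon>0$, to be chosen at the very end in terms of $\delta$, $N$ and $M[u]$, and use Lemma \ref{unilocalization} to pick $\widetilde R=\widetilde R(\varepsilon)$ with
$$
\int_{|x+x(t)|>\widetilde R}\big(|u(x,t)|^2+|\nabla u(x,t)|^2\big)\,dx<\varepsilon^2\qquad\text{for all }t\ge0 .
$$
Combined with the hypothesis $P[u]=\Im\int\bar u\nabla u\,dx=0$ and Cauchy--Schwarz, this also gives $\big|\Im\int_{|x+x(t)|\le\widetilde R}\bar u\nabla u\,dx\big|<\varepsilon$ uniformly in $t$.

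Next I localize the center of mass at a scale adapted to each $t_n$. For fixed $n$ let $\tau_n\in[0,t_n]$ be a point where $s\mapsto|x(s)|$ attains its maximum over $[0,t_n]$, so that
$$
\sup_{0\le s\le\tau_n}|x(s)|=|x(\tau_n)|\ge|x(t_n)|\ge\delta t_n .
$$
Set $R_n=2\big(\widetilde R+|x(\tau_n)|\big)$, fix $\chi\in C_c^\infty(\R^N)$ with $\chi\equiv1$ on $\{|y|\le1\}$ and $\operatorname{supp}\chi\subset\{|y|\le2\}$, and put $z_n(t)=\int_{\R^N}\chi(x/R_n)\,x\,|u(x,t)|^2\,dx$ for $t\in[0,\tau_n]$. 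Differentiating in $t$, using \eqref{gH} (the nonlinearity does not contribute to $\partial_t|u|^2$, the convolution potential being real) and integrating by parts once gives $z_n'(t)=2\int_{\R^N}\chi(x/R_n)\,\Im(\bar u\nabla u)\,dx+\mathcal E_n(t)$, where $\mathcal E_n(t)$ is supported in $\{R_n\le|x|\le2R_n\}$ with bounded coefficients. By the choice of $R_n$ and since $|x(t)|\le|x(\tau_n)|$ for $t\le\tau_n$, one has $\{|x+x(t)|\le\widetilde R\}\subset\{|x|\le R_n\}$ (so $\chi(x/R_n)\equiv1$ on the concentration region) and $\{|x|\ge R_n\}\subset\{|x+x(t)|>\widetilde R\}$; hence the main term splits into the contribution from $\{|x+x(t)|\le\widetilde R\}$ (bounded by $\varepsilon$ using $P[u]=0$) plus an exterior contribution, while $\mathcal E_n$ is again an exterior contribution. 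Therefore $|z_n'(t)|\le C_N\varepsilon$ on $[0,\tau_n]$, so $|z_n(\tau_n)-z_n(0)|\le C_N\varepsilon\,\tau_n$.

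It remains to compare $z_n$ with the true center of mass. Splitting $z_n(t)$ over $\{|x+x(t)|\le\widetilde R\}$ (where $\chi\equiv1$) and its complement (where $|\chi(x/R_n)\,x|\le2R_n$ and the mass is $<\varepsilon^2$), the translation $y=x+x(t)$ yields
$$
z_n(t)=-x(t)\,M[u]+O\big(\widetilde R\,M[u]\big)+O\big(\varepsilon\,R_n\big)=-x(t)\,M[u]+O\big(\widetilde R\,M[u]\big)+O\big(\varepsilon\,|x(\tau_n)|\big)
$$
for $t\in\{0,\tau_n\}$, with $z_n(0)=O(\widetilde R\,M[u])$ because $x(0)=0$. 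Combining with the previous bound and absorbing the $O(\varepsilon|x(\tau_n)|)$ term into the left side (legitimate once $\varepsilon$ is small relative to $M[u]$) gives $|x(\tau_n)|\,M[u]\le C_N\varepsilon\,\tau_n+C_N\widetilde R\,M[u]$, whence
$$
\delta t_n\le|x(\tau_n)|\le\frac{2C_N\varepsilon}{M[u]}\,t_n+2C_N\widetilde R .
$$
Choosing $\varepsilon$ so small that $2C_N\varepsilon/M[u]<\delta/2$ forces $t_n\le 4C_N\widetilde R(\varepsilon)/\delta$, contradicting $t_n\to+\infty$. Hence $x(t)/t\to0$.

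I expect the main obstacle to be precisely that $\sup_{0\le s\le t}|x(s)|$ is not controlled a priori, which forces the truncation scale $R_n$ to depend on the (possibly very large, and a priori much bigger than $|x(t_n)|$) size of the path on $[0,t_n]$. The device of stopping at the argmax time $\tau_n$ — so that $\sup_{[0,\tau_n]}|x|=|x(\tau_n)|$ is exactly the quantity being estimated and can be absorbed on the left — is what closes the estimate. The rest is bookkeeping: calibrating $\widetilde R$ to the right small power of $\varepsilon$, verifying the two inclusions $\{|x+x(t)|\le\widetilde R\}\subset\{|x|\le R_n\}$ and $\{|x|\ge R_n\}\subset\{|x+x(t)|>\widetilde R\}$ on all of $[0,\tau_n]$, and carrying out the (routine) integration by parts in the definition of $z_n'$.
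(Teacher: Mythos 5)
Your argument is correct and is essentially the standard proof from the cited sources (\cite{HR08}, \cite{DHR08}, \cite{G}): a truncated center--of--mass functional whose near--constancy (from $P[u]=0$ and the uniform $H^1$ localization of Lemma~\ref{unilocalization}) is pitted against its drift $\approx -x(t)M[u]$, computed on a stopping interval chosen so that the path stays inside the truncation ball. Your argmax stopping time $\tau_n$ is a cosmetic variant of the usual first--exit--time device and plays exactly the same role.
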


\section{Rigidity Theorem} 
\label{rigid}
\begin{theorem}[Rigidity]\label{rigidity}
	Let $u$ be the global solution of \eqref{gH} with initial data $u_0\in H^1(\R^N)$ satisfying $P[u_0]=0$, $
	\mathcal{ME}[u_0]<1$ and $\mathcal{G}[u_0]<1.$ Suppose $K=\{u(\cdot-x(t),t)\,\,|\,\,t\in [0,\infty)\}$ is precompact in $H^1$. Then $u_0\equiv 0$.
\end{theorem}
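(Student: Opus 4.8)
The plan is to run the localized virial/convexity argument of Kenig--Merle \cite{KM06} and Holmer--Roudenko \cite{HR08}, the only genuinely new point being the treatment of the nonlocal potential terms in the localized virial identity. I argue by contradiction, assuming $u_0\not\equiv 0$, and use the same cutoff as in Theorem \ref{Dichotomy}: fix $\phi\in C^\infty$ with $\phi(r)=r^2/2$ for $r\le 2$, $\phi\equiv\mathrm{const}$ for $r\ge 3$ and $\phi''\le 1$, set $\phi_R(x)=R^2\phi(|x|/R)$ and
$$
z_R(t)=\int_{\R^N}\phi_R(x)\,|u(x,t)|^2\,dx .
$$
Since the multiplier $(|x|^{-(N-\gamma)}\ast|u|^p)|u|^{p-2}$ is real, the potential contribution to $\partial_t|u|^2$ vanishes and $z_R'(t)=2\,\Im\int_{\R^N}\nabla\phi_R\cdot\bar u\,\nabla u\,dx$. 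Because $|\nabla\phi_R|\lesssim R$, the mass $\|u(t)\|_{L^2}=\|u_0\|_{L^2}$ is conserved, and $\|\nabla u(t)\|_{L^2}$ is bounded uniformly in $t$ (by Theorem \ref{Dichotomy}, since $\mathcal{ME}[u_0]<1$ and $\mathcal{G}[u_0]<1$ force $\mathcal{G}[u(t)]<1$), one obtains the a priori bound $|z_R'(t)|\le c_1 R$ with $c_1$ independent of $R,t$.

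Next I would expand $z_R''(t)$ as in \eqref{dichotomy9}--\eqref{dichotomy11}, using the symmetrization after \eqref{dichotomy19}, and write
$$
z_R''(t)=8\Big(\|\nabla u(t)\|_{L^2}^2-\tfrac{s_c(p-1)+1}{p}\,Z(u(t))\Big)+\mathrm{Err}_R(t),
$$
where $\mathrm{Err}_R(t)$ collects the contributions from the region where $\phi_R\ne|x|^2/2$: the $\Delta^2\phi_R$ term, which is $O\!\left(R^{-2}\int_{|x|\gtrsim R}|u|^2\right)$, the local and nonlocal potential terms supported in $\{|x|\gtrsim R\}$, and the $\nabla\phi_R$-weighted double integral handled region by region exactly as in \eqref{dichotomy18}--\eqref{dichotomy21}. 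The crucial change from Theorem \ref{Dichotomy} is that here $u$ need not be radial, so instead of the radial Sobolev inequality (Lemma \ref{rsob}) I would estimate every tail term by Hardy--Littlewood--Sobolev (Lemma \ref{HLS}) together with the uniform localization of the precompact flow: since $2<\frac{2Np}{N+\gamma}<\frac{2N}{N-2}$ in the intercritical range, a factor $\|u\|_{L^{2Np/(N+\gamma)}(|x|\gtrsim R)}$ is controlled by $\|u\|_{H^1(|x|\gtrsim R)}$, which Lemma \ref{unilocalization} makes as small as we wish once the ball $\{|x|\le R\}$ engulfs the bulk, i.e. once $R\gtrsim R_0+\sup|x(t)|$. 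On the main term, Lemma \ref{bndconvexvar} gives $8(\|\nabla u\|_{L^2}^2-\tfrac{s_c(p-1)+1}{p}Z(u))\ge A:=8E[u]\big(1-(\mathcal{ME}[u])^{s_c(p-1)}\big)$; if $u_0\not\equiv 0$ then $E[u]>0$ by Lemma \ref{EGcomp} (using $\mathcal{G}[u_0]<1$), and $\mathcal{ME}[u_0]<1$ gives $1-(\mathcal{ME}[u])^{s_c(p-1)}>0$, so $A>0$.

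Finally I would couple this with the control of the spatial translation. Given $\varepsilon>0$, Lemma \ref{unilocalization} produces $R_0$ with $\int_{|x+x(t)|>R_0}(|\nabla u|^2+|u|^2)\,dx<\varepsilon$ for all $t\ge0$, while Lemma \ref{pathcontrol} (which uses $P[u_0]=0$, cf. Lemma \ref{zeromoment}) gives $x(t)/t\to0$, hence $\sup_{[0,T]}|x(t)|\le\eta T$ for $T$ large, for any fixed $\eta>0$. Choosing $\varepsilon$ so small that $|\mathrm{Err}_R(t)|\le A/2$ whenever $R\ge R_0+\sup_{[0,T]}|x(t)|$, one gets $z_R''(t)\ge A/2$ on $[0,T]$; integrating, $z_R'(T)-z_R'(0)\ge \tfrac{A}{2}T$, while the a priori bound forces $|z_R'(T)-z_R'(0)|\le 2c_1R$. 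Taking $R=R_0+\eta T$ and then $\eta<A/(8c_1)$ yields $\tfrac{A}{4}T\le 2c_1R_0$ for all large $T$, a contradiction; therefore $u_0\equiv 0$. The step I expect to be the main obstacle is the bookkeeping of $\mathrm{Err}_R(t)$: the localized virial identity for the convolution nonlinearity carries a genuinely nonlocal error — the $\nabla\phi_R$-weighted double integral over $\R^N\times\R^N$ — and one must verify, splitting into the regions $|x|\approx|y|$ and $\max\{|x|,|y|\}\gg\min\{|x|,|y|\}$ as after \eqref{dichotomy19}, that each contribution is either supported where $u$ is $H^1$-small (hence tamed by Lemma \ref{HLS} and Sobolev via Lemma \ref{unilocalization}) or is $O(R^{-2})$, with bounds uniform in $t$. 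The second delicate point is synchronizing the two length scales — the localization radius $R_0$ from compactness and the sublinear growth rate $\eta$ of $x(t)$ from Lemma \ref{pathcontrol} — so that the convexity gain $\tfrac{A}{2}T$ outruns the a priori bound $2c_1(R_0+\eta T)$; this is handled as in \cite{HR08}, \cite{DHR08}, \cite{G}.
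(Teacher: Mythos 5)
Your proposal is correct and runs along the same lines as the paper's proof: same cutoff $\phi_R$, same a priori bound $|z_R'(t)|\lesssim R$ from mass conservation and the uniform gradient bound, same lower bound on $z_R''(t)$ via Lemma~\ref{bndconvexvar}, same region-by-region treatment of the nonlocal double integral after \eqref{dichotomy19}, and the same synchronization of $R=R_0+\eta T$ with $\eta$ from Lemma~\ref{pathcontrol}.

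The one point where you depart from the paper's written argument is actually an improvement. In estimating the tail of the potential term (the paper's step \eqref{rigidity7}), the paper invokes the radial Sobolev inequality to produce the $R^{-\alpha}$ decay, exactly as in the radial blow-up case of Theorem~\ref{Dichotomy}. But in the rigidity theorem $u$ is \emph{not} assumed radial, so a pointwise radial decay estimate is not available; the correct mechanism here is the one you use — precompactness via Lemma~\ref{unilocalization}, which makes $\|u\|_{H^1(|x+x(t)|>R_0)}$ uniformly small, combined with the ordinary Sobolev embedding $H^1\hookrightarrow L^{2Np/(N+\gamma)}$ (valid because $2<\tfrac{2Np}{N+\gamma}<\tfrac{2N}{N-2}$). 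One small care point worth making explicit: $\|u\|_{L^{2Np/(N+\gamma)}(|x|>R)}$ is not literally bounded by $\|u\|_{H^1(|x|>R)}$ without a cutoff, so one should pass through $\|\chi_R u\|_{H^1}$ with a smooth cutoff $\chi_R$ supported in $\{|x|>R\}$, picking up only an additional innocuous $R^{-1}\|u\|_{L^2}$ term. Also note that in the double-integral term, after H\"older and HLS, the tail restriction effectively lands on one factor of $\|u\|_{L^{2Np/(N+\gamma)}(|x|\gtrsim R)}^p$, which suffices for the required smallness. With these minor clarifications your argument is complete and, on the nonradial tail estimate, cleaner than what the paper writes.
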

\begin{proof}
	Let $\phi\in C_0^{\infty}$ be radial, with
	$\phi(x)=|x|^2$ for $|x|\leq 1$ and $0$ for $|x|\geq 2$.
	For $R>0$, let $\phi_R(x)=R^2\phi(x/R)$. Define
	\begin{align}\label{rigidity1-2}
	V_{loc}(t)=\int \phi_R(x)|u(x,t)|^2\,dx\implies 	V_{loc}^{\prime}(t)=2R\Im\int\bar{u}(t)\cdot\nabla u(t)\,(\nabla\phi)\left(\frac{x}{R}\right)\,dx.
	\end{align}
	Using H\"older's inequality, we get
	\begin{align}\label{rigidity3}
		|V_{loc}^{\prime}(t)|\leq CR\int_{|x|\leq 2R}^{}|u(t)|\,|\nabla u(t)|\,dx\leq CR\|u(t)\|^{2(1-s_c)}_{L^2}\|\nabla u(t)\|_{L^2}^{2s_c}.
	\end{align}
	The second derivative,
	using the definition of $\phi$ and symmetrization, yields 
	\begin{align*}
		V_{loc}^{\prime\prime}(t)\geq &\,8\int_{|x|\leq R}^{}|\nabla u|^2-\frac{4(N(p-1)-\gamma)}{p}\int_{|x|\leq R}^{}\left(|x|^{-(N-\gamma)}\ast|u|^p\right)|u|^p\\
		&-\frac{c}{R^2}\int_{R<|x|<2R}^{} |u|^2 +4\int_{R<|x|<2R}^{}\phi''\left(\frac{|x|}{R}\right)|\nabla u|^2\\
		&-\left(4\,c\left(\frac{1}{2}-\frac{1}{p}\right)+\frac{2(N-\gamma)}{p}\right)\int_{R<|x|<2R}^{}\left(|x|^{-(N-\gamma)}\ast|u|^p\right)|u|^p\\
		&+ \frac{2(N-\gamma)}{p}\int\int_{\Omega}\left(1-\frac{R}{|x|}\phi'\left(\frac{|x|}{R}\right)\right)\frac{x(x-y)|u(x)|^p|u(y)|^p}{|x-y|^{N-\gamma+2}}\,dxdy\\
		&-\frac{2(N-\gamma)}{p}\int\int_{\Omega}\left(1-\frac{R}{|y|}\phi'\left(\frac{|y|}{R}\right)\right)\frac{y(x-y)|u(x)|^p|u(y)|^p}{|x-y|^{N-\gamma+2}}\,dxdy.
		\end{align*}
		We re-write the above estimate as
		\begin{align}\label{rigidity3.5}
		V_{loc}^{\prime\prime}(t)\geq& \left(8\int_{|x|\leq R}^{}|\nabla u|^2-\frac{4(N(p-1)-\gamma)}{p}\int_{|x|\leq R}^{}\left(|x|^{-(N-\gamma)}\ast|u|^p\right)|u|^p\right)\\\notag
		&-c_1\left(\int_{R<|x|<2R}^{}|\nabla u|^2+\frac{|u|^2}{R^2}+\left(|x|^{-(N-\gamma)}\ast|u|^p\right)|u|^p\right)\\\label{rigidity4}
		&+ \frac{2(N-\gamma)}{p}\int\int_{\Omega}\left(1-\frac{R}{|x|}\phi'\left(\frac{|x|}{R}\right)\right)\frac{x(x-y)|u(x)|^p|u(y)|^p}{|x-y|^{N-\gamma+2}}\,dxdy\\\label{rigidity5}
		&-\frac{2(N-\gamma)}{p}\int\int_{\Omega}\left(1-\frac{R}{|y|}\phi'\left(\frac{|y|}{R}\right)\right)\frac{y(x-y)|u(x)|^p|u(y)|^p}{|x-y|^{N-\gamma+2}}\,dxdy,
	\end{align}
	where
	$$
	\Omega = \{(x,y)\in \R^N\times \R^N\,:\, |x|>R\}\cup\{(x,y)\in \R^N\times \R^N\,:\, |y|>R\}.
	$$Since $\{u(\cdot-x(t),t)\,\,|\,\,t\in [0,\infty)\}$ is precompact in $H^1(\R^N)$, by Lemma \ref{unilocalization} there exists $R_0\geq 0$ such that taking $R\geq R_0 +\sup_{t\in[T,T_1]}|x(t)|$, we obtain for all $t\in [T,T_1]$
	\begin{align}\label{rigidity6}
	\int_{|x|>R_0}^{}|\nabla u(x,t)|^2 +|u(x,t)|^2 dx < \frac{\epsilon}{8}.	
	\end{align}
	\noindent
	Using H\"older's inequality, Hardy-Littlewood-Sobolev inequality and radial Sobolev inequality  yields the existence of $R_1>0$ such that
	\begin{align}\label{rigidity7}\notag
	\int_{|x|>R_1}^{}&\left(|x|^{-(N-\gamma)}\ast|u|^p\right)|u|^p\,dx\\\notag
	&\leq c_2 \||\cdot|^{-(N-\gamma)}\ast|u|^p\|_{L_{|x|>R_1}^{\frac{2N}{N-\gamma}}}\|u\|^p_{L_{|x|>R_1}^{\frac{2Np}{N+\gamma}}}\quad \text{(H\"older's inequality)}\\\notag
	&\leq c_3 \|u\|^{2p}_{L_{|x|>R_1}^{\frac{2Np}{N+\gamma}}}\quad \text{(HLS inequality; Lemma \ref{HLS})}\\
	&\leq \frac{c_4}{R_1^{\frac{(N-1)(N(p-1)-\gamma)}{N}}}\| u\|^{\frac{N(p-1)-\gamma}{N}}_{\dot{H}^1}\|u\|^{\frac{N(p+1)+\gamma}{N}}_{L^2} <\frac{\epsilon}{8},
	\end{align}
	where the second to last inequality follows from the radial Sobolev inequality and the last one follows from taking $\displaystyle{R_1^{\frac{(N-1)(N(p-1)-\gamma)}{N}}>8\,c_4\,\| u\|^{\frac{N(p-1)-\gamma}{N}}_{\dot{H}^1}\|u\|^{\frac{N(p+1)+\gamma}{N}}_{L^2}}$. 
	
	Let $\epsilon=16E[u]\left(1-(\mathcal{M}\mathcal{E}[u])^{s_c(p-1)}\right)c_1^{-1}$ and take $R=\max\{R_0,R_1\}$, combine \eqref{rigidity6} and \eqref{rigidity7} to obtain
	\begin{align}\label{6+7}
		c_1\left(\int_{|x|>R}^{}|\nabla u|^2+\frac{|u|^2}{R^2}+\left(|x|^{-(N-\gamma)}\ast|u|^p\right)|u|^p\right)\leq 4E[u]\left(1-(\mathcal{M}\mathcal{E}[u])^{s_c(p-1)}\right).
	\end{align}
	Now we invoke Lemma \ref{bndconvexvar} by splitting the integrals on the right side of the expression \eqref{lowbndvar} into the regions $\{|x| >R\}$ and $\{|x| < R\}$ and use \eqref{6+7} to obtain the following bound, $\eqref{rigidity3.5}\geq 12E[u]\left(1-(\mathcal{M}\mathcal{E}[u])^{s_c(p-1)}\right).$
	Next, we estimate the terms \eqref{rigidity4} and \eqref{rigidity5}
	\begin{align}\label{rigidity8}
	\int\int_{\Omega}\left(\left(1-\frac{R}{|x|}\phi'\left(\frac{|x|}{R}\right)\right)x-\left(1-\frac{R}{|x|}\phi'\left(\frac{|x|}{R}\right)\right)y\right)\frac{(x-y)|u(x)|^p|u(y)|^p}{|x-y|^{N-\gamma+2}}\,dxdy,	
	\end{align}
	where we follow the argument as we did in Theorem \ref{Dichotomy} to obtain
	\begin{align*}
		\eqref{rigidity8}\leq \frac{c_5}{R^{\frac{(N-1)(N(p-1)-\gamma)}{N}}}\| \chi_{|x|>R}u\|^{\frac{N(p-1)-\gamma}{N}}_{\dot{H}^1}<\frac{\epsilon}{4}<4E[u]\left(1-(\mathcal{M}\mathcal{E}[u])^{s_c(p-1)}\right)
	\end{align*}
	with $\displaystyle{R^{\frac{(N-1)(N(p-1)-\gamma)}{N}}>4\,c_5\| \chi_{|x|>R}u\|^{\frac{N(p-1)-\gamma}{N}}_{\dot{H}^1}}$. Putting everything together, we obtain
	\begin{align}\label{rigidity11}
		V_{loc}^{\prime\prime}(t)\geq 8E[u]\left(1-(\mathcal{M}\mathcal{E}[u])^{s_c(p-1)}\right)-|I_R|\geq  4E[u]\left(1-(\mathcal{M}\mathcal{E}[u])^{s_c(p-1)}\right).
	\end{align}
	By Lemma \ref{pathcontrol}, there exists $T\geq 0$ such that for all $t \geq T$ , we have $|x(t)| \leq \delta t$, with $\delta>0$ to be chosen later. Taking $R = R_0 + \delta T_1$, we have that \eqref{rigidity4} holds for all $t \in [T, T_1]$, then integrating from $T$ to $T_1$, we obtain
	\begin{align}\label{rigidity12}
	|V_{loc}^{\prime}(T_1)-V_{loc}^{\prime}(T)|\geq 4E[u]\left(1-(\mathcal{M}\mathcal{E}[u])^{s_c(p-1)}\right)(T_1-T).
	\end{align}
	On the other hand, from \eqref{rigidity3} and \eqref{eq:dichotomy2}, we have that
	\begin{align}\label{rigidity13}
		|V_{loc}^{\prime}(t)|\leq CR\|u(t)\|^{2(1-s_c)}_{L^2}\|\nabla u(t)\|_{L^2}^{2s_c}\leq C\Big(R_0 + \delta T_1\Big)\|Q\|^{2(1-s_c)}_{L^2}\|\nabla Q\|_{L^2}^{2s_c}.
	\end{align}
	Combining \eqref{rigidity12} and \eqref{rigidity13}, we get
	$$
	4E[u]\left(1-(\mathcal{M}\mathcal{E}[u])^{s_c(p-1)}\right)(T_1-T)\leq  C\Big(R_0 + \delta T_1\Big)\|Q\|^{2(1-s_c)}_{L^2}\|\nabla Q\|_{L^2}^{2s_c}.
	$$
	Let $\delta=\frac{E[u]\left(1-(\mathcal{M}\mathcal{E}[u])^{s_c(p-1)}\right)}{C\|Q\|^{2(1-s_c)}_{L^2}\|\nabla Q\|_{L^2}^{2s_c}}$, then the above expression can be re-written as
	$$
	3E[u]\left(1-(\mathcal{M}\mathcal{E}[u])^{s_c(p-1)}\right)T_1 \leq CR_0\|Q\|^{2(1-s_c)}_{L^2}\|\nabla Q\|_{L^2}^{2s_c}+4E[u]\left(1-(\mathcal{M}\mathcal{E}[u])^{s_c(p-1)}\right)T,
	$$
	taking $T_1\rightarrow +\infty$ implies that the left hand side of the above expression goes to $\infty$ and we derive a contradiction (right hand side is bounded), which can be resolved only if $E[u] = 0$, implying that $u\equiv 0$.
\end{proof}

\section{Divergence to infinity (Theorem \ref{mainP0} (2) part (b))}\label{S-last}

The argument for part (2)b follows \cite{HR10} and \cite{G} proof verbatim. We give a brief overview here for the sake of completeness. 

Assume that there is no finite time blowup for a nonradial and infinite variance solution
(from Theorem \ref{mainP0} part (2)b), thus, the solutions exists for all time (i.e., $T^* =+\infty$). Under this assumption of global existence, we study the behavior of
$\mathcal{G}[u(t)]$ as $t\rightarrow+\infty$, and use a concentration compactness type argument to establish
the divergence of $\mathcal{G}[u(t)]$ in $H^1$ as it was developed in \cite{HR10}, note that the concentration
compactness and the rigidity arguments are used to prove a blowup property. 

We first restate (in the spirit of \cite{HR10}) the characterization of $Q$ from Lions \cite{Lions84I}, Theorem II.1, which can be considered for any minimizer $Q$.
 \begin{proposition}\label{wblowup1}
	There exists a function $\epsilon(\rho)$, defined for small $\rho>0$ with $\lim\limits_{\rho\rightarrow 0}\epsilon(\rho)=0$, such that for all $u\in H^1(\R^N)$ with 
	\begin{align}\label{LionsVar1}
	\Big|Z(u)-Z(Q)\Big|+\Big|\|u\|_{L^2}-\|Q\|_{L^2}\Big|+\Big|\|\nabla u\|_{L^2}-\|\nabla Q\|_{L^2}\Big|\leq \rho,
	\end{align}
	there is $\theta_0\in\R$ and $x_0\in\R^N$ such that 
	\begin{align}\label{LionsVar2}
	\|u-e^{i\theta_0}Q(\cdot-x_0)\|_{H^1}\leq \epsilon(\rho).
	\end{align}
\end{proposition}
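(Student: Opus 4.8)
The plan is to prove the proposition by contradiction, using a profile decomposition; this is the quantitative (contrapositive) form of the compactness-modulo-symmetries of near-optimizers for the Gagliardo--Nirenberg inequality \eqref{eq:GN}. If the statement fails, there exist $\epsilon_0>0$ and a sequence $u_n\in H^1(\R^N)$ with $|Z(u_n)-Z(Q)|+\big|\|u_n\|_{L^2}-\|Q\|_{L^2}\big|+\big|\|\nabla u_n\|_{L^2}-\|\nabla Q\|_{L^2}\big|\to0$ but $\inf_{\theta\in\R,\,x\in\R^N}\|u_n-e^{i\theta}Q(\cdot-x)\|_{H^1}\ge\epsilon_0$ for all $n$. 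Then $\{u_n\}$ is bounded in $H^1$, with $\|u_n\|_{L^2}\to\|Q\|_{L^2}$, $\|\nabla u_n\|_{L^2}\to\|\nabla Q\|_{L^2}$ and $Z(u_n)\to Z(Q)$; since the sharp constant in \eqref{eq:GN} is attained at $Q$ (Lemma \ref{GNineq}), i.e. $Z(Q)=C_{GN}\|\nabla Q\|_{L^2}^{\,Np-(N+\gamma)}\|Q\|_{L^2}^{\,N+\gamma-(N-2)p}$, the sequence $\{u_n\}$ is minimizing for the Weinstein functional $J$ of \eqref{E:W}.

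Next I would invoke the $H^1$ profile decomposition with spatial translations only, the stationary analogue of Theorem \ref{linprodecomp} (due to Lions and G\'erard): along a subsequence,
\[
u_n(x)=\sum_{j=1}^{M}\phi^j(x-x_n^j)+R_n^M(x),\qquad |x_n^j-x_n^k|\to\infty\quad(j\ne k),
\]
with the Pythagorean expansions $\|u_n\|_{\dot{H}^s}^2=\sum_{j=1}^M\|\phi^j\|_{\dot{H}^s}^2+\|R_n^M\|_{\dot{H}^s}^2+o_n(1)$ for $s=0,1$, and $\lim_{M\to\infty}\limsup_{n\to\infty}\|R_n^M\|_{L^q}=0$ for $2<q<\tfrac{2N}{N-2}$, in particular for $q=\tfrac{2Np}{N+\gamma}$ (which lies in that range because $0<s_c<1$). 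Exactly as in Steps~1--2 of Proposition \ref{energypythadecomp} --- using $Z(v)\lesssim\|v\|_{L^{2Np/(N+\gamma)}}^{2p}$ together with Lemma \ref{HLS} and the remainder estimate above --- one gets the orthogonal decoupling $Z(Q)=\lim_n Z(u_n)=\sum_{j\ge1}Z(\phi^j)$.

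The crux is then a superadditivity inequality. Set $a=Np-(N+\gamma)$ and $b=N+\gamma-(N-2)p$, both positive precisely because $0<s_c<1$, with $a+b=2p\ge4$. Applying \eqref{eq:GN} to each $\phi^j$ gives $Z(\phi^j)\le C_{GN}\|\nabla\phi^j\|_{L^2}^a\|\phi^j\|_{L^2}^b$. With $r_j=\|\nabla\phi^j\|_{L^2}^2/\|\nabla Q\|_{L^2}^2$ and $s_j=\|\phi^j\|_{L^2}^2/\|Q\|_{L^2}^2$ (so $\sum_jr_j\le1$, $\sum_js_j\le1$ by Pythagoras), weighted AM--GM gives $r_j^{a/(2p)}s_j^{b/(2p)}\le\tfrac{a}{2p}r_j+\tfrac{b}{2p}s_j$, and $\sum_j t_j^p\le(\sum_j t_j)^p$ (valid for $p\ge1$), whence
\[
1=\frac{Z(Q)}{C_{GN}\|\nabla Q\|_{L^2}^a\|Q\|_{L^2}^b}=\sum_{j\ge1}r_j^{a/2}s_j^{b/2}\le\Big(\tfrac{a}{2p}\sum_jr_j+\tfrac{b}{2p}\sum_js_j\Big)^p\le1 .
\]
Every inequality must therefore be an equality: no mass is lost to the remainder, so $\|R_n^M\|_{H^1}\to0$; exactly one profile $\phi^1$ is nonzero, with $\|\phi^1\|_{L^2}=\|Q\|_{L^2}$ and $\|\nabla\phi^1\|_{L^2}=\|\nabla Q\|_{L^2}$; and $Z(\phi^1)=Z(Q)$, so $\phi^1$ attains equality in \eqref{eq:GN}, i.e. minimizes $J$. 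The $M=1$ Pythagorean identities then force $\|u_n-\phi^1(\cdot-x_n^1)\|_{H^1}\to0$. Since $Z(v)=Z(|v|)$ and the diamagnetic inequality is an equality (for $v$ nonvanishing) only when $v=e^{i\theta}|v|$ with $\theta$ constant, $\phi^1=e^{i\theta_0}\widetilde Q(\cdot-y_0)$ for a positive radial ground state $\widetilde Q$ (Lemma \ref{GNineq}, \cite{MS13}) with $\|\widetilde Q\|_{L^2}=\|Q\|_{L^2}$ and $\|\nabla\widetilde Q\|_{L^2}=\|\nabla Q\|_{L^2}$. This contradicts $\inf_{\theta,x}\|u_n-e^{i\theta}Q(\cdot-x)\|_{H^1}\ge\epsilon_0$ once $\widetilde Q$ is a phase/translate of $Q$, and the claimed modulus $\epsilon(\rho)$ is extracted by running the whole contradiction with a fixed tolerance $\rho$.

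I expect the difficulties to be twofold. First, transporting the $Z$-decoupling and the vanishing of the remainder from the time-dependent setting of Theorem \ref{linprodecomp} to the stationary decomposition, keeping in mind that the controlling norm is $L^{2Np/(N+\gamma)}$ (available here only because $s_c<1$, as in Proposition \ref{energypythadecomp}) with all convolution terms handled by Lemma \ref{HLS}. Second --- and this is the genuinely delicate point --- the absence of a uniqueness theorem for the ground state: strictly, $Q$ in the statement must be read as one representative of the family of minimizers with the prescribed mass and gradient, and the representative closest to $u$ may depend on $u$; so a literal reading of the conclusion needs the standard uniqueness (known only for $p=\gamma=2$, $N\ge3$), whereas in general one replaces $e^{i\theta_0}Q(\cdot-x_0)$ by $e^{i\theta_0}\widetilde Q(\cdot-x_0)$ ranging over ground states $\widetilde Q$ with $\|\widetilde Q\|_{L^2}=\|Q\|_{L^2}$ and $\|\nabla\widetilde Q\|_{L^2}=\|\nabla Q\|_{L^2}$, which is what is actually needed in Section \ref{S-last}. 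As an alternative one could follow Lions' original route through the concentration-compactness trichotomy; the two approaches are equivalent here.
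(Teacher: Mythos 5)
Your profile-decomposition argument is sound and, in fact, more informative than the paper, which gives no proof at all: the paper simply restates Lions' Theorem II.1 from \cite{Lions84I} (in the spirit of \cite{HR10}) and cites it. So you have taken a genuinely different route---a self-contained contradiction argument via the stationary (translation-only) profile decomposition---where the paper offers only a reference to Lions' concentration--compactness trichotomy. The core computation is correct: with $a=Np-(N+\gamma)$, $b=N+\gamma-(N-2)p$ (both positive in the intercritical range, $a+b=2p$), the $Z$-decoupling along orthogonal translates, the weighted AM--GM bound $r_j^{a/(2p)}s_j^{b/(2p)}\le\tfrac{a}{2p}r_j+\tfrac{b}{2p}s_j$, and the superadditivity $\sum_j t_j^p\le(\sum_j t_j)^p$ for $p\ge1$ together force exactly one nonzero profile with no loss to the remainder, which must then be a Gagliardo--Nirenberg optimizer of the prescribed mass and gradient. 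One small slip: the middle step in your key chain should be an inequality, $1=\tfrac{Z(Q)}{C_{GN}\|\nabla Q\|_{L^2}^a\|Q\|_{L^2}^b}\le\sum_j r_j^{a/2}s_j^{b/2}$, since \eqref{eq:GN} is applied to each $\phi^j$; this does not affect the conclusion, as the chain is squeezed to equality anyway. Most importantly, you are right to flag the uniqueness caveat: the Pohozaev identities and the uniquely determined $C_{GN}$ fix $\|Q\|_{L^2}$, $\|\nabla Q\|_{L^2}$, and $Z(Q)$ for \emph{every} ground state, so if two non-congruent minimizers existed, taking $u=Q_2$ would make the left-hand side of \eqref{LionsVar1} vanish while the conclusion \eqref{LionsVar2} fails for $Q=Q_1$. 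Thus the literal statement requires uniqueness (known only for $p=\gamma=2$, $2<N<6$), and in general the conclusion must be read as convergence to the \emph{set} of minimizers with the given norms---which, as you note, is all that Section \ref{S-last} actually uses. This subtlety, also noted in the paper's footnote about misuse of uniqueness in the literature, is handled correctly in your proposal.
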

This is equivalent to
\begin{proposition}\label{wblowup2}
	There exists a function $\epsilon(\rho)$ such that $\epsilon(\rho)\rightarrow 0$ as $\rho\rightarrow 0$ satisfying the following: Suppose there exists $\lambda>0$ such that
	\begin{align}\label{wbu1}
	\left|\mathcal{ME}[u]-\frac{s_c(p-1)+1}{s_c(p-1)}\left(1-\frac{\lambda^{2s_c(p-1)}}{s_c(p-1)+1}\right)\lambda^2\right|\leq \rho\lambda^{2s_c(p-1)+2}
	\end{align}
	and
	\begin{align}\label{wbu2}
	\left|\mathcal{G}[u(t)]-\lambda\right|\leq \rho\begin{cases}
	\lambda^{2s_c(p-1)+1}&\text{if}\,\,\lambda\leq 1\\
	\lambda&\text{if}\,\,\lambda\geq 1
	\end{cases}.
	\end{align}
	Then there exists $\theta_0\in\R$ and $x_0\in \R^N$ such that
	\begin{align}\label{wbu3}
	\|u-e^{i\theta_0}\lambda^{N/2}\beta^{-\frac{s_c}{1-s_c}}Q(\lambda(\beta^{-\frac{3s_c}{(1-s_c)N}}x-x_0))\|_{L^2}\leq \beta^{\frac{s_c}{2(1-s_c)}}\epsilon(\rho)
	\end{align}
	and
	\begin{align}\label{wbu4}
	\|\nabla[u-e^{i\theta_0}\lambda^{N/2}\beta^{-\frac{s_c}{1-s_c}}Q(\lambda(\beta^{-\frac{3s_c}{(1-s_c)N}}x-x_0))]\|_{L^2}\leq \lambda\beta^{-\frac{s_c}{2(1-s_c)}}\epsilon(\rho),
	\end{align}
	where $\beta =\left(\frac{M[u]}{M[Q]}\right)^{\theta}$.
\end{proposition}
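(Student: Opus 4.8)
\medskip

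\noindent\textbf{Proof proposal.} Both propositions assert $H^1$-closeness of $u$ to the ground-state orbit; \eqref{wbu1}--\eqref{wbu2} are the scaling- and mass-renormalized recasting of \eqref{LionsVar1}. The plan is to pass from Proposition \ref{wblowup1} to Proposition \ref{wblowup2} by an explicit two-parameter rescaling $v(x)=c_1\,u(c_2x)$, $c_1,c_2>0$: choose $c_1,c_2$ so that $v$ meets the hypotheses of Proposition \ref{wblowup1}, apply that proposition, and rescale back (the converse implication is the case $\lambda=1$, indicated at the end). First I would read off the profile data of $u$. Given $u=u(t)$ satisfying \eqref{wbu1}--\eqref{wbu2}, set $\beta=(M[u]/M[Q])^{\theta}$, so that $\|u\|_{L^2}^2=\beta^{1/\theta}\|Q\|_{L^2}^2$ and $\|u\|_{L^2}^{2\theta}=\beta\|Q\|_{L^2}^{2\theta}$ hold exactly. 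From the definitions of $\mathcal G$ and $\mathcal{ME}$ this gives $\|\nabla u\|_{L^2}^2=\beta^{-1}\mathcal G[u]^2\|\nabla Q\|_{L^2}^2$ and $E[u]=\beta^{-1}\mathcal{ME}[u]\,E[Q]$; combining with $Z(u)=p\|\nabla u\|_{L^2}^2-2p\,E[u]$ and the Pohozaev relations behind \eqref{poh2} (namely $E[Q]=\tfrac{s_c(p-1)}{2s_c(p-1)+2}\|\nabla Q\|_{L^2}^2$ and $Z(Q)=\tfrac{p}{s_c(p-1)+1}\|\nabla Q\|_{L^2}^2$, using $M[Q]=\|Q\|_{L^2}^2$) yields
\[
Z(u)=\frac{p\|\nabla Q\|_{L^2}^2}{\beta}\left(\mathcal G[u]^2-\frac{s_c(p-1)}{s_c(p-1)+1}\,\mathcal{ME}[u]\right).
\]
The role of the precise constant in \eqref{wbu1} is that, when $\mathcal G[u]^2=\lambda^2$ and $\mathcal{ME}[u]$ equals that constant, the parenthesis collapses to $\lambda^{2s_c(p-1)+2}/(s_c(p-1)+1)$; hence, up to errors controlled by the right-hand sides of \eqref{wbu1}--\eqref{wbu2}, $\|u\|_{L^2}^2=\beta^{1/\theta}\|Q\|_{L^2}^2$, $\|\nabla u\|_{L^2}^2\approx\lambda^2\beta^{-1}\|\nabla Q\|_{L^2}^2$, and $Z(u)\approx\lambda^{2s_c(p-1)+2}\beta^{-1}Z(Q)$.

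Next I would set $v(x)=c_1u(c_2x)$ with $c_2=\lambda^{-1}\beta^{1/(2(1-s_c))}$ and $c_1=\lambda^{-N/2}\beta^{(N-2s_c)/(4(1-s_c))}$ — the unique choice normalizing $\|v\|_{L^2}=\|Q\|_{L^2}$ exactly and $\|\nabla v\|_{L^2}=\|\nabla Q\|_{L^2}$ in the limit $\rho\to0$. The key structural fact is that the scaling exponents satisfy $Np-N-\gamma=2s_c(p-1)+2$ (immediate from \eqref{E:s}), so that $c_1^{2p}c_2^{-(N+\gamma)}=\lambda^{-(2s_c(p-1)+2)}\beta$; consequently $Z(v)=c_1^{2p}c_2^{-(N+\gamma)}Z(u)$ has $Z(Q)$ as its leading part \emph{automatically} — the three constraints of \eqref{LionsVar1} for $v$ are mutually consistent precisely because $\mathcal{ME}[u]$ sits at the critical value. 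The delicate point is the \emph{uniform-in-$\lambda$} transfer of errors: the piecewise $\rho$-threshold in \eqref{wbu2} is exactly what forces $|\mathcal G[u]^2-\lambda^2|\lesssim\rho\,\lambda^{2s_c(p-1)+2}$ when $\lambda\le1$ and $|\mathcal G[u]^2-\lambda^2|\lesssim\rho\,\lambda^2$ when $\lambda\ge1$, which in each regime is exactly the scale needed so that, after multiplying by the rescaling factors above (which carry the compensating negative powers of $\lambda$), the deviations of $\|v\|_{L^2}$, $\|\nabla v\|_{L^2}$ and $Z(v)$ from the $Q$-values are $O(\rho)$ uniformly in $\lambda,\beta$. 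Thus $v$ satisfies \eqref{LionsVar1} with $C\rho$ in place of $\rho$, and Proposition \ref{wblowup1} produces $\theta_0\in\R$, $x_0\in\R^N$ with $\|v-e^{i\theta_0}Q(\cdot-x_0)\|_{H^1}\le\epsilon(C\rho)$, still a modulus vanishing as $\rho\to0$.

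Finally I would undo the rescaling. Since $u(y)=c_1^{-1}v(c_2^{-1}y)$, a change of variables gives $\|u-e^{i\theta_0}c_1^{-1}Q(c_2^{-1}\cdot-x_0)\|_{L^2}=c_1^{-1}c_2^{N/2}\,\|v-e^{i\theta_0}Q(\cdot-x_0)\|_{L^2}$ with $c_1^{-1}c_2^{N/2}=\beta^{s_c/(2(1-s_c))}$, matching the right side of \eqref{wbu3}, and similarly one derivative in yields the companion estimate with the analogous factor $c_1^{-1}c_2^{N/2-1}$; rewriting $c_1^{-1}Q(c_2^{-1}y-x_0)=c_1^{-1}Q\big(c_2^{-1}(y-c_2x_0)\big)$, expressing $c_1,c_2$ as powers of $\lambda,\beta$, and relabeling the translation $x_0\mapsto c_2x_0$ puts these inequalities into the form \eqref{wbu3}--\eqref{wbu4}. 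For the converse implication, if $u$ satisfies \eqref{LionsVar1} then the same $Q$-identities give $M[u]=M[Q]+O(\rho)$, $\mathcal{ME}[u]=1+O(\rho)$ and $\mathcal G[u]=1+O(\rho)$, so \eqref{wbu1}--\eqref{wbu2} hold with $\lambda=1$ (and $C\rho$ for $\rho$); Proposition \ref{wblowup2} then delivers \eqref{wbu3}--\eqref{wbu4} with $\lambda=1$, $\beta=1+O(\rho)$, whose $\beta$-powers are absorbed into the modulus, recovering \eqref{LionsVar2}.

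The main obstacle I foresee is precisely the uniform-in-$\lambda$ error bookkeeping in the middle step: one must verify that the two-case threshold in \eqref{wbu2} and the exact constant in \eqref{wbu1} conspire so that, after composing with a rescaling whose constants are large negative powers of $\lambda$, no $\rho$-independent remainder survives in the three quantities of $v$. This rests on the algebraic identity (via \eqref{E:s} and \eqref{poh2}) pinning the constant in \eqref{wbu1} to the value for which the rescaled potential energy reproduces $Z(Q)$ exactly; everything else is routine manipulation of the scaling symmetry of \eqref{gH}.
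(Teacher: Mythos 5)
Your reduction---rescale $u$ to $v$ so that $v$ satisfies the hypotheses of Proposition \ref{wblowup1}, apply that proposition, then undo the rescaling---is exactly what the paper has in mind, since the paper offers no proof here beyond asserting that the two propositions are ``equivalent''. Your bookkeeping is correct: the identity $Np-N-\gamma=2s_c(p-1)+2$ gives $c_1^{2p}c_2^{-(N+\gamma)}=\lambda^{-(2s_c(p-1)+2)}\beta$, and the two-case threshold in \eqref{wbu2} yields $|\mathcal G[u]^2-\lambda^2|\lesssim\rho\lambda^{2s_c(p-1)+2}$ for $\lambda\le1$ and $\lesssim\rho\lambda^2\le\rho\lambda^{2s_c(p-1)+2}$ for $\lambda\ge1$, so all three Lions quantities of $v$ deviate from their $Q$-values by $O(\rho)$ uniformly in $\lambda$ and $\beta$.

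One thing worth flagging: the exponents you derive on undoing the rescaling do \emph{not} match those printed in the statement. You get profile amplitude $c_1^{-1}=\lambda^{N/2}\beta^{-(N-2s_c)/(4(1-s_c))}$, spatial scale $c_2^{-1}=\lambda\beta^{-1/(2(1-s_c))}$, and gradient prefactor $c_1^{-1}c_2^{N/2-1}=\lambda\beta^{-1/2}$, whereas \eqref{wbu3}--\eqref{wbu4} display $\lambda^{N/2}\beta^{-s_c/(1-s_c)}$, $\lambda\beta^{-3s_c/((1-s_c)N)}$, and $\lambda\beta^{-s_c/(2(1-s_c))}$. The two sets of exponents coincide precisely when $s_c=N/6$ --- the 3d cubic NLS case ($N=3$, $s_c=1/2$) of \cite{HR10}, \cite{G}. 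For general $(N,\gamma,p)$ the paper's profile has $\|\nabla(\text{profile})\|_{L^2}=\lambda\beta^{\frac{s_c(N-6)}{2N(1-s_c)}}\|\nabla Q\|_{L^2}$, which cannot track $\|\nabla u\|_{L^2}\approx\lambda\beta^{-1/2}\|\nabla Q\|_{L^2}$ unless $s_c=N/6$, so the printed $\beta$-powers in the profile and in \eqref{wbu4} appear to be carried over from that special case without adjustment. Your formulas are the internally consistent ones; note also that the $L^2$ prefactor $\beta^{\frac{s_c}{2(1-s_c)}}$ in \eqref{wbu3} does agree with your $c_1^{-1}c_2^{N/2}$, so \eqref{wbu3} is fine while the other exponents need correction. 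In short, the proof is right; the statement it proves is the corrected version of the proposition.
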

Suppose that $0\leq\mathcal{ME}[u]<1$ and let $\mathcal{G}[u(t)]=\lambda>0$ be given. The ``mass-energy" horizontal line for this $\lambda$ intersects the graph of parabola, $y=\frac{s_c(p-1)+1}{s_c(p-1)}\left(1-\frac{\lambda^{2s_c(p-1)}}{s_c(p-1)+1}\right)\lambda^2$ at two places, i.e.,  there exists two solutions $0\leq \lambda_1<1<\lambda_2$. The first case produces a solution that is global and scattering (Theorem \ref{mainP0} (1)) and the second case produces a solution which
either blows up in finite time (Theorem \ref{mainP0} (2)(a)) or diverge in infinite time (Theorem \ref{mainP0} (2)(b)) as shown in Section \ref{S-last}.

It is possible that $\mathcal{G}[u(t)]$ is much larger than $1$ or $\lambda_2$. The following Proposition shows that it cannot.
\begin{proposition}\label{boundary}
	Let $\mathcal{G}[u_0]=\lambda_0>1$. Then there exists $\rho_0=\rho_0(\lambda_0)>0$ (with the property that $\rho_0\rightarrow 0$ as $\lambda_0\searrow 1$) such that for any $\lambda\geq \lambda_0$, the following
	holds: There does NOT exist a solution $u(t)$ of \eqref{gH} with $P[u] = 0$ satisfying $\|u\|_{L^2} = \|Q\|_{L^2}$ and 
	\begin{align}\label{bc1}
	\frac{E[u]}{E[Q]}=\frac{s_c(p-1)+1}{s_c(p-1)}\left(1-\frac{\lambda^{2s_c(p-1)}}{s_c(p-1)+1}\right)\lambda^2
	\end{align}
	with 
	\begin{align}\label{bc2}
	\lambda\leq \frac{\|\nabla u(t)\|_{L^2}}{\|\nabla Q\|_{L^2}}\leq \lambda(1+\rho_0)\quad \text{for all}\,\,t\geq 0.
	\end{align}
\end{proposition}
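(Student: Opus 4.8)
The plan is to argue by contradiction. Suppose that for some $\lambda\ge\lambda_0>1$ and some $\rho_0>0$ (to be fixed below) there exists a solution $u(t)$ of \eqref{gH} with $P[u]=0$, $\|u\|_{L^2}=\|Q\|_{L^2}$, satisfying \eqref{bc1} and \eqref{bc2} on all of $[0,\infty)$. Since $\|u\|_{L^2}=\|Q\|_{L^2}$ we have $\beta=(M[u]/M[Q])^{\theta}=1$, and \eqref{bc1}--\eqref{bc2} are exactly the hypotheses \eqref{wbu1}--\eqref{wbu2} of Proposition \ref{wblowup2} with $\rho=\rho_0$. Hence for each $t\ge0$ there are $\theta(t)\in\R$ and $x(t)\in\R^N$ with
\[
\bigl\|u(t)-e^{i\theta(t)}\lambda^{N/2}Q\bigl(\lambda(\cdot-x(t))\bigr)\bigr\|_{H^1}\le\epsilon(\rho_0),
\]
where $\epsilon(\rho_0)\to0$ as $\rho_0\to0$. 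Thus $\{u(\cdot+x(t),t):t\ge0\}$ lies in the $\epsilon(\rho_0)$-neighbourhood of the compact orbit $\{e^{i\vartheta}\lambda^{N/2}Q(\lambda\,\cdot):\vartheta\in[0,2\pi)\}$, so $K=\{u(\cdot-x(t),t):t\ge0\}$ is precompact in $H^1$. Combined with $P[u]=0$, Lemma \ref{pathcontrol} gives $x(t)/t\to0$ as $t\to\infty$, and Lemma \ref{unilocalization} gives uniform-in-$t$ localization of $|\nabla u|^2+|u|^2$ about $x(t)$.

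Next I would run the localized virial argument of Theorems \ref{Dichotomy} and \ref{rigidity}. With $V_{loc}(t)=\int\phi_R(x)|u(x,t)|^2\,dx$, those computations give, for $R$ large and every $t$ in the lifespan, an inequality of the form
\[
V_{loc}''(t)\le 16(s_c(p-1)+1)E[u_0]-8s_c(p-1)\|\nabla u(t)\|_{L^2}^2+I_R(t),
\]
up to the normalization fixed by the cutoff, where $I_R(t)$ collects the annulus terms, the $\Delta^2\phi_R$ term and the off-diagonal convolution integrals over $\Omega$; by the uniform localization, Lemma \ref{HLS}, and the tail estimates of \eqref{dichotomy18}, \eqref{rigidity7}, one has $|I_R(t)|\le\eta(R)$ uniformly in $t$ with $\eta(R)\to0$ as $R\to\infty$. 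Using $E[u_0]=E[u]$ together with \eqref{bc1}, the lower bound $\|\nabla u(t)\|_{L^2}^2\ge\lambda^2\|\nabla Q\|_{L^2}^2$ coming from \eqref{bc2}, and the Pohozhaev relations \eqref{poh2}--\eqref{poh3} (so that $E[Q]=\tfrac{s_c(p-1)}{2(s_c(p-1)+1)}\|\nabla Q\|_{L^2}^2$), the two explicit terms simplify to a negative multiple of $\lambda^2\bigl(\lambda^{2s_c(p-1)}-1\bigr)\|\nabla Q\|_{L^2}^2$, i.e.
\[
V_{loc}''(t)\le -c_0\,\lambda^2\bigl(\lambda^{2s_c(p-1)}-1\bigr)\|\nabla Q\|_{L^2}^2+\eta(R)
\]
for an absolute constant $c_0>0$. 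Since $\lambda\ge\lambda_0>1$, the leading term is $\le-2a_0$ with $a_0:=\tfrac{c_0}{2}\lambda_0^2(\lambda_0^{2s_c(p-1)}-1)\|\nabla Q\|_{L^2}^2>0$; note $a_0\to0$ as $\lambda_0\searrow1$, and one must take $\epsilon(\rho_0)$ small relative to $a_0$ in order to make the localization tails $\le a_0$, which forces $\rho_0=\rho_0(\lambda_0)\to0$ as $\lambda_0\searrow1$. Fixing such a $\rho_0$ and then $R$ large yields $V_{loc}''(t)\le-a_0$ for all $t$ in the lifespan.

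I would then close the contradiction exactly as in Theorem \ref{rigidity}. If $u$ has finite variance, \eqref{bc2} is already incompatible with Theorem \ref{mainP0}(2)(a), so assume $u$ is of infinite variance and global. Fix $\delta>0$; by $x(t)/t\to0$ there is $T\ge0$ with $|x(t)|\le\delta t$ for $t\ge T$, so over $[T,T_1]$ one may take $R=R_0+\delta T_1$. Integrating $V_{loc}''\le-a_0$ over $[T,T_1]$ gives $V_{loc}'(T_1)-V_{loc}'(T)\le-a_0(T_1-T)$, while \eqref{rigidity3} and the bounds $\|u(t)\|_{L^2}=\|Q\|_{L^2}$, $\|\nabla u(t)\|_{L^2}\le\lambda(1+\rho_0)\|\nabla Q\|_{L^2}$ give $|V_{loc}'(t)|\le C(R_0+\delta T_1)$ with $C=C(\lambda,Q)$. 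Dividing by $T_1$ and letting $T_1\to\infty$ forces $a_0\le C\delta$; taking $\delta$ small enough is a contradiction, so no such $u$ exists. \textbf{The main obstacle} is the virial step: propagating the proximity of $u(t)$ to the ground-state orbit uniformly in $t$ and, above all, controlling the nonlocal tail terms $I_R(t)$ — the off-diagonal convolution integrals over $\Omega$ — uniformly in $t$ with explicit constants, so that the genuinely negative coefficient of $\lambda^2(\lambda^{2s_c(p-1)}-1)\|\nabla Q\|_{L^2}^2$ survives the errors; this is precisely where Lemma \ref{HLS} must replace the pointwise-nonlinearity estimates used in the NLS argument of \cite{HR10}.
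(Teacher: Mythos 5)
Your overall route is the one the paper outlines — apply Proposition \ref{wblowup2} (with $\beta=1$ since $\|u\|_{L^2}=\|Q\|_{L^2}$) to place $u(t)$ near the ground-state orbit, then run the localized virial computation of Section \ref{rigid} to get $V_{loc}''\le -a_0<0$, with $a_0\sim\lambda_0^2(\lambda_0^{2s_c(p-1)}-1)\|\nabla Q\|_{L^2}^2\to 0$ as $\lambda_0\searrow1$, which is exactly why $\rho_0(\lambda_0)\to0$ — and the virial algebra and the closing estimate against $|V_{loc}'(t)|\lesssim R$ match the paper's scheme.

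There is one genuine logical misstep. You assert that because $\{u(\cdot-x(t),t)\}$ lies in the $\epsilon(\rho_0)$-neighbourhood of the compact orbit $\{e^{i\vartheta}\lambda^{N/2}Q(\lambda\,\cdot)\}$, the set $K$ is precompact in $H^1$, and you then cite Lemmas \ref{unilocalization} and \ref{pathcontrol}. That implication is false: an $\epsilon$-neighbourhood of a compact subset of an infinite-dimensional Banach space is bounded but not totally bounded, so $K$ need not be precompact, and the hypotheses of those two lemmas are not literally met. The fix is the one the sources \cite{HR10}, \cite{G} actually use: Proposition \ref{wblowup2} already delivers, uniformly in $t$, the $H^1$-proximity $\|u(\cdot+x(t),t)-e^{i\theta(t)}\lambda^{N/2}Q(\lambda\,\cdot)\|_{H^1}\le\epsilon(\rho_0)$. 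From this one reads off directly (by the triangle inequality against the fixed, rapidly decaying profile $Q$) the uniform localization $\int_{|x-x(t)|>R}(|\nabla u|^2+|u|^2)\le 2\epsilon(\rho_0)^2+o_R(1)$ — which is all the virial argument needs, provided $\epsilon(\rho_0)^2\ll a_0$, exactly the constraint you already impose. Likewise, the conclusion $x(t)/t\to0$ should be established by rerunning the proof of Lemma \ref{pathcontrol} with this uniform localization and $P[u]=0$ replacing the precompactness hypothesis, not by citing the lemma as a black box. With those two substitutions your argument closes correctly.
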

\begin{proof}
	The proof relies on Proposition \ref{wblowup2} and is easy to adapt as done in \cite{HR10} and \cite{G} following the same argument as in Theorem \ref{rigidity} (Section \ref{rigid}) in this paper.
\end{proof}  
This proves that there is NO solution at the ``mass-energy"
line for $\lambda$ satisfying \eqref{bc2}. We want to show that $\mathcal{G}[u(t)]$ on any ``mass-energy" line with $\mathcal{ME}[u_0]<1$ and $\mathcal{G}[u(t)] >1$ will diverge to infinity. By contradiction, we assume that such solutions have bounded renormalized gradient $\mathcal{G}[u(t)]$ for all $t>0$.

We say the solution has a globally bounded gradient if there exists a solution at the ``mass-energy" line for $\lambda$ such that $\lambda\leq \mathcal{G}[u(t)]\leq \sigma$ for all $t>0$. Observe that if the solution does not have a globally bounded gradient for some $\lambda$ and $\sigma$, then for any $\sigma^{\prime}<\sigma$ the solution still does not have globally bounded gradient. We are now in a position to define the threshold.

\begin{definition}
	Fix $\lambda_0 >1$. Let $\sigma_c=\sigma_c(\lambda_0)$ be the supremum of all $\sigma>\lambda_0$ such that the solution does NOT have a globally bounded gradient for all $\lambda$ such that $\lambda_0\leq \lambda\leq \sigma$.
\end{definition}
 
 By Proposition \ref{boundary}, we have that $\lambda\leq \mathcal{G}[u(t)]\leq \lambda(1+\rho_0)$ does not hold for all $\lambda\geq \lambda_0$. We want to prove that $\sigma_c(\lambda_0)=+\infty$. By contradiction,
assume that $\sigma_c(\lambda_0)$ is finite.
Let $u(t)$ be a solution to \eqref{gH} with initial data $u_{n,0}$ at the ``mass-energy" line
for $\lambda>\lambda_0$, satisfying the hypothesis of Proposition \ref{boundary}. Moreover, we want to prove that $\mathcal{G}[u(t)]\rightarrow\infty$ over a sequence of times $\{t_n\}\rightarrow\infty$. Assume that such a sequence of times does not exist. This implies that there is a finite $\sigma$ satisfying $\lambda\leq \mathcal{G}[u(t)]\leq \sigma$ for all $t>0$. Invoking the nonlinear profile decomposition on the sequence $\{u_{n,0}\}$ as done in Theorem \ref{crit.elem} enables us to construct a ``critical threshold solution" $u(t)=u_c(t)$ at
the ``mass-energy" line for $\lambda_c$ with $\lambda_0 <\lambda_c <\sigma_c(\lambda_0)$ and $\lambda_c <\mathcal{G}[u_c(t)]< \sigma_c(\lambda_0)$ for all $t>0$. At this point we note that the nonlinear profile decomposition gives the $\dot{H}^1$ asymptotic orthogonality at $t=0$, but we would need to extend this for $0\leq t\leq T$. This can be done following the argument described in \cite{HR10} (Lemma 6.3) and \cite{G} (Lemma 3.9). This critical threshold solution $u_c(t)$ will satisfy Proposition \ref{precompflow_crit} (precompactness of the flow) and Lemma \ref{unilocalization} (uniform localization). This localization property of $u_c(t)$ implies that  $u_c(t)$ blows-up in finite time. The arguments from \cite{HR10} (Proposition 3.2) and \cite{G} (Lemma 4.10) proves exactly that, which contradicts the boundedness of $u_c(t)$ in $H^1$, and hence, $u_c(t)$ cannot exist, which means that our initial assumption that $\sigma_c(\lambda_0)<\infty$ is false. This completes the proof of Theorem \ref{mainP0}. 

\appendix
\section{Uniqueness of the ground state for $p=2$, $\gamma=2$}
Here for completeness we review the uniqueness of the ground state argument to the nonlocal elliptic (Choquard) equation 
\begin{equation}\label{HQ}
-Q + \Delta Q +\left(|{x}|^{-(N-2)}\ast|Q|^{2}\right)Q=0,
\end{equation}
since the argument is different from that for a local nonlinearity. As it was mentioned in the introduction, for $N=3$ the uniqueness is proved by Lieb \cite{Lieb83}, a slightly different proof using the comparison argument is in Lenzmann \cite{L09}; for $N=4$ it is proved in Krieger-Lenzmann-Raphael \cite{KLR09} via a combination of the above. We also follow the above arguments in 3d and generalize it for $2<N<6$. 
The stationary equation \eqref{HQ} appears in the context of the Hartree equation only in dimensions $2<N<6$: in dimension $N=6$ the Hartree equation is energy-critical, and thus, the corresponding elliptic equation will be different (lacking the linear term). While most of the arguments below work for dimensions 6 and higher, the equation \eqref{HQ} is only needed for $N<6$.

\begin{theorem}\label{uniqueQ}
Let $2 < N < 6$. The equation \eqref{HQ} has the unique positive, radial solution $Q$ in $H^1(\R^N)$. 	
\end{theorem}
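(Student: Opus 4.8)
The plan is to follow the scheme of Lieb for $N=3$ and its extension by Krieger--Lenzmann--Rapha\"el, exploiting two structural features of \eqref{HQ} that are special to the case $p=2$, $\gamma=2$: the potential $V_Q = |x|^{-(N-2)}\ast |Q|^2$ solves $-\Delta V_Q = c_N |Q|^2$ (Newtonian potential), and the nonlinearity is \emph{linear} in $Q$ outside the convolution. First I would record the preliminaries: by Moroz--van Schaftingen \cite{MS13} any ground state $Q$ is positive, radially symmetric (about some point, which we fix to be the origin), smooth, and decays exponentially; in particular $Q \in H^1 \cap L^\infty$ and $V_Q$ is a bounded, positive, radially decreasing function with $V_Q(r) \to 0$. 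The pair $(Q, V_Q)$ then satisfies the ODE system
\begin{align}\label{uq-system}
\begin{cases}
Q'' + \tfrac{N-1}{r} Q' - Q + V_Q\, Q = 0,\\[2pt]
V_Q'' + \tfrac{N-1}{r} V_Q' + c_N Q^2 = 0,
\end{cases}
\end{align}
with $Q'(0) = V_Q'(0) = 0$ and $Q, V_Q \to 0$ at infinity.

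The core of the argument is a Newton's-theorem / comparison step. Suppose $Q_1, Q_2$ are two positive radial ground states. Using Newton's theorem one writes $V_{Q_i}(r) = \int_0^\infty K(r,s)\, Q_i(s)^2\, s^{N-1}\, ds$ with $K(r,s) = c_N \min(r,s)^{-(N-2)}$ up to constants, so that $V_{Q_i}$ is monotone in $Q_i^2$ in an averaged sense. After a scaling normalization (e.g.\ matching $Q_1(0) = Q_2(0)$, or matching the $L^2$-masses — one degree of freedom to be fixed later), one assumes for contradiction that $Q_1 \not\equiv Q_2$ and lets $r_0$ be the first crossing point. The monotone dependence of $V$ on $Q^2$ together with \eqref{uq-system} forces a sign on the Wronskian-type quantity $W(r) = r^{N-1}(Q_1' Q_2 - Q_1 Q_2')$ (or the analogous combination for $Q_1 - Q_2$), and integrating from $0$ to $r_0$ and beyond yields a contradiction with the boundary behavior at $0$ and at $\infty$. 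This is the mechanism by which linearity in $Q$ (so the ``potential'' $-1 + V_{Q_i}$ enters a genuine Sturm--Liouville equation for $Q_i$) and the Newtonian structure combine; it is exactly where the proof departs from the Kwong-type uniqueness for local nonlinearities.

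The main obstacle — and the reason the statement is restricted to $2 < N < 6$ — is controlling the decay and the sign structure uniformly across dimensions: one must verify that the kernel $K(r,s)$ has the right monotonicity, that the comparison functions have no spurious oscillations (ruling this out uses that $V_Q - 1 < 0$ for large $r$, so $Q$ is eventually convex and cannot oscillate), and that the normalization can indeed be arranged so that both the behavior at $r=0$ and as $r\to\infty$ are pinned down; at $N=6$ the linear term degenerates in the sense that the equation becomes energy-critical and the scaling/decay picture changes, so one needs $N<6$. I would carry out the steps in the order: (1) cite/recall positivity, radiality, regularity, exponential decay from \cite{MS13}; (2) reduce to the ODE system \eqref{uq-system} and establish qualitative properties of $V_Q$ (positive, decreasing, $V_Q(0)<1$ forcing a turning point for $Q$); (3) set up the Newton representation and the monotonicity $Q_1 \le Q_2 \Rightarrow V_{Q_1} \le V_{Q_2}$ in the appropriate averaged sense; (4) run the crossing-point / Wronskian comparison to force $Q_1 \equiv Q_2$ after fixing the normalization; (5) translate back to uniqueness modulo translation for \eqref{HQ}. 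The delicate estimates are all in step (2)–(4), and I would expect step (4) — extracting the contradiction from the first crossing point while handling both endpoints $r=0$ and $r=\infty$ simultaneously — to be the hard part.
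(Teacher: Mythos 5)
Your plan follows the same fundamental route as the paper: use Newton's theorem to rewrite the convolution as a Volterra-type potential $U_Q(r)=\int_0^r K(r,s)Q(s)^2\,ds$ depending only on $Q$ on $[0,r]$, and then run a Wronskian comparison between two hypothetical solutions. Two remarks. First, the normalization step you propose is off: the equation \eqref{HQ} has the soliton frequency fixed at $1$, so there is no scaling freedom left with which to ``match $Q_1(0)=Q_2(0)$''; and if the two solutions did agree at the origin then, once the Newton reduction puts the problem in Volterra form, local uniqueness for the IVP immediately forces $Q_1\equiv Q_2$. The paper handles this correctly by rewriting \eqref{HQ} via Newton's theorem, rescaling to normalize the resulting constant $a$, and then invoking Volterra/ODE uniqueness to conclude that distinct positive radial solutions must have $Q_1(0)\neq Q_2(0)$, say $Q_1(0)>Q_2(0)$; the first step of the comparison argument then shows they never cross, so $Q_1>Q_2$ (hence $U_{Q_1}>U_{Q_2}$) on all of $(0,\infty)$.

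Second, and this is a genuine difference of route rather than a gap: for the final contradiction the paper does not integrate the Wronskian identity out to infinity, as you suggest. Instead it observes that $H_i=-\Delta+U_{Q_i}$ are Schr\"odinger operators with bounded potentials increasing to a constant, each having $Q_i$ as its positive ground state with eigenvalue $1$, and then compares quadratic forms: $\|Q_1\|_{L^2}^2\leq\langle H_2Q_1,Q_1\rangle=\langle H_1Q_1,Q_1\rangle-\langle(U_{Q_1}-U_{Q_2})Q_1,Q_1\rangle=\|Q_1\|_{L^2}^2-\delta$ with $\delta>0$, a contradiction. Your alternative---integrating $\frac{d}{dr}\big(r^{N-1}(Q_1'Q_2-Q_1Q_2')\big)=r^{N-1}(U_{Q_1}-U_{Q_2})Q_1Q_2$ from $0$ to $\infty$ and using exponential decay to kill the boundary term---is more elementary and also works, provided you justify that $r^{N-1}(Q_1'Q_2-Q_1Q_2')\to 0$ (which follows from the known exponential decay of $Q$ and $Q'$). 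So your plan is essentially sound once the normalization discussion is replaced by the ODE-uniqueness observation, and your closing step trades the paper's spectral-theory argument for a direct integration; both buy the same contradiction, with yours being slightly more self-contained and the paper's perhaps more robust to weaker decay information.
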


The proof uses the following representation of the Newton's potential, which can be found in the textbook \cite[Theorem 9.7]{LiebL2001}.
 
\begin{lemma}\label{newton}
If $f$ is a radial $C^{\infty}$ function on $\R^N$, then 
\begin{align}\label{n1}
- \left( \frac1{|x|^{N-2}} \ast f \right)(r) =  
\int_{0}^{r}K(r,s)f(s)\,ds -\big|\mathbb{S}^{N-1}\big|\int_{0}^{\infty}f(s)s\,ds,
\end{align}
where
\begin{align}\label{n3}
K(r,s)=\big|\mathbb{S}^{N-1}\big|\left(1-\left(\frac{s}{r}\right)^{N-2}\right)s \geq 0~ \text{for} ~ r \geq s.
\end{align}

\end{lemma}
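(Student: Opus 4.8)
The plan is to derive \eqref{n1} from the classical Newton averaging identity over a single sphere, combined with integration in the radial variable, and then to read off the kernel $K(r,s)$ and its positivity by elementary algebra. Throughout I assume, as in the application where $f=|Q|^2$ decays exponentially, that $f$ is radial, smooth, and satisfies $\int_0^\infty |f(s)|\,s\,ds<\infty$; this makes the convolution $\left(|x|^{-(N-2)}\ast f\right)(x)$ finite, ensures convergence of all integrals below, and legitimizes Tonelli's theorem.

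First I would record the single-sphere statement: for $s>0$, with $dS$ the $(N-1)$-dimensional surface measure on $\{|y|=s\}\subset\R^N$,
\[
\int_{|y|=s}\frac{dS(y)}{|x-y|^{N-2}}=\big|\mathbb{S}^{N-1}\big|\,s^{N-1}\cdot
\begin{cases}
|x|^{-(N-2)}, & |x|\ge s,\\[4pt]
s^{-(N-2)}, & |x|\le s.
\end{cases}
\]
This is the content of \cite[Theorem 9.7]{LiebL2001}; for a self-contained argument one observes that $g(x)=\frac{1}{|\mathbb{S}^{N-1}|\,s^{N-1}}\int_{|y|=s}|x-y|^{-(N-2)}\,dS(y)$ is radial in $x$ and harmonic on $\{|x|<s\}$ (differentiating under the integral, since $x\ne y$ there), hence constant on that ball, with value $g(0)=s^{-(N-2)}$; on $\{|x|>s\}$ it is radial, harmonic and vanishing at infinity, hence a multiple of $|x|^{-(N-2)}$, and the asymptotics $|x|^{N-2}|x-y|^{-(N-2)}\to1$ as $|x|\to\infty$ (uniformly for $|y|=s$) fix the multiple to be $1$.

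Next I would pass to polar coordinates. Writing $r=|x|$ and using Tonelli together with the single-sphere identity, then splitting the $s$-integral at $s=r$,
\[
\left(\frac1{|x|^{N-2}}\ast f\right)(r)=\int_0^\infty f(s)\left(\int_{|y|=s}\frac{dS(y)}{|x-y|^{N-2}}\right)ds
=\big|\mathbb{S}^{N-1}\big|\left(r^{-(N-2)}\int_0^r s^{N-1}f(s)\,ds+\int_r^\infty s\,f(s)\,ds\right).
\]
Rearranging: adding and subtracting $\big|\mathbb{S}^{N-1}\big|\int_0^r s\,f(s)\,ds$ replaces $\big|\mathbb{S}^{N-1}\big|\int_r^\infty s\,f(s)\,ds$ by $\big|\mathbb{S}^{N-1}\big|\int_0^\infty s\,f(s)\,ds-\big|\mathbb{S}^{N-1}\big|\int_0^r s\,f(s)\,ds$, and combining this with the first term gives
\[
\left(\frac1{|x|^{N-2}}\ast f\right)(r)=\big|\mathbb{S}^{N-1}\big|\int_0^\infty s\,f(s)\,ds-\int_0^r\big|\mathbb{S}^{N-1}\big|\Big(1-(s/r)^{N-2}\Big)s\,f(s)\,ds,
\]
which is precisely \eqref{n1} with $K(r,s)=\big|\mathbb{S}^{N-1}\big|\big(1-(s/r)^{N-2}\big)s$ after multiplying through by $-1$. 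The inequality \eqref{n3} is immediate: for $0\le s\le r$ and $N>2$ one has $(s/r)^{N-2}\le1$, hence $K(r,s)\ge0$.

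There is no serious obstacle here, since this is a classical formula. The only points needing a little care are the interchange of integration (handled by Tonelli once the decay of $f$ is assumed, which holds in our application) and, if a self-contained treatment is wanted rather than a citation, the harmonicity/Liouville argument giving the single-sphere Newton identity, which I would present as the one-paragraph remark above.
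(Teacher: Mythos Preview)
Your proof is correct. The paper does not actually prove this lemma; it simply cites \cite[Theorem~9.7]{LiebL2001} for the representation, so your derivation from the single-sphere Newton identity followed by radial integration is both sound and more detailed than what the paper provides.
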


\begin{proof}
\textit{(of Theorem \ref{uniqueQ})}
Using Lemma \ref{newton} for a radial $Q\in H^1(\R^N)$, we rewrite \eqref{HQ} as
\begin{align}\label{uq1}
- Q^{\prime\prime} - \frac{N-1}{r} Q^{\prime} + \left(\int_{0}^{r}K(r,s)Q(s)^{2}\,ds\right) Q = a Q,
\end{align}
where $a=-1 + \big|\mathbb{S}^{N-1}\big| \left(\int_{0}^{\infty}Q(s)^{2} s \, ds \right) > 0$. Using the rescaling $Q(r)\mapsto a^{-1}Q(a^{-1/2}r)$, we obtain the version of \eqref{uq1} with $a=1$, namely, 	
\begin{equation}\label{uq2}
\left(-\frac{d^2}{dr^2}-\frac{N-1}{r}\frac{d}{dr}+U_Q(r)\right)Q(r)=Q(r),
\end{equation}
where 
\begin{equation}\label{uq3}
U_Q(r)=\left(\int_{0}^{r}K(r,s)Q(s)^{2}\,ds\right).
\end{equation}
Suppose $Q_1(r)$ and $Q_2(r)$ are two positive radial solutions of \eqref{uq3} in $H^1(\R^N)$ such that $Q_1\neq Q_2$ that 
solve the IVP
\begin{align}\label{uq4}
\begin{cases}
Q^{\prime\prime}(r)+\frac{N-1}{r}Q^{\prime}(r)+Q(r)-U_Q(r)Q(r)=0,\\
Q(0)=Q_0,\quad Q^{\prime}(0)=0.
\end{cases}
\end{align}
The Volterra integral theory (for example, see Lemmas 2.4-2.6 and Theorem 2.1 in \cite{YRZ2}) guarantees existence and uniqueness of a local $C^2$ solution to the above initial-value problem for a given $Q(0)$ (note that $U_Q(r)$ is bounded, see details below). Therefore, if $Q_1\ne Q_2$, then $Q_1(0)\ne Q_2(0)$. Without loss of generality, assume that $Q_1(0)>Q_2(0)$, and by continuity we have $Q_1(r)>Q_2(r)$ on some interval $r>0$. We now prove that $Q_1(r)>Q_2(r)$ for all $r\geq 0$. 
Multiplying the equation \eqref{uq4} written for $Q_1$ with $Q_2$ and subtracting the same with indexes reversed, we get
$$
Q_1^{\prime\prime} Q_2 - Q_1 Q_2^{\prime\prime} = - \frac{N-1}{r} \left(Q_1^{\prime}Q_2 - Q_1Q_2^{\prime}\right) + \left(U_{Q_1} - U_{Q_2}\right) Q_1Q_2,
$$
or, equivalently (multiplying by $r^{N-1}$),
\begin{equation}\label{uq6}
\frac{d}{dr}\big(r^{N-1}(Q_1^{\prime}Q_2-Q_1Q_2^{\prime})\big)=r^{N-1}\left(U_{Q_1}-U_{Q_2}\right)Q_1Q_2.
\end{equation}
Integrating \eqref{uq6}, we obtain
\begin{equation}\label{uq7}
r^{N-1}\left(Q_1^{\prime}(r)Q_2(r)-Q_1(r)Q_2^{\prime}(r)\right)=\int_{0}^{r}s^{N-1}\big(U_{Q_1}(s)-U_{Q_2}(s)\big)Q_1(s)Q_2(s)ds.
\end{equation}
Suppose that $Q_1(r)$ intersects $Q_2(r)$ at $r_1>0$ for the first time. Then, the left-hand side of \eqref{uq7} at $r_1$ is non-positive due to monotonicity and decay of both $Q_1$ and $Q_2$: 
\begin{equation}\label{uq8}
r_1^{N-1}Q_1(r_1)\left(Q_1^{\prime}(r_1)-Q_2^{\prime}(r_1)\right)\leq 0, 
\end{equation}
however, the right-hand side of \eqref{uq7} satisfies
\begin{equation}\label{uq9}
\int_{0}^{r_1}s^{N-1}Q_1(s)Q_2(s)\big(U_{Q_1}(s)-U_{Q_2}(s)\big)ds>0,
\end{equation}
since both $Q_1(r)$, $Q_2(r)>0$ along with $U_{Q_1}(r)>U_{Q_2}(r)$ for $0<r<r_1$. This leads to a contradiction, thus, $Q_1(r)$ and $Q_2(r)$ do not intersect, which implies that $Q_1(r)>Q_2(r)$ must hold for all $r\geq 0$. Now we show that this fact also leads to a contradiction. 
Consider the two Schr\"odinger operators $H_i = - \Delta + U_{Q_i}$, $i=1,2$, 
with $U_{Q_i}(r) = \int_0^r \left(1-\left(\frac{s}{r} \right)^{N-2} \right) s \, Q_i^2(s) \, ds $. Recalling that a ground state $Q_i(r)$ asymptotically behaves as $r^{-\frac{N-1}2+\varepsilon} e^{-|x|}$ (this is in the case $p=2$), it is easy to observe that $U_{Q_i}$ is not only bounded, but increases to a horizontal asymptote $y=c_N = const$. Hence, we can apply the classical Schrodinger operator theory (for example, \cite[Chapter 13]{RS4}) to show that both equations $H_i \,Q = Q$, $i=1,2$, have the unique positive ground state solution, respectively denoted by $Q_i$ (with the eigenvalue 1 as we rescaled the equation in \eqref{uq2}). This implies that $\langle H_i f, f \rangle \geq \|f\|_{L^2}$ for any $H^1$ function $f$ with equality holding on a multiple of $Q_i$, that is when $f = c_i Q_i$, $i=1,2$, respectively. 
Now, since $H_2 = H_1 - (U_{Q_1} - U_{Q_2})$, we obtain
$$
\|Q_1\|^2_{L^2} \leq \langle H_2 Q_1, Q_1 \rangle  = \langle H_1 Q_1, Q_1 \rangle - \langle (U_{Q_1} - U_{Q_2}) Q_1, Q_1 \rangle  = \|Q_1\|^2_{L^2} - \delta,    
$$
since $U_{Q_1} > U_{Q_2}$, yielding a contradiction. This implies that \eqref{uq1} (and hence \eqref{HQ}) can not have two distinct radial positive $H^1$ solutions. 
\end{proof}

\bibliography{Andy-references}
\bibliographystyle{abbrv}

\end{document}